\definecolor{bgcolor}{rgb}{0.76,0.88,0.50}
\definecolor{bgcolor0}{rgb}{0.93,0.99,1}
\definecolor{bgcolor1}{rgb}{0.8,1,1}
\definecolor{bgcolor2}{rgb}{0.8,1,0.8}
\definecolor{bgcolor3}{rgb}{0.50,0.90,0.50}
\definecolor{mydarkred}{RGB}{192,25,25}
\definecolor{mydarkgreen}{RGB}{25,192,25}
\definecolor{mydarkblue}{RGB}{25,25,192}
\definecolor{ForestGreen}{RGB}{34,139,34}
\definecolor{Black}{RGB}{0,0,0}
\definecolor{MyDimGrey}{RGB}{100,100,100}
\newcommand{\algname}[1]{{\color{Black}\small\sf#1}\xspace}
\newcommand{\algnamesmall}[1]{{\color{Black}\scriptsize\sf#1}\xspace}
\newcommand{\algnametiny}[1]{{\color{Black}\tiny\sf#1}\xspace}
\newcommand{\norm}[1]{\left\| #1 \right\|}
\newcommand{\inp}[2]{\left\langle#1,#2\right\rangle} 
\newcommand{\flr}[1]{\left\lfloor #1\right\rfloor} 
\newcommand{\ceil}[1]{\left\lceil #1\right\rceil} 
\newcommand{\R}{\mathbb{R}} 
\newcommand{\N}{\mathbb{N}} 
\newcommand{\Exp}[1]{{\rm \mathbb{E}}\left[#1\right]}
\newcommand{\ExpSub}[2]{{\rm \mathbb{E}}_{#1}\left[#2\right]}
\newcommand{\ExpCond}[2]{{\mathbb{E}}\left[\left.#1\right\vert#2\right]}
\newcommand{\Prob}[1]{\mathbb{P}\left(#1\right)} 
\newcommand{\ProbCond}[2]{\mathbb{P}\left(#1\middle\vert#2\right)}
\newcommand{\cA}{\mathcal{A}}
\newcommand{\cC}{\mathcal{C}}
\newcommand{\cF}{\mathcal{F}}
\newcommand{\cO}{\mathcal{O}}
\newcommand{\mA}{\mathbf{A}}
\newcommand{\eqdef}{:=} 
\newcommand{\vast}{\bBigg@{4}}
\DeclareSymbolFont{extraup}{U}{zavm}{m}{n}
\DeclareMathSymbol{\varheart}{\mathalpha}{extraup}{86}
\DeclareMathSymbol{\vardiamond}{\mathalpha}{extraup}{87}
\newenvironment{protocol}[1][htb]{%
    \renewcommand{\ALG@name}{Protocol}
   \begin{algorithm}[#1]%
  }{\end{algorithm}}
\newcommand{\alglinelabel}{%
  \addtocounter{ALC@line}{-1}
  \refstepcounter{ALC@line}
  \label
}
\newcommand{\alglinelabellower}{%
  \addtocounter{ALC@line}{-1}
  \refstepcounter{ALC@line}
  \label
}
\newcommand{\algonlinelinelabel}{%
  \addtocounter{ALC@line}{-1}
  \refstepcounter{ALC@line}
  \label
}
\newcommand{\smallnablafunc}{{{\scriptscriptstyle \nabla}f}}
\theoremstyle{plain}
\newtheorem{theorem}{Theorem}[section]
\newtheorem{lemma}[theorem]{Lemma}
\theoremstyle{definition}
\newtheorem{definition}[theorem]{Definition}
\newtheorem{assumption}[theorem]{Assumption}
\theoremstyle{remark}
\newtheorem{remark}[theorem]{Remark}
\newtcolorbox{theorembox}{
  colback=gray!20,
  colframe=gray!20,
  boxrule=0.8pt,
  before skip=20pt,
  after skip=20pt,
  boxsep=-1mm,
}
\definecolor{mydarkgreen}{RGB}{39,130,67}
\definecolor{mydarkred}{RGB}{192,25,25}
\definecolor{mydarkorange}{RGB}{236,147,14}
\newcommand{\red}{\color{mydarkred}}
\newcommand{\newmethod}{Shadowheart SGD}
\title{\newmethod: Distributed Asynchronous SGD with Optimal Time Complexity Under Arbitrary Computation and Communication Heterogeneity}
\author{%
  Alexander Tyurin \\
  KAUST\thanks{King Abdullah University of Science and Technology, Thuwal, Saudi Arabia},\,\,AIRI\thanks{AIRI, Moscow, Russia},\,\,Skoltech\thanks{Skolkovo Institute of Science and Technology, Moscow, Russia} \\
  \And
  Marta Pozzi \\
  KAUST\footnotemark[1],\,\,University of Pavia\thanks{University of Pavia, Pavia, Italy} \\
  \And
  Ivan Ilin \\
  KAUST\footnotemark[1] \\
  \And
  Peter Richt\'{a}rik \\
  KAUST\footnotemark[1] \\
}
\begin{document}

\maketitle

\begin{abstract}
  We consider \emph{nonconvex stochastic} optimization problems in the \emph{asynchronous centralized distributed} setup where the communication times from workers to a server can not be ignored, and the computation and communication times are potentially different for all workers. Using an unbiassed compression technique, we develop a new method---\algname{\newmethod}---that provably improves the time complexities of all previous centralized methods. Moreover, we show that the time complexity of \algname{\newmethod} is optimal in the family of centralized methods with compressed communication. 
  We also consider the bidirectional setup, where broadcasting from the server to the workers is non-negligible, and develop a corresponding method.
\end{abstract}

\section{Introduction}
\label{sec:intro}

We consider the nonconvex smooth optimization problem 
\begin{align}
  \label{eq:main_task}
  \textstyle \min \limits_{x \in \R^d} \Big \{f(x) \eqdef \ExpSub{\xi \sim \mathcal{D}_{\xi}}{f(x;\xi)}\Big \},
\end{align}
where $f(\cdot;\cdot)\,:\,\R^d \times \mathbb{S}_{\xi} \rightarrow \R,$ and $\mathcal{D}_{\xi}$ is a distribution on $\mathbb{S}_{\xi} \neq \emptyset.$ Given $\varepsilon >0$, we seek to find a possibility random point $\hat{x}$ such that ${\mathbb{E}}[\norm{\nabla f(\hat{x})}^2] \leq \varepsilon.$ Such a point $\hat{x}$ is called an $\varepsilon$--stationary point. We focus on solving the problem in the following setup: 

\phantom{x} (a) $n$ {\em workers/nodes} are able to compute \emph{stochastic} gradients $\nabla f(x;\xi) $ of  $f,$ {\em in parallel} and {\em asynchronously}, and it takes (at most) $h_i$ seconds for worker $i$ to compute a single stochastic gradient; \\
\phantom{x} (b) the workers are connected to a \emph{server} which acts as a communication hub; \\
\phantom{x} (c) the workers can communicate with the server {\em in parallel} and {\em asynchronously};  it takes (at most) $\tau_i$ seconds for worker $i$ to send   a {\em compressed} message to the server; compression is performed via applying lossy communication compression to the communicated message (a vector from $\R^d$); see Def.~\ref{def:unbiased_compression};\\
\phantom{x} (d) the server can broadcast compressed vectors  to the workers in (at most) $\tau_{\rm serv}$ seconds; compression is performed via applying a lossy communication compression operator to the communicated message (a vector from $\R^d$); see Def.~\ref{def:contractive}.  

The main goal of this work is to find an \emph{optimal} optimization strategy/method that would work uniformly well in all scenarios characterized by the values of the computation times $h_1,\dots,h_n$ and communication times $\tau_1,\dots, \tau_n$ and $\tau_{\rm serv}$. Since we allow these times to be  arbitrarily heterogeneous, designing a single  algorithm that would be optimal in all these scenarios seems challenging.

Let us clarify what we mean by $\{\tau_i\}$ and take, for instance, the Rand$K$ compressor (see Def.~\ref{def:rand_k} and Sec.~\ref{sec:intro_communication}) (sends $K$ random entries). Then $\tau_i$ is \# of seconds required to send $K$ coordinates of a vector. Now assume that the communication is proportional to the number of coordinates/bits that a worker sends, i.e., it takes $\dot{\tau}_i$ seconds to send a \emph{one coordinate/bit}. Then, clearly, we have $\tau_i = K \times \dot{\tau}_i.$
and, $\tau_i$ is a function of $K.$ While the dependence is clear for Rand$K$, for all possible compressors, the dependence on the amount of sent information can be less nontrivial. We generally fix an arbitrary unbiased compressor and assume it takes $\tau_i$ seconds to send the compressed message.

From the viewpoint of federated learning \citep{konevcny2016federated, kairouz2021advances}, our work is a theoretical study of device  heterogeneity. Moreover, our formalism captures both \emph{cross-silo} and \emph{cross-device} settings as special cases. Due to our in-depth focus on device heterogeneity and the challenges that need to be overcome, we do not consider statistical heterogeneity, and leave an extension to this setup to future work.

We rely on assumptions which are  standard in the literature on stochastic gradient methods: smoothness, lower-boundedness and bounded variance.

\begin{assumption}
    \label{ass:lipschitz_constant}
$f$ is differentiable \& $L$--smooth, i.e., $\norm{\nabla f(x) - \nabla f(y)} \leq L \norm{x - y}$, $\forall x, y \in \R^d.$
 \end{assumption}

 \begin{assumption}
    \label{ass:lower_bound}
    There exist $f^* \in \R$ such that $f(x) \geq f^*$ for all $x \in \R^d$. We define $\Delta \eqdef f(x^0) - f^*,$ where $x^0\in \R^d$ is a starting point of all algorithms we consider.
 \end{assumption}

\begin{assumption}
    \label{ass:stochastic_variance_bounded}
    For all $x \in \R^d,$ the stochastic gradients $\nabla f(x;\xi)$ are unbiased, and their variance is bounded by $\sigma^2\geq 0$, i.e., ${\mathbb{E}}_{\xi}[\nabla f(x;\xi)] = \nabla f(x)$ and 
    ${\mathbb{E}}_{\xi}[\|\nabla f(x;\xi) - \nabla f(x)\|^2] \leq \sigma^2.$
\end{assumption} 

To simplify the exposition, in what follows we first focus on the regime in which the broadcast cost can be ignored. We describe a strategy for extending our algorithm to the more general regime in Sec.~\ref{sec:bidirectional_main}.

\begin{table*}
  \caption{\textbf{Time Complexities of Centralized Distributed Algorithms.} Assume that it takes at most $h_i$ seconds to worker $i$ to calculate a stochastic gradient and $\dot{\tau}_i$ seconds to send \emph{one coordinate/float} to server. Abbreviations: $L$ = smoothness constant, $\varepsilon$ = error tolerance, $\Delta = f(x^0) - f^*,$ $n$ = \# of workers, $d$ = dimension of the problem. We take the Rand$K$ compressor with $K = 1$ (Def.~\ref{def:rand_k}) (as an example) in \algname{QSGD} and \algname{\newmethod}. Due to Property~\ref{property:randk}, the choice $K = 1$ is optimal for \algname{\newmethod} up to a constant factor.
  }
  \label{table:complexities}
  \centering 
  \scriptsize
  \begin{adjustbox}{width=1.0\columnwidth,center}  
  \begin{threeparttable}
  \begin{tabular}[t]{ccccccc}
   \toprule
   \multirow{2}{*}{\bf Method} & \multirow{2}{*}{\bf Time Complexity} & \multicolumn{5}{c}{\bf Time Complexities in Some Regimes}\\
   && \makecell{$\max\{h_n, \dot{\tau}_n\} \to \infty$, \\ $\max\{h_i, \dot{\tau}_i\} < \infty\,\forall i < n$ \\ (the last worker is slow)} & \makecell{$h_i = h, \dot{\tau}_i = \dot{\tau}\,\,\forall i \in [n]$ \\ (equal performance)} & \multicolumn{3}{c}{\makecell{Numerical Comparison\textsuperscript{\color{blue}(b)} \\ $\nicefrac{\sigma^2}{\varepsilon} =$}}\\
   &&&&$1$&$10^3$&$10^6$ \\
     \midrule
     \makecell{\algnamesmall{Minibatch SGD} (see \eqref{eq:minibatch_sgd})} & $\max\limits_{i \in [n]} \max\{h_i, d \dot{\tau}_i\} \left(\frac{L \Delta}{\varepsilon} + \frac{\sigma^2 L \Delta}{n \varepsilon^2}\right)$ & \makecell{$\infty$ \\ \color{mydarkred} \textnormal{(non-robust)}} & \makecell{$\max\{h, d \dot{\tau}, \frac{d \dot{\tau} \sigma^2}{n \varepsilon}, \frac{h \sigma^2}{n \varepsilon}\} \frac{L \Delta}{\varepsilon}$ \\ \color{mydarkred} \textnormal{(worse, e.g., when $\dot{\tau}, d$ or $n$ large)}} & $\times 10^3$ & $\times 10^3$ & $\times 10^4$ \\
     \midrule
     \makecell{\algnamesmall{QSGD} (see \eqref{eq:dcgd}) \\ \citep{alistarh2017qsgd} \\ \citep{khaled2020better}} & $\max\limits_{i \in [n]} \max\{h_i, \dot{\tau}_i\} \left(\left(\frac{d}{n} + 1\right) \frac{L \Delta}{\varepsilon} + \frac{d \sigma^2 L \Delta}{n \varepsilon^2}\right)$ & \makecell{$\infty$ \\ \color{mydarkred} \textnormal{(non-robust)}} & \makecell{$\geq \frac{d h \sigma^2}{n \varepsilon} \frac{L \Delta}{\varepsilon}$ \\ \color{mydarkred} \textnormal{(worse, e.g., when $\varepsilon$ small)}} & $\times 3$ & $\times 10^2$ & $\times 10^4$\\
     \midrule
     \makecell{\algnamesmall{Rennala SGD} \\ \citep{tyurin2023optimal}, \\ \algnamesmall{Asynchronous SGD} \\ (e.g., \citep{mishchenko2022asynchronous})} & $\geq \min\limits_{j \in [n]} \max\left\{h_{\bar{\pi}_j}, d \dot{\tau}_{\bar{\pi}_j}, \frac{\sigma^2}{\varepsilon}\left(\sum\limits_{i=1}^j \frac{1}{h_{\bar{\pi}_i}}\right)^{-1}\right\} \frac{L \Delta}{\varepsilon}\textsuperscript{\color{blue}(a)}$ & \makecell{$< \infty$ \\ \color{mydarkgreen} \textnormal{(robust)}} & \makecell{$\geq \max\left\{h, d \dot{\tau}, \frac{h \sigma^2}{n \varepsilon}\right\} \frac{L \Delta}{\varepsilon}$ \\ \color{mydarkred} \textnormal{(worse, e.g., when $\dot{\tau}, d$ or $n$ large)}} & $\times 10^2$ & $\times 10$ & $\times 1.5$\\
     \midrule
     \makecell{\makecell{\algnamesmall{\newmethod}} \\ (see \eqref{eq:grad_est_static} and Alg.~\ref{alg:alg_server}) \\ (Corollary~\ref{cor:max_time})} & $t^*(d - 1, \nicefrac{\sigma^2}{\varepsilon}, [h_i, \dot{\tau}_i]_1^n) \frac{L \Delta}{\varepsilon}\textsuperscript{\color{blue}(c)}$ & \makecell{$< \infty$ \\ \color{mydarkgreen} \textnormal{(robust)}} & $\max\left\{h, \dot{\tau}, \frac{d \dot{\tau}}{n}, \sqrt{\frac{d \dot{\tau} h \sigma^2}{n \varepsilon}}, \frac{h \sigma^2}{n \varepsilon}\right\} \frac{L \Delta}{\varepsilon}$ & $\times 1$ & $\times 1$ & $\times 1$ \\
     \midrule
     \midrule
     \makecell{\makecell{Lower Bound} \\ (see Section~\ref{sec:lower_bound} and Theorem~\ref{theorem:lower_bound})} & $t^*(d - 1, \nicefrac{\sigma^2}{\varepsilon}, [h_i, \dot{\tau}_i]_1^n) \frac{L \Delta}{\varepsilon}\textsuperscript{\color{blue}(d)}$ & \makecell{---} & --- & --- & --- & --- \\
     \midrule
    \multicolumn{7}{c}{\makecell{\bf The time complexity of \newmethod\,is not worse than the time complexity of the competing  centralized methods, and is \emph{strictly} better in many regimes (see Section~\ref{sec:our_method_better}). \\ \bf We show that \eqref{eq:time_complexity} is the \emph{optimal time complexity} in the family of centralized methods with compression (see Section~\ref{sec:lower_bound} and Theorem~\ref{theorem:lower_bound}).}}\\
    \bottomrule
    \end{tabular}
    \begin{tablenotes}
      \scriptsize
      \item [{\color{blue}(a)}] Upper bound time complexities are not derived for \algnametiny{Rennala SGD} and \algnametiny{Asynchronous SGD}. However, we can derive the lower bound using Theorem~\ref{theorem:lower_bound} with $\omega = 0.$ One should take $d \dot{\tau}_i$ instead of $\tau_i$ when apply Theorem~\ref{theorem:lower_bound} because these methods send $d$ coordinates. $\bar{\pi}$ is a permutation that sorts $\max\{h_i, d \dot{\tau}_i\}:$  $\max\{h_{\bar{\pi}_1}, d \dot{\tau}_{\bar{\pi}_1}\} \leq \dots \leq \max\{h_{\bar{\pi}_n}, d \dot{\tau}_{\bar{\pi}_n}\}$
      \item [{\color{blue}(b)}] We numerically compute time complexities for $d = 10^6,$ $n = 10^3,$ $h_i \sim U(0.1, 1),$ $\dot{\tau}_i \sim U(0.1, 1)$ (uniform i.i.d.), and three noise regimes $\nicefrac{\sigma^2}{\varepsilon} \in \{1,10^3, 10^6\}.$ 
      We report the factors by which the time complexities of the competing methods are worse compared to the time complexity of our method \algnametiny{\newmethod}. So, for example, \algnametiny{Minibatch SGD}, \algnametiny{QSGD} and \algnametiny{Asynchronous SGD} can be worse by the factors $\times 10^4$, $\times 10^4$, and $\times 10^2$, respectively.
      \item [{\color{blue}(c)}] The mapping $t^*$ is defined in Def.~\ref{def:optimal_time_budget}.
      \item [{\color{blue}(d)}] The lower bound constructed with the Rand$K$ compressor and the dimension $d = \Theta\left(L \Delta / \varepsilon\right).$ 
    \end{tablenotes}
    \end{threeparttable}
    \end{adjustbox}
\end{table*}

\section{Related Work}
\subsection{Communication time can be ignored}
\label{sec:intro_can_be_ignored}

We now briefly review related work and important concepts. Consider the regime when the communication cost is negligible ($\tau_i=0$ for all $i$), and the computation times $h_i$ are arbitrary but fixed. It is well-known that under Assumptions~\ref{ass:lipschitz_constant}, \ref{ass:lower_bound}, and \ref{ass:stochastic_variance_bounded}, the vanilla \algname{SGD} method
$x^{k+1} = x^k - \gamma \nabla f(x^k;\xi^k),$ where $x^0\in \R^d$ is a starting point, and $\gamma>0$ is the step size,  solves \eqref{eq:main_task}  using \emph{the optimal number of stochastic gradients} \citep{ghadimi2013stochastic,arjevani2022lower}.  Since the \# of iterations of \algname{SGD} to get an $\varepsilon$--stationary point is $\operatorname{O}\left(\nicefrac{L \Delta}{\varepsilon} + \nicefrac{\sigma^2 L \Delta}{\varepsilon^2}\right),$ \algname{SGD} run on a \emph{single worker} whose computation time is $h_1$ seconds would have {\em time complexity}  
$\textstyle \operatorname{O}\left(h_1 \times \left(\nicefrac{L \Delta}{\varepsilon} + \nicefrac{\sigma^2 L \Delta}{\varepsilon^2}\right)\right)$
seconds. 
The time complexity of  \algname{Minibatch SGD} with $n$ workers, i.e., 
\begin{align}
  \label{eq:minibatch_sgd}
  \textstyle x^{k+1} = x^k - \frac{\gamma}{n} \sum\limits_{i=1}^{n} \nabla f(x^k;\xi^k_i),
\end{align}
can be shown  \citep{gower2019sgd} to be
\begin{align}
\label{eq:mini_batch_sgd_time}
\textstyle \operatorname{O}\left(h_{\max} \times \left(\frac{L \Delta}{\varepsilon} + \frac{\sigma^2 L \Delta}{n \varepsilon^2}\right)\right),
\end{align}
where $h_{\max} \eqdef \max_{i \in [n]} h_i,$ where $[n]$ denotes $\{1,\dots,n\}$. The dependence on $h_{\max}$ is due to \algname{Minibatch SGD} employing {\em synchronous} parallelism which forces it to wait for the slowest  worker. While the stochastic part of \eqref{eq:mini_batch_sgd_time} can be $n$ times smaller than in the single worker case, \eqref{eq:mini_batch_sgd_time} does not guarantee an improvement since $h_{\max}$ can be arbitrarily large. In real systems, computation times can be very heterogeneous and vary in time in chaotic ways \citep{dutta2018slow, chen2016revisiting}. 

Recently, \citet{cohen2021asynchronous,mishchenko2022asynchronous} and \citet{koloskova2022sharper} showed that it is possible to improve upon \eqref{eq:mini_batch_sgd_time} using the celebrated \algname{Asynchronous SGD} method \citep{recht2011hogwild, feyzmahdavian2016asynchronous, nguyen2018sgd} and get the time complexity 
$\textstyle \operatorname{O}((\nicefrac{1}{n} \sum_{i=1}^n \frac{1}{h_i})^{-1} \times \left(\nicefrac{L \Delta}{\varepsilon} + \nicefrac{\sigma^2 L \Delta}{n \varepsilon^2}\right)),$
which improves the dependence from $h_{\max}$ to the harmonic mean of the computation times.
Subsequently, \citet{tyurin2023optimal} developed the \algname{Rennala SGD} method whose time complexity is \begin{align}
  \label{eq:rennala_time_intro}
  \textstyle \operatorname{O}\bigg(\min\limits_{m \in [n]}\left(\frac{1}{m} \sum\limits_{i=1}^m \frac{1}{h_{\pi_i}}\right)^{-1} \times \left(\frac{L \Delta}{\varepsilon} + \frac{\sigma^2 L \Delta}{m \varepsilon^2}\right)\bigg),
\end{align}
where $\pi$ is a permutation forwhich $h_{\pi_1} \leq \dots \leq h_{\pi_n}$. They also showed that the time complexity \eqref{eq:rennala_time_intro} is \emph{optimal} by providing a matching lower bound.

\subsection{Communication time is a factor}
\label{sec:intro_communication}
In many practical scenarios, communication times can be the main bottleneck, and can not be ignored, e.g., in distributed/federated training of machine learning models \citep{ramesh2021zero, kairouz2021advances, wang2023cocktailsgd}. There are two main techniques for reducing the communication bottleneck: local training steps \citep{mcmahan2017communication} and compressed communication \citep{Seide2014, alistarh2017qsgd}. In our work, we investigate the latter technique. In particular, efficient methods with compressed communication such as \algname{DIANA}~\citep{DIANA}, \algname{Accelerated DIANA}~\citep{ADIANA}, \algname{MARINA}~\citep{MARINA} and \algname{DASHA}~\citep{tyurin2022dasha} employ  \emph{unbiased compressors}, defined next. Assume that $\mathbb{S}_{\nu}$ is a nonempty arbitrary set of samples, and $\mathcal{D}_{\nu}$ is a distribution on $\mathbb{S}_{\nu}.$ 
\begin{definition}
\label{def:unbiased_compression}
A mapping $\cC\,:\,\R^d \times \mathbb{S}_{\nu} \rightarrow \R^d$ is an \textit{unbiased compressor} if
there exists $\omega \geq 0$ such that
${\mathbb{E}}_{\nu}[\cC(x;\nu)] = x, \; {\mathbb{E}}_{\nu}[\norm{\cC(x;\nu) - x}^2] \leq \omega \norm{x}^2$
for all $x$. Let $\mathbb{U}(\omega)$ denote the family of such compressors\footnote{For convenience, following the previous literature, we use the shortcuts $\cC(x;\nu) \equiv \cC(x)$ and $\cC(x;\nu_{ij}) \equiv \cC_{ij}(x)$ assuming that $\nu$ and $\nu_{ij}$ are mutually independent.}.
\end{definition}
\begin{assumption}
  \label{ass:independence}
 Samples from $\mathcal{D}_{\xi}$ and $\mathcal{D}_{\nu}$ are mutually independent.
\end{assumption}
The canonical example of an unbiased compressor is the Rand$K$ compressor (see Def.~\ref{def:rand_k}) that scales $K$ random entries of the input vector $x$ by $\nicefrac{d}{K}$ and zeros out the rest. Many more examples of unbiased compressors are considered in the literature \citep{beznosikov2020biased,xu2021grace,horvoth2022natural}. One of the most straightforward methods which use compression is \algname{QSGD}\footnote{It is also called the distributed compressed stochastic gradient descent method (\algnamesmall{DCGD}/\algnamesmall{DCSGD}) \citep{khaled2020better}.} \citep{alistarh2017qsgd}:
\begin{align}
  \label{eq:dcgd}
  \textstyle x^{k+1} = x^k - \frac{\gamma}{n} \sum\limits_{i=1}^{n} \cC_i\left(\nabla f(x^k;\xi^k_i)\right),
\end{align}
where each worker calculates one stochastic gradient, compresses it using $\cC_i \in \mathbb{U}(\omega)$ drawn independently, and sends it to the server. The server aggregates the compressed vectors and  performs step \eqref{eq:dcgd}. With a proper stepsize choice $\gamma,$ \algname{QSGD} converges after $\operatorname{O}\left((\nicefrac{\omega}{n} + 1) \times \nicefrac{L \Delta}{\varepsilon} + (\omega + 1) \times \nicefrac{\sigma^2 L \Delta}{n \varepsilon^2}\right)$ iterations\footnote{For $\omega = 0,$ the rate reduces to the rate of \algnamesmall{Minibatch SGD}.} \citep{khaled2020better}. Let's assume it takes $\tau_i$ seconds for worker $i$ to send one compressed vector to the server. Since the workers act in parallel, the time complexity of \algname{QSGD} is
\begin{align}
  \label{eq:time_comp_qsgd}
  \textstyle \max \limits_{i \in [n]}\left(h_i + \tau_i\right)\times \Big((\frac{\omega}{n} + 1) \frac{L \Delta}{\varepsilon} + (\omega + 1) \frac{\sigma^2 L \Delta}{n \varepsilon^2}\Big).
\end{align}
We can go through a similar exercise with any other method that uses compressed communication (e.g., \citep{tyurin2022computation,gauthier2023asynchronous,jia2023efficient}). Nevertheless, as far as we know, \emph{the optimal time complexities for asynchronous centralized distributed optimization with communication compression are not known}.

\section{Summary of Contributions}
\label{sec:contri}
In the regime in which the communication time can be ignored (see Sec.~\ref{sec:intro_can_be_ignored}), \citet{tyurin2023optimal} showed that \eqref{eq:rennala_time_intro} is the optimal time complexity. In this work we endeavor to take the next step: we wish to understand the fundamental limits of the regime in which communication time is a factor. Our main  contributions are: \\
$\spadesuit$ We develop a new method---\algname{\newmethod} (\Cref{alg:alg_server})---that guarantees to find an $\varepsilon$--stationary point of problem \eqref{eq:main_task}  with time complexity $T_*$ given in \eqref{eq:time_complexity}. While the general expression we give for $T_*$ is hard to parse since it involves the equilibrium time $t_*(\cdot)$ whose definition is implicit (see Def.~\ref{def:optimal_time_budget}), we show 
(see Sec.~\ref{sec:comparison_main}) that $T_*$ is not worse than the time complexity of known centralized\footnote{We say that a method is \emph{centralized} if the workers calculate stochastic gradients only at points calculated by the server.} methods, and also who that it can be \emph{strictly}  better in many regimes, even by {\em many degrees of magnitude} (see Sec.~\ref{sec:our_method_better} and Table~\ref{table:complexities}).
\\
$\clubsuit$ In Sec.~\ref{sec:lower_bound} we show that \eqref{eq:time_complexity} is the \emph{optimal time complexity} in the family of centralized methods with compression. This is the first such result in the literature.\\
$\vardiamond$ We also developed \algname{Adaptive \newmethod} (Sec.~\ref{sec:problem_estim} and \ref{sec:adaptive_method}), which does not require the knowledge of the computation and communication times and can work with arbitrary changing times. Moreover, we designed \algname{Bidirectional \newmethod} (Sec.~\ref{sec:bidirectional_main}), which works in the regime when broadcast cost not negligible as well.\\
$\varheart$ Our theoretical study of \algname{\newmethod} is supported by judiciously designed synthetic experiments and machine-learning experiments with logistic regression; see Sec.~\ref{sec:experiments}. \\

\begin{figure}[t]
\begin{minipage}[t]{0.5\textwidth}
\begin{algorithm}[H]
  \caption{\algname{\newmethod}}
  \label{alg:alg_server}
  \begin{algorithmic}[1]
  \STATE \textbf{Input:} starting point $x^0 \in \R^d$, stepsize $\gamma >0 $, ratio $\nicefrac{\sigma^2}{\varepsilon},$
  computation times $h_i > 0,$ and communication times $\tau_i > 0$ for $i\in [n]$
  \STATE Find equilibrium time $t^*$ using Def.~\ref{def:optimal_time_budget}
  \STATE Set $b_i = \flr{\frac{t^*}{h_i}}$ and $m_i = \flr{\frac{t^*}{\tau_i}}$ for all $i \in [n]$
  \STATE Find $S_{\textnormal{A}} = \{i \in [n]\,:\, b_i \wedge m_i > 0\}$
  \FOR{$k = 0, 1, \dots, K - 1$}
  \STATE Run Alg.~\ref{alg:alg_worker} in active workers $S_{\textnormal{A}}$
  \STATE Broadcast $x^k , b_i,$ $m_i$ to active workers $S_{\textnormal{A}}$ \alglinelabel{alg:broadcast}
  \STATE Initialize $g^k = 0$
  \FOR{$i \in S_{\textnormal{A}}$ {\bf in parallel}} 
      \STATE $w_i \overset{(a)}{=} \left(b_i \omega + \omega \frac{\sigma^2}{\varepsilon} + m_i \frac{\sigma^2}{\varepsilon}\right)^{-1}$
      \FOR{$j = 1, \dots, m_i$} 
          \STATE Receive $\cC_{ij}\left(g_i^k\right)$ from   worker $i$
          \STATE $g^k = g^k + w_i \cC_{ij}\left(g_i^k\right)$
      \ENDFOR
  \ENDFOR
  \STATE $g^k = g^k / \left(\sum_{i=1}^n w_{i} m_i b_i\right)$
  \STATE $x^{k+1} = x^k - \gamma g^k$
  \ENDFOR
  \end{algorithmic}
\end{algorithm}
\vspace{-0.75cm}
\begin{algorithm}[H]
  \caption{Strategy of Worker $i$}
  \label{alg:alg_worker}
  \begin{algorithmic}[1]
  \STATE Receive $x^k, b_i,$ $m_i$ from server, init $g_i^k = 0$
  \FOR{$l = 1, \dots, b_i$}
      \STATE Calculate $\nabla f(x^k;\xi_{il}^k), \quad \xi_{il}^k \sim \mathcal{D}_{\xi}$
      \STATE $g_i^k = g_i^k + \nabla f(x^k;\xi_{il}^k)$
  \ENDFOR
  \FOR{$j = 1, \dots, m_i$}
      \STATE Send $\cC_{ij}\left(g_i^k\right) \equiv \cC\left(g_i^k; \nu_{ij}^k\right)$ to server, \\
      $\nu_{ij}^k \sim \mathcal{D}_{\nu},$ $\cC_{ij} \in \mathbb{U}(\omega)$ \\
  \ENDFOR
  \end{algorithmic}
\end{algorithm}
\end{minipage}
\begin{minipage}[t]{0.5\textwidth}
\begin{figure}[H]
  \begin{theorembox}
  \begin{definition}[Equilibrium Time]
      \label{def:optimal_time_budget}
      \begin{align*}
      &\textnormal{A mapping }t^* \,:\, \underbrace{\R_{\geq 0}}_{\omega} \times \underbrace{\R_{\geq 0}}_{\nicefrac{\sigma^2}{\varepsilon}} \times \underbrace{\left(\R_{\geq 0} \times \R_{\geq 0}\right)}_{(h_1, \tau_1)} \\
      &\times \dots \times \underbrace{\left(\R_{\geq 0} \times \R_{\geq 0}\right)}_{(h_n, \tau_n)} \rightarrow \R_{\geq 0}
      \end{align*}
    with inputs $\omega, \nicefrac{\sigma^2}{\varepsilon}, h_1, \tau_1, \dots, h_n, \tau_n$   is called the \emph{equilibrium time} if it is defined as follows. Find a permutation\footnote{It is possible that a permutation is not unique. The result of the mapping does not depend on the choice of the permutation. See the proof of Property~\ref{pror:welldefined}.} $\pi$ that sorts the pairs $(h_i, \tau_i)$ as $\max\{h_{\pi_1}, \tau_{\pi_1}\} \leq \dots \leq \max\{h_{\pi_n}, \tau_{\pi_n}\}$ and 
      find the solution $s^*(j) \in [0, \infty]$ in ${\red s}$ of\footnote{For convenience, we use \emph{the projectively extended real line} and define $1 / 0 = \infty$.}
      \begin{align}
          \label{eq:main_equation}
        \textstyle   \left(\sum \limits_{i=1}^j \frac{1}{2 \tau_{\pi_i} \omega + \frac{4 \tau_{\pi_i} h_{\pi_i} \sigma^2 \omega}{{\red s} \times \varepsilon} + \frac{2 h_{\pi_i} \sigma^2}{\varepsilon}}\right)^{-1} = {\red s}
      \end{align}
      for all $j \in [n].$ Then the mapping returns the value
      \begin{equation}
      \begin{aligned}
          \label{eq:equil_time}
          &t^*(\omega, \nicefrac{\sigma^2}{\varepsilon}, h_1, \tau_1, \dots, h_n, \tau_n) \\
          &\hspace{-0.1cm}\equiv \min_{j \in [n]} \max\{\max\{h_{\pi_j}, \tau_{\pi_j}\}, s^*(j)\} \in [0, \infty].
      \end{aligned}
      \end{equation}
      We shall use the short notation $t^*(\omega, \nicefrac{\sigma^2}{\varepsilon},[h_i, \tau_i]_{1}^{n}).$
  \end{definition}
  \end{theorembox}
  \vspace{-0.7cm}
  $(a):$ If $\omega = 0$ and $\frac{\sigma^2}{\varepsilon} = 0,$ then $w_i = 1$
\end{figure}
\end{minipage}
\end{figure}

\section{Development of \newmethod}
\label{sec:static_method}

 Our method bears some resemblance to \algname{Rennala SGD} \citep{tyurin2023optimal} and \algname{QSGD} \citep{alistarh2017qsgd}, and involves some additional algorithmic elements which play a key role. First, we adopted the main suggestion of \citet{tyurin2023optimal}[Sec.7] behind the design of \algname{Rennala SGD}  that an optimal method should calculate stochastic gradients at the last iterate. Second, 
 \algname{QSGD} served as an inspiration for how to perform gradient compression.  In particular, \algname{\newmethod} has the form  $x^{k+1} = x^k - \gamma g^k,$ where 
\begin{align}
  \label{eq:grad_est_static}
 \textstyle
  g^k = \sum \limits_{i=1}^{n} w_{i} \sum\limits_{j=1}^{m_i}  \cC_{ij}\left(\sum\limits_{l=1}^{b_i} \nabla f(x^k;\xi_{il}^k)\right) / \sum\limits_{i=1}^n w_{i} m_i b_i.
\end{align}

In \algname{\newmethod}, worker $i$ calculates $b_i$ stochastic gradients, adds them up to form $\sum_{l=1}^{b_i} \nabla f(x^k;\xi_{il}^k)$, and compresses the result $m_i$ times using independently drawn compressors. The compressed messages are sent to the server. The first non-trivial step in the design of our method is the presence of weights $w_i$: the server aggregates the $\sum_{i=1}^n m_i$ compressed messages across all workers by performing a conic combination with coefficient $\frac{w_i}{\sum_{i=1}^n w_{i} m_i b_i}$ for messages coming from worker $i$. One can easily show that \eqref{eq:grad_est_static} is equivalent to Alg.~\ref{alg:alg_server}. Note that we recover \algname{QSGD} (see \eqref{eq:dcgd}) as a special (suboptimal) case with $w_i = b_i =  m_i = 1$ for all $i \in [n]$. 

The weights $\{w_i\}$ are chosen so as to minimize the variance in the proof of Lemma~\ref{lemma:sgd_async:same_weights}. However, we still need to find the right values for $b_i$ and $m_i.$ Since the computation and communication times of worker $i$ are $h_i$ and $\tau_i$, respectively, the following strategy makes intuitive sense: the server sets some time budget $t$ for all workers, and each worker then calculates $\flr{\nicefrac{t}{h_i}}$ stochastic gradients and sends $\flr{\nicefrac{t}{\tau_i}}$ compressed vectors to the server. But what is the right way to choose $t$? If $t$ is too small, then, intuitively, some workers may not have time to calculate ``enough'' gradients, or may even not have time to send any messages to the server. On the other hand, if $t$ is too large, then the workers will eventually send information of diminishing  utility which will not  be worth the extra time this takes. 

We find out that, and this one of the key insights of our work, that  there exists an optimal time budget $t^*$ which depends on the quantities $\omega$, $\nicefrac{\sigma^2}{\varepsilon}$, $h_1$, $\tau_1$, $\dots$, $h_n$, $\tau_n$, for which we coin the name \emph{equilibrium time}; see Def.~\ref{def:optimal_time_budget}. Admittedly, the definition of the equilibrium time is implicit; we do not know if it is possible to give a more explicit formula in general.  To provide for some peace of mind, we prove the following property:
\begin{restatable}{property}{PROPERTYEQUILIBRIUMTIMEWELLDEFINED}
    \label{pror:welldefined}
    If all inputs of the equilibrium time are non-negative, then the equilibrium time is well defined.
\end{restatable}

More importantly, in Sec.~\ref{sec:lower_bound} we provide a \emph{lower bound} that involves the same mapping. Thus, the equilibrium time is not an ``artifact'' of our method, but is of a fundamental nature.  We use the equilibrium time $t^*$ in \algname{\newmethod} when we choose $b_i$ and $m_i.$ 

Our first main result provides {\em iteration complexity:} 
\begin{restatable}{theorem}{THEOREMMAINSTATIC}
  \label{thm:sgd_homog}
Lett Assumptions~\ref{ass:lipschitz_constant}, \ref{ass:lower_bound}, \ref{ass:stochastic_variance_bounded}, \ref{ass:independence} hold. Let us take
  $\gamma = 1 / 2 L$
  in   \algname{\newmethod} (Alg.~\ref{alg:alg_server}).
  Then as long as 
    $K \geq 16 L \Delta / \varepsilon,$
 we have the guarantee  $\frac{1}{K}\sum_{k=0}^{K-1}\Exp{\norm{\nabla f(x^k)}^2} \leq \varepsilon.$
\end{restatable}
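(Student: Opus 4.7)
My approach is the standard template for nonconvex stochastic optimization: combine the $L$-smooth descent inequality with unbiasedness and a variance bound for $g^k$, then telescope. The non-routine content is the variance bound for the compressed, batched, reweighted estimator defined in \eqref{eq:grad_est_static}.

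\emph{Unbiasedness.} By Def.~\ref{def:unbiased_compression}, $\Exp{\cC_{ij}(v)} = v$; by Assumption~\ref{ass:stochastic_variance_bounded}, the inner sum satisfies $\ExpCond{\sum_{l=1}^{b_i}\nabla f(x^k;\xi^k_{il})}{x^k} = b_i\nabla f(x^k)$; and by Assumption~\ref{ass:independence} the compressor noise and the stochastic-gradient noise are mutually independent. Substituting into \eqref{eq:grad_est_static}, the denominator $\sum_i w_i m_i b_i$ cancels the numerator in expectation and yields $\ExpCond{g^k}{x^k} = \nabla f(x^k)$, regardless of the choice of the positive weights $w_i$.

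\emph{Variance bound (the main obstacle).} The crux is to control $V^k \eqdef \ExpCond{\sqnorm{g^k - \nabla f(x^k)}}{x^k}$. Using the independence above and the property $\Exp{\sqnorm{\cC(u) - u}} \leq \omega\sqnorm{u}$, one expands the square in \eqref{eq:grad_est_static}; cross terms vanish, leaving a sum of per-worker contributions in $\omega$, $\sigma^2$, $b_i$, $m_i$, $w_i$. Lemma~\ref{lemma:sgd_async:same_weights} then carries out the minimisation over the free weights $w_i$, producing a closed-form variance bound in terms of $\omega,\sigma^2$ and the per-worker budgets $b_i,m_i$ only. Substituting the algorithmic choices $b_i = \flr{t^*/h_i}$ and $m_i = \flr{t^*/\tau_i}$, where $t^*$ is the equilibrium time of Def.~\ref{def:optimal_time_budget}, and invoking the fixed-point identity \eqref{eq:main_equation} that $t^*$ satisfies, collapses the bound to a uniform estimate of the form $V^k \le c\,\varepsilon$ for an absolute constant $c$. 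The delicate point is that \eqref{eq:main_equation} is engineered precisely to balance the compressor variance and the stochastic-gradient variance so that both scale linearly with $\varepsilon$, which is what ultimately strips the $\sigma^2/\varepsilon$ factor out of the iteration count.

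\emph{Descent and telescoping.} By Assumption~\ref{ass:lipschitz_constant} and unbiasedness,
\[
\ExpCond{f(x^{k+1})}{x^k} \leq f(x^k) - \gamma\!\left(1 - \tfrac{L\gamma}{2}\right)\sqnorm{\nabla f(x^k)} + \tfrac{L\gamma^2}{2} V^k.
\]
With $\gamma = 1/(2L)$, the coefficients reduce to $3/(8L)$ and $1/(8L)$. Taking full expectation, summing $k = 0,\dots,K-1$, invoking Assumption~\ref{ass:lower_bound} and the bound $V^k \le c\varepsilon$ from the previous step gives
\[
\tfrac{1}{K}\sum_{k=0}^{K-1}\Exp{\sqnorm{\nabla f(x^k)}} \leq \tfrac{8L\Delta}{3K} + \tfrac{c\,\varepsilon}{3}.
\]
Choosing the variance tuning so that $c/3 \le 1/2$ and then requiring $K \geq 16 L\Delta/\varepsilon$ ensures the right-hand side is at most $\varepsilon$, concluding the proof.
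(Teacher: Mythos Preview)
Your route is the paper's route (unbiasedness, variance bound via Lemma~\ref{lemma:sgd_async:same_weights}, then smooth descent and telescoping), but the stated form of the variance bound is wrong and this propagates into your descent calculation. Lemma~\ref{lemma:sgd_async:same_weights} does \emph{not} produce $V^k\le c\,\varepsilon$; it gives
\[
V^k \;\le\; \Bigl(\textstyle\sum_{i:\,b_i\wedge m_i>0}\frac{b_i m_i}{b_i\omega+\omega\sigma^2/\varepsilon+m_i\sigma^2/\varepsilon}\Bigr)^{-1}\bigl(\|\nabla f(x^k)\|^2+\varepsilon\bigr),
\]
and what the equilibrium-time fixed point \eqref{eq:main_equation} buys (this is Lemma~\ref{lemma:bound_variance_term_by_one} in the paper) is that the scalar prefactor is at most $1$, so $V^k\le \|\nabla f(x^k)\|^2+\varepsilon$. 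The $\|\nabla f(x^k)\|^2$ part is unavoidable: it is the compressor variance $\Exp{\|\cC(u)-u\|^2}\le\omega\|u\|^2$ evaluated at $u=b_i\nabla f(x^k)$, and no choice of $t^*$ can make it scale with $\varepsilon$.

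With the correct bound the descent step must absorb a $B\|\nabla f(x^k)\|^2$ term with $B=1$: the coefficient in front of $\|\nabla f(x^k)\|^2$ becomes $\gamma(1-L\gamma)$, not $\gamma(1-L\gamma/2)$, which at $\gamma=1/(2L)$ equals $1/(4L)$ (not $3/(8L)$). Telescoping then gives $\tfrac{1}{K}\sum_k\Exp{\|\nabla f(x^k)\|^2}\le \tfrac{4L\Delta}{K}+\tfrac{\varepsilon}{2}$, which is precisely what $K\ge 16L\Delta/\varepsilon$ closes. The paper packages this step as Theorem~\ref{theorem:sgd} with $B=1$, $C=\varepsilon$; your ``choose the tuning so that $c/3\le 1/2$'' is not available, because there is no free constant to tune once $t^*$ is fixed by Def.~\ref{def:optimal_time_budget}.
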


This result guarantees that \algname{\newmethod}  will converge after $\cO\left(\nicefrac{L \Delta}{\varepsilon}\right)$ iterations. Our second main result provides a much more relevant complexity measure: {\em time complexity}.

\begin{restatable}{corollary}{COROLLLARYMAIN}
  \label{cor:max_time}
 \algname{\newmethod} (Alg.~\ref{alg:alg_server}) converges after at most $T_*$ seconds, where
  \begin{align}
    \label{eq:time_complexity}
    \textstyle T_{*} \eqdef \frac{32 L \Delta}{\varepsilon} \times t^*(\omega, \nicefrac{\sigma^2}{\varepsilon}, h_1, \tau_1, \dots, h_n, \tau_n).
  \end{align}

\end{restatable}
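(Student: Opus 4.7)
The plan is to derive the wall-clock time complexity \eqref{eq:time_complexity} by combining the iteration-count bound of \Cref{thm:sgd_homog} with a careful bound on the duration of one iteration. Specifically, the algorithm runs for $K = \ceil{16 L\Delta/\varepsilon}$ iterations, and within each iteration the server imposes a time budget $t$ after which it aggregates whatever the workers have produced; worker $i$ contributes $b_i = \flr{t/h_i}$ stochastic gradients and $m_i = \flr{t/\tau_i}$ compressed messages. The core claim is that if we choose $t := 2\, t^*(\omega, \nicefrac{\sigma^2}{\varepsilon}, [h_i, \tau_i]_1^n)$, then the variance prerequisite behind the descent analysis of \Cref{thm:sgd_homog} is met; the total time is then at most $K \cdot 2 t^* \leq 32 L\Delta\, t^*/\varepsilon$, which is exactly \eqref{eq:time_complexity}.

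The bulk of the work goes into validating the choice $t = 2 t^*$. I would start by picking an index $j^* \in [n]$ attaining the min in \eqref{eq:equil_time}, so that $t^* \geq \max\{h_{\pi_i}, \tau_{\pi_i}\}$ for all $i \leq j^*$ (by the sorting) and $t^* \geq s^*(j^*)$ (by construction). Setting $w_i = 0$ for $i > j^*$ kills the contribution of the slow workers, so only the first $j^*$ workers matter; for these, the inequality $\flr{x} \geq x/2$ for $x \geq 2$ yields $b_{\pi_i} \geq t^*/h_{\pi_i} \geq 1$ and $m_{\pi_i} \geq t^*/\tau_{\pi_i} \geq 1$. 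Plugging these lower bounds into the per-worker variance supplied by \Cref{lemma:sgd_async:same_weights} and optimizing the weights $w_i$ produces a harmonic-mean expression whose $i$-th summand has denominator comparable (up to a factor 2) to the denominator appearing in \eqref{eq:main_equation}. Since the LHS of \eqref{eq:main_equation} is strictly decreasing in $s$ (only the $4\tau h \sigma^2 \omega/(s\varepsilon)$ summand depends on $s$), it remains $\leq t^*$ at $s = t^*$, which translates into the required variance bound on $g^k$ up to constants.

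The main obstacle is reconciling the $\|\nabla f(x^k)\|^2$-dependent piece of the variance---the unavoidable $\omega\|\nabla f(x^k)\|^2/m_i$ term intrinsic to unbiased compression---with the $\sigma^2/\varepsilon$-only form appearing in \eqref{eq:main_equation}. The trick will be to fold this piece into the descent inequality at $\gamma = 1/(2L)$: the $-\gamma\|\nabla f\|^2$ decrement is large enough to absorb a multiple of $\|\nabla f\|^2$ carried by $\mathbb{E}\|g^k\|^2$ whenever $\omega/m_i$ is bounded, and this is precisely why \eqref{eq:main_equation} needs only the stochastic-noise scale $h_i \sigma^2/\varepsilon$ in its denominator. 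Once this matching is in place, the remaining arithmetic---turning the equilibrium identity $s^*(j^*) \leq t^*$ into $\mathbb{E}\|g^k-\nabla f(x^k)\|^2 = O(\varepsilon)$, and then multiplying by $K$ to get $K \cdot 2 t^* \leq 32 L\Delta\, t^*/\varepsilon$ (after absorbing the ceiling in the constant)---is routine bookkeeping.
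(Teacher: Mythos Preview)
Your proposal conflates the proof of the corollary with the proof of \Cref{thm:sgd_homog}. The corollary is a two-line time-accounting argument once \Cref{thm:sgd_homog} is available: Algorithm~\ref{alg:alg_server} sets $b_i = \lfloor t^*/h_i\rfloor$ and $m_i = \lfloor t^*/\tau_i\rfloor$ (with $t^*$, not $2t^*$), so worker $i$ spends at most $h_i b_i + \tau_i m_i \leq h_i \cdot (t^*/h_i) + \tau_i \cdot (t^*/\tau_i) = 2t^*$ per iteration, and multiplying by $K = \lceil 16L\Delta/\varepsilon\rceil$ from \Cref{thm:sgd_homog} gives $T_* \leq 32L\Delta\, t^*/\varepsilon$. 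That is the paper's entire argument (via \Cref{thm:sgd_homom_time}).

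Everything you describe from ``The bulk of the work'' onward---selecting $j^*$, lower-bounding $b_{\pi_i}, m_{\pi_i}$ by $t^*/(2h_{\pi_i})$ and $t^*/(2\tau_{\pi_i})$, feeding into \Cref{lemma:sgd_async:same_weights}, and absorbing the $\omega\|\nabla f(x^k)\|^2$ term into the descent step---is precisely the content of the proof of \Cref{thm:sgd_homog} (specifically \Cref{lemma:bound_variance_term_by_one} combined with the SGD template). Since the corollary statement already presupposes \Cref{thm:sgd_homog}, you may cite it as a black box; re-deriving the variance bound is not needed here.

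There is also a concrete slip: you propose setting the algorithm's budget parameter to $t := 2t^*$, which would make $b_i = \lfloor 2t^*/h_i\rfloor$. With that choice the per-iteration wall-clock time is $h_i b_i + \tau_i m_i \leq 4t^*$, not $2t^*$, because computation and communication are sequential in Algorithm~\ref{alg:alg_worker}. The factor of $2$ in $2t^*$ comes not from doubling the parameter but from adding the computation phase ($\leq t^*$) and the communication phase ($\leq t^*$), each governed by the same $t^*$.
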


Surprisingly, we show in Sec.~\ref{sec:lower_bound} that our time complexity guarantee
 \eqref{eq:time_complexity} is \emph{optimal} for the family of centralized methods with compressed communication. Moreover, in Sec.~\ref{sec:comparison_main} and \ref{sec:our_method_better}, we show that \eqref{eq:time_complexity} is no worse and can be significantly better than the time complexities of previous centralized methods (see also Table~\ref{table:complexities} for a summary). 

\subsection{Tighter result with per-iteration times $h^k_i$ and $\tau^k_i$}
\label{sec:tighter}
A slight modification of Alg.~\ref{alg:alg_server} leads to Alg.~\ref{alg:alg_server_better}, which can work with iteration-dependent computation and communication times $h^k_i$ and $\tau^k_i.$ Our main result in this setup is  Theorem~\ref{thm:sgd_homog_better}; here we present its corollary.

\begin{restatable}{theorem}{THEOREMMAINSTATICTIME}
  \label{thm:sgd_homom_time}Alg.~\ref{alg:alg_server_better} converges after
  \begin{align}
    \label{eq:static_conv_rate_time}
    \textstyle \sum\limits_{k=0}^{\ceil{\frac{16 L \Delta}{\varepsilon}}} 2 t^*(\omega, \nicefrac{\sigma^2}{\varepsilon}, h^k_1, \tau^k_1, \dots, h^k_n, \tau^k_n)
  \end{align}
  seconds, where $h^k_i > 0$ and $\tau^k_i > 0$ are computation and  communication times for worker $i$ in iteration $k$.
\end{restatable}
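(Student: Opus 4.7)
My plan is to decompose the guarantee into two independent pieces: an \emph{iteration complexity} bound (how many updates of $x^k$ suffice), and a \emph{per-iteration time} bound (how many seconds each update requires when the worker times are allowed to vary with $k$). The total wall-clock time is then the sum of the per-iteration times over the iterations actually performed. The iteration count will come, almost unchanged, from Theorem~\ref{thm:sgd_homog}; the new work lies entirely in arguing that modifying the algorithm to adaptively pick the time budget $t^*_k \eqdef t^*(\omega, \nicefrac{\sigma^2}{\varepsilon}, h^k_1, \tau^k_1, \dots, h^k_n, \tau^k_n)$ at each round (i)~preserves the key variance bound driving the descent lemma, and (ii)~lets the round actually finish in $\cO(t^*_k)$ seconds.

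\textbf{Step 1: iteration complexity.} I would first verify that the gradient-estimator variance bound used in the proof of Theorem~\ref{thm:sgd_homog} (the one established in Lemma~\ref{lemma:sgd_async:same_weights}, which fixes the weights $w_i$ and uses the batch/compression counts $b_i,m_i$) is a \emph{per-round} statement: it depends only on the counts $b^k_i,m^k_i$ the server assigns at round $k$, not on whether these counts are the same across rounds. Because Alg.~\ref{alg:alg_server_better} re-computes $t^*_k$ and the induced $b^k_i,m^k_i$ every round so that \eqref{eq:main_equation} is satisfied with $s = t^*_k$ (for the current times), the variance bound
$\ExpCond{\sqnorm{g^k - \nabla f(x^k)}}{x^k} \le \varepsilon/2 + c\sqnorm{\nabla f(x^k)}$
that underlies Theorem~\ref{thm:sgd_homog} continues to hold unchanged. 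Plugging this into the standard smooth nonconvex descent argument with stepsize $\gamma = 1/(2L)$ gives, verbatim, that $K \ge \lceil 16 L \Delta/\varepsilon \rceil$ iterations suffice for $\frac{1}{K}\sum_{k=0}^{K-1}\Exp{\sqnorm{\nabla f(x^k)}} \le \varepsilon$.

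\textbf{Step 2: per-iteration time.} For a fixed round with times $h^k_i,\tau^k_i$, I would show that if the server sets a time budget $t$ with $t \ge 2 t^*_k$, then every worker $i$ either contributes at least $\lfloor t/h^k_i\rfloor \ge t/(2h^k_i)$ stochastic gradients and $\lfloor t/\tau^k_i\rfloor \ge t/(2\tau^k_i)$ compressed messages, or else the ``$\max\{h^k_{\pi_j},\tau^k_{\pi_j}\},s^*(j)\}$'' argument in the definition \eqref{eq:equil_time} of $t^*_k$ lets us drop that worker without harming the variance. The doubling is what converts the floors in $\lfloor \cdot \rfloor$ into clean lower bounds of the form $t/(2 h^k_i)$, $t/(2\tau^k_i)$, and matching these into \eqref{eq:main_equation} reproduces exactly the variance bound used in Step~1. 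Thus the round can be declared finished by wall-clock time $2 t^*_k$.

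\textbf{Step 3: summation and main obstacle.} Summing the per-round bound $2 t^*_k$ over $k=0,\dots,\lceil 16L\Delta/\varepsilon\rceil$ from Step~1 yields precisely \eqref{eq:static_conv_rate_time}, completing the proof. The main obstacle I anticipate is Step~2: the equilibrium time $t^*_k$ is defined implicitly through the fixed-point equation \eqref{eq:main_equation} and an outer minimum over a prefix $j\in[n]$ of workers sorted by $\max\{h^k_i,\tau^k_i\}$, so one must argue carefully that the adaptive choice of ``which workers participate'' at round $k$ (i.e., the minimizing $j$) gives a conic-combination estimator whose variance still satisfies the clean bound required by Lemma~\ref{lemma:sgd_async:same_weights}, uniformly across rounds. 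Once this round-wise variance bound and the ``floor $\mapsto$ half'' doubling are in hand, Steps~1 and 3 are essentially bookkeeping on top of Theorem~\ref{thm:sgd_homog} and Corollary~\ref{cor:max_time}, so this is the only place where genuinely new estimation is needed.
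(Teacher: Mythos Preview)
Your high-level decomposition (iteration count, per-iteration time, sum) matches the paper, but you have swapped the roles of Steps~1 and~2 and misidentified where the work lies.

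The per-iteration time bound (your Step~2) is a one-liner in the paper: since Alg.~\ref{alg:alg_server_better} prescribes $b_i = \lfloor t^*_k/h^k_i\rfloor$ and $m_i = \lfloor t^*_k/\tau^k_i\rfloor$, worker $i$ finishes in $h^k_i b_i + \tau^k_i m_i \le t^*_k + t^*_k = 2t^*_k$ seconds, using only $\lfloor x\rfloor \le x$. No appeal to \eqref{eq:main_equation} or to the variance is needed here, and your phrasing ``if the server sets a time budget $t$ with $t\ge 2t^*_k$'' does not describe the algorithm: the counts are set from $t^*_k$, and the factor~$2$ in the time arises from summing the computation and communication phases, not from doubling the budget.

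Conversely, the floor-to-half argument you place in Step~2 --- $b_i \ge t^*_k/(2h^k_i)$, $m_i \ge t^*_k/(2\tau^k_i)$ for the active workers $i\le j^*$, then feeding these into \eqref{eq:main_equation} --- is exactly what proves that the variance coefficient in Lemma~\ref{lemma:sgd_async:same_weights} is at most~$1$ (this is Lemma~\ref{lemma:bound_variance_term_by_one}), and it belongs in Step~1. Your Step~1 treats this as immediate, but Lemma~\ref{lemma:sgd_async:same_weights} only expresses the variance \emph{as a function of} $b_i,m_i$; showing that the algorithm's specific choice makes this $\le \sqnorm{\nabla f(x^k)} + \varepsilon$ is precisely where the definition of $t^*_k$, the sorted prefix $j^*$, and the fixed-point equation enter. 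So the ``main obstacle'' you anticipate is real, but it is a Step~1 issue (iteration complexity via the variance bound), while the actual Step~2 (wall-clock time per round) is trivial.
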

For presentation simplicity sake, in the main part we continue to work with static $\{h_i\}$ and $\{\tau_i\}$.

\subsection{On the problem of estimating the times in Algorithms~\ref{alg:alg_server} and \ref{alg:alg_server_better}}
\label{sec:problem_estim}
One of the main features of asynchronous methods (e.g., \algname{Rennala SGD}, \algname{Asynchronous SGD}) is their adaptivity to and independence from processing times. In Sec.~\ref{sec:adaptive_method}, we design \algname{Adaptive \newmethod} (Alg.~\ref{alg:alg_server_online}) with this feature. Unlike Alg.~\ref{alg:alg_server}, it does not require the knowledge of $\{h_i\}$ and $\{\tau_i\}$ (or $\{h_i^k\}$ and $\{\tau_i^k\}$ in the case of Alg.~\ref{alg:alg_server_better}), and does not calculate the equilibrium time $t^*.$ However, as a byproduct of this flexibility, this method has a slightly worse time complexity guarantee. In order to present our result, we need to define an auxiliary sequence.

\begin{definition}
\label{def:ratio}
Assume that the workers have computation and communication times less or equal to $\{h_i\}$ and $\{\tau_i\}.$
Assume that $\bar{h}_{ij}$ is the actual time required to calculate the $j$\textsuperscript{th} stochastic gradient by worker $i$, $h_{\min} > 0$ is the smallest possible computation time. Then
$$r_i \eqdef \sup_{k \geq 0}\tfrac{\sup_{1 \leq j \leq l_{\max}} \bar{h}_{i,(k+j)}}{\inf_{1 \leq j \leq l_{\max}} \bar{h}_{i,(k+j)}}, \; l_{\max} \eqdef \ceil{\tfrac{t_{\max}}{h_{\min}}},$$
$t_{\max} \eqdef 128 \times t^*(\omega, \nicefrac{\sigma^2}{\varepsilon}, [\max\{h_i, \tau_i\}, \max\{h_i, \tau_i\}]_1^n).$
\end{definition}

That is, $r_i \in [1, \infty]$ is the largest ratio between the fastest and the slowest computation of stochastic gradients in local time windows. 
$r_i$ defines a degree of fluctuations in computation times. Note that $r_i$ describes \emph{local} fluctuations; it is true that $r_i \leq \nicefrac{\sup_{j \geq 1} \bar{h}_{i,j}}{\inf_{j \geq 1} \bar{h}_{i,j}}$ for all $i \in [n]$ and $r_i$ can be arbitrarily smaller. 

A corollary of our main result in this part (Theorem~\ref{thm:sgd_homog_online}) is presented next.

\begin{restatable}{corollary}{THEOREMSGDHOMOGMINIBATCHESPRECALCULATEDPARAMSTWOSPLIT}
  \label{corollary:sgd_async_online}
If the   computation and communication times are positive,  the time complexity of Alg.~\ref{alg:alg_server_online} is
  $\textstyle \frac{L \Delta}{\varepsilon} \times t^*(\omega, \nicefrac{\sigma^2}{\varepsilon}, [\max\{h_i, \tau_i\}, \min\left\{\tau_i r_i, \max\{h_i, \tau_i\}\right\}]_1^n)$
  up to a constant factor, where $r_i$ is defined in Def.~\ref{def:ratio}.
\end{restatable}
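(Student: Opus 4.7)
The plan is to reduce Corollary~\ref{corollary:sgd_async_online} to the main theorem for Alg.~\ref{alg:alg_server_online}, namely Theorem~\ref{thm:sgd_homog_online}, which presumably gives $\cO(\nicefrac{L\Delta}{\varepsilon})$ iterations with a per-iteration duration governed by the \emph{observed} computation and communication times. The corollary then follows once we bound this per-iteration duration by $\bar t \eqdef t^*(\omega, \nicefrac{\sigma^2}{\varepsilon},[\max\{h_i,\tau_i\},\min\{\tau_i r_i,\max\{h_i,\tau_i\}\}]_1^n)$ up to a constant factor, and multiply by the iteration count.

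First I would unfold $\bar t$ via Def.~\ref{def:optimal_time_budget}. Property~\ref{pror:welldefined} guarantees that $\bar t$ is well defined, and the construction supplies an active prefix $\{\pi_1,\dots,\pi_{j^\star}\}$ on which the balance equation \eqref{eq:main_equation} is tight at $s=\bar t$. The target is to show that by wall-clock time $\cO(\bar t)$ the server has aggregated enough compressed messages to satisfy the variance condition that drives Theorem~\ref{thm:sgd_homog_online}, exactly as in the proof of Corollary~\ref{cor:max_time} for the static setting, but with the fluctuation-aware inputs.

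The key translation from the static to the adaptive setting uses the ratio $r_i$ of Def.~\ref{def:ratio}. Because $r_i$ uniformly bounds the quotient of the slowest to the fastest gradient computation within any local window of length $t_{\max}\geq \bar t$, any on-the-fly estimate the server forms from past samples of worker $i$ is accurate to within a factor $r_i$ over the remainder of the current iteration. Consequently, within $c\bar t$ seconds worker $i$ delivers at least $\flr{c\bar t/\max\{h_i,\tau_i\}}$ paired gradient-plus-send units (it can only start contributing after finishing at least one of each), and it can continue producing compressed messages at an effective cost no worse than $\min\{\tau_i r_i,\max\{h_i,\tau_i\}\}$ per message: the $\tau_i r_i$ term reflects the adaptive algorithm's inability to exploit future communication faster than what past estimates allow, while the $\max\{h_i,\tau_i\}$ cap comes from the natural upper bound of one paired unit per step. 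These are exactly the $b_i$ and $m_i$ values entering the weights $w_i$ in \eqref{eq:grad_est_static}, so plugging them into the variance bound used in Lemma~\ref{lemma:sgd_async:same_weights} reproduces \eqref{eq:main_equation} at the modified inputs.

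The main obstacle I anticipate is the two-sided nature of the above argument. One must show that at time $\cO(\bar t)$ the aggregated estimator's variance is already $\leq \varepsilon$, so that the adaptive stopping rule fires and the precondition of Theorem~\ref{thm:sgd_homog_online} holds; one must also rule out premature termination, i.e., check that the implicit stopping criterion is consistent with the equilibrium point $s^*(j^\star)=\bar t$ defined by \eqref{eq:main_equation} at the modified inputs. Bounding both tails uses $r_i$ symmetrically, and the resulting hidden constant is absorbed into the $128$ in Def.~\ref{def:ratio}. After that, the corollary follows by multiplying this per-iteration time by the $\cO(\nicefrac{L\Delta}{\varepsilon})$ iteration bound of Theorem~\ref{thm:sgd_homog_online}.
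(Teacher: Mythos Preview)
Your high-level plan---bound the wall-clock duration of a single iteration by a constant multiple of $\bar t$ and then multiply by the $\cO(L\Delta/\varepsilon)$ iteration count from Theorem~\ref{thm:sgd_homog_online}---is the same as the paper's. Two points in your sketch, however, would not go through as written.

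First, you invoke the estimator~\eqref{eq:grad_est_static} and Lemma~\ref{lemma:sgd_async:same_weights}, but those belong to the \emph{static} Algorithm~\ref{alg:alg_server}. Algorithm~\ref{alg:alg_server_online} uses the different estimator~\eqref{eq:split_logic_gradient_estimator}: worker $i$ keeps sending compressed copies of the growing partial sum $\sum_{r\le j}\nabla f(x^k;\xi^k_{ir})$, so there is no single $b_i$, and the relevant variance bound is Lemma~\ref{lemma:sgd_async_online}, whose right-hand side involves $l_i$ (number of gradients computed so far) and $m_{ij}$ (number of compressed messages sent while gradient $j{+}1$ was being computed). The paper's proof works directly with the stopping condition in Line~\ref{alg:mainwhile}: one shows that after $t'=128\,\bar t$ seconds the quantity
$\mathcal H=\sum_{i:l_i>0}\bigl(\sum_j\tfrac{4\omega}{l_i^2 m_{ij}}+\sum_j\tfrac{4\omega\sigma^2}{l_i^3 m_{ij}\varepsilon}+\tfrac{4\sigma^2}{l_i\varepsilon}\bigr)^{-1}$
is at least $1$. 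The factor $r_i$ enters through a concrete counting argument, not through any ``estimates'' the algorithm makes (it makes none): from $m_{ij}\ge\max\{\lfloor h^k_{ij}/\tau_i\rfloor,1\}$ and $l_i\ge t'/\bigl(2(\max_j h^k_{ij}+\tau_i)\bigr)$ one obtains, after a short case analysis,
$\tfrac{1}{l_i^2}\sum_{j\le l_i}\tfrac{1}{m_{ij}}\le \tfrac{8}{t'}\min\{\tau_i\cdot\max_j h^k_{ij}/\min_j h^k_{ij},\,\max\{h_i,\tau_i\}\}\le \tfrac{8}{t'}\min\{\tau_i r_i,\max\{h_i,\tau_i\}\}$.
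Substituting this and the analogous bound on $4\sigma^2/(l_i\varepsilon)$ into $\mathcal H$ and comparing with~\eqref{eq:main_equation} at the modified inputs gives $\mathcal H\ge 1$.

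Second, the ``two-sided'' obstacle you anticipate does not exist. There is nothing to rule out about premature termination: the loop in Line~\ref{alg:mainwhile} exits \emph{only} when the variance factor is already $\le \tfrac14$, which is precisely the hypothesis Theorem~\ref{thm:sgd_homog_online} needs. Correctness is therefore automatic whenever the loop stops; the corollary requires only the \emph{upper} bound on the stopping time. If the loop exits before $t'$, so much the better. Hence $r_i$ is used once, not ``symmetrically,'' and no lower bound on the iteration duration is needed.
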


Unlike Alg.~\ref{alg:alg_server} and Alg.~\ref{alg:alg_server_better}, Alg.~\ref{alg:alg_server_online} is more ``greedy''; it calculates stochastic gradients and sends compressed vectors in parallel, and it does not know the times $h_i$ and $\tau_i$ (or $h_i^k$ and $\tau_i^k$). That is why this method gets a suboptimal complexity and depends on $r_i.$ Nevertheless, if we assume that i) the computation times do not fluctuate significantly, i.e., $r_i = \Theta(1),$ and ii) $\tau_i \leq h_i$ for all $i \in [n],$ then this complexity reduces to the optimal complexity $\nicefrac{L \Delta}{\varepsilon} \times t^*(\omega, \nicefrac{\sigma^2}{\varepsilon}, [h_i, \tau_i]_1^n).$

\section{Lower Bound}
\label{sec:lower_bound}

\begin{protocol}[h]
  \caption{Simplified Representation of Protocol~\ref{alg:time_multiple_oracle_protocol}}
  \label{alg:simplified_time_multiple_oracle_protocol}
  \begin{algorithmic}[1]
  \STATE Init $S = \emptyset$ on the server (all available information)
  \WHILE{True}
  \STATE Server calculates a new point $\bar{x}$ using $S$ and broadcasts $\bar{x}$ and $S$ to any worker \\ (broadcasting does not take time)
  \ENDWHILE \\
  \textbf{$i$\textsuperscript{th} Worker (in parallel):}
  \WHILE{True}
  \STATE Receives $\bar{x}$ and $S,$ calculates as many stochastic gradients as it want at the point $\bar{x}$ (each calculation takes $h_i$ seconds), aggregates all available information, and sends compressed vectors (each dispatch takes $\tau_i$ seconds), which will be added to the set $S$
  \ENDWHILE
  \end{algorithmic}
\end{protocol}

In Sec.~\ref{sec:static_method}, we stated that \algname{\newmethod} converges after $T_*$ seconds; with $T_*$ given in \eqref{eq:time_complexity}. Our next step is to understand if it might be possible to improve this complexity. In Sec.~\ref{sec:lower_bound_formal}, we formalize our setup and show in Theorem~\ref{theorem:lower_bound} that up to a constant factor, the result \eqref{eq:time_complexity} is {\em optimal}. Here we present a simplified illustration of our approach. 

Protocol~\ref{alg:simplified_time_multiple_oracle_protocol} can describe all centralized methods (the server updates the iterates, and the workers calculate stochastic gradients at these points), including \algname{Minibatch SGD}, \algname{Asynchronous SGD}, \algname{Rennala SGD}, and \algname{\newmethod}. In Theorem~\ref{theorem:lower_bound}, we show that up to a constant factor, no method described by Protocol~\ref{alg:simplified_time_multiple_oracle_protocol} can converge faster than \eqref{eq:time_complexity} seconds. In order to use our lower bound, the workers must calculate stochastic gradients at a point that was calculated by the server. 

Let us briefly explain the proof's idea. The general approach is the same as in \citep{nesterov2003introductory,arjevani2022lower,huang2022lower}: we take the ``difficult'' function (Sec.~\ref{sec:worst_case}), which has large gradients while the last coordinate equals to zero. Every algorithm starts with the point $x^0 = 0,$ and the only way to discover the next coordinate is to calculate a stochastic gradient. Oracles associated with the workers return the next non-zero coordinate with the probability $p_{\sigma} \approx \nicefrac{\varepsilon}{\sigma^2}.$ Even if the stochastic oracle returns a non-zero coordinate for some worker, the corresponding communication oracle on this worker also has to return a non-zero coordinate, which happens with probability $p_{\omega} \approx \nicefrac{1}{\omega + 1}.$ We fix the Rand$K$ compressor in the lower bound theorem with $K \approx \nicefrac{L \Delta}{\varepsilon \left(\omega + 1\right)},$ and the number of coordinates $\approx \nicefrac{L \Delta}{\varepsilon};$ thus, indeed, $p_{\omega} \approx \nicefrac{1}{\omega + 1}.$  Since all $n$ workers work in parallel, they can discover and send to the server the next non-zero coordinate not earlier than after $\min_{m \in [n]} \left\{h_m \eta_{m} + \tau_m \mu_{m}\right\}$ seconds, where $\eta_{m}$ and $\mu_{m}$ are i.i.d.\, \emph{geometric} random variables with $p_{\sigma}$ and $p_{\omega}.$ With a high probability, we show that this quantity is $\Omega(t^*(\nicefrac{1}{p_{\omega}}, \nicefrac{1}{p_{\sigma}}, h_1, \tau_1, \dots, h_n, \tau_n)).$ The number of coordinates is $\approx \nicefrac{L \Delta}{\varepsilon}.$ Therefore, the lower bound is \eqref{eq:time_complexity} seconds up to a constant factor.

\section{Equilibrium Time}

Since the time complexity \eqref{eq:time_complexity} of  \algname{\newmethod} is optimal, we believe that the equilibrium time is a fundamental mapping that should be investigated more deeply.

\subsection{Calculation strategy}
The calculation of $t^*$ requires us to sort $\max\{h_i, \tau_i\}.$ 
Next, it is sufficient to solve $n$ equations from \eqref{eq:main_equation}. In Property~\ref{pror:welldefined}, we prove that \eqref{eq:main_equation} has one unique solution that can be easily found, for instance, using the bisection method. 
Then, is it left to find the minimum in \eqref{eq:equil_time}.

\subsection{Intuition behind the equilibrium time $t^*$}
Assuming we found an optimal $j^*$ in \eqref{eq:equil_time}, we have
  $t^* = \max\{\max\{h_{\pi_{j^*}}, \tau_{\pi_{j^*}}\}, s^*(j^*)\}.$
The first observation is that $t^*$ does not depend on the workers that correspond to $\max\{h_{\pi_{j^* + 1}}, \tau_{\pi_{j^* + 1}}\}, \dots, \max\{h_{\pi_{n}}, \tau_{\pi_{n}}\}.$ Since these values are greater or equal to $\max\{h_{\pi_{j^*}}, \tau_{\pi_{j^*}}\},$ the mapping ``decides'' to ignore them because they are too slow. The following derivations are not rigorous and are merely supposed to offer some intuition. 
We define $\alpha_i \eqdef \tau_{\pi_i} \omega$ and $\beta_i \eqdef \nicefrac{h_{\pi_i} \sigma^2}{\varepsilon}.$ 
Next, using \eqref{eq:main_equation}, we have
\begin{equation}
\begin{aligned}
  \label{eq:quad_eq}
  &\textstyle 1 = \sum\limits_{i=1}^{j^*} \frac{s^*(j^*)}{2 \alpha_i + \frac{4 \alpha_i \beta_i}{s^*(j^*)} + 2 \beta_i} \approx s^*(j^*) \sum\limits_{i \in \mathcal{M}} \frac{1}{\max\{\alpha_i, \beta_i\}} + s^*(j^*)^2 \sum\limits_{i \not\in \mathcal{M}} \frac{1}{\alpha_i \beta_i},
\end{aligned}
\end{equation}
where $\mathcal{M} \eqdef \{i \in [j^*]\,:\,\max\{\alpha_i, \beta_i\} \geq \nicefrac{\alpha_i \beta_i}{s^*(j^*)}\}.$ Solving this, one can get that
\begin{align}
  \label{eq:solve_s}
  \textstyle s^*(j^*) \approx \left(\sum\limits_{i \in \mathcal{M}} \frac{1}{\max\{\alpha_i, \beta_i\}} + \sqrt{\sum\limits_{i \not\in \mathcal{M}} \frac{1}{\alpha_i \beta_i}}\right)^{-1}.
\end{align}
Thus, $s^*(j^*)$ divides the active workers into two groups $\mathcal{M}$ and $[j^*] \setminus \mathcal{M}.$ Both groups contribute to \eqref{eq:solve_s} with a \emph{harmonic mean}-like and a \emph{quadratic harmonic mean}-like dependences, correspondingly. The transition between two groups is decided by the rule $\max\{\alpha_i, \beta_i\} \geq \nicefrac{\alpha_i \beta_i}{s^*(j^*)} \Leftrightarrow s^*(j^*) \geq \min\{\alpha_i, \beta_i\}.$ Intuitively, the last inequality means that if $\tau_{i}$ or $h_{i}$ is small (a worker can quickly compute a gradient or send a compressed vector), it belongs to $\mathcal{M}.$ Otherwise, if a worker's computation and communication performance are balanced, it belongs to $[j^*] \setminus \mathcal{M}.$

\subsection{Properties of the equilibrium time $t^*$}
We now provide some properties and particular cases to understand $t^*$ better. One can find the proofs and more properties in Sec.~\ref{sec:properties_proofs}. The first result says that $t^*$ is monotonic.

\begin{restatable}{property}{PROPERTYEQUILIBRIUMTIMEMONOTON}
  \label{property:monotonic}
  If $\bar{\omega} \geq \omega \geq 0, \nicefrac{\bar{\sigma}^2}{\bar{\varepsilon}} \geq \nicefrac{\sigma^2}{\varepsilon} \geq 0, \bar{h}_1 \geq h_1 \geq 0, \bar{\tau}_1 \geq \tau_1 \geq 0, \dots, \bar{h}_n \geq h_n \geq 0,$ and $\bar{\tau}_n \geq \tau_n \geq 0,$ then
  $t^*(\bar{\omega}, \nicefrac{\bar{\sigma}^2}{\bar{\varepsilon}}, [\bar{h}_i, \bar{\tau}_i]_1^n) \geq t^*(\omega, \nicefrac{\sigma^2}{\varepsilon}, [h_i, \tau_i]_1^n).$
\end{restatable}

Consider the Rand$K$ compressor. If it takes $\dot{\tau}_i$ sec to send \emph{one coordinate} by worker $i$, then, up to a constant factor, Property~\ref{property:randk} ensures that an optimal choice of $K$ is $1$.

\begin{restatable}{property}{PROPERTYEQUILIBRIUMTIMERAND}
  \label{property:randk}
  For all $K \in [1, d],$$\nicefrac{\sigma^2}{\varepsilon},$ $h_1,$ $\dot{\tau}_1, \dots, h_n, \dot{\tau}_n \geq 0,$ we have
  $24 \times t^*\left(\nicefrac{d}{K} - 1, \nicefrac{\sigma^2}{\varepsilon}, h_1, K \dot{\tau}_1, \dots, h_n, K \dot{\tau}_n\right) 
  \geq t^*\left(d - 1, \nicefrac{\sigma^2}{\varepsilon}, h_1, \dot{\tau}_1, \dots, h_n, \dot{\tau}_n\right).$
\end{restatable}

\subsection{Examples}
We now list several examples, starting with simple corner/extreme cases. One can find the derivations in Sec.~\ref{sec:examples_derivations}. For brevity, we will sometimes write $t^*$ instead of $t^* (\omega, \nicefrac{\sigma^2}{\varepsilon}, h_1, \tau_1, \dots, h_n, \tau_n) $.
\begin{restatable}{example}{EXAMPLEZERO}[{\bf Infinitely Fast Worker}]
If exists $j \in [n]$ such that $\tau_j = 0$ and $h_j = 0,$ then $t^*= 0$.
\end{restatable}
\begin{restatable}{example}{EXAMPLEINFTY}[{\bf Infinitely Slow Workers}]
  If $\tau_i = \infty$ and $h_i = \infty$ for all $i \in [n],$ then $t^* = \infty$.
\end{restatable}

\begin{restatable}{example}{EXAMPLEEQUAL}[{\bf Equal Performance}]
  \label{example:equal}
If $\tau_i = \tau$ and $h_i = h$ for all $i \in [n],$ then\footnote{From the proof, it is clear that the result is tight up to a constant factor.} 
  \begin{equation}
  \begin{aligned}
    \label{eq:example:equal}
    \textstyle t^* \leq 6 \max\left\{h, \tau, \frac{\tau \omega}{n}, \frac{h \sigma^2}{n \varepsilon}, \sqrt{\frac{\tau h \sigma^2 \omega}{n \varepsilon}}\right\}.
    \end{aligned}
  \end{equation}
\end{restatable}

In the next example, we consider the setting from Sec.~\ref{sec:intro_can_be_ignored}. Example~\ref{example:rennala} and Corollary~\ref{cor:max_time} restore the optimal rate \eqref{eq:rennala_time_intro} of \algname{Rennala SGD}.

\begin{restatable}{example}{EXAMPLERENNALA}[{\bf Infinitely Fast Communication}]
  \label{example:rennala}
If $\tau_i = 0$ for all $i \in [n],$ then
  \begin{align}
    &\textstyle t^* \leq 2 \min\limits_{m \in [n]} \max\left\{h_{\pi_m}, \frac{\sigma^2}{\varepsilon}\left(\sum\limits_{i=1}^m \frac{1}{h_{\pi_i}}\right)^{-1}\right\} = \Theta\left(\min\limits_{m \in [n]} \left(\frac{1}{m}\sum\limits_{i=1}^m \frac{1}{h_{\pi_i}}\right)^{-1} \left(1 + \frac{\sigma^2}{m \varepsilon}\right)\right) \label{eq:rennala_time},
  \end{align}
  where $\pi$ is a permutation that sorts $\{h_i\}_{i=1}^{n}.$
\end{restatable}

The following two examples show that $t^*$ is robust to slow workers or workers that do not participate.

\begin{restatable}{example}{EXAMPLESLOW}[{\bf Ignoring Slow Workers}]
  \label{example:slow}
If $h_i$ and $\tau_i$ are fixed and finite for all $i \leq p$, and $\max\{h_i, \tau_i\} = m \in \R$ for all $i > p,$ then, for $m$ large enough, we have
  $t^*(\omega, \nicefrac{\sigma^2}{\varepsilon}, [h_i, \tau_i]_1^n) = t^*(\omega, \nicefrac{\sigma^2}{\varepsilon}, [h_i, \tau_i]_1^p).$
\end{restatable}
\begin{restatable}{example}{EXAMPLEPARTIAL}[{\bf Partial Participation}]
  \label{example:partial}
If $\max\{h_i, \tau_i\} = \infty$ for all $i > p \geq 1,$ then 
  $t^*(\omega, \nicefrac{\sigma^2}{\varepsilon}, [h_i, \tau_i]_1^n) = t^*(\omega, \nicefrac{\sigma^2}{\varepsilon}, [h_i, \tau_i]_1^p).$
\end{restatable}

\section{Comparison with Baselines}
\label{sec:comparison_main}
In the previous sections, we did not invoke any assumptions about the compressors except for Def.~\ref{def:unbiased_compression}. Inspired by Property~\ref{property:randk}, to make the comparisons with the baselines easier, we consider the Rand$K$ compressor with $K = 1$.  Using Theorem~\ref{theorem:rand_k}, we have $\omega = d - 1.$ 
We also assume that worker $i$ takes  $\dot{\tau}_i$ seconds to send \emph{one coordinate} to the server; thus $\tau_i = \dot{\tau}_i,$ since we use Rand$1$. Also, it takes $d \dot{\tau}_i$ to send a non-compressed vector for all $i \in [n].$

\textbf{Minibatch SGD and QSGD.} It is well known \citep{lan2020first} that the number of iterations of \algname{Minibatch SGD} required to find an $\varepsilon$--solution is $\operatorname{O}\left(\nicefrac{L \Delta}{\varepsilon} + \nicefrac{\sigma^2 L \Delta}{\varepsilon^2}\right).$ In \algname{Minibatch SGD}, each worker calculates one stochastic gradient and sends a non-compressed vector. Since the server waits for the slowest worker, the time complexity of such method (up to a constant factor) is
\begin{align}
  \label{eq:time_complexity_mini_batch}
  \textstyle T_{\textnormal{MB}} \eqdef \max\limits_{i \in [n]} \left(h_i + d \dot{\tau}_i\right) \left(\frac{L \Delta}{\varepsilon} + \frac{\sigma^2 L \Delta}{n \varepsilon^2}\right).
\end{align}
In Sec.~\ref{sec:comparison}, we compare \eqref{eq:time_complexity_mini_batch} with \eqref{eq:time_complexity} and show
\begin{restatable}{comparison}{COMPARISONMB}
  $T_{*} = \operatorname{O}(T_{\textnormal{MB}}).$
\end{restatable}
However, there are many regimes when $T_* \ll T_{\textnormal{MB}}.$ For instance, if $\max\{h_i, \dot{\tau}_i\} = \infty$ for some worker (Example~\ref{example:partial}), then $T_{\textnormal{MB}} = \infty$ and $T_* < \infty.$ Also, under the conditions of Example~\ref{example:slow}, if $m \rightarrow \infty,$ we get $T_{\textnormal{MB}} \rightarrow \infty$ whereas $T_*$ is bounded. The same reasoning applies to \algname{QSGD} because its time complexity \eqref{eq:time_comp_qsgd} depends on $\max_{i \in [n]}\left(h_i + \dot{\tau}_i\right).$ Due to Theorem~\ref{theorem:lower_bound}, up to a constant factor, $T_{*}$ is less or equal to \eqref{eq:time_comp_qsgd}; see also Table~\ref{table:complexities}.

\textbf{Rennala SGD and Asynchronous SGD.} When the communication time is negligible, \citet{tyurin2023optimal} proved that the optimal time complexity is attained by \algname{Rennala SGD}. When $\dot{\tau}_i \to 0$ for all $i \in [n],$ we show in Example~\ref{example:rennala} that \eqref{eq:time_complexity} is the same as the time complexity of \algname{Rennala SGD} obtained by \citet{tyurin2023optimal}. Assume $\dot{\tau}_i > 0$ for all $i \in [n].$ We can apply the result from Theorem~\ref{theorem:lower_bound} to \algname{Rennala SGD}, thus the time complexity of \algname{Rennala SGD} is not better than 
\begin{equation}
\begin{aligned}
  \label{eq:time_complexity_rennala}
  \textstyle T_{\textnormal{R}} \eqdef \frac{L \Delta}{\varepsilon} \times t^*(0, \nicefrac{\sigma^2}{\varepsilon}, h_1, d \dot{\tau}_1, \dots, h_n, d \dot{\tau}_n).
\end{aligned}
\end{equation}
Note that \algname{Asynchronous SGD} also has the same lower bound. In Sec.~\ref{sec:comparison}, we compare \eqref{eq:time_complexity_rennala} with \eqref{eq:time_complexity} and show
\begin{restatable}{comparison}{COMPARISONRENNALA}
  $T_{*} = \operatorname{O}(T_{\textnormal{R}}).$
\end{restatable}

\subsection{\algname{Shadowheart SGD} is strictly better in many regimes.}
\label{sec:our_method_better}
Due to the non-explicit nature, it is not transparent that the time complexity of \algname{Shadowheart SGD} is universally strictly better than in all baselines in many practical regimes. Let us prove it. Take 
\begin{align}
\label{eq:lLlPwoPPhuNkUpsjYtSE}
\textnormal{$h_i = h$ for all $i < n,$ $h_n = \infty,$ and $\dot{\tau}_i = \dot{\tau}$ for all $i \in [n].$}
\end{align}
Due to \eqref{eq:time_complexity_mini_batch} and \eqref{eq:time_complexity_rennala}, the time complexities of \algname{Minibatch SGD} and \algname{QSGD} are $\infty,$ and the time complexities of \algname{Asynchronous SGD} and \algname{Rennala SGD} are not smaller than
\begin{align*}
  \textstyle \Omega\left(\max\left\{h, d \dot{\tau}, \frac{h \sigma^2}{(n - 1) \varepsilon}\right\} \frac{L \Delta}{\varepsilon}\right) \textnormal{, which } \overset{n \to \infty}{\to} \Omega\left(\max\left\{h, d \dot{\tau}\right\} \frac{L \Delta}{\varepsilon}\right).
\end{align*}
From \eqref{eq:example:equal}, the time complexity of \algname{Shadowheart SGD} is at most
\begin{align*}
  \textstyle \operatorname{O}\left(\max\left\{h, \dot{\tau}, \frac{d \dot{\tau}}{n - 1}, \frac{h \sigma^2}{(n - 1) \varepsilon}, \sqrt{\frac{\dot{\tau} h \sigma^2 d}{(n - 1) \varepsilon}}\right\} \frac{L \Delta}{\varepsilon}\right) \textnormal{, which } \overset{n \to \infty}{\to} \operatorname{O}\left(\max\left\{h, \dot{\tau}\right\} \frac{L \Delta}{\varepsilon}\right).
\end{align*}
Thus, \algname{Shadowheart SGD} can be $d$ times faster than all previous methods if the number of workers $n$ is large. Using the same reasoning, \algname{Shadowheart SGD} can be $n - 1$ times faster if the dimension $d$ or $\dot{\tau}$ is large. Due to the continuity of the complexities, such huge differences hold even if we take $n < \infty,$ and start considering more heterogenous times $h_i$ and $\tau_i$ by perturbating \eqref{eq:lLlPwoPPhuNkUpsjYtSE}.

\subsection{The fastest worker works locally}
\label{sec:fastest_worker}
Another important baseline is the vanilla \algname{SGD} method, which works on the fastest worker, does not communicate with the server, and performs local steps (non-centralized method). For simplicity, assume that $\nicefrac{\sigma^2}{\varepsilon} \geq 1.$ Then, the time complexity of such an algorithm \citep{lan2020first} is
  $\textstyle T_{\textnormal{SGD}} \eqdef \min_{i \in [n]} h_i \times \nicefrac{\sigma^2 L \Delta}{\varepsilon^2}.$
Clearly, comparing $T_{\textnormal{SGD}}$ and \eqref{eq:time_complexity}, if $\dot{\tau}_i$ are large enough, then $T_{\textnormal{SGD}}$ can be smaller than $T_{*}.$ However, this does not contradict our lower bounds because this method does not satisfy the conditions of Theorem~\ref{theorem:lower_bound}:  it does not communicate with the server. In other words, if the communication channel is too slow, it does not make sense to communicate. One may now ask: ``Under which conditions is it beneficial to communicate?'' Comparing $T_{\textnormal{SGD}}$ and \eqref{eq:time_complexity}, one can see that \eqref{eq:time_complexity} is better when
  $\textstyle t^*(d - 1, \nicefrac{\sigma^2}{\varepsilon}, h_1, \dot{\tau}_1, \dots, h_n, \dot{\tau}_n) \leq \min_{i \in [n]} h_i \times \nicefrac{\sigma^2}{\varepsilon}.$
It is sufficient to substitute the initial parameters to this inequality and decide which method to use. For instance, in the view of Example~\ref{example:equal}, one should compare 
  $\textstyle \max\{h, \dot{\tau}, \nicefrac{\dot{\tau} (d - 1)}{n}, \nicefrac{h \sigma^2}{n \varepsilon}, \sqrt{\nicefrac{\dot{\tau} h \sigma^2 (d - 1)}{n \varepsilon}}\} \textnormal{ vs. } \nicefrac{h \sigma^2}{\varepsilon}.$
In the regime when $n$ is large enough or $\varepsilon$ is small enough, we have $t^* < \nicefrac{h \sigma^2}{\varepsilon},$ and Alg.~\ref{alg:alg_server_online} has better convergence guarantees. On the other hand, if $\dot{\tau}$ is large enough, then it is possible that $t^* > \nicefrac{h \sigma^2}{\varepsilon}.$

\begin{ack}
  The research reported in this publication was supported by funding from King Abdullah University of Science and Technology (KAUST): i) KAUST Baseline Research Scheme, ii) Center of Excellence for Generative AI, under award number 5940, iii) SDAIA-KAUST Center of Excellence in Artificial Intelligence and Data Science. The work of A.T. was partially supported by the Analytical center under the RF Government (subsidy agreement 000000D730321P5Q0002, Grant No. 70-2021-00145 02.11.2021).
\end{ack}

\bibliography{neurips_2024}
\bibliographystyle{apalike}

\appendix

\newpage

\tableofcontents

\newpage

\section{Bidirectional Compression}
\label{sec:bidirectional_main}

In this section, we discuss a simple way to use the \algname{\newmethod} techniques in the setup when broadcasting is expensive (\begin{NoHyper}Line~\ref{alg:broadcast}\end{NoHyper} in Alg.~\ref{alg:alg_server}); i.e., when $\tau_{\rm serv}\gg 0$. We will employ the following family of compressors. 

\begin{definition}\label{def:contractive}
  A mapping $\cC\,:\,\R^d \times \mathbb{S}_{\nu} \rightarrow \R^d$ is a \textit{biased compressor} if
  there exists $\alpha \in (0,1]$ such that
  \begin{align}
      \label{eq:biased_compressor}
      \ExpSub{\nu}{\norm{\cC(x;\nu) - x}^2} \leq (1 - \alpha) \norm{x}^2, \,\, \forall x \in \R^d.
  \end{align}
  \label{def:biased_compression}
 We shall use the shortcut $\cC(x;\nu) \equiv \cC(x),$ and denote the family of such biased compressors as $\mathbb{B}(\alpha).$ 
\end{definition}

The family $\mathbb{B}(\alpha)$ is more general than $\mathbb{U}(\omega)$ in the sense that if $\cC \in \mathbb{U}(\omega),$ then $(\omega + 1)^{-1} \cC \in \mathbb{B}((\omega + 1)^{-1}).$ It includes the Top$K$ and Rank$K$ compressors \citep{PowerSGD,beznosikov2020biased}, among many others.

Let $\cC_{\rm serv}\in \mathbb{B}(\alpha)$ be the compressor used by the server. We use the primal error-feedback mechanism \algname{EF21-P} \citep{gruntkowska2023ef21} which requires us to add the following changes to Alg.~\ref{alg:alg_server} and Alg.~\ref{alg:alg_worker}. We add the steps
\begin{align*}
  p^{k+1} = \cC_{\rm serv}(x^{k+1} - w^k), \quad w^{k+1} = w^k + p^{k+1}
\end{align*}
to Alg.~\ref{alg:alg_server} and broadcast $p^{k+1}$ instead of $x^k.$ This change leads to \algname{Bidirectional \newmethod} (Alg.~\ref{alg:alg_server_bi}). In Alg.~\ref{alg:alg_worker}, the workers should receive $p^{k+1},$ calculate $w^{k+1},$ and use $w^k$ instead of $x^k$ in the calculations of stochastic gradients. 
We provide the pseudo-codes of these algorithms in Sec.~\ref{sec:bidirectional}.

Our main results are:

\begin{restatable}{theorem}{THEOREMMAINSTATICBI}
  \label{thm:sgd_homog_bi}
Let Assumptions~\ref{ass:lipschitz_constant}, \ref{ass:lower_bound}, \ref{ass:stochastic_variance_bounded}, \ref{ass:independence} hold. Choose
  $\gamma = \frac{\alpha}{16 L}$.
  Then as long as
    $K \geq \frac{768 L \Delta}{\alpha \varepsilon},$
 \algname{Bidirectional \newmethod} (Alg.~\ref{alg:alg_server_bi}) guarantees to find an $\varepsilon$--stationary point.
\end{restatable}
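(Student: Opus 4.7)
The plan is to combine the $L$-smoothness descent lemma with the \algname{EF21-P} primal error-feedback contraction, and then fold in the variance analysis we already have for \algname{\newmethod} (Theorem~\ref{thm:sgd_homog}), treated conditionally on the point $w^k$ at which the workers are evaluating the stochastic gradients. The gradient estimator $g^k$ in \Cref{alg:alg_server_bi} has the same form as \eqref{eq:grad_est_static} except that it is unbiased for $\nabla f(w^k)$ rather than $\nabla f(x^k)$, so the challenge is to absorb the shift $x^k \mapsto w^k$ introduced by compressing the broadcast with $\cC_{\rm serv}\in\mathbb{B}(\alpha)$.

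First I would recall the standard \algname{EF21-P} contraction: since $w^{k+1}=w^k+\cC_{\rm serv}(x^{k+1}-w^k)$ with $\cC_{\rm serv}\in\mathbb{B}(\alpha)$, one has the Young-type bound
\begin{equation*}
\ExpCond{\sqnorm{w^{k+1}-x^{k+1}}}{\cF^k}\;\leq\;\bigl(1-\tfrac{\alpha}{2}\bigr)\sqnorm{w^k-x^k}\;+\;\tfrac{2(1-\alpha)}{\alpha}\sqnorm{x^{k+1}-x^k}.
\end{equation*}
In parallel, $L$-smoothness applied to $x^{k+1}=x^k-\gamma g^k$ gives
\begin{equation*}
\Exp{f(x^{k+1})}\;\leq\;\Exp{f(x^k)}-\gamma\Exp{\inp{\nabla f(x^k)}{g^k}}+\tfrac{L\gamma^2}{2}\Exp{\sqnorm{g^k}}.
\end{equation*}
Since $\ExpCond{g^k}{\cF^k}=\nabla f(w^k)$, the inner product splits as $\inp{\nabla f(x^k)}{\nabla f(w^k)}$, and I would use $2ab\geq -a^2-b^2$ together with $\sqnorm{\nabla f(x^k)-\nabla f(w^k)}\leq L^2\sqnorm{x^k-w^k}$ to bound
\begin{equation*}
-\inp{\nabla f(x^k)}{\nabla f(w^k)}\;\leq\;-\tfrac{1}{2}\sqnorm{\nabla f(x^k)}+\tfrac{L^2}{2}\sqnorm{x^k-w^k}+\tfrac{1}{2}\sqnorm{\nabla f(w^k)}\cdot 0\ldots
\end{equation*}
so that the ``good'' term $\sqnorm{\nabla f(x^k)}$ appears with a positive coefficient and the shift is charged to $\sqnorm{x^k-w^k}$.

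Next I would import the variance bound from the proof of \Cref{thm:sgd_homog}: with $b_i,m_i,w_i$ chosen via the equilibrium time (exactly as in Alg.~\ref{alg:alg_server}), Lemma~\ref{lemma:sgd_async:same_weights} guarantees something of the form $\ExpCond{\sqnorm{g^k-\nabla f(w^k)}}{\cF^k}\leq \varepsilon/\text{const}$, and hence $\ExpCond{\sqnorm{g^k}}{\cF^k}\leq 2\sqnorm{\nabla f(w^k)}+\varepsilon/\text{const}\leq 4\sqnorm{\nabla f(x^k)}+4L^2\sqnorm{x^k-w^k}+\varepsilon/\text{const}$. At this point I would introduce the Lyapunov function
\begin{equation*}
\Phi^k\;\eqdef\;f(x^k)-f^*\;+\;c\,\sqnorm{w^k-x^k},
\end{equation*}
with $c=\Theta(L/\alpha)$, plug the three inequalities above into $\Phi^{k+1}$, and tune the constant $c$ so that (i) the $\sqnorm{x^{k+1}-x^k}=\gamma^2\sqnorm{g^k}$ term produced by \algname{EF21-P} is dominated by the descent term, and (ii) the $\sqnorm{x^k-w^k}$ term produced by the shift is dominated by $-\tfrac{c\alpha}{2}\sqnorm{x^k-w^k}$. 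The stepsize rule $\gamma=\alpha/(16L)$ is exactly what is needed to balance these two competing effects, picking up a factor of $\alpha$ in the effective progress rate and thus a factor of $1/\alpha$ in the iteration count, which is why $K\geq 768 L\Delta/(\alpha\varepsilon)$ appears.

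Finally, telescoping the one-step inequality
\begin{equation*}
\Exp{\Phi^{k+1}}\;\leq\;\Exp{\Phi^k}\;-\;\tfrac{\gamma}{4}\Exp{\sqnorm{\nabla f(x^k)}}\;+\;\tfrac{\gamma\varepsilon}{\text{const}}
\end{equation*}
from $k=0$ to $K-1$, using $\Phi^0=\Delta$ (since $w^0=x^0$) and $\Phi^K\geq 0$, and dividing by $K$ yields $\tfrac{1}{K}\sum_{k=0}^{K-1}\Exp{\sqnorm{\nabla f(x^k)}}\leq \varepsilon$ under the stated lower bound on $K$. The main obstacle I anticipate is the bookkeeping of the Young's-inequality constants: the $1/\alpha$ blow-up in the \algname{EF21-P} contraction must be matched exactly by the $\gamma\propto\alpha$ factor, and the constant $c$ in the Lyapunov function has to be chosen sharply enough that the final constant in the iteration count is $768$ as advertised; everything else reduces to reusing the variance lemma behind \Cref{thm:sgd_homog} verbatim.
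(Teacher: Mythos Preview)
Your approach is correct and is essentially the same strategy as the paper's, but you do more work by hand. The paper's proof has two steps: (i) use Lemma~\ref{lemma:sgd_async:same_weights} together with Lemma~\ref{lemma:bound_variance_term_by_one} to obtain the variance bound $\ExpCond{\sqnorm{g^k-\nabla f(w^k)}}{\mathcal{G}_k}\leq \sqnorm{\nabla f(w^k)}+\varepsilon$, and then (ii) invoke Theorem~E.3 from \citet{gruntkowska2023ef21} as a black box (with $B=2$, $C=\varepsilon$) to conclude. You carry out the same step~(i) and then reconstruct from scratch the EF21-P Lyapunov argument that the cited theorem encapsulates, namely the contraction $\Exp{\sqnorm{w^{k+1}-x^{k+1}}}\leq(1-\alpha/2)\sqnorm{w^k-x^k}+\cO(1/\alpha)\sqnorm{x^{k+1}-x^k}$ coupled to the descent lemma via the potential $f(x^k)-f^*+c\sqnorm{w^k-x^k}$. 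So the paper outsources your Lyapunov computation to a citation; your version has the merit of being self-contained.

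One small correction: when you write that Lemma~\ref{lemma:sgd_async:same_weights} gives ``something of the form $\ExpCond{\sqnorm{g^k-\nabla f(w^k)}}{\cF^k}\leq \varepsilon/\text{const}$'', you are understating the bound---the actual inequality carries an additional $\sqnorm{\nabla f(w^k)}$ term on the right, as in the unidirectional case. Your very next line (``hence $\ExpCond{\sqnorm{g^k}}{\cF^k}\leq 2\sqnorm{\nabla f(w^k)}+\varepsilon/\text{const}$'') is consistent with the correct bound, so this is just a slip in the exposition, not in the argument; but be sure to carry that multiplicative term through when you balance the constants, since it is precisely what forces the factor $1/\alpha$ in the iteration count via $\gamma\propto\alpha$.
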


\begin{restatable}{corollary}{THEOREMMAINSTATICTIMEBI}
If the broadcast time of $\cC_{\rm serv}$ is not greater than $\tau_{\rm serv}$, then
 \algname{Bidirectional \newmethod} (Alg.~\ref{alg:alg_server_bi}) converges after at most
  \begin{align}
    \label{eq:static_conv_rate_time_bi}
    \textstyle T_{*,\rm serv} \eqdef \frac{768 L \Delta}{\alpha \varepsilon} \times \left(\tau_{\rm serv} + 2 t^*(\omega, \nicefrac{\sigma^2}{\varepsilon}, [h_i, \tau_i]_1^n)\right)
  \end{align}
  seconds.
\end{restatable}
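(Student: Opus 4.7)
The plan is to derive the time complexity by combining the iteration-complexity bound from Theorem~\ref{thm:sgd_homog_bi} with a per-iteration timing analysis of Alg.~\ref{alg:alg_server_bi}, mirroring how Corollary~\ref{cor:max_time} is obtained from Theorem~\ref{thm:sgd_homog}. From Theorem~\ref{thm:sgd_homog_bi}, with the stepsize $\gamma = \alpha/(16L)$, after $K = \lceil 768 L\Delta/(\alpha\varepsilon)\rceil$ iterations we are guaranteed $\frac{1}{K}\sum_{k=0}^{K-1}\mathbb{E}[\|\nabla f(x^k)\|^2]\le \varepsilon$. Hence it suffices to upper-bound the wall-clock cost of a single iteration.

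A single iteration of Alg.~\ref{alg:alg_server_bi} decomposes into two sequential phases. In the first phase, the server compresses $x^{k+1}-w^k$ via $\cC_{\rm serv}\in\mathbb{B}(\alpha)$ and broadcasts the compressed vector $p^{k+1}$, which by assumption costs at most $\tau_{\rm serv}$ seconds. In the second phase, each active worker $i\in S_{\rm A}$ updates $w^{k+1}=w^k+p^{k+1}$ locally and then runs exactly the same computation/compression procedure as in Alg.~\ref{alg:alg_worker}, only with $w^k$ in place of $x^k$: it computes $b_i=\lfloor t^*/h_i\rfloor$ stochastic gradients and sends $m_i=\lfloor t^*/\tau_i\rfloor$ compressed messages to the server. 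Since each of the $b_i$ gradient computations costs at most $h_i$ seconds and each of the $m_i$ dispatches costs at most $\tau_i$ seconds, worker $i$ finishes in at most $b_i h_i+m_i\tau_i\le 2 t^*$ seconds, and by parallelism across workers the whole phase finishes in at most $2 t^*(\omega,\nicefrac{\sigma^2}{\varepsilon},[h_i,\tau_i]_1^n)$ seconds. Adding the two phases yields a per-iteration cost of at most $\tau_{\rm serv}+2 t^*(\omega,\nicefrac{\sigma^2}{\varepsilon},[h_i,\tau_i]_1^n)$.

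Multiplying this per-iteration cost by the iteration count $K=\lceil 768 L\Delta/(\alpha\varepsilon)\rceil$ yields
\begin{align*}
T \;\le\; \frac{768 L\Delta}{\alpha\varepsilon}\,\bigl(\tau_{\rm serv}+2 t^*(\omega,\nicefrac{\sigma^2}{\varepsilon},[h_i,\tau_i]_1^n)\bigr) \;=\; T_{*,\rm serv},
\end{align*}
which is exactly \eqref{eq:static_conv_rate_time_bi}.

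The main subtlety I expect is justifying the clean additive structure "$\tau_{\rm serv}+2t^*$" for the per-iteration time. It relies on the fact that workers cannot begin their computation of $\nabla f(w^k;\cdot)$ until they have received $p^{k+1}$ (and hence $w^{k+1}$), so the broadcast truly precedes the worker phase and the two costs add rather than overlap; this is the only place where the structure of \algname{Bidirectional \newmethod} differs in a non-trivial way from the unidirectional analysis underlying Corollary~\ref{cor:max_time}. Everything else reduces to the previous per-iteration timing argument for Alg.~\ref{alg:alg_server}, so no new analytical machinery is needed.
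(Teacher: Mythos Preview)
Your approach is correct and essentially identical to the paper's: bound the per-iteration wall-clock time by $\tau_{\rm serv}+\max_i(h_i b_i+\tau_i m_i)\le \tau_{\rm serv}+2t^*$, then multiply by the iteration count $768L\Delta/(\alpha\varepsilon)$ from Theorem~\ref{thm:sgd_homog_bi}. One cosmetic slip: in Alg.~\ref{alg:alg_server_bi} the broadcast of $p^{k+1}$ occurs at the \emph{end} of iteration $k$, after the worker computation/communication phase (workers compute at $w^k$, which they already have from the broadcast in iteration $k-1$), so your ``first phase / second phase'' ordering and the sentence ``workers cannot begin their computation of $\nabla f(w^k;\cdot)$ until they have received $p^{k+1}$'' are mis-indexed; nevertheless, since the two phases are sequential within each iteration and you are summing over all iterations, the per-iteration bound $\tau_{\rm serv}+2t^*$ and hence the total time bound are unaffected.
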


\begin{remark}
  If the broadcast cost can't be ignored, the time complexity of Alg.~\ref{alg:alg_server} changes from \eqref{eq:time_complexity} to  
  \begin{align}
    \label{eq:time_compl_broad}
    \textstyle T_{*} \eqdef \frac{16 L \Delta}{\varepsilon} \times (\tau_{\rm serv}^{\rm full} + 2 t^*(\omega, \nicefrac{\sigma^2}{\varepsilon}, [h_i, \tau_i]_1^n)),
  \end{align}
  where $\tau_{\rm serv}^{\rm full}$ is the time required to broadcast a \emph{full/ non-compressed} vector.
\end{remark}
We should compare \eqref{eq:time_compl_broad} obtained by the unidirectional algorithm and  \eqref{eq:static_conv_rate_time_bi} obtained by the bidirectional algorithm. Consider that $\cC_{\rm serv} =$ Top$K$ with $K \leq d.$ We can see  \eqref{eq:static_conv_rate_time_bi} that  depends on $\tau_{\rm serv},$ that is much less than $\tau_{\rm serv}^{\rm full}$ because $K \ll d.$ At the same time, \eqref{eq:static_conv_rate_time_bi}  is $\nicefrac{1}{\alpha}$ times larger than \eqref{eq:time_compl_broad}. This is a standard price for the fact that we use a biased compressor (e.g. \citep{EF21, gruntkowska2023ef21}). However, $\alpha$ is very close $1$ in practice \citep{beznosikov2020biased,PowerSGD,xu2021deepreduce}. It turns out that we can always choose $K$ in Top$K$ (we take this compressor as an example) in such a way that Alg.~\ref{alg:alg_server_bi} is never worse than Alg.~\ref{alg:alg_server}.
\begin{restatable}{comparison}{COMPARISONBI}
  Assume that it takes $\dot{\tau}_{\rm serv}$ seconds to send \emph{one coordinate} from the server to the workers. If we take $K \geq \min\left\{d, t^*(\omega, \nicefrac{\sigma^2}{\varepsilon}, [h_i, \tau_i]_1^n) / \dot{\tau}_{\rm serv}\right\}$ in Top$K,$ then $T_{*,\rm serv} = \operatorname{O}\left(T_{*}\right).$
\end{restatable}
If $\tau_{\rm serv}^{\rm full} = d \dot{\tau}_{\rm serv}$ is the bottleneck in \eqref{eq:time_compl_broad} with Alg.~\ref{alg:alg_server}, i.e., $d \dot{\tau}_{\rm serv} \gg t^*$, then one can take $K = \nicefrac{t^*}{\dot{\tau}_{\rm serv}} \ll d$ in Top$K$ with Alg.~\ref{alg:alg_server_bi} and improve the time complexity.

\newpage

\section{Frequently Used Notation}
\label{sec:notations}

We thought a table of frequently used notation could be useful. Here it is:

\newcommand{\ditto}[1][.4pt]{\xrfill{#1}~''~\xrfill{#1}}
\begin{table}[h]
\centering
\begin{adjustbox}{width=1.0\columnwidth,center}
\begin{tabular}{cc}
\hline
\bf Notation & \bf Meaning\\
\hline
$\varepsilon$ & error tolerance \\
$f$ & Function $f:\R^d \to \R$ whose $\varepsilon$-stationary point we want to find (see \eqref{eq:main_task}) \\
$L$ & Smoothness parameter of $f$ (see \Cref{ass:lipschitz_constant})\\
$f^*$ & Lower bound on $f$ (see \eqref{ass:lower_bound})\\
$\sigma^2$ & Stochastic gradients $\nabla f(x;\xi)$ have variance bounded by $\sigma^2$ (see \Cref{ass:stochastic_variance_bounded}) \\
$x^0$ & Starting point of all algorithms; a vector in $\R^d$ \\
$\gamma$ & Positive stepsize used by all algorithms \\
$\Delta$ & $\Delta \eqdef f(x^0) - f^*$ \\
\hline
$n$ & number of workers \\
$h_i$ & Maximal time it takes for worker $i$ to compute one stochastic gradient of $\nabla f(\cdot; \xi)$ \\
 $b_i$ & Minibatch size associated with worker $i$ (worker $i$ compresses minibatch gradients) \\
  $m_i$ & Number of compressed messages sent to the server by worker $i$ in a single iteration \\
$\mathbb{U}(\omega)$ & Set of unbiased compressors with variance parameter $\omega\geq 0$ (see \Cref{def:unbiased_compression})\\
$\cC_{ij}$ & Compressors used by worker $i$; $\cC_{ij} \in \mathbb{U}(\omega) $, $j \in \{1,\dots,m_i\}$\\
 $\tau_i$ & Maximal time it takes for worker $i$ to communicate vector $\cC_{ij}(\cdot)$, where $\cC_{ij} \in \mathbb{U}(\omega)$, to the server \\
 $\dot{\tau}_i$ & Time it takes to send to worker $i$ one float to the server (equal to $\tau_i$ of the Rand$1$ compressor is used) \\
 $\mathbb{B}(\alpha)$ & Set of biased compressors with contraction parameter $0< \alpha \leq 1 $ (see \Cref{def:biased_compression})\\
 $\cC_{\rm serv}$ & Compressor used by the server; $\cC_{\rm serv} \in \mathbb{B}(\alpha)$ \\
  $\tau_{\rm serv}^{\rm full}$ & Maximal time it takes for server to broadcast a non-compressed vector from $\R^d$ to the workers \\ 
  $\tau_{\rm serv}$ & Maximal time it takes for server to broadcast a vector $\cC_{\rm serv}(\cdot)$, where $\cC_{\rm serv} \in \mathbb{B}(\alpha)$, to the workers \\
  $\dot{\tau}_{\rm serv}$ & Time it takes for server to broadcast one float to the workers \\  
\hline
 $t^*$ &   Equilibrium time; a function of $\omega, \nicefrac{\sigma^2}{\varepsilon}, h_1, \tau_1, \dots, h_n, \tau_n$ (see \Cref{def:optimal_time_budget}) \\
$ T_*$ & Time complexity of \algname{\newmethod} (see \Cref{cor:max_time}) \\
$T_{\rm MB}$ & Time complexity of \algname{Minibatch SGD}  (see \eqref{eq:time_complexity_mini_batch}) \\
$ T_{\rm R}$ & Time complexity of \algname{Rennala SGD} (see \eqref{eq:time_complexity_rennala}) \\
\hline
$g = \operatorname{O}(f)$ & Exist $C > 0$ such that $g(z) \leq C \times f(z)$ for all $z \in \mathcal{Z}$\\
$g = \Omega(f)$ & Exist $C > 0$ such that $g(z) \geq C \times f(z)$ for all $z \in \mathcal{Z}$\\
$g = \Theta(f)$ & $g = \operatorname{O}(f)$ and $g = \Omega(f)$ \\
$\{a, \dots, b\}$ & Set $\{i \in \mathbb{Z}\,|\, a \leq i \leq b\}$ \\
$[n]$ & $\{1, \dots, n\}$ \\
\hline
\end{tabular}
\end{adjustbox}
\end{table}

\section{Basic Facts}

Here we collect some basic facts which are used repeatedly in the proofs.

{\bf Variance decomposition.} Let $x\in \R^d$ be a random vector with finite mean and finite variance. Then for any deterministic vector $c\in \R^d$, we have the identity
\begin{equation} \label{eq:variance_decomposition}\Exp{\norm{x- \Exp{x}}^2 } = \Exp{\norm{x-c }^2}  -  \norm{\Exp{x} - c}^2.\end{equation}

\begin{lemma}
  \label{lemma:sum_prod}
  Consider a sequence $q_1, \dots, q_n \in [0, 1],$ then
  \begin{align*}
    1 - \sum_{m=1}^n q_m \leq \prod_{m=1}^n \left(1  - q_m\right).
  \end{align*}
\end{lemma}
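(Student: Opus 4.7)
The plan is a straightforward induction on $n$, using a simple case-split to handle the fact that the quantity $1 - \sum_{m=1}^{k} q_m$ on the left-hand side may go negative while the product on the right-hand side is always nonnegative (each factor $1 - q_m \in [0,1]$). This is essentially a form of Weierstrass's product inequality, so I expect no real obstacle; the only subtlety is making sure the induction step does not accidentally multiply an inequality by a nonnegative number on the wrong side.

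\textbf{Base case} ($n=1$). The inequality $1 - q_1 \leq 1 - q_1$ holds with equality.

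\textbf{Inductive step.} Assume $1 - \sum_{m=1}^{n-1} q_m \leq \prod_{m=1}^{n-1}(1-q_m)$. I split into two cases. If $1 - \sum_{m=1}^{n-1} q_m \leq 0$, then a fortiori $1 - \sum_{m=1}^{n} q_m \leq 0 \leq \prod_{m=1}^{n}(1-q_m)$, since all factors $1-q_m$ lie in $[0,1]$. Otherwise $1 - \sum_{m=1}^{n-1} q_m \geq 0$, and since $1-q_n \geq 0$, multiplying the induction hypothesis by $1-q_n$ preserves the inequality:
\[
\prod_{m=1}^{n}(1-q_m) \;=\; (1-q_n)\prod_{m=1}^{n-1}(1-q_m) \;\geq\; (1-q_n)\Bigl(1 - \sum_{m=1}^{n-1} q_m\Bigr).
\]
Expanding the right-hand side gives $1 - \sum_{m=1}^{n} q_m + q_n \sum_{m=1}^{n-1} q_m \geq 1 - \sum_{m=1}^{n} q_m$, because $q_n \sum_{m=1}^{n-1} q_m \geq 0$. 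This closes the induction.

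The main (minor) thing to be careful about is the sign issue above: without the case-split, one could not conclude from $A \leq B$ and $c \geq 0$ that $cA \leq cB$ is useful here, but combined with the trivial case where the left-hand side is nonpositive, everything goes through cleanly. No tools beyond elementary algebra and induction are needed, and no additional assumptions on the $q_m$ beyond $q_m \in [0,1]$ are used.
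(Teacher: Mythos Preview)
Your proof is correct and follows essentially the same induction-and-multiply argument as the paper. The case split you introduce is actually unnecessary: multiplying an inequality $A \leq B$ by a nonnegative scalar $c$ yields $cA \leq cB$ regardless of the sign of $A$, and since $q_n\sum_{m=1}^{n-1}q_m \geq 0$ holds unconditionally, the paper's proof goes through in one shot without distinguishing whether $1 - \sum_{m=1}^{n-1} q_m$ is negative.
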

\begin{proof}
We prove by induction. For $n = 1,$ is it true: $1 - \sum_{m=1}^{1} q_m =\prod_{m=1}^1 \left(1  - q_m\right).$ Assume that that it is true for $n - 1.$ Then
\begin{align*}
  1 - \sum_{m=1}^{n-1} q_m \leq \prod_{m=1}^{n-1} \left(1  - q_m\right).
\end{align*}
Multiply both parts by $1 - q_n \in [0, 1]$ to obtain
\begin{align*}
  \prod_{m=1}^{n} \left(1  - q_m\right) \geq \left(1 - q_n\right)\left(1 - \sum_{m=1}^{n-1} q_m\right) = 1 - \sum_{m=1}^{n-1} q_m - q_n + q_n \left(\sum_{m=1}^{n-1} q_m\right) \geq 1 - \sum_{m=1}^{n} q_m
\end{align*}
since $q_m \in [0, 1]$ for all $m \in [n].$
\end{proof}

\section{Rand$K$ Compressor}

\begin{definition}
    \label{def:rand_k}
    Assume that $S$ is a random subset from $[d],$ $|S| = K,$ $K \in [d].$ A stochastic mapping $\cC\,:\, \R^d \times \mathbb{S}_{\nu} \rightarrow \R^d$ is Rand$K$ if
    $$\cC(x;S) = \frac{d}{K} \sum_{j \in S} x_j e_j,$$ where $\{e_i\}_{i=1}^d$ is the standard unit basis.
\end{definition}

\begin{theorem}
    \label{theorem:rand_k}
    If $\cC$ is Rand$K$, then $\cC \in \mathbb{U}\left(\frac{d}{k} - 1\right).$
\end{theorem}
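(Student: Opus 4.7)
The plan is to verify the two defining conditions of $\mathbb{U}(\omega)$ from Definition~\ref{def:unbiased_compression} directly, by exploiting the fact that for a uniformly random $K$-subset $S \subset [d]$ the indicator $\mathbbm{1}[j \in S]$ has mean $K/d$ for every coordinate $j$.

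First I would establish unbiasedness. Expanding
\[
\mathbb{E}_S[\cC(x;S)] = \frac{d}{K}\sum_{j=1}^{d} x_j\, \mathbb{E}_S[\mathbbm{1}[j \in S]]\, e_j = \frac{d}{K}\sum_{j=1}^{d} x_j \cdot \frac{K}{d}\, e_j = x,
\]
which gives the first identity in \eqref{eq:compressor}.

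Second I would compute the second moment. Using the variance decomposition \eqref{eq:variance_decomposition} with $c = 0$ (or equivalently expanding directly), and noting that the cross terms vanish because $e_j^\top e_l = 0$ for $j \neq l$, I would write
\[
\mathbb{E}_S[\|\cC(x;S)\|^2] = \frac{d^2}{K^2}\sum_{j=1}^{d} x_j^2\, \mathbb{E}_S[\mathbbm{1}[j \in S]] = \frac{d^2}{K^2}\cdot \frac{K}{d}\, \|x\|^2 = \frac{d}{K}\,\|x\|^2.
\]
Combining with unbiasedness via $\mathbb{E}_S[\|\cC(x;S)-x\|^2] = \mathbb{E}_S[\|\cC(x;S)\|^2] - \|x\|^2$ yields
\[
\mathbb{E}_S[\|\cC(x;S)-x\|^2] = \left(\frac{d}{K}-1\right)\|x\|^2,
\]
which matches the second condition in \eqref{eq:compressor} with $\omega = d/K - 1$, completing the claim.

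There is essentially no obstacle here; the only mild subtlety is keeping track of the normalizing factor $d/K$ when squaring and ensuring the sum over $j \in S$ is handled via the indicator trick so that the uniform marginal probability $K/d$ can be pulled out. All the work reduces to these two elementary moment computations.
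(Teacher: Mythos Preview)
Your proof is correct; the two moment computations are exactly the standard verification and there is no gap. Note that the paper does not actually give its own proof of this theorem---it simply defers to \cite{beznosikov2020biased}---so your argument supplies what the paper omits, and is the same direct computation one finds in that reference.
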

One can find the proof in \citep{beznosikov2020biased}.

\section{Proofs of the Properties of the Equilibrium Time}
\label{sec:properties_proofs}

\PROPERTYEQUILIBRIUMTIMEWELLDEFINED*
\begin{proof}
  $ $\newline
  \textit{(Part 1: $s^*(j)$ is well-defined)} \\
  First, we show that $s^*(j)$ is well-defined for all $j \in [n].$ 
  We fix $j \in [n]$ and consider the equation from Def.~\ref{def:optimal_time_budget}:
  \begin{align}
    \label{eq:UKtEMfgrsENYmeDNWzZ}
    \underbrace{\left(\sum_{i=1}^j \frac{1}{2 \tau_{\pi_i} \omega + \frac{4 \tau_{\pi_i} h_{\pi_i} \sigma^2 \omega}{s \times \varepsilon} + \frac{2 h_{\pi_i} \sigma^2}{\varepsilon}}\right)^{-1}}_{\phi(s)} = \underbrace{s}_{\psi(s)}
  \end{align}
  w.r.t $s.$
  The function $\phi(s)$ is a \textbf{non-increasing} function for all $s \geq 0,$ and the function $\psi(s)$ is an \textbf{increasing} function for all $s \geq 0.$ Let us consider two cases. \\
  1) Exists $p \leq j$ such that $\tau_{\pi_p} \omega = 0$ and $\frac{h_{\pi_p} \sigma^2}{\varepsilon} = 0,$ then 
  \begin{align*}
    \phi(s) = \left(\sum_{i \neq p} \frac{1}{2 \tau_{\pi_i} \omega + \frac{4 \tau_{\pi_i} h_{\pi_i} \sigma^2 \omega}{s \times \varepsilon} + \frac{2 h_{\pi_i} \sigma^2}{\varepsilon}} + \frac{1}{0}\right)^{-1} = \left(\infty\right)^{-1} = 0.
  \end{align*}
  for all $s \geq 0,$ then the only solution to the equation is $s = 0.$ \\
  2) Otherwise, we have 
  \begin{align*}
    \phi(s) = \left(\sum_{i=1}^j \frac{1}{2 \tau_{\pi_i} \omega + \frac{4 \tau_{\pi_i} h_{\pi_i} \sigma^2 \omega}{s \times \varepsilon} + \frac{2 h_{\pi_i} \sigma^2}{\varepsilon}}\right)^{-1} \geq \left(\sum_{i=1}^j \frac{1}{2 \tau_{\pi_i} \omega + \frac{2 h_{\pi_i} \sigma^2}{\varepsilon}}\right)^{-1} > 0
  \end{align*}
  for all $s \geq 0.$ Then $\phi(0) > 0$ (can be equal to $\infty$). Using $\psi(0) = 0$ and the monotonicity of the functions, one can show the unique solution (greater zero) exists. 

  If a permutation $\pi$ is unique, then the formula $\min_{j \in [n]} \max\{\max\{h_{\pi_j}, \tau_{\pi_j}\}, s^*(j)\}$ is well-defined and we can finish the proof.
  
  \textit{(Part 2: non-unique permutation)} \\
  We assume that there exists $i \in [n]$ such that $\max\{h_i, \tau_i\} < \infty.$ Otherwise, $\min_{j \in [n]} \max\{\max\{h_{\pi_j}, \tau_{\pi_j}\}, s^*(j)\} = \infty$ for any permutation. Next, note that $s^*(j+1) \leq s^*(j)$ for all $j < n$ because 
  \begin{align*}
    \left(\sum_{i=1}^{j+1} \frac{1}{2 \tau_{\pi_i} \omega + \frac{4 \tau_{\pi_i} h_{\pi_i} \sigma^2 \omega}{s \times \varepsilon} + \frac{2 h_{\pi_i} \sigma^2}{\varepsilon}}\right)^{-1} \leq \left(\sum_{i=1}^{j} \frac{1}{2 \tau_{\pi_i} \omega + \frac{4 \tau_{\pi_i} h_{\pi_i} \sigma^2 \omega}{s \times \varepsilon} + \frac{2 h_{\pi_i} \sigma^2}{\varepsilon}}\right)^{-1}
  \end{align*}
  for all $s \geq 0.$
  We will use this property later.

  Consider that there are two non-equal permutations $\pi$ and $\bar{\pi}$ that sort the pairs $(h_i, \tau_i)$ by $\max\{h_i, \tau_i\},$ and there are two corresponding solutions $s^*(j)$ and $\bar{s}^*(j).$ Each permutation divides the pairs $(h_i, \tau_i)$ into the same equivalence classes:
  \begin{align*}
    \underbrace{\max\{h_{\pi_1}, \tau_{\pi_1}\} = \dots = \max\{h_{\pi_{j_1}}, \tau_{\pi_{j_1}}\}}_{C_1} < \underbrace{\max\{h_{\pi_{j_1 + 1}}, \tau_{\pi_{j_1 + 1}}\} = \dots = \max\{h_{\pi_{j_2}}, \tau_{\pi_{j_2}}\}}_{C_2} < \dots,
  \end{align*}
  \begin{align*}
    \underbrace{\max\{h_{\bar{\pi}_1}, \tau_{\bar{\pi}_1}\} = \dots = \max\{h_{\bar{\pi}_{j_1}}, \tau_{\bar{\pi}_{j_1}}\}}_{C_1} < \underbrace{\max\{h_{\bar{\pi}_{j_1 + 1}}, \tau_{\bar{\pi}_{j_1 + 1}}\} = \dots = \max\{h_{\bar{\pi}_{j_2}}, \tau_{\bar{\pi}_{j_2}}\}}_{C_2} < \dots.
  \end{align*}
  The order within each class can be different, but the elements are the same. Next, since $s^*(j+1) \leq s^*(j)$ for all $j < n,$ we can conclude that the minimum in 
  \begin{align*}
    \min_{j \in [n]} \max\{\max\{h_{\pi_j}, \tau_{\pi_j}\}, s^*(j)\}
  \end{align*}
  is attained for $j^*$ such that $\max\{h_{\pi_{j^*}}, \tau_{\pi_{j^*}}\} < \max\{h_{\pi_{j^*+1}}, \tau_{\pi_{j^*+1}}\}$ ($\max\{h_{\pi_{n+1}}, \tau_{\pi_{n+1}}\} \equiv \infty$). Since $\max\{h_{\pi_{j^*}}, \tau_{\pi_{j^*}}\} < \max\{h_{\pi_{j^*+1}}, \tau_{\pi_{j^*+1}}\},$ we have $\max\{h_{\pi_{j^*}}, \tau_{\pi_{j^*}}\} = \max\{h_{\bar{\pi}_{j^*}}, \tau_{\bar{\pi}_{j^*}}\}.$ Therefore, we obtain
  \begin{align*}
    \min_{j \in [n]} \max\{\max\{h_{\pi_j}, \tau_{\pi_j}\}, s^*(j)\} = \max\{\max\{h_{\pi_{j^*}}, \tau_{\pi_{j^*}}\}, s^*(j^*)\} = \max\{\max\{h_{\bar{\pi}_{j^*}}, \tau_{\bar{\pi}_{j^*}}\}, s^*(j^*)\}.
  \end{align*}
  Also, for all $j \in [n]$ such that $\max\{h_{\pi_{j}}, \tau_{\pi_{j}}\} < \max\{h_{\pi_{j+1}}, \tau_{\pi_{j+1}}\},$ we have
  \begin{align*}
    \left(\sum_{i=1}^j \frac{1}{2 \tau_{\pi_i} \omega + \frac{4 \tau_{\pi_i} h_{\pi_i} \sigma^2 \omega}{s \times \varepsilon} + \frac{2 h_{\pi_i} \sigma^2}{\varepsilon}}\right)^{-1} = \left(\sum_{i=1}^j \frac{1}{2 \tau_{\bar{\pi}_i} \omega + \frac{4 \tau_{\bar{\pi}_i} h_{\bar{\pi}_i} \sigma^2 \omega}{s \times \varepsilon} + \frac{2 h_{\bar{\pi}_i} \sigma^2}{\varepsilon}}\right)^{-1}
  \end{align*}
  Therefore, we get $s^*(j^*) = \bar{s}^*(j^*)$ and 
  \begin{align*}
    \min_{j \in [n]} \max\{\max\{h_{\pi_j}, \tau_{\pi_j}\}, s^*(j)\} = \max\{\max\{h_{\bar{\pi}_{j^*}}, \tau_{\bar{\pi}_{j^*}}\}, \bar{s}^*(j^*)\} \geq \min_{j \in [n]} \max\{\max\{h_{\bar{\pi}_{j}}, \tau_{\bar{\pi}_{j}}\}, \bar{s}^*(j)\}.
  \end{align*}
  Using the same reasoning, we can show that
  \begin{align*}
    \min_{j \in [n]} \max\{\max\{h_{\bar{\pi}_{j}}, \tau_{\bar{\pi}_{j}}\}, \bar{s}^*(j)\} \geq \min_{j \in [n]} \max\{\max\{h_{\pi_j}, \tau_{\pi_j}\}, s^*(j)\}.
  \end{align*}
  It means that $\min_{j \in [n]} \max\{\max\{h_{\bar{\pi}_{j}}, \tau_{\bar{\pi}_{j}}\}, \bar{s}^*(j)\} = \min_{j \in [n]} \max\{\max\{h_{\pi_j}, \tau_{\pi_j}\}, s^*(j)\},$ thus the final result of the mapping does not depend on a chosen permutation.
\end{proof}

\PROPERTYEQUILIBRIUMTIMEMONOTON*

\begin{proof}
  Assume that $\pi$ is a permutation that sorts the pairs $(h_i, \tau_i)$ by $\max\{h_i, \tau_i\},$ and $\bar{\pi}$ is a permutation that sorts the pairs $(\bar{h}_i, \bar{\tau}_i)$ by $\max\{\bar{h}_i, \bar{\tau}_i\},$ then 
  \begin{align*}
    \left(\sum_{i=1}^j \frac{1}{2 \tau_{\pi_i} \omega + \frac{4 \tau_{\pi_i} h_{\pi_i} \sigma^2 \omega}{s \varepsilon} + \frac{2 h_{\pi_i} \sigma^2}{\varepsilon}}\right)^{-1} \leq \left(\sum_{i=1}^j \frac{1}{2 \bar{\tau}_{\bar{\pi}_i} \bar{\omega} + \frac{4 \bar{\tau}_{\bar{\pi}_i} \bar{h}_{\bar{\pi}_i} \bar{\sigma}^2 \bar{\omega}}{s \bar{\varepsilon}} + \frac{2 \bar{h}_{\bar{\pi}_i} \bar{\sigma}^2}{\bar{\varepsilon}}}\right)^{-1}
  \end{align*}
  for all $j \in [n].$
  It means $\bar{s}^*(j) \geq s^*(j),$ where $s^*(j)$ and $\bar{s}^*(j)$ are the solutions of the equation \eqref{eq:main_equation} with the pairs $(h_i, \tau_i)$ and $(\bar{h}_i, \bar{\tau}_i)$ and corresponding permutations $\pi$ and $\bar{\pi}.$ Also, we have $\max\{h_{\pi_j}, \tau_{\pi_j}\} \leq \max\{\bar{h}_{\bar{\pi}_j}, \bar{\tau}_{\bar{\pi}_j}\}$ for all $j \in [n].$ Therefore, we have
  \begin{align*}
    t^*(\omega, \nicefrac{\sigma^2}{\varepsilon}, h_1, \tau_1, \dots, h_n, \tau_n) 
    &= \min_{j \in [n]} \max\{\max\{h_{\pi_j}, \tau_{\pi_j}\}, s^*(j)\} \leq \min_{j \in [n]} \max\{\max\{\bar{h}_{\bar{\pi}_j}, \bar{\tau}_{\bar{\pi}_j}\}, \bar{s}^*(j)\} \\
    &= t^*(\bar{\omega}, \nicefrac{\bar{\sigma}^2}{\bar{\varepsilon}}, \bar{h}_1, \bar{\tau}_1, \dots, \bar{h}_n, \bar{\tau}_n).
  \end{align*}
\end{proof}

\begin{restatable}{property}{PROPERTYEQUILIBRIUMTIMEBOUND}
  \label{property:bound}
  For all $c \in (0, 1]$ and $\omega, \nicefrac{\sigma^2}{\varepsilon}, h_1, \tau_1, \dots, h_n, \tau_n \geq 0,$ we have
  \begin{align*}
    t^*(c \times \omega, c \times \nicefrac{\sigma^2}{\varepsilon}, h_1, \tau_1, \dots, h_n, \tau_n) \geq c \times t^*(\omega, \nicefrac{\sigma^2}{\varepsilon}, h_1, \tau_1, \dots, h_n, \tau_n).
  \end{align*}
\end{restatable}

\begin{proof}
  Using the definition of the equilibrium time, we have
  \begin{align*}
    t^*(\omega, \nicefrac{\sigma^2}{\varepsilon}, h_1, \tau_1, \dots, h_n, \tau_n) = 
    \min_{j \in [n]} \max\{\max\{h_{\pi_j}, \tau_{\pi_j}\}, s^*(j)\},
  \end{align*}
  where $s^*(j)$ is the solution of
  \begin{align}
    \label{eq:HYIrEmBh}
    \left(\sum_{i=1}^j \frac{1}{2 \tau_{\pi_i} \omega + \frac{4 \tau_{\pi_i} h_{\pi_i} \sigma^2 \omega}{s \varepsilon} + \frac{2 h_{\pi_i} \sigma^2}{\varepsilon}}\right)^{-1} = s,
  \end{align}
  and
  \begin{align*}
    t^*(c \times \omega, c \times \nicefrac{\sigma^2}{\varepsilon}, h_1, \tau_1, \dots, h_n, \tau_n) = 
    \min_{j \in [n]} \max\{\max\{h_{\pi_j}, \tau_{\pi_j}\}, s^*_{c}(j)\},
  \end{align*}
  where $s^*_{c}(j)$ is the solution of
  \begin{align*}
    \left(\sum_{i=1}^j \frac{1}{2 c \tau_{\pi_i} \omega + \frac{4 c^2 \tau_{\pi_i} h_{\pi_i} \sigma^2 \omega}{s \varepsilon} + \frac{2 c h_{\pi_i} \sigma^2}{\varepsilon}}\right)^{-1} = s.
  \end{align*}
  Using simple algebra, we obtain
  \begin{align*}
    \left(\sum_{i=1}^j \frac{1}{2 c \tau_{\pi_i} \omega + \frac{4 c^2 \tau_{\pi_i} h_{\pi_i} \sigma^2 \omega}{s \varepsilon} + \frac{2 c h_{\pi_i} \sigma^2}{\varepsilon}}\right)^{-1} = c \left(\sum_{i=1}^j \frac{1}{2 \tau_{\pi_i} \omega + \frac{4 \tau_{\pi_i} h_{\pi_i} \sigma^2 \omega}{\frac{s}{c} \varepsilon} + \frac{2 h_{\pi_i} \sigma^2}{\varepsilon}}\right)^{-1}.
  \end{align*}
  Thus, $s^*_{c}(j)$ is the solution of
  \begin{align}
    \label{eq:jRzCftHmctuozbiNP}
    \left(\sum_{i=1}^j \frac{1}{2 \tau_{\pi_i} \omega + \frac{4 \tau_{\pi_i} h_{\pi_i} \sigma^2 \omega}{\frac{s}{c} \varepsilon} + \frac{2 h_{\pi_i} \sigma^2}{\varepsilon}}\right)^{-1} = \frac{s}{c}
  \end{align}
  Comparing \eqref{eq:HYIrEmBh} and \eqref{eq:jRzCftHmctuozbiNP}, one can see that $s^*_{c}(j) = c \times s^*(j)$ for all $j \in [n].$ Using this and $c \in (0, 1]$, we get
  \begin{align*}
    &t^*(c \times \omega, c \times \nicefrac{\sigma^2}{\varepsilon}, h_1, \tau_1, \dots, h_n, \tau_n) = 
    \min_{j \in [n]} \max\{\max\{h_{\pi_j}, \tau_{\pi_j}\}, c \times s^*(j)\} \\
    &\geq c \times \min_{j \in [n]} \max\{\max\{h_{\pi_j}, \tau_{\pi_j}\}, s^*(j)\} = c \times t^*(\omega, \nicefrac{\sigma^2}{\varepsilon}, h_1, \tau_1, \dots, h_n, \tau_n).
  \end{align*}
\end{proof}

\begin{restatable}{property}{PROPERTYEQUILIBRIUMTIMEBOUNDLOWER}
  \label{property:upperbound}
  For all $c \geq 1$ and $\omega, \nicefrac{\sigma^2}{\varepsilon}, h_1, \tau_1, \dots, h_n, \tau_n \geq 0,$ we have
  \begin{align*}
    t^*(c \times \omega, c \times \nicefrac{\sigma^2}{\varepsilon}, h_1, \tau_1, \dots, h_n, \tau_n) \leq c \times t^*(\omega, \nicefrac{\sigma^2}{\varepsilon}, h_1, \tau_1, \dots, h_n, \tau_n).
  \end{align*}
\end{restatable}

\begin{proof}
  The proof of this property repeats the proof of Property~\ref{property:bound} up to the last inequality.
  Using $c \geq 1$, we get
  \begin{align*}
    &t^*(c \times \omega, c \times \nicefrac{\sigma^2}{\varepsilon}, h_1, \tau_1, \dots, h_n, \tau_n) = 
    \min_{j \in [n]} \max\{\max\{h_{\pi_j}, \tau_{\pi_j}\}, c \times s^*(j)\} \\
    &\leq c \times \min_{j \in [n]} \max\{\max\{h_{\pi_j}, \tau_{\pi_j}\}, s^*(j)\} = c \times t^*(\omega, \nicefrac{\sigma^2}{\varepsilon}, h_1, \tau_1, \dots, h_n, \tau_n).
  \end{align*}
\end{proof}

\begin{restatable}{property}{PROPERTYEQUILIBRIUMTIMEMOREBOUNDS}
  For all $c \in (0, 1]$ and $\omega, \nicefrac{\sigma^2}{\varepsilon}, h_1, \tau_1, \dots, h_n, \tau_n \geq 0,$ we have\\
  $t^*(c \times \omega, \nicefrac{\sigma^2}{\varepsilon}, [h_i, \tau_i]_1^n) \geq c \times t^*(\omega, \nicefrac{\sigma^2}{\varepsilon}, [h_i, \tau_i]_1^n)$
  and $t^*(\omega, c \times \nicefrac{\sigma^2}{\varepsilon}, [h_i, \tau_i]_1^n) \geq c \times t^*(\omega, \nicefrac{\sigma^2}{\varepsilon}, [h_i, \tau_i]_1^n).$ \\
  For all $c \geq 1$ and $\omega, \nicefrac{\sigma^2}{\varepsilon}, h_1, \tau_1, \dots, h_n, \tau_n \geq 0,$ we have
    $t^*(c \times \omega, \nicefrac{\sigma^2}{\varepsilon}, [h_i, \tau_i]_1^n) \leq c \times t^*(\omega, \nicefrac{\sigma^2}{\varepsilon}, [h_i, \tau_i]_1^n)$
  and 
    $t^*(\omega, c \times \nicefrac{\sigma^2}{\varepsilon}, [h_i, \tau_i]_1^n) \leq c \times t^*(\omega, \nicefrac{\sigma^2}{\varepsilon}, [h_i, \tau_i]_1^n)$
\end{restatable}

\begin{remark}
  We can obtain stronger inequalities. See Properties~\ref{property:bound} and \ref{property:upperbound}.
\end{remark}

\begin{proof}
  For all $c \in (0, 1],$ using Properties~\ref{property:monotonic} and \ref{property:bound}, we have
  \begin{align*}
    &t^*(c \times \omega, \nicefrac{\sigma^2}{\varepsilon}, h_1, \tau_1, \dots, h_n, \tau_n) \geq t^*(c \times \omega, c \times \nicefrac{\sigma^2}{\varepsilon}, h_1, \tau_1, \dots, h_n, \tau_n) \geq c \times t^*(\omega, \nicefrac{\sigma^2}{\varepsilon}, h_1, \tau_1, \dots, h_n, \tau_n)
  \end{align*}
  and
  \begin{align*}
    &t^*(\omega, c \times \nicefrac{\sigma^2}{\varepsilon}, h_1, \tau_1, \dots, h_n, \tau_n) \geq t^*(c \times \omega, c \times \nicefrac{\sigma^2}{\varepsilon}, h_1, \tau_1, \dots, h_n, \tau_n) \geq c \times t^*(\omega, \nicefrac{\sigma^2}{\varepsilon}, h_1, \tau_1, \dots, h_n, \tau_n).
  \end{align*}
  For all $c \geq 1,$ using Properties~\ref{property:monotonic} and \ref{property:upperbound}, we have
  \begin{align*}
    &t^*(c \times \omega, \nicefrac{\sigma^2}{\varepsilon}, h_1, \tau_1, \dots, h_n, \tau_n) \leq t^*(c \times \omega, c \times \nicefrac{\sigma^2}{\varepsilon}, h_1, \tau_1, \dots, h_n, \tau_n) \leq c \times t^*(\omega, \nicefrac{\sigma^2}{\varepsilon}, h_1, \tau_1, \dots, h_n, \tau_n)
  \end{align*}
  and
  \begin{align*}
    &t^*(\omega, c \times \nicefrac{\sigma^2}{\varepsilon}, h_1, \tau_1, \dots, h_n, \tau_n) \leq t^*(c \times \omega, c \times \nicefrac{\sigma^2}{\varepsilon}, h_1, \tau_1, \dots, h_n, \tau_n) \leq c \times t^*(\omega, \nicefrac{\sigma^2}{\varepsilon}, h_1, \tau_1, \dots, h_n, \tau_n).
  \end{align*}
\end{proof}

\begin{restatable}{property}{PROPERTYEQUILIBRIUMEQUALSCONSTANTS}
  \label{property:constant_equal}
  For all $c \geq 0$ and $\omega, \nicefrac{\sigma^2}{\varepsilon}, h_1, \tau_1, \dots, h_n, \tau_n \geq 0,$ we have
  \begin{align*}
    t^*(\omega, \nicefrac{\sigma^2}{\varepsilon}, c \times h_1, c \times \tau_1, \dots, c \times h_n, c \times \tau_n) = c \times t^*(\omega, \nicefrac{\sigma^2}{\varepsilon}, h_1, \tau_1, \dots, h_n, \tau_n).
  \end{align*}
\end{restatable}

\begin{proof}
  For $c = 0,$ it it clear. Assume that $c > 0.$ Using the definition of the equilibrium time, we have
  \begin{align*}
    t^*(\omega, \nicefrac{\sigma^2}{\varepsilon}, h_1, \tau_1, \dots, h_n, \tau_n) = 
    \min_{j \in [n]} \max\{\max\{h_{\pi_j}, \tau_{\pi_j}\}, s^*(j)\},
  \end{align*}
  where $s^*(j)$ is the solution of
  \begin{align}
    \label{eq:HYIrEmBh_}
    \left(\sum_{i=1}^j \frac{1}{2 \tau_{\pi_i} \omega + \frac{4 \tau_{\pi_i} h_{\pi_i} \sigma^2 \omega}{s \varepsilon} + \frac{2 h_{\pi_i} \sigma^2}{\varepsilon}}\right)^{-1} = s,
  \end{align}
  and
  \begin{align*}
    t^*(\omega, \nicefrac{\sigma^2}{\varepsilon}, c \times h_1, c \times \tau_1, \dots, c \times h_n, c \times \tau_n) = 
    \min_{j \in [n]} \max\{\max\{c \times h_{\pi_j}, c \times \tau_{\pi_j}\}, s^*_{c}(j)\},
  \end{align*}
  where $s^*_{c}(j)$ is the solution of
  \begin{align*}
    \left(\sum_{i=1}^j \frac{1}{2 c \tau_{\pi_i} \omega + \frac{4 c^2 \tau_{\pi_i} h_{\pi_i} \sigma^2 \omega}{s \varepsilon} + \frac{2 c h_{\pi_i} \sigma^2}{\varepsilon}}\right)^{-1} = s.
  \end{align*}
  For both cases, we can take the same permutation $\pi.$
  Using simple algebra, we obtain
  \begin{align*}
    \left(\sum_{i=1}^j \frac{1}{2 c \tau_{\pi_i} \omega + \frac{4 c^2 \tau_{\pi_i} h_{\pi_i} \sigma^2 \omega}{s \varepsilon} + \frac{2 c h_{\pi_i} \sigma^2}{\varepsilon}}\right)^{-1} = c \left(\sum_{i=1}^j \frac{1}{2 \tau_{\pi_i} \omega + \frac{4 \tau_{\pi_i} h_{\pi_i} \sigma^2 \omega}{\frac{s}{c} \varepsilon} + \frac{2 h_{\pi_i} \sigma^2}{\varepsilon}}\right)^{-1}.
  \end{align*}
  Thus, $s^*_{c}(j)$ is the solution of
  \begin{align}
    \label{eq:jRzCftHmctuozbiNP_}
    \left(\sum_{i=1}^j \frac{1}{2 \tau_{\pi_i} \omega + \frac{4 \tau_{\pi_i} h_{\pi_i} \sigma^2 \omega}{\frac{s}{c} \varepsilon} + \frac{2 h_{\pi_i} \sigma^2}{\varepsilon}}\right)^{-1} = \frac{s}{c}
  \end{align}
  Comparing \eqref{eq:HYIrEmBh_} and \eqref{eq:jRzCftHmctuozbiNP_}, one can see that $s^*_{c}(j) = c \times s^*(j)$ for all $j \in [n].$ Using this, we get
  \begin{align*}
    &t^*(\omega, \nicefrac{\sigma^2}{\varepsilon}, c \times h_1, c \times \tau_1, \dots, c \times h_n, c \times \tau_n) = 
    \min_{j \in [n]} \max\{\max\{c \times h_{\pi_j}, c \times \tau_{\pi_j}\}, c \times s^*(j)\} \\
    &= c \times \min_{j \in [n]} \max\{\max\{h_{\pi_j}, \tau_{\pi_j}\}, s^*(j)\} = c \times t^*(\omega, \nicefrac{\sigma^2}{\varepsilon}, h_1, \tau_1, \dots, h_n, \tau_n).
  \end{align*}
\end{proof}

\begin{restatable}{property}{PROPERTYEQUILIBRIUMSUBSET}
  \label{property:subset}
  We fix a nonempty subset $S = \{k_1, \dots, k_m\}$ from the set $[n]$ with a size $m \geq 1.$ For all $\omega, \nicefrac{\sigma^2}{\varepsilon}, h_1, \tau_1, \dots, h_n, \tau_n \geq 0,$ we have
  \begin{align*}
    t^*(\omega, \nicefrac{\sigma^2}{\varepsilon}, h_1, \tau_1, \dots, h_n, \tau_n) \leq t^*(\omega, \nicefrac{\sigma^2}{\varepsilon}, h_{k_1}, \tau_{k_1}, \dots, h_{k_m}, \tau_{k_m}).
  \end{align*}
\end{restatable}

\begin{proof}
  Using Property~\ref{property:monotonic} with $\bar{\tau}_{i} = \infty$ and $\bar{h}_{i} = \infty$ for all $i \not\in S$ and $\bar{\tau}_{i} = \tau_i$ and $\bar{h}_{i} = h_i$ for all $i \in S,$ we have
  \begin{align*}
    t^*(\omega, \nicefrac{\sigma^2}{\varepsilon}, h_1, \tau_1, \dots, h_n, \tau_n) \leq t^*(\omega, \nicefrac{\sigma^2}{\varepsilon}, \bar{h}_1, \bar{\tau}_1, \dots, \bar{h}_n, \bar{\tau}_n).
  \end{align*}
  Next, using Def.~\ref{def:optimal_time_budget}, we obtain
  \begin{align*}
    &t^*(\omega, \nicefrac{\sigma^2}{\varepsilon}, \bar{h}_1, \bar{\tau}_1, \dots, \bar{h}_n, \bar{\tau}_n) = \min_{j \in [n]} \max\{\max\{\bar{h}_{\pi_j}, \bar{\tau}_{\pi_j}\}, s^*(j)\},
  \end{align*}
  where $s^*(j)$ is the solution of
  \begin{align}
    \label{eq:RajNRBLYpv}
    \left(\sum_{i=1}^j \frac{1}{2 \bar{\tau}_{\pi_i} \omega + \frac{4 \bar{\tau}_{\pi_i} \bar{h}_{\pi_i} \sigma^2 \omega}{s \times \varepsilon} + \frac{2 \bar{h}_{\pi_i} \sigma^2}{\varepsilon}}\right)^{-1} = s
  \end{align}
  w.r.t $s$ for all $j \in [n],$ and $\pi$ is a permutation that sorts $\max\{\bar{h}_i, \bar{\tau}_i\}$ in such a way that the set $\{\pi_1, \dots, \pi_{m}\}$ equals to the set $\{k_1, \dots, k_{m}\}$ (the order of elements can be different). Such permutation exists because $\max\{\bar{h}_{i}, \bar{\tau}_{i}\} = \infty$ for all $i \not\in S.$ Using $\max\{\bar{h}_{\pi_i}, \bar{\tau}_{\pi_i}\} = \infty$ for all $i > m,$ we have
  \begin{align}
    \label{eq:Vcirsjz}
    &t^*(\omega, \nicefrac{\sigma^2}{\varepsilon}, \bar{h}_1, \bar{\tau}_1, \dots, \bar{h}_n, \bar{\tau}_n) = \min_{j \in [{\red m}]} \max\{\max\{\bar{h}_{\pi_j}, \bar{\tau}_{\pi_j}\}, s^*(j)\}.
  \end{align}
  By the construction of $\pi,$ \eqref{eq:RajNRBLYpv} and \eqref{eq:Vcirsjz} depend only on the elements from $S.$ Thus, we have
  \begin{align*}
    t^*(\omega, \nicefrac{\sigma^2}{\varepsilon}, h_1, \tau_1, \dots, h_n, \tau_n) 
    &\leq t^*(\omega, \nicefrac{\sigma^2}{\varepsilon}, \bar{h}_1, \bar{\tau}_1, \dots, \bar{h}_n, \bar{\tau}_n) = t^*(\omega, \nicefrac{\sigma^2}{\varepsilon}, \bar{h}_{k_1}, \bar{\tau}_{k_1}, \dots, \bar{h}_{k_m}, \bar{\tau}_{k_m}) \\
    &=t^*(\omega, \nicefrac{\sigma^2}{\varepsilon}, h_{k_1}, \tau_{k_1}, \dots, h_{k_m}, \tau_{k_m}).
  \end{align*}
\end{proof}

\begin{restatable}{property}{PROPERTYEQUILIBRIUMTIMEZERO}
  \label{property:zero}
  For all $\nicefrac{\sigma^2}{\varepsilon},$ $h_1,$ $\dot{\tau}_1, \dots, h_n, \dot{\tau}_n \geq 0,$ we have
  \begin{align*}
    &12 t^*\left(0, \nicefrac{\sigma^2}{\varepsilon}, h_1, d\dot{\tau}_1, \dots, h_n, d \dot{\tau}_n\right) \\
    &\geq t^*\left(d - 1, \nicefrac{\sigma^2}{\varepsilon}, h_1, \dot{\tau}_1, \dots, h_n, \dot{\tau}_n\right).
  \end{align*}
\end{restatable}

\begin{proof}
  For $d = 1,$ it is clear. Assume that $d > 1.$ Using the definition of $t^*,$ we have
  \begin{align}
    \label{eq:kkaaDQr}
    t^*\left(0, \nicefrac{\sigma^2}{\varepsilon}, h_1, d\dot{\tau}_1, \dots, h_n, d \dot{\tau}_n\right) \geq \min_{j \in [n]} \max\left\{\max\{h_{\bar{\pi}_j}, d \dot{\tau}_{\bar{\pi}_j}\}, \frac{\sigma^2}{\varepsilon}\left(\sum_{i=1}^j \frac{1}{h_{\bar{\pi}_i}}\right)^{-1}\right\} 
  \end{align}
  where $\bar{\pi}$ is a permutation that sorts $\max\{h_{i}, d\dot{\tau}_{i}\}.$ Assume that $j^*$ is the minimal index that minimizes \eqref{eq:kkaaDQr}. Then
  \begin{align}
    \label{eq:fQZITzxcfr}
    t^*\left(0, \nicefrac{\sigma^2}{\varepsilon}, h_1, d\dot{\tau}_1, \dots, h_n, d \dot{\tau}_n\right) \geq \max\left\{\max\{h_{\bar{\pi}_{j^*}}, d \dot{\tau}_{\bar{\pi}_{j^*}}\}, \frac{\sigma^2}{\varepsilon}\left(\sum_{i=1}^{j^*} \frac{1}{h_{\bar{\pi}_i}}\right)^{-1}\right\} .
  \end{align}

  Let us define
  \begin{align*}
    I_{*} \eqdef t^*(d - 1, \nicefrac{\sigma^2}{\varepsilon}, h_1, \dot{\tau}_1, \dots, h_n, \dot{\tau}_n).
  \end{align*}
  Using Property~\ref{property:subset}, we have
  \begin{align*}
    I_{*} \leq t^*(d - 1, \nicefrac{\sigma^2}{\varepsilon}, h_{\bar{\pi}_1}, \dot{\tau}_{\bar{\pi}_1}, \dots, h_{\bar{\pi}_{j^*}}, \dot{\tau}_{\bar{\pi}_{j^*}}).
  \end{align*}
  Using Def.~\ref{def:optimal_time_budget} of $t^*,$ we get
  \begin{align}
    \label{eq:bQraGfyhAm}
    I_{*} \leq \max\{\max_{j \in [j^*]} \max\{h_{\bar{\pi}_j}, \dot{\tau}_{\bar{\pi}_j}\}, s^*\},
  \end{align}
  where $s^*$ is the solution of 
  \begin{align}
    \label{eq:cReWeAWF}
    \left(\sum_{i=1}^{j^*} \frac{1}{2 \dot{\tau}_{\bar{\pi}_i} (d - 1) + \frac{4 \dot{\tau}_{\bar{\pi}_i} h_{\bar{\pi}_i} \sigma^2 (d - 1)}{s \times \varepsilon} + \frac{2 h_{\bar{\pi}_i} \sigma^2}{\varepsilon}}\right)^{-1} = s.
  \end{align}
  Let us take $s' = 12 \max\left\{(d - 1) \max_{j \in [j^*]} \dot{\tau}_{\bar{\pi}_j}, \frac{\sigma^2}{\varepsilon}\left(\sum_{i=1}^{j^*} \frac{1}{h_{\bar{\pi}_i}}\right)^{-1}\right\}.$ 
  Since $s' \geq (d - 1) \max_{j \in [j^*]} \dot{\tau}_{\bar{\pi}_j},$ we have
  \begin{align*}
    \left(\sum_{i=1}^{j^*} \frac{1}{2 \dot{\tau}_{\bar{\pi}_i} (d - 1) + \frac{4 \dot{\tau}_{\bar{\pi}_i} h_{\bar{\pi}_i} \sigma^2 (d - 1)}{s' \times \varepsilon} + \frac{2 h_{\bar{\pi}_i} \sigma^2}{\varepsilon}}\right)^{-1} 
    &\leq \left(\sum_{i=1}^{j^*} \frac{1}{2 \dot{\tau}_{\bar{\pi}_i} (d - 1) + \frac{4 h_{\bar{\pi}_i} \sigma^2}{\varepsilon} + \frac{2 h_{\bar{\pi}_i} \sigma^2}{\varepsilon}}\right)^{-1} \\
    &\leq \left(\sum_{i=1}^{j^*} \frac{1}{2 \dot{\tau}_{\bar{\pi}_i} (d - 1) + \frac{6 h_{\bar{\pi}_i} \sigma^2}{\varepsilon}}\right)^{-1} \\
    &\leq 12 \left(\sum_{i=1}^{j^*} \min\left\{\frac{1}{\dot{\tau}_{\bar{\pi}_i} (d - 1)}, \frac{1}{\frac{h_{\bar{\pi}_i} \sigma^2}{\varepsilon}}\right\}\right)^{-1}.
  \end{align*}
  If there exists $p \in [j^*]$ such that $\frac{1}{\dot{\tau}_{\bar{\pi}_p} (d - 1)} < \frac{1}{\frac{h_{\bar{\pi}_p} \sigma^2}{\varepsilon}},$ then
  \begin{align*}
    \sum_{i=1}^{j^*} \min\left\{\frac{1}{\dot{\tau}_{\bar{\pi}_i} (d - 1)}, \frac{1}{\frac{h_{\bar{\pi}_i} \sigma^2}{\varepsilon}}\right\} \geq \frac{1}{\dot{\tau}_{\bar{\pi}_p} (d - 1)}
  \end{align*}
  and
  \begin{align*}
    \left(\sum_{i=1}^{j^*} \frac{1}{2 \dot{\tau}_{\bar{\pi}_i} (d - 1) + \frac{4 \dot{\tau}_{\bar{\pi}_i} h_{\bar{\pi}_i} \sigma^2 (d - 1)}{s' \times \varepsilon} + \frac{2 h_{\bar{\pi}_i} \sigma^2}{\varepsilon}}\right)^{-1} 
    &\leq 12 (d - 1) \dot{\tau}_{\bar{\pi}_p} \leq 12 (d - 1) \max_{j \in [j^*]} \dot{\tau}_{\bar{\pi}_j}.
  \end{align*}
  Otherwise, we have
  \begin{align*}
    \sum_{i=1}^{j^*} \min\left\{\frac{1}{\dot{\tau}_{\bar{\pi}_i} (d - 1)}, \frac{1}{\frac{h_{\bar{\pi}_i} \sigma^2}{\varepsilon}}\right\} = \sum_{i=1}^{j^*} \frac{1}{\frac{h_{\bar{\pi}_i} \sigma^2}{\varepsilon}}
  \end{align*}
  and
  \begin{align*}
    \left(\sum_{i=1}^{j^*} \frac{1}{2 \dot{\tau}_{\bar{\pi}_i} (d - 1) + \frac{4 \dot{\tau}_{\bar{\pi}_i} h_{\bar{\pi}_i} \sigma^2 (d - 1)}{s' \times \varepsilon} + \frac{2 h_{\bar{\pi}_i} \sigma^2}{\varepsilon}}\right)^{-1} 
    &\leq 12 \left(\sum_{i=1}^{j^*} \frac{1}{\frac{h_{\bar{\pi}_i} \sigma^2}{\varepsilon}}\right)^{-1} = 12 \frac{\sigma^2}{\varepsilon}\left(\sum_{i=1}^{j^*} \frac{1}{h_{\bar{\pi}_i}}\right)^{-1}.
  \end{align*}
  Considering both cases, we have
  \begin{align*}
    \left(\sum_{i=1}^{j^*} \frac{1}{2 \dot{\tau}_{\bar{\pi}_i} (d - 1) + \frac{4 \dot{\tau}_{\bar{\pi}_i} h_{\bar{\pi}_i} \sigma^2 (d - 1)}{s' \times \varepsilon} + \frac{2 h_{\bar{\pi}_i} \sigma^2}{\varepsilon}}\right)^{-1} 
    &\leq 12 \max\left\{(d - 1) \max_{j \in [j^*]} \dot{\tau}_{\bar{\pi}_j}, \frac{\sigma^2}{\varepsilon}\left(\sum_{i=1}^{j^*} \frac{1}{h_{\bar{\pi}_i}}\right)^{-1}\right\} = s'.
  \end{align*}
  It means that $s^* \leq s'$ because $s^*$ is the solution of \eqref{eq:cReWeAWF}. Using \eqref{eq:bQraGfyhAm}, we get
  \begin{align*}
    I_{*} 
    &\leq 12 \max\left\{\max_{j \in [j^*]} \max\{h_{\bar{\pi}_j}, \dot{\tau}_{\bar{\pi}_j}\}, \max\left\{(d - 1) \max_{j \in [j^*]} \dot{\tau}_{\bar{\pi}_j}, \frac{\sigma^2}{\varepsilon}\left(\sum_{i=1}^{j^*} \frac{1}{h_{\bar{\pi}_i}}\right)^{-1}\right\}\right\}.
  \end{align*}
  Using $d \geq 1$ and $d \dot{\tau}_{\bar{\pi}_j} \leq \max\{h_{\bar{\pi}_j}, d \dot{\tau}_{\bar{\pi}_j}\},$ we get
  \begin{align*}
    I_{*} 
    &\leq 12 \max\left\{\max_{j \in [j^*]} \max\{h_{\bar{\pi}_j}, d \dot{\tau}_{\bar{\pi}_j}\}, \max\left\{\max_{j \in [j^*]} \max\{h_{\bar{\pi}_j}, d \dot{\tau}_{\bar{\pi}_j}\}, \frac{\sigma^2}{\varepsilon}\left(\sum_{i=1}^{j^*} \frac{1}{h_{\bar{\pi}_i}}\right)^{-1}\right\}\right\} \\
    &\leq 12 \max\left\{\max_{j \in [j^*]} \max\{h_{\bar{\pi}_j}, d \dot{\tau}_{\bar{\pi}_j}\}, \frac{\sigma^2}{\varepsilon}\left(\sum_{i=1}^{j^*} \frac{1}{h_{\bar{\pi}_i}}\right)^{-1}\right\}.
  \end{align*}
  Due to $\max_{j \in [j^*]} \max\{h_{\bar{\pi}_j}, d \dot{\tau}_{\bar{\pi}_j}\} = \max\{h_{\bar{\pi}_{j^*}}, d \dot{\tau}_{\bar{\pi}_{j^*}}\}$ and \eqref{eq:fQZITzxcfr}, we obtain
  \begin{align*}
    I_{*} &= t^*(d - 1, \nicefrac{\sigma^2}{\varepsilon}, h_1, \dot{\tau}_1, \dots, h_n, \dot{\tau}_n) 
    \leq 12 \max\left\{\max\{h_{\bar{\pi}_{j^*}}, d \dot{\tau}_{\bar{\pi}_{j^*}}\}, \frac{\sigma^2}{\varepsilon}\left(\sum_{i=1}^{j^*} \frac{1}{h_{\bar{\pi}_i}}\right)^{-1}\right\} \\
    &\leq 12 t^*\left(0, \nicefrac{\sigma^2}{\varepsilon}, h_1, d\dot{\tau}_1, \dots, h_n, d \dot{\tau}_n\right).
  \end{align*}
\end{proof}

\PROPERTYEQUILIBRIUMTIMERAND*

\begin{proof}
  $ $\newline
  (Part 1: $K \leq \frac{d + 1}{2}$) \\
  For all $K \leq \frac{d + 1}{2},$ we have
  \begin{align}
    \label{eq:lTmhujcn}
    &t^*\left(\frac{d}{K} - 1, \nicefrac{\sigma^2}{\varepsilon}, h_1, K \dot{\tau}_1, \dots, h_n, K \dot{\tau}_n\right) = \min_{j \in [n]} \max\{\max\{h_{\pi_j}, K \dot{\tau}_{\pi_j}\}, s^*(j)\},
  \end{align}
  where $s^*(j)$ is the solution of
  \begin{align*}
    \left(\sum_{i=1}^j \frac{1}{2 K \dot{\tau}_{\pi_i} \left(\frac{d}{K} - 1\right) + \frac{4 K \dot{\tau}_{\pi_i} h_{\pi_i} \sigma^2 \left(\frac{d}{K} - 1\right)}{s \times \varepsilon} + \frac{2 h_{\pi_i} \sigma^2}{\varepsilon}}\right)^{-1} = s,
  \end{align*}
  and $\pi$ is a permutation that sorts $\max\{h_j, K \dot{\tau}_j\}.$ Also, assume that $j^*$ is a minimizer in \eqref{eq:lTmhujcn}.
  For all $j \in [n],$ we get
  \begin{align*}
    &s^*(j) = \left(\sum_{i=1}^j \frac{1}{2 K \dot{\tau}_{\pi_i} \left(\frac{d}{K} - 1\right) + \frac{4 K \dot{\tau}_{\pi_i} h_{\pi_i} \sigma^2 \left(\frac{d}{K} - 1\right)}{s^*(j) \times \varepsilon} + \frac{2 h_{\pi_i} \sigma^2}{\varepsilon}}\right)^{-1} \\
    &=\left(\sum_{i=1}^j \frac{1}{2 \dot{\tau}_{\pi_i} \left(d - K\right) + \frac{4 \dot{\tau}_{\pi_i} h_{\pi_i} \sigma^2 \left(d - K\right)}{s^*(j) \times \varepsilon} + \frac{2 h_{\pi_i} \sigma^2}{\varepsilon}}\right)^{-1}.
  \end{align*}
  Since $K \leq \frac{d + 1}{2},$ we have
  \begin{align*}
    &s^*(j) \geq \frac{1}{2}\left(\sum_{i=1}^j \frac{1}{2 \dot{\tau}_{\pi_i} (d - 1) + \frac{4 \dot{\tau}_{\pi_i} h_{\pi_i} \sigma^2 (d - 1)}{s^*(j) \times \varepsilon} + \frac{2 h_{\pi_i} \sigma^2}{\varepsilon}}\right)^{-1}
  \end{align*}
  and 
  \begin{align}
    \label{eq:aYvidFBgewIRVl}
    &2 \times s^*(j) \geq \left(\sum_{i=1}^j \frac{1}{2 \dot{\tau}_{\pi_i} (d - 1) + \frac{4 \dot{\tau}_{\pi_i} h_{\pi_i} \sigma^2 (d - 1)}{2 \times s^*(j) \times \varepsilon} + \frac{2 h_{\pi_i} \sigma^2}{\varepsilon}}\right)^{-1}.
  \end{align}
  At the same time, using Property~\ref{property:subset}, we have
  \begin{align}
    \label{eq:kkVlmDlfRFjJqbRFkFK}
    &t^*\left(d - 1, \nicefrac{\sigma^2}{\varepsilon}, h_1, \dot{\tau}_1, \dots, h_n, \dot{\tau}_n\right) \leq t^*\left(d - 1, \nicefrac{\sigma^2}{\varepsilon}, h_{\pi_1}, \dot{\tau}_{\pi_1}, \dots, h_{\pi_{j^*}}, \dot{\tau}_{\pi_{j^*}}\right) \leq \max\{\max_{j \in [j^*]} \max\{h_{\bar{\pi}_j}, \dot{\tau}_{\bar{\pi}_j}\}, s'(j^*)\},
  \end{align}
  where $s'(j^*)$ is the solution of
  \begin{align*}
    \left(\sum_{i=1}^{j^*} \frac{1}{2 \dot{\tau}_{\pi_{i}} \left(d - 1\right) + \frac{4 \dot{\tau}_{\pi_i} h_{\pi_i} \sigma^2 \left(d - 1\right)}{s \times \varepsilon} + \frac{2 h_{\pi_i} \sigma^2}{\varepsilon}}\right)^{-1} = s.
  \end{align*}
  From \eqref{eq:aYvidFBgewIRVl}, we can conclude that $2 \times s^*(j^*) \geq s'(j^*).$ Using this and \eqref{eq:kkVlmDlfRFjJqbRFkFK}, we obtain
  \begin{align*}
    t^*\left(d - 1, \nicefrac{\sigma^2}{\varepsilon}, h_1, \dot{\tau}_1, \dots, h_n, \dot{\tau}_n\right) 
    &\leq \max\{\max_{j \in [j^*]} \max\{h_{\bar{\pi}_j}, \dot{\tau}_{\bar{\pi}_j}\}, 2 s^*(j^*)\} \\
    &\leq 2 \max\{\max_{j \in [j^*]} \max\{h_{\bar{\pi}_j}, \dot{\tau}_{\bar{\pi}_j}\}, s^*(j^*)\} \\
    &\leq 2 \max\{\max_{j \in [j^*]} \max\{h_{\bar{\pi}_j}, K \dot{\tau}_{\bar{\pi}_j}\}, s^*(j^*)\}.
  \end{align*}
  Note that $\max_{j \in [j^*]} \max\{h_{\bar{\pi}_j}, K \dot{\tau}_{\bar{\pi}_j}\} = \max\{h_{\bar{\pi}_{j^*}}, K \dot{\tau}_{\bar{\pi}_{j^*}}\},$ thus
  \begin{align*}
    t^*\left(d - 1, \nicefrac{\sigma^2}{\varepsilon}, h_1, \dot{\tau}_1, \dots, h_n, \dot{\tau}_n\right) 
    &\leq 2 \max\{\max\{h_{\bar{\pi}_{j^*}}, K \dot{\tau}_{\bar{\pi}_{j^*}}\}, s^*(j^*)\} = 2 t^*\left(\frac{d}{K} - 1, \nicefrac{\sigma^2}{\varepsilon}, h_1, K \dot{\tau}_1, \dots, h_n, K \dot{\tau}_n\right).
  \end{align*}
  (Part 2: $K > \frac{d + 1}{2}$) \\
  For all $K > \frac{d + 1}{2},$ using Property~\ref{property:monotonic}, we get
  \begin{align*}
    &t^*\left(\frac{d}{K} - 1, \nicefrac{\sigma^2}{\varepsilon}, h_1, K \dot{\tau}_1, \dots, h_n, K \dot{\tau}_n\right) \geq t^*\left(0, \nicefrac{\sigma^2}{\varepsilon}, \frac{1}{2}h_1,  \frac{d}{2} \dot{\tau}_1, \dots, \frac{1}{2}h_n, \frac{d}{2} \dot{\tau}_n\right).
  \end{align*}
  Next, using Property~\ref{property:constant_equal}, we have
  \begin{align*}
    &t^*\left(\frac{d}{K} - 1, \nicefrac{\sigma^2}{\varepsilon}, h_1, K \dot{\tau}_1, \dots, h_n, K \dot{\tau}_n\right) \geq \frac{1}{2} t^*\left(0, \nicefrac{\sigma^2}{\varepsilon}, h_1, d\dot{\tau}_1, \dots, h_n, d\dot{\tau}_n\right).
  \end{align*}
  It is left to use Property~\ref{property:zero} to get
  \begin{align*}
    &t^*\left(\frac{d}{K} - 1, \nicefrac{\sigma^2}{\varepsilon}, h_1, K \dot{\tau}_1, \dots, h_n, K \dot{\tau}_n\right) \geq \frac{1}{24} t^*\left(d - 1, \nicefrac{\sigma^2}{\varepsilon}, h_1, \dot{\tau}_1, \dots, h_n, \dot{\tau}_n\right).
  \end{align*}
\end{proof}

\section{Derivations of the Examples for the Equilibrium Time}
\label{sec:examples_derivations}

\EXAMPLEZERO*

\begin{proof}
  Let us take a permutation $\pi$ where $\pi_1 = j.$ Such a permutation exists because $\max\{h_{j}, \tau_{j}\} = 0.$ By the definition of $t^*$, we have
  \begin{equation}
  \begin{aligned}
    \label{eq:LgmzCCGaHIepndx}
    t^*(\omega, \nicefrac{\sigma^2}{\varepsilon}, h_1, \tau_1, \dots, h_n, \tau_n) 
    &= \min_{j \in [n]} \max\{\max\{h_{\pi_j}, \tau_{\pi_j}\}, s^*(j)\} \\
    &\leq \max\{\max\{h_{\pi_1}, \tau_{\pi_1}\}, s^*(1)\} \\
    &= \max\{0, s^*(1)\},
  \end{aligned}
  \end{equation}
  where $s^*(1)$ is the solution of
  \begin{align*}
    \left(\sum_{i=1}^1 \frac{1}{2 \tau_{\pi_i} \omega + \frac{4 \tau_{\pi_i} h_{\pi_i} \sigma^2 \omega}{s \varepsilon} + \frac{2 h_{\pi_i} \sigma^2}{\varepsilon}}\right)^{-1} = s.
  \end{align*}
  Since
  \begin{align*}
    \left(\sum_{i=1}^1 \frac{1}{2 \tau_{\pi_i} \omega + \frac{4 \tau_{\pi_i} h_{\pi_i} \sigma^2 \omega}{s \varepsilon} + \frac{2 h_{\pi_i} \sigma^2}{\varepsilon}}\right)^{-1} = \left(\sum_{i=1}^1 \frac{1}{0}\right)^{-1} = \left(\infty\right)^{-1} = 0,
  \end{align*}
  we obtain $s^*(1) = 0.$ We substitute it to \eqref{eq:LgmzCCGaHIepndx} to get $t^*(\omega, \nicefrac{\sigma^2}{\varepsilon}, h_1, \tau_1, \dots, h_n, \tau_n) = 0.$
\end{proof}

\EXAMPLEINFTY*

\begin{proof}
  By the definition of $t^*$, we have
  \begin{align*}
    t^*(\omega, \nicefrac{\sigma^2}{\varepsilon}, h_1, \tau_1, \dots, h_n, \tau_n) = 
    \min_{j \in [n]} \max\{\max\{h_{\pi_j}, \tau_{\pi_j}\}, s^*(j)\} = \min_{j \in [n]} \max\{\infty, s^*(j)\} = \infty.
  \end{align*}
\end{proof}

\EXAMPLEEQUAL*

\begin{proof}
  By the definition, we have
  \begin{align}
    t^*(\omega, \nicefrac{\sigma^2}{\varepsilon}, h_1, \tau_1, \dots, h_n, \tau_n) = 
    \min_{j \in [n]} \max\{\max\{h, \tau\}, s^*(j)\} = \max\{\max\{h, \tau\}, \min_{j \in [n]} s^*(j)\} = \max\{\max\{h, \tau\}, s^*\} 
  \end{align}
  where $s^*$ is the solution of
  \begin{align*}
    \left(\sum_{i=1}^n \frac{1}{2 \tau \omega + \frac{4 \tau h \sigma^2 \omega}{s \varepsilon} + \frac{2 h \sigma^2}{\varepsilon}}\right)^{-1} = s.
  \end{align*}
  Since 
  \begin{align*}
    \left(\sum_{i=1}^n \frac{1}{2 \tau \omega + \frac{4 \tau h \sigma^2 \omega}{s \varepsilon} + \frac{2 h \sigma^2}{\varepsilon}}\right)^{-1} 
    =\left(\frac{n}{2 \tau \omega + \frac{4 \tau h \sigma^2 \omega}{s \varepsilon} + \frac{2 h \sigma^2}{\varepsilon}}\right)^{-1} = \frac{2 \tau \omega}{n} + \frac{4 \tau h \sigma^2 \omega}{s n \varepsilon} + \frac{2 h \sigma^2}{n \varepsilon},
  \end{align*}
  we have to solve and find the non-negative solution of the quadratic equation
  \begin{align*}
    s^2 - s \left(\frac{2 \tau \omega}{n} + \frac{2 h \sigma^2}{n \varepsilon}\right) - \frac{4 \tau h \sigma^2 \omega}{n \varepsilon} = 0.
  \end{align*}
  The solution is
  \begin{align*}
    s^* = \left(\frac{\tau \omega}{n} + \frac{h \sigma^2}{n \varepsilon}\right) + \sqrt{\left(\frac{\tau \omega}{n} + \frac{h \sigma^2}{n \varepsilon}\right)^2 + \frac{4 \tau h \sigma^2 \omega}{n \varepsilon}} \leq 2 \left(\frac{\tau \omega}{n} + \frac{h \sigma^2}{n \varepsilon} + \sqrt{\frac{\tau h \sigma^2 \omega}{n \varepsilon}}\right).
  \end{align*}
  Therefore, we have
  \begin{align*}
    t^*(\omega, \nicefrac{\sigma^2}{\varepsilon}, h_1, \tau_1, \dots, h_n, \tau_n) 
    &\leq \max\left\{\max\{h, \tau\}, 2 \left(\frac{\tau \omega}{n} + \frac{h \sigma^2}{n \varepsilon} + \sqrt{\frac{\tau h \sigma^2 \omega}{n \varepsilon}}\right)\right\} \\
    &\leq 6 \max\left\{h, \tau, \frac{\tau \omega}{n}, \frac{h \sigma^2}{n \varepsilon}, \sqrt{\frac{\tau h \sigma^2 \omega}{n \varepsilon}}\right\}.
  \end{align*}
\end{proof}

\EXAMPLERENNALA*

\begin{proof}
  By the definition, we have
$
    t^*(\omega, \nicefrac{\sigma^2}{\varepsilon}, h_1, \tau_1, \dots, h_n, \tau_n) = 
    \min_{j \in [n]} \max\{h_{\pi_j}, s^*(j)\},
$
  where $s^*(j)$ is the solution of
  \begin{align*}
    \left(\sum_{i=1}^j \frac{1}{\frac{2 h_{\pi_i} \sigma^2}{\varepsilon}}\right)^{-1} = s.
  \end{align*}
  Therefore, we have
  \begin{align*}
    t^*(\omega, \nicefrac{\sigma^2}{\varepsilon}, h_1, \tau_1, \dots, h_n, \tau_n) \leq 
    2 \min_{j \in [n]} \max\left\{h_{\pi_j}, \frac{\sigma^2}{\varepsilon}\left(\sum_{i=1}^j \frac{1}{h_{\pi_i}}\right)^{-1}\right\} =
    \Theta\left(\min_{j \in [n]} \left(h_{\pi_j} + \frac{\sigma^2}{\varepsilon}\left(\sum_{i=1}^j \frac{1}{h_{\pi_i}}\right)^{-1}\right)\right).
  \end{align*}
  Using Lemma~\ref{lemma:rennalaequiv}, we obtain
  \begin{align*}
    t^*(\omega, \nicefrac{\sigma^2}{\varepsilon}, h_1, \tau_1, \dots, h_n, \tau_n) \leq 
    2 \min_{j \in [n]} \max\left\{h_{\pi_j}, \frac{\sigma^2}{\varepsilon}\left(\sum_{i=1}^j \frac{1}{h_{\pi_i}}\right)^{-1}\right\} = 
    \Theta\left(\min_{j \in [n]} \left(j + \frac{\sigma^2}{\varepsilon}\right)\left(\sum_{i=1}^j \frac{1}{h_{\pi_i}}\right)^{-1}\right).
  \end{align*}
\end{proof}

\begin{lemma}
  \label{lemma:rennalaequiv}
  Let us consider the two functions
  \begin{align*}
    g(j) \eqdef h_{j} + a\left(\sum_{i=1}^j \frac{1}{h_{i}}\right)^{-1}
, \qquad 
    p(j) \eqdef \left(j + a\right)\left(\sum_{i=1}^j \frac{1}{h_{i}}\right)^{-1}
  \end{align*}
  for all $j \in [n],$ where $h_i \geq 0$ for all $i \in [n],$ $a \geq 0,$ and $h_1 \leq \dots \leq h_n.$ Then
  \begin{align*}
    \frac{1}{2} \min_{j \in [n]} g(j) \leq \min_{j \in [n]} p(j) \leq \min_{j \in [n]} g(j)
  \end{align*}
\end{lemma}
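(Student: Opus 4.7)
The plan is to establish the two inequalities $\min p \le \min g$ and $\min g \le 2 \min p$ separately. The right-hand one is immediate from the harmonic--arithmetic mean inequality applied to $h_1,\dots,h_j$: since $h$ is sorted, $j H_j \le h_j$, so $p(j) = j H_j + a H_j \le h_j + a H_j = g(j)$ pointwise, and minimizing both sides gives $\min p \le \min g$.

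For the reverse inequality, I plan to bound $g$ at an index carefully chosen relative to a minimizer $j^\star$ of $p$. The key observation is that $a H_{j^\star} \le (j^\star+a) H_{j^\star} = p(j^\star)$ always holds, so the bound $g(j^\star) \le 2 p(j^\star)$ is automatic whenever $h_{j^\star} \le p(j^\star)$. Moreover this is automatic for $j^\star = 1$, since $h_1 = H_1$ and $p(1) = (1+a) H_1 \ge H_1 = h_1$. So I would split into the easy case $h_{j^\star} \le p(j^\star)$ (which covers $j^\star = 1$) and the hard case $j^\star \ge 2$ with $h_{j^\star} > p(j^\star)$.

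In the hard case I will evaluate $g$ at index $j^\star - 1$ and use the first-order optimality $p(j^\star) \le p(j^\star-1)$. Combined with the telescoping identity $1/H_{j^\star} = 1/H_{j^\star - 1} + 1/h_{j^\star}$, it rearranges to $h_{j^\star} \le p(j^\star-1) = (j^\star-1+a) H_{j^\star-1}$. Since $h$ is sorted, $h_{j^\star - 1} \le h_{j^\star}$, which gives $g(j^\star - 1) \le (j^\star - 1 + 2a) H_{j^\star - 1}$. Substituting $H_{j^\star - 1} = h_{j^\star} H_{j^\star}/(h_{j^\star} - H_{j^\star})$ (from the same telescoping identity) and invoking the hypothesis $h_{j^\star} > (j^\star+a) H_{j^\star}$, the desired bound $(j^\star - 1 + 2a) H_{j^\star - 1} \le 2(j^\star + a) H_{j^\star}$ reduces after clearing denominators to $(j^\star+1) h_{j^\star} \ge 2(j^\star+a) H_{j^\star}$, which is a strict weakening of the case hypothesis for any $j^\star \ge 1$.

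The main obstacle I expect is calibrating the constant to land at exactly $2$ rather than something larger: a naive chain such as $g(j^\star-1) \le h_{j^\star} + p(j^\star-1) \le 2 p(j^\star-1)$ would bound $g$ in terms of $p(j^\star - 1)$, which can exceed $p(j^\star)$ appreciably, so one must actively exploit $h_{j^\star} > p(j^\star)$ to trade the larger $H_{j^\star - 1}$ for the smaller $H_{j^\star}$. Edge cases with $h_i = 0$ or $a = 0$ are absorbed by the projectively extended real line convention used throughout, under which both $g$ and $p$ collapse compatibly with the inequalities.
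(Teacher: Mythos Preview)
Your proof is correct, and the upper bound $\min p \le \min g$ is identical to the paper's. For the lower bound the paper takes the same starting point (first-order optimality $p(j^\star)\le p(j^\star-1)$ at a minimizer $j^\star\ge 2$) but pushes the telescoping algebra one step further than you do: rewriting $p(j^\star)\le p(j^\star-1)$ as $(j^\star+a)\sum_{i\le j^\star-1}1/h_i \le (j^\star-1+a)\sum_{i\le j^\star}1/h_i$ and cancelling gives $\sum_{i\le j^\star}1/h_i \le (j^\star+a)/h_{j^\star}$, i.e.\ $h_{j^\star}\le p(j^\star)$ itself, not merely $h_{j^\star}\le p(j^\star-1)$. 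Hence your ``hard case'' $h_{j^\star}>p(j^\star)$ is in fact vacuous, and your ``easy case'' already covers every minimizer. The paper then finishes in one line by splitting $p(j^\star)=\tfrac12 p(j^\star)+\tfrac12 p(j^\star)\ge \tfrac12 h_{j^\star}+\tfrac12 a H_{j^\star}=\tfrac12 g(j^\star)$. So your argument and the paper's are really the same approach; you just did not notice that the optimality condition is exactly equivalent to your easy-case hypothesis, and therefore carried out an unnecessary (though correct) computation at $j^\star-1$.
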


\begin{proof}
  If $h_1 = 0,$ then $p(1) = h(1) = 0$ and $\min_{i \in [n]} p(i) = \min_{i \in [n]} h(i) = 0.$ Assume that $h_1 > 0.$
  Using the fact that a harmonic mean is less or equal to the maximum, we have
  \begin{align*}
    \min_{j \in [n]} p(j) = \min_{j \in [n]} \left(j + a\right)\left(\sum_{i=1}^j \frac{1}{h_{i}}\right)^{-1} \leq \min_{j \in [n]} \left(h_j + a\left(\sum_{i=1}^j \frac{1}{h_{i}}\right)^{-1}\right) = \min_{j \in [n]} g(j).
  \end{align*}
  Thus, we proved the upper bound.
  Next, assume that $j^*$ is the smallest minimizer of $p(j).$ If $j^* = 1,$ then
  \begin{align*}
    \min_{j \in [n]} p(j) = p(1) = \left(1 + a\right) h_1 = h_1 + a h_1 = g(1) \geq \min_{j \in [n]} g(j).
  \end{align*}
  Otherwise, if $j^* > 1,$ then $p(j^*) \leq p(j^* - 1).$ Using simple algebra, we obtain
  \begin{align*}
    &\left(j^* + a\right)\left(\sum_{i=1}^{j^*} \frac{1}{h_{i}}\right)^{-1} \leq \left(j^* - 1 + a\right)\left(\sum_{i=1}^{j^* - 1} \frac{1}{h_{i}}\right)^{-1} \\
    &\Leftrightarrow \left(j^* + a\right) \left(\sum_{i=1}^{j^* - 1} \frac{1}{h_{i}}\right) \leq \left(j^* - 1 + a\right) \left(\sum_{i=1}^{j^*} \frac{1}{h_{i}}\right) \\
    &\Leftrightarrow \left(\sum_{i=1}^{j^*} \frac{1}{h_{i}}\right) \leq \left(j^* + a\right) \left(\frac{1}{h_{j^*}}\right) \\
    &\Leftrightarrow h_{j^*} \leq \left(j^* + a\right) \left(\sum_{i=1}^{j^*} \frac{1}{h_{i}}\right)^{-1}.
  \end{align*}
  Using the last inequality, we get
  \begin{align*}
    \min_{j \in [n]} p(j) 
    &= \left(j^* + a\right) \left(\sum_{i=1}^{j^*} \frac{1}{h_{i}}\right)^{-1} = \frac{1}{2} \left(j^* + a\right) \left(\sum_{i=1}^{j^*} \frac{1}{h_{i}}\right)^{-1} + \frac{1}{2} \left(j^* + a\right) \left(\sum_{i=1}^{j^*} \frac{1}{h_{i}}\right)^{-1} \\
    &\geq \frac{1}{2} h_{j^*} + \frac{1}{2} a \left(\sum_{i=1}^{j^*} \frac{1}{h_{i}}\right)^{-1} = \frac{1}{2} g(j^*) \geq \frac{1}{2} \min_{j \in [n]} g(j).
  \end{align*}
\end{proof}

\EXAMPLESLOW*

\begin{proof}
  By the definition, we have
  \begin{align*}
    t^*(\omega, \nicefrac{\sigma^2}{\varepsilon}, h_1, \tau_1, \dots, h_n, \tau_n) 
    &= \min_{j \in [n]} \max\{\max\{h_{\pi_j}, \tau_{\pi_j}\}, s^*(j)\}.
  \end{align*}
  For $m > \max_{i \in [p]}\max\left\{h_i, \tau_i\right\}$, we have $\max\left\{h_i, \tau_i\right\} < m$ for all $i \leq p$ and the set $\{1, \dots, p\}$ equals to $\{\pi_1, \dots, \pi_p\}.$ Thus, we get
  \begin{align*}
    t^*(\omega, \nicefrac{\sigma^2}{\varepsilon}, h_1, \tau_1, \dots, h_n, \tau_n) 
    &= \min\left\{\min_{j \in [p]} \max\{\max\{h_{\pi_j}, \tau_{\pi_j}\}, s^*(j)\}, \min_{j \in \{p + 1, \dots, n\}} \max\{m, s^*(j)\}\right\} \\
    &= \min\left\{t^*(\omega, \nicefrac{\sigma^2}{\varepsilon}, h_1, \tau_1, \dots, h_p, \tau_p), \min_{j \in \{p + 1, \dots, n\}} \max\{m, s^*(j)\}\right\} \\
  \end{align*}
  By taking $m > t^*(\omega, \nicefrac{\sigma^2}{\varepsilon}, h_1, \tau_1, \dots, h_p, \tau_p),$ we obtain
  \begin{align*}
    \min_{j \in \{p + 1, \dots, n\}} \max\{m, s^*(j)\} \geq m > t^*(\omega, \nicefrac{\sigma^2}{\varepsilon}, h_1, \tau_1, \dots, h_p, \tau_p).
  \end{align*}
  Therefore, we have
  \begin{align*}
    t^*(\omega, \nicefrac{\sigma^2}{\varepsilon}, h_1, \tau_1, \dots, h_n, \tau_n) 
    &= t^*(\omega, \nicefrac{\sigma^2}{\varepsilon}, h_1, \tau_1, \dots, h_p, \tau_p).
  \end{align*}
\end{proof}

\EXAMPLEPARTIAL*

\begin{proof}
  By the definition, we have
  \begin{align*}
    t^*(\omega, \nicefrac{\sigma^2}{\varepsilon}, h_1, \tau_1, \dots, h_n, \tau_n) 
    &= \min_{j \in [n]} \max\{\max\{h_{\pi_j}, \tau_{\pi_j}\}, s^*(j)\},
  \end{align*}
  where $\pi$ is a permutation such that the set $\{\pi_{p+1}, \dots, \pi_{n}\}$ equals to the set $\{p+1, \dots, n\}.$ Such a permutation exists because $\max\{h_{i}, \tau_{i}\} = \infty$ for all $i > p.$ Using this, we have
  \begin{align*}
    t^*(\omega, \nicefrac{\sigma^2}{\varepsilon}, h_1, \tau_1, \dots, h_n, \tau_n) 
    &= \min\left\{\min_{j \in [p]} \max\{\max\{h_{\pi_j}, \tau_{\pi_j}\}, s^*(j)\}, \min_{j \in \{p+1, \dots, n\}} \max\{\max\{h_{\pi_j}, \tau_{\pi_j}\}, s^*(j)\}\right\} \\
    &= \min\left\{\min_{j \in [p]} \max\{\max\{h_{\pi_j}, \tau_{\pi_j}\}, s^*(j)\}, \infty\right\} \\
    &= \min_{j \in [p]} \max\{\max\{h_{\pi_j}, \tau_{\pi_j}\}, s^*(j)\} \\
    &= t^*(\omega, \nicefrac{\sigma^2}{\varepsilon}, h_1, \tau_1, \dots, h_p, \tau_p).
  \end{align*}
\end{proof}

\section{Generic Lemma For Unbiased Gradient Estimators}
\label{sec:main_lemma}
We prove the following generic lemma that estimates the variance of the general family of unbiased gradient estimators.

\begin{lemma}
\label{lemma:sgd_async}
Consider that Assumptions~\ref{ass:stochastic_variance_bounded} and \ref{ass:independence} hold. Let us consider the gradient estimator
\begin{align*}
    g^k = \frac{1}{\sum_{i=1}^n \sum_{j=1}^{m_i} w_{ij} b_{ij}} \sum_{i=1}^{n} \sum_{j=1}^{m_i} w_{ij}  \cC_{ij}\left(\sum_{l=1}^{b_{ij}} \nabla f(x^k;\xi_{il}^k)\right),
\end{align*} 
where $m_{i} \geq 0$ for all $i \in [n]$, $b_{ij} \geq 0$ for all $i \in [n]$ and $j \in [m_i]$ are ordered batch sizes ($b_{i1} \leq \dots \leq b_{i,m_i}$ for all $i \in [n]$), $w_{ij} \geq 0$ are weights for all $i \in [n]$ and $j \in [m_i]$, and $\sum_{i=1}^n \sum_{j=1}^{m_i} w_{ij} b_{ij} > 0,$ $\cC_{ij} \in \mathbb{U}\left(\omega_{ij}\right)$ are mutually independent compressors from Def.~\ref{def:unbiased_compression} for all $i \in [n]$ and $j \in [m_i]$, and $x^k \in \R^d$ is an arbitrary point. Then $\Exp{g^k} = \nabla f(x^k)$ and 
\begin{equation}
\begin{aligned}
    \label{lemma:sgd_async:result}
    \Exp{\norm{g^k - \nabla f(x^k)}^2} &\leq \frac{1}{\left(\sum_{i=1}^n \sum_{j=1}^{m_i} w_{ij} b_{ij}\right)^2}\sum_{i=1}^{n} \sum_{j=1}^{m_i} w_{ij}^2 b_{ij}^2 \omega_{ij} \norm{\nabla f(x^k)}^2 \\
    &\quad + \frac{1}{\left(\sum_{i=1}^n \sum_{j=1}^{m_i} w_{ij} b_{ij}\right)^2}\sum_{i=1}^{n} \left(\sum_{j=1}^{m_i} w_{ij}^2 b_{ij} \omega_{ij} \sigma^2 + \sum_{j=1}^{m_i} \sum_{p=1}^{m_i} \min\{b_{ij}, b_{ip}\} w_{ij} w_{ip} \sigma^2\right).
\end{aligned}
\end{equation}
\end{lemma}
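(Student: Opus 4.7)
Write $W \eqdef \sum_{i,j} w_{ij} b_{ij}$ and $S_{ij} \eqdef \sum_{l=1}^{b_{ij}} \nabla f(x^k; \xi_{il}^k)$, so that $g^k = W^{-1} \sum_{i,j} w_{ij} \cC_{ij}(S_{ij})$. Unbiasedness is immediate from tower expectation: conditioning on the stochastic-gradient samples, $\ExpSub{\nu}{\cC_{ij}(S_{ij})} = S_{ij}$ by Def.~\ref{def:unbiased_compression}, and $\Exp{S_{ij}} = b_{ij}\nabla f(x^k)$ by Assumption~\ref{ass:stochastic_variance_bounded}, so $\Exp{g^k} = W^{-1}\sum_{i,j} w_{ij} b_{ij}\nabla f(x^k) = \nabla f(x^k)$.

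For the variance I apply the identity \eqref{eq:variance_decomposition} conditionally on $\xi$ and then take outer expectation, which gives
\[\Exp{\norm{g^k - \nabla f(x^k)}^2} = \Exp{\ExpSub{\nu}{\norm{g^k - \ExpSub{\nu}{g^k}}^2}} + \Exp{\norm{\ExpSub{\nu}{g^k} - \nabla f(x^k)}^2}.\]
The first summand (compressor variance) is handled by mutual independence of $\{\cC_{ij}\}$, which kills cross-terms and yields $\ExpSub{\nu}{\norm{g^k - \ExpSub{\nu}{g^k}}^2} \leq W^{-2}\sum_{i,j} w_{ij}^2 \omega_{ij}\norm{S_{ij}}^2$ via \eqref{eq:compressor}. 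Taking outer expectation and applying the variance decomposition once more to $S_{ij}$, a sum of $b_{ij}$ independent unbiased stochastic gradients, produces $\Exp{\norm{S_{ij}}^2} = b_{ij}^2\norm{\nabla f(x^k)}^2 + \Exp{\norm{S_{ij} - b_{ij}\nabla f(x^k)}^2} \leq b_{ij}^2\norm{\nabla f(x^k)}^2 + b_{ij}\sigma^2$. This recovers the first line of \eqref{lemma:sgd_async:result} and the $w_{ij}^2 b_{ij}\omega_{ij}\sigma^2$ part of the second line.

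For the second summand I expand $\ExpSub{\nu}{g^k} - \nabla f(x^k) = W^{-1}\sum_{i,j} w_{ij}(S_{ij} - b_{ij}\nabla f(x^k))$ into cross-terms indexed by $(i,j), (i',p)$. Since distinct workers draw samples from $\mathcal{D}_{\xi}$ independently, every cross-term with $i \neq i'$ has zero expectation. The key observation, and the main subtlety of the proof, is that the sample index $\xi_{il}^k$ does \emph{not} depend on the compressor index $j$ and the batch sizes are ordered ($b_{i1} \leq \cdots \leq b_{i,m_i}$): within worker $i$, compressors $j$ and $p$ share exactly the samples $\xi_{il}^k$ with $l \in [\min\{b_{ij}, b_{ip}\}]$. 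By independence of the individual stochastic gradients across $l$, only the $\min\{b_{ij}, b_{ip}\}$ diagonal pairs contribute, and each is bounded by $\sigma^2$ via Assumption~\ref{ass:stochastic_variance_bounded}. Hence $\Exp{\lin{S_{ij} - b_{ij}\nabla f(x^k), S_{ip} - b_{ip}\nabla f(x^k)}} \leq \min\{b_{ij}, b_{ip}\}\sigma^2$, and summing with weights $w_{ij} w_{ip}$ over $i, j, p$ and dividing by $W^2$ delivers the remaining $\min\{b_{ij}, b_{ip}\} w_{ij} w_{ip}\sigma^2$ term.

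The principal obstacle is precisely this nested-sample bookkeeping: compressors belonging to the same worker do \emph{not} see independent batches, so the naive ``variance of a sum equals sum of variances'' shortcut fails, and the $\min\{b_{ij}, b_{ip}\}$ factor must be tracked explicitly using the ordering assumption $b_{i1} \leq \cdots \leq b_{i,m_i}$ together with the sample-index convention. Everything else is a routine application of independence, Def.~\ref{def:unbiased_compression}, and Assumption~\ref{ass:stochastic_variance_bounded}.
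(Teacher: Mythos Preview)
Your proposal is correct and follows essentially the same approach as the paper: a two-stage variance decomposition separating compressor noise from stochastic-gradient noise, with the key observation that within each worker the batches $S_{ij}$ are nested so that the $(j,p)$ covariance contributes $\min\{b_{ij},b_{ip}\}\sigma^2$. The only cosmetic difference is that the paper regroups the gradient-noise term by sample index $l$ via the sets $S_{il}=\{j:\,l\le b_{ij}\}$ and then counts occurrences of $w_{ij}w_{ip}$, whereas you expand directly into pairwise covariances; the two computations are equivalent.
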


\begin{proof}
  First, we show the gradient estimator is unbiased:
  \begin{align*}
    \Exp{g^k} 
    &= \Exp{\frac{1}{\sum_{i=1}^n \sum_{j=1}^{m_i} w_{ij} b_{ij}} \sum_{i=1}^{n} \sum_{j=1}^{m_i} w_{ij}  \cC_{ij}\left(\sum_{l=1}^{b_{ij}} \nabla f(x^k;\xi_{il}^k)\right)} \\
    &= \frac{1}{\sum_{i=1}^n \sum_{j=1}^{m_i} w_{ij} b_{ij}} \sum_{i=1}^{n} \sum_{j=1}^{m_i} w_{ij}  \Exp{\cC_{ij}\left(\sum_{l=1}^{b_{ij}} \nabla f(x^k;\xi_{il}^k)\right)}.
  \end{align*}
  Using Def.~\ref{def:unbiased_compression} and Assumption~\ref{ass:stochastic_variance_bounded}, we have
  \begin{align*}
    \Exp{g^k} 
    &= \frac{1}{\sum_{i=1}^n \sum_{j=1}^{m_i} w_{ij} b_{ij}} \sum_{i=1}^{n} \sum_{j=1}^{m_i} w_{ij} b_{ij} \nabla f(x^k) = \nabla f(x^k).
  \end{align*}
  Next, we estimate the variance
  \begin{align*}
    &\Exp{\norm{g^k - \nabla f(x^k)}^2} \\
    &=\Exp{\norm{\frac{1}{\sum_{i=1}^n \sum_{j=1}^{m_i} w_{ij} b_{ij}} \sum_{i=1}^{n} \sum_{j=1}^{m_i} w_{ij} \cC_{ij}\left(\sum_{l=1}^{b_{ij}} \nabla f(x^k;\xi_{il}^k)\right) - \nabla f(x^k)}} \\
    &=\frac{1}{\left(\sum_{i=1}^n \sum_{j=1}^{m_i} w_{ij} b_{ij}\right)^2}\Exp{\norm{\sum_{i=1}^{n} \sum_{j=1}^{m_i} w_{ij} \cC_{ij}\left(\sum_{l=1}^{b_{ij}} \nabla f(x^k;\xi_{il}^k)\right) - \sum_{i=1}^n \sum_{j=1}^{m_i} w_{ij} b_{ij} \nabla f(x^k)}^2}.
  \end{align*}
  Using the independence and \eqref{eq:variance_decomposition}, we have
  \begin{equation}
  \begin{aligned}
    &\Exp{\norm{g^k - \nabla f(x^k)}^2} \\
    &=\frac{1}{\left(\sum_{i=1}^n \sum_{j=1}^{m_i} w_{ij} b_{ij}\right)^2}\sum_{i=1}^{n} \Exp{\norm{\sum_{j=1}^{m_i} w_{ij} \cC_{ij}\left(\sum_{l=1}^{b_{ij}} \nabla f(x^k;\xi_{il}^k)\right) - \sum_{j=1}^{m_i} w_{ij} b_{ij} \nabla f(x^k)}^2} \\
    &=\underbrace{\frac{1}{\left(\sum_{i=1}^n \sum_{j=1}^{m_i} w_{ij} b_{ij}\right)^2}\sum_{i=1}^{n} \Exp{\norm{\sum_{j=1}^{m_i} w_{ij} \cC_{ij}\left(\sum_{l=1}^{b_{ij}} \nabla f(x^k;\xi_{il}^k)\right) - \sum_{j=1}^{m_i} w_{ij} \sum_{l=1}^{b_{ij}} \nabla f(x^k;\xi_{il}^k)}^2}}_{I_1} \\
    &\quad+ \underbrace{\frac{1}{\left(\sum_{i=1}^n \sum_{j=1}^{m_i} w_{ij} b_{ij}\right)^2}\sum_{i=1}^{n} \Exp{\norm{\sum_{j=1}^{m_i} w_{ij} \sum_{l=1}^{b_{ij}} \nabla f(x^k;\xi_{il}^k) - \sum_{j=1}^{m_i} w_{ij} b_{ij} \nabla f(x^k)}^2}}_{I_2}.
    \label{eq:I_1_2}
  \end{aligned}
  \end{equation}
  Using the independence of the compressors and Def.~\ref{def:unbiased_compression}, we get
  \begin{align*}
    I_1 &=\frac{1}{\left(\sum_{i=1}^n \sum_{j=1}^{m_i} w_{ij} b_{ij}\right)^2}\sum_{i=1}^{n} \sum_{j=1}^{m_i} w_{ij}^2 \Exp{\norm{\cC_{ij}\left(\sum_{l=1}^{b_{ij}} \nabla f(x^k;\xi_{il}^k)\right) - \sum_{l=1}^{b_{ij}} \nabla f(x^k;\xi_{il}^k)}^2} \\
    &\leq\frac{1}{\left(\sum_{i=1}^n \sum_{j=1}^{m_i} w_{ij} b_{ij}\right)^2}\sum_{i=1}^{n} \sum_{j=1}^{m_i} w_{ij}^2 \omega_{ij} \Exp{\norm{\sum_{l=1}^{b_{ij}} \nabla f(x^k;\xi_{il}^k)}^2}.
  \end{align*}
  In the view of \eqref{eq:variance_decomposition}, the independence of the stochastic gradients, and Assumption~\ref{ass:stochastic_variance_bounded}, we obtain
  \begin{align*}
    I_1 
    &\leq\frac{1}{\left(\sum_{i=1}^n \sum_{j=1}^{m_i} w_{ij} b_{ij}\right)^2}\sum_{i=1}^{n} \sum_{j=1}^{m_i} w_{ij}^2 \omega_{ij} \Exp{\norm{\sum_{l=1}^{b_{ij}} \nabla f(x^k;\xi_{il}^k) - \sum_{l=1}^{b_{ij}} \nabla f(x^k)}^2} \\
    &\quad + \frac{1}{\left(\sum_{i=1}^n \sum_{j=1}^{m_i} w_{ij} b_{ij}\right)^2}\sum_{i=1}^{n} \sum_{j=1}^{m_i} w_{ij}^2 \omega_{ij} b_{ij}^2 \norm{\nabla f(x^k)}^2 \\
    &=\frac{1}{\left(\sum_{i=1}^n \sum_{j=1}^{m_i} w_{ij} b_{ij}\right)^2}\sum_{i=1}^{n} \sum_{j=1}^{m_i} w_{ij}^2 \omega_{ij} \sum_{l=1}^{b_{ij}} \Exp{\norm{\nabla f(x^k;\xi_{il}^k) - \nabla f(x^k)}^2} \\
    &\quad + \frac{1}{\left(\sum_{i=1}^n \sum_{j=1}^{m_i} w_{ij} b_{ij}\right)^2}\sum_{i=1}^{n} \sum_{j=1}^{m_i} w_{ij}^2 \omega_{ij} b_{ij}^2 \norm{\nabla f(x^k)}^2 \\
    &\leq \frac{1}{\left(\sum_{i=1}^n \sum_{j=1}^{m_i} w_{ij} b_{ij}\right)^2}\sum_{i=1}^{n} \sum_{j=1}^{m_i} w_{ij}^2 b_{ij} \omega_{ij} \sigma^2  + \frac{1}{\left(\sum_{i=1}^n \sum_{j=1}^{m_i} w_{ij} b_{ij}\right)^2}\sum_{i=1}^{n} \sum_{j=1}^{m_i} w_{ij}^2 b_{ij}^2 \omega_{ij} \norm{\nabla f(x^k)}^2.
  \end{align*}
  We now consider
  \begin{align*}
    I_2 
    &= \frac{1}{\left(\sum_{i=1}^n \sum_{j=1}^{m_i} w_{ij} b_{ij}\right)^2}\sum_{i=1}^{n} \Exp{\norm{\sum_{j=1}^{m_i} w_{ij} \sum_{l=1}^{b_{ij}} \nabla f(x^k;\xi_{il}^k) - \sum_{j=1}^{m_i} w_{ij} b_{ij} \nabla f(x^k)}^2}.
  \end{align*}
  Let us consider the set $S_{il} \eqdef \{j \in [m_i]\,|\, l \leq b_{ij}\}$ for all $i,l \in \mathbb{N}.$ Then we can rewrite the norm in the following way
  \begin{align*}
    I_2 
    &= \frac{1}{\left(\sum_{i=1}^n \sum_{j=1}^{m_i} w_{ij} b_{ij}\right)^2}\sum_{i=1}^{n} \Exp{\norm{\sum_{l=1}^{b_{i,m_i}} \left(\sum_{j \in S_{il} }w_{ij}\right) \left(\nabla f(x^k;\xi_{il}^k) - \nabla f(x^k)\right)}^2}.
  \end{align*}
  The stochastic vectors are independent, thus 
  \begin{align*}
    I_2 
    &= \frac{1}{\left(\sum_{i=1}^n \sum_{j=1}^{m_i} w_{ij} b_{ij}\right)^2}\sum_{i=1}^{n} \sum_{l=1}^{b_{i,m_i}} \left(\sum_{j \in S_{il} }w_{ij}\right)^2 \Exp{\norm{\nabla f(x^k;\xi_{il}^k) - \nabla f(x^k)}^2} \\
    &\leq \frac{1}{\left(\sum_{i=1}^n \sum_{j=1}^{m_i} w_{ij} b_{ij}\right)^2}\sum_{i=1}^{n} \sum_{l=1}^{b_{i,m_i}} \left(\sum_{j \in S_{il} }w_{ij}\right)^2 \sigma^2.
  \end{align*}
  Note that 
  \begin{align*}
    \sum_{l=1}^{b_{i,m_i}} \left(\sum_{j \in S_{il} }w_{ij}\right)^2 = \sum_{l=1}^{b_{i,m_i}} \sum_{j \in S_{il}} \sum_{p \in S_{il}} w_{ij} w_{ip}.
  \end{align*}
  The number of appearances of the term $w_{ij} w_{ip}$ in the sum equals to $\min\{b_{ij}, b_{ip}\}.$ 
  Thus
  \begin{align*}
    I_2 
    &\leq \frac{1}{\left(\sum_{i=1}^n \sum_{j=1}^{m_i} w_{ij} b_{ij}\right)^2}\sum_{i=1}^{n} \sum_{j=1}^{m_i} \sum_{p=1}^{m_i} \min\{b_{ij}, b_{ip}\} w_{ij} w_{ip} \sigma^2.
  \end{align*}
  We now substitute the bounds on $I_1$ and $I_2$ to \eqref{eq:I_1_2}, and get \eqref{lemma:sgd_async:result}.
\end{proof}

\clearpage
\section{Proofs for Algorithms~\ref{alg:alg_server} and \ref{alg:alg_server_better}}

In the appendix, we work with Alg.~\ref{alg:alg_server_better} instead of Alg.~\ref{alg:alg_server}. Alg.~\ref{alg:alg_server_better} is more general and estimates all the parameters based on local per-iteration times $h^k_i$ and $\tau^k_i$ instead of $h_i$ and $\tau_i.$ All results for Alg.~\ref{alg:alg_server} can be easily obtained by taking $h^k_i = h_i$ and $\tau^k_i = \tau_i.$

\begin{algorithm}[t]
  \caption{\algname{\newmethod} \hfill (Alg.~\ref{alg:alg_server} is equivalent to Alg.~\ref{alg:alg_server_better} when $h^k_i = h^k$ and $\tau^k_i = \tau^k$)}
  \label{alg:alg_server_better}
  \begin{algorithmic}[1]
  \STATE \textbf{Input:} starting point $x^0$, stepsize $\gamma$, the ratio $\nicefrac{\sigma^2}{\varepsilon}$
  \FOR{$k = 0, 1, \dots, K - 1$}
  \STATE Find the maximum computation speeds $h_i^k > 0$ \\
  and compressors' communication speeds $\tau_i^k > 0$ of the workers in the current iteration
  \STATE Find the equilibrium time $t^*$ using Def.~\ref{def:optimal_time_budget} with $h_i^k$ and $\tau_i^k$
  \STATE Set $b_i = \flr{\frac{t^*}{h_i^k}}$ and $m_i = \flr{\frac{t^*}{\tau_i^k}}$ for all $i \in [n]$ \hfill ($t^*$, $b_i$ and $m_i$ are local and can be different in every iteration)
  \STATE Find active workers $S_{\textnormal{A}} = \{i \in [n]\,:\, b_i \wedge m_i > 0\}$
  \STATE Run Alg.~\ref{alg:alg_worker} in all active workers $S_{\textnormal{A}}$
  \STATE Broadcast $x^k , b_i,$ and $m_i$ to all active workers $S_{\textnormal{A}}$
  \STATE Init $g^k = 0$
  \FOR{$i \in S_{\textnormal{A}}$ {\bf in parallel}} 
      \STATE $w_i \overset{(a)}{=} \left(b_i \omega + \omega \frac{\sigma^2}{\varepsilon} + m_i \frac{\sigma^2}{\varepsilon}\right)^{-1}$
      \FOR{$j = 1, \dots, m_i$} 
          \STATE Receive $\cC_{ij}\left(g_i^k\right)$ from the $i$\textsuperscript{th} worker
          \STATE $g^k = g^k + w_i \cC_{ij}\left(g_i^k\right)$
      \ENDFOR
  \ENDFOR
  \STATE $g^k = g^k / \left(\sum_{i=1}^n w_{i} m_i b_i\right)$
  \STATE $x^{k+1} = x^k - \gamma g^k$
  \ENDFOR
  \end{algorithmic}
  $(a):$ If $\omega = 0$ and $\frac{\sigma^2}{\varepsilon} = 0,$ then $w_i = 1$
\end{algorithm}

\begin{restatable}{lemma}{LEMMASGDASYNC}
\label{lemma:sgd_async:same_weights}
Consider that Assumptions~\ref{ass:stochastic_variance_bounded} and \ref{ass:independence} hold. Then the gradient estimator \eqref{eq:grad_est_static} with the weights $w_i$ from Alg.~\ref{alg:alg_server_better} is unbiased and
\begin{equation}
\begin{aligned}
    \label{lemma:sgd_async:same_weights:result}
    \Exp{\norm{g^k - \nabla f(x^k)}^2} 
    \leq \left(\sum_{i\,:\, b_i \wedge m_i > 0} \frac{b_i m_i}{b_i \omega + \omega \frac{\sigma^2}{\varepsilon} + m_i \frac{\sigma^2}{\varepsilon}}\right)^{-1} \left(\norm{\nabla f(x^k)}^2 + \varepsilon\right).
\end{aligned}
\end{equation}
\end{restatable}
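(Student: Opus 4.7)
The plan is to apply the generic Lemma~\ref{lemma:sgd_async} from Section~\ref{sec:main_lemma} with the specialization used in \algname{\newmethod}: for each worker $i$, take $w_{ij} = w_i$, $b_{ij} = b_i$, and $\omega_{ij} = \omega$ uniformly over $j \in [m_i]$, and sum only over active workers (those with $b_i \wedge m_i > 0$; inactive workers contribute zero everywhere). Under this specialization, several sums collapse: $\sum_{j=1}^{m_i} w_{ij}^2 b_{ij}^2 \omega_{ij} = m_i w_i^2 b_i^2 \omega$, $\sum_{j=1}^{m_i} w_{ij}^2 b_{ij} \omega_{ij} = m_i w_i^2 b_i \omega$, and crucially, since all batch sizes within worker $i$ are equal, $\min\{b_{ij}, b_{ip}\} = b_i$, so $\sum_{j,p=1}^{m_i} \min\{b_{ij}, b_{ip}\} w_{ij} w_{ip} = m_i^2 b_i w_i^2$. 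The denominator becomes $\sum_i w_i m_i b_i$.

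Combining these, Lemma~\ref{lemma:sgd_async} yields
\[
\Exp{\norm{g^k - \nabla f(x^k)}^2} \leq \frac{\sum_i w_i^2 m_i b_i \bigl(b_i \omega \norm{\nabla f(x^k)}^2 + \omega \sigma^2 + m_i \sigma^2\bigr)}{\bigl(\sum_i w_i m_i b_i\bigr)^2}.
\]
The next step is to exploit the particular choice $w_i = (b_i \omega + \omega \tfrac{\sigma^2}{\varepsilon} + m_i \tfrac{\sigma^2}{\varepsilon})^{-1}$ from Alg.~\ref{alg:alg_server_better}, which satisfies $w_i \cdot (b_i \omega + \omega \tfrac{\sigma^2}{\varepsilon} + m_i \tfrac{\sigma^2}{\varepsilon}) = 1$. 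I would then verify the pointwise inequality
\[
b_i \omega \norm{\nabla f(x^k)}^2 + \omega \sigma^2 + m_i \sigma^2 \leq \Bigl(b_i \omega + \omega \tfrac{\sigma^2}{\varepsilon} + m_i \tfrac{\sigma^2}{\varepsilon}\Bigr)\bigl(\norm{\nabla f(x^k)}^2 + \varepsilon\bigr),
\]
which follows by expanding the right-hand side and observing that the ``extra'' terms $b_i \omega \varepsilon + \omega \tfrac{\sigma^2}{\varepsilon}\norm{\nabla f(x^k)}^2 + m_i \tfrac{\sigma^2}{\varepsilon}\norm{\nabla f(x^k)}^2$ are all non-negative.

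Plugging this back in, one factor of $(b_i \omega + \omega \tfrac{\sigma^2}{\varepsilon} + m_i \tfrac{\sigma^2}{\varepsilon})$ cancels against $w_i$, leaving
\[
\Exp{\norm{g^k - \nabla f(x^k)}^2} \leq \frac{\sum_i w_i m_i b_i (\norm{\nabla f(x^k)}^2 + \varepsilon)}{\bigl(\sum_i w_i m_i b_i\bigr)^2} = \frac{\norm{\nabla f(x^k)}^2 + \varepsilon}{\sum_i w_i m_i b_i},
\]
and substituting the explicit form of $w_i$ (restricted to the active set, since $w_i m_i b_i = 0$ otherwise) gives exactly \eqref{lemma:sgd_async:same_weights:result}.

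The unbiasedness $\Exp{g^k} = \nabla f(x^k)$ is inherited for free from the same invocation of Lemma~\ref{lemma:sgd_async}. I do not anticipate a genuine obstacle here: the only mildly delicate point is ensuring that all the sums are understood as being over active workers (so the denominator in $w_i$ is finite) and that the degenerate case $\omega = \sigma^2/\varepsilon = 0$ from footnote $(a)$ in Alg.~\ref{alg:alg_server_better} is handled by convention (in that case the bound is $0$ on both sides after the cancellation, matching the special definition $w_i = 1$).
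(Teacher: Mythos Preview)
Your proposal is correct and follows essentially the same route as the paper's proof: invoke Lemma~\ref{lemma:sgd_async} with $w_{ij}=w_i$, $b_{ij}=b_i$, $\omega_{ij}=\omega$, then upper-bound $b_i\omega\norm{\nabla f(x^k)}^2+\omega\sigma^2+m_i\sigma^2$ by $(b_i\omega+\omega\tfrac{\sigma^2}{\varepsilon}+m_i\tfrac{\sigma^2}{\varepsilon})(\norm{\nabla f(x^k)}^2+\varepsilon)$ (the paper phrases this as ``add nonnegative terms,'' you phrase it as a pointwise inequality---same content), and finally cancel one factor against $w_i$.
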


\begin{proof}
    Alg.~\ref{alg:alg_server_better} implements the gradient estimator \eqref{eq:grad_est_static}. We can use Lemma~\ref{lemma:sgd_async} with $b_{ij} = b_i,$ $w_{ij} = w_i$ and $\omega_{ij} = \omega$ for all $i \in [n]$ and $j \in [m_i].$
    Using \eqref{lemma:sgd_async:result}, we have
    \begin{align*}
        \Exp{\norm{g^k - \nabla f(x^k)}^2} 
        &\leq \frac{1}{\left(\sum_{i=1}^n \sum_{j=1}^{m_i} w_{ij} b_{ij}\right)^2}\sum_{i=1}^{n} \sum_{j=1}^{m_i} w_{ij}^2 b_{ij}^2 \omega_{ij} \norm{\nabla f(x^k)}^2 +  \\
        &\quad \frac{1}{\left(\sum_{i=1}^n \sum_{j=1}^{m_i} w_{ij} b_{ij}\right)^2}\sum_{i=1}^{n} \left(\sum_{j=1}^{m_i} w_{ij}^2 b_{ij} \omega_{ij} \sigma^2 + \sum_{j=1}^{m_i} \sum_{p=1}^{m_i} \min\{b_{ij}, b_{ip}\} w_{ij} w_{ip} \sigma^2\right) \\
        &= \frac{1}{\left(\sum_{i=1}^n m_i w_{i} b_i\right)^2}\sum_{i=1}^{n} w_{i}^2 \left(m_i b_i^2 \omega\right) \norm{\nabla f(x^k)}^2 +  \\
        &\quad \frac{1}{\left(\sum_{i=1}^n m_i w_{i} b_i\right)^2}\sum_{i=1}^{n} w_{i}^2 \left(m_i b_i \omega \sigma^2 + b_i m_i^2 \sigma^2\right).
    \end{align*}
    We add nonnegative terms to the last inequality to obtain 
    \begin{align*}
        \Exp{\norm{g^k - \nabla f(x^k)}^2} 
        &\leq \frac{1}{\left(\sum_{i=1}^n m_i w_{i} b_i\right)^2}\sum_{i=1}^{n} w_{i}^2 \left(m_i b_i^2 \omega + m_i b_i \omega \frac{\sigma^2}{\varepsilon} + b_i m_i^2 \frac{\sigma^2}{\varepsilon}\right) \norm{\nabla f(x^k)}^2 +  \\
        &\quad \frac{1}{\left(\sum_{i=1}^n m_i w_{i} b_i\right)^2}\sum_{i=1}^{n} w_{i}^2 \left(m_i b_i^2 \omega \varepsilon + m_i b_i \omega \sigma^2 + b_i m_i^2 \sigma^2\right) \\
        &= \frac{1}{\left(\sum_{i=1}^n m_i w_{i} b_i\right)^2}\sum_{i=1}^{n} w_{i}^2 \left(m_i b_i^2 \omega + m_i b_i \omega \frac{\sigma^2}{\varepsilon} + b_i m_i^2 \frac{\sigma^2}{\varepsilon}\right) \left(\norm{\nabla f(x^k)}^2 + \varepsilon\right).
    \end{align*}
    Using the choice of the weights $w_i,$ we get \eqref{lemma:sgd_async:same_weights:result}.
\end{proof}

\begin{lemma}
    \label{lemma:bound_variance_term_by_one}
    Consider two quantities $\omega \geq 0$ and $\nicefrac{\sigma^2}{\varepsilon} \geq 0,$ and $n \in \N.$ 
    Also, consider a sequence of positive pairs $\{(h_i, \tau_i)\}_{i=1}^{n}.$
    We take $b_i = \flr{\frac{t^*}{h_i}}$ and $m_i = \flr{\frac{t^*}{\tau_i}}$ for all $i \in [n],$
    where $t^* \equiv t^*\left(\omega, \nicefrac{\sigma^2}{\varepsilon}, h_1, \tau_1, \dots, h_n, \tau_n\right)$ is the equilibrium time from Def.~\ref{def:optimal_time_budget}. Then
    \begin{align*}
        \left(\sum_{i\,:\, b_i \wedge m_i > 0} \frac{b_i m_i}{b_i \omega + \omega \frac{\sigma^2}{\varepsilon} + m_i \frac{\sigma^2}{\varepsilon}}\right)^{-1} \leq 1.
    \end{align*}
\end{lemma}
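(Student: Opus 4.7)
The plan is to show the equivalent inequality $\sum_{i\,:\, b_i \wedge m_i > 0} \frac{b_i m_i}{b_i \omega + \omega \frac{\sigma^2}{\varepsilon} + m_i \frac{\sigma^2}{\varepsilon}} \geq 1$ by matching the terms of this sum against the equation \eqref{eq:main_equation} that defines $s^*(j)$. The key hook is to exploit the identity $\frac{b_i m_i}{b_i \omega + \omega \frac{\sigma^2}{\varepsilon} + m_i \frac{\sigma^2}{\varepsilon}} = \left(\frac{\omega}{m_i} + \frac{\omega \sigma^2}{b_i m_i \varepsilon} + \frac{\sigma^2}{b_i \varepsilon}\right)^{-1}$, which, after substituting the lower bounds on $b_i$ and $m_i$ implied by the floor definitions, produces exactly the summand appearing in \eqref{eq:main_equation} (up to constants).

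First I would let $j^* \in [n]$ be a minimizer in \eqref{eq:equil_time}, so that $t^* = \max\{\max\{h_{\pi_{j^*}}, \tau_{\pi_{j^*}}\}, s^*(j^*)\}$. The sorting property of $\pi$ then gives $\max\{h_{\pi_i}, \tau_{\pi_i}\} \leq t^*$ for every $i \leq j^*$, which in turn forces $b_{\pi_i} \geq 1$ and $m_{\pi_i} \geq 1$; thus all indices $\pi_1, \dots, \pi_{j^*}$ belong to the summation set, and the sum is non-empty. Next, I would use the elementary fact that $\lfloor x \rfloor \geq x/2$ for $x \geq 1$ to conclude $b_{\pi_i} \geq t^*/(2 h_{\pi_i})$ and $m_{\pi_i} \geq t^*/(2 \tau_{\pi_i})$ for all $i \leq j^*$.

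Plugging these lower bounds into the reciprocal form of the summand and multiplying through by $t^*$ yields
\begin{equation*}
\frac{b_{\pi_i} m_{\pi_i}}{b_{\pi_i} \omega + \omega \frac{\sigma^2}{\varepsilon} + m_{\pi_i} \frac{\sigma^2}{\varepsilon}} \geq \frac{t^*}{2 \tau_{\pi_i} \omega + \frac{4 \tau_{\pi_i} h_{\pi_i} \sigma^2 \omega}{t^* \varepsilon} + \frac{2 h_{\pi_i} \sigma^2}{\varepsilon}}.
\end{equation*}
Since $t^* \geq s^*(j^*)$ and the denominator on the right is decreasing in the variable $s$ (with $s = t^*$ here), replacing $t^*$ by $s^*(j^*)$ in the $\frac{1}{s}$ term only enlarges the denominator; so the right-hand side above dominates the corresponding summand of \eqref{eq:main_equation} evaluated at $s = s^*(j^*)$, scaled by $t^*/s^*(j^*) \geq 1$. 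Summing over $i = 1, \dots, j^*$ and invoking the defining identity $\sum_{i=1}^{j^*} \bigl(2 \tau_{\pi_i} \omega + \tfrac{4 \tau_{\pi_i} h_{\pi_i} \sigma^2 \omega}{s^*(j^*) \varepsilon} + \tfrac{2 h_{\pi_i} \sigma^2}{\varepsilon}\bigr)^{-1} = 1/s^*(j^*)$ collapses the sum to $t^*/s^*(j^*) \geq 1$, which is exactly what is needed.

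The main (minor) obstacle is the step $\lfloor x \rfloor \geq x/2$: it requires $x \geq 1$, which is why the argument is restricted to indices $i \leq j^*$ where we have actually verified $b_{\pi_i}, m_{\pi_i} \geq 1$. A secondary care point is bookkeeping around degenerate cases ($\omega = 0$ or $\sigma^2/\varepsilon = 0$, or a zero denominator under the projectively extended convention), but since $t^* \geq \max\{h_{\pi_1}, \tau_{\pi_1}\} > 0$ by positivity of the pairs, the sum always contains at least the index $\pi_1$ and the reduction above goes through verbatim.
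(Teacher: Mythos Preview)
Your proposal is correct and follows essentially the same route as the paper's proof: restrict to the first $j^*$ workers, use $\lfloor x\rfloor \geq x/2$ for $x\geq 1$ to get $b_{\pi_i}\geq t^*/(2h_{\pi_i})$ and $m_{\pi_i}\geq t^*/(2\tau_{\pi_i})$, substitute into the reciprocal form of the summand, then use $t^*\geq s^*(j^*)$ together with the defining equation \eqref{eq:main_equation} to collapse the sum to at least $1$. The paper's write-up is slightly more explicit in the chain of inequalities but the argument is identical.
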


\begin{proof}
    Assume that $j^* \in [n]$ is the smallest index that minimizes $\max\{\max\{h_{\pi_j}, \tau_{\pi_j}\}, s^*(j)\},$ then 
    \begin{align}
        \label{eq:glSJGjKZJ}
        t^* = \max\{\max\{h_{\pi_{j^*}}, \tau_{\pi_{j^*}}\}, s^*(j^*)\}.
    \end{align}
    Therefore, we have $t^* \geq \max\{h_{\pi_{j^*}}, \tau_{\pi_{j^*}}\} \geq \max\{h_{\pi_{j}}, \tau_{\pi_{j}}\}$ for all $j \leq j^*$ since $\max\{h_{\pi_{j}}, \tau_{\pi_{j}}\}$ are sorted. It means that $b_{\pi_j} = \flr{\frac{t^*}{h_{\pi_j}}} \geq 1$ and $m_{\pi_j} = \flr{\frac{t^*}{\tau_{\pi_j}}} \geq 1$ for all $j \leq j^*.$ Using this, we have
    \begin{align*}
        I \eqdef \left(\sum_{i\,:\, b_i \wedge m_i > 0} \frac{b_i m_i}{b_i \omega + \omega \frac{\sigma^2}{\varepsilon} + m_i \frac{\sigma^2}{\varepsilon}}\right)^{-1} \leq \left(\sum_{i=1}^{j^*} \frac{b_{\pi_i} m_{\pi_i}}{b_{\pi_i} \omega + \omega \frac{\sigma^2}{\varepsilon} + m_{\pi_i} \frac{\sigma^2}{\varepsilon}}\right)^{-1} = \left(\sum_{i=1}^{j^*} \frac{1}{\frac{\omega}{m_{\pi_i}} + \frac{\omega \sigma^2}{b_{\pi_i} m_{\pi_i} \varepsilon} + \frac{\sigma^2}{b_{\pi_i} \varepsilon}}\right)^{-1}.
    \end{align*}
    Since $b_{\pi_{j}} = \flr{\frac{t^*}{h_{\pi_{j}}}} \geq 1$ and $m_{\pi_{j}} = \flr{\frac{t^*}{\tau_{\pi_{j}}}} \geq 1,$ we can also conclude that $b_{\pi_{j}} \geq \frac{t^*}{2 h_{\pi_{j}}}$ and $m_{\pi_{j}} \geq \frac{t^*}{2 \tau_{\pi_{j}}}$ for all $j \leq j^*.$ Therefore, we obtain
    \begin{align*}
        I &\leq \left(\sum_{i=1}^{j^*} \frac{1}{\frac{2 \tau_{\pi_i} \omega}{t^*} + \frac{4 \tau_{\pi_i} h_{\pi_i} \omega \sigma^2}{(t^*)^2 \varepsilon} + \frac{2 h_{\pi_i} \sigma^2}{t^* \varepsilon}}\right)^{-1} \leq \left(\sum_{i=1}^{j^*} \frac{1}{\frac{2 \tau_{\pi_i} \omega}{s^*(j^*)} + \frac{4 \tau_{\pi_i} h_{\pi_i} \omega \sigma^2}{(s^*(j^*))^2 \varepsilon} + \frac{2 h_{\pi_i} \sigma^2}{s^*(j^*) \varepsilon}}\right)^{-1} \\
        &= \frac{1}{s^*(j^*)}\left(\sum_{i=1}^{j^*} \frac{1}{2 \tau_{\pi_i} \omega + \frac{4 \tau_{\pi_i} h_{\pi_i} \omega \sigma^2}{s^*(j^*) \times \varepsilon} + \frac{2 h_{\pi_i} \sigma^2}{\varepsilon}}\right)^{-1}.
    \end{align*}
    where the last inequality follows from \eqref{eq:glSJGjKZJ}. Recall that $s^*(j^*)$ is the solution of the equation \eqref{eq:main_equation}. Thus
    \begin{align*}
        \left(\sum_{i=1}^{j^*} \frac{1}{2 \tau_{\pi_i} \omega + \frac{4 \tau_{\pi_i} h_{\pi_i} \omega \sigma^2}{s^*(j^*) \times \varepsilon} + \frac{2 h_{\pi_i} \sigma^2}{\varepsilon}}\right)^{-1} = s^*(j^*)
    \end{align*}
    and 
    \begin{align*}
        I \leq \frac{1}{s^*(j^*)} \times s^*(j^*) = 1.
    \end{align*}
\end{proof}

\begin{restatable}{theorem}{THEOREMMAINSTATICBETTER}
  \label{thm:sgd_homog_better}
  Assume that Assumptions~\ref{ass:lipschitz_constant}, \ref{ass:lower_bound}, \ref{ass:stochastic_variance_bounded}, \ref{ass:independence} hold. Let us take
  $\gamma = \frac{1}{2 L}$
  in Alg.~\ref{alg:alg_server_better}.
  Then for all iterations
  \begin{align}
    \label{eq:static_conv_rate}
    K \geq \frac{16 L \Delta}{\varepsilon},
  \end{align}
  Alg.~\ref{alg:alg_server_better} guarantees that $\frac{1}{K}\sum_{k=0}^{K-1}\Exp{\norm{\nabla f(x^k)}^2} \leq \varepsilon.$
\end{restatable}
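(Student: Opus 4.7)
The plan is a standard descent lemma argument for nonconvex SGD, where the novelty lies entirely in how the variance of $g^k$ is controlled by the choice of equilibrium time. I would proceed in three short stages: a one-step descent inequality, substitution of the variance bound, and telescoping.

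First, since $f$ is $L$-smooth (Assumption \ref{ass:lipschitz_constant}) and $x^{k+1} = x^k - \gamma g^k$, the standard quadratic upper bound gives
\begin{equation*}
f(x^{k+1}) \leq f(x^k) - \gamma \langle \nabla f(x^k), g^k \rangle + \tfrac{L \gamma^2}{2} \|g^k\|^2.
\end{equation*}
By Lemma \ref{lemma:sgd_async:same_weights}, the estimator $g^k$ is unbiased conditional on $x^k$ (using Assumptions \ref{ass:stochastic_variance_bounded} and \ref{ass:independence} and the mutual independence of the compressors and the stochastic samples drawn at iteration $k$). Taking conditional expectation and then using the variance decomposition $\mathbb{E}[\|g^k\|^2] = \|\nabla f(x^k)\|^2 + \mathbb{E}[\|g^k - \nabla f(x^k)\|^2]$ yields
\begin{equation*}
\mathbb{E}[f(x^{k+1}) \mid x^k] \leq f(x^k) - \gamma\bigl(1 - \tfrac{L\gamma}{2}\bigr)\|\nabla f(x^k)\|^2 + \tfrac{L\gamma^2}{2}\, \mathbb{E}\bigl[\|g^k - \nabla f(x^k)\|^2 \bigm| x^k\bigr].
\end{equation*}

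The key step is to plug in the variance bound. By Lemma \ref{lemma:sgd_async:same_weights} the conditional second moment of $g^k - \nabla f(x^k)$ is at most $A_k (\|\nabla f(x^k)\|^2 + \varepsilon)$, where $A_k$ is the inverse sum appearing on the right of \eqref{lemma:sgd_async:same_weights:result}. Because $b_i, m_i$ are chosen via the equilibrium time from Def.~\ref{def:optimal_time_budget}, Lemma \ref{lemma:bound_variance_term_by_one} guarantees $A_k \leq 1$ at every iteration (this is where the definition of $t^*$ does its work, and it is the only nontrivial ingredient; the rest is bookkeeping). Substituting $\gamma = 1/(2L)$ gives $1 - L\gamma/2 = 3/4$ and $L\gamma^2/2 = 1/(8L)$, so
\begin{equation*}
\mathbb{E}[f(x^{k+1})] \leq \mathbb{E}[f(x^k)] - \tfrac{3\gamma}{4}\,\mathbb{E}\|\nabla f(x^k)\|^2 + \tfrac{1}{8L}\bigl(\mathbb{E}\|\nabla f(x^k)\|^2 + \varepsilon\bigr).
\end{equation*}
Combining the two gradient-norm terms leaves a coefficient $\tfrac{3}{8L} - \tfrac{1}{8L} = \tfrac{1}{4L}$ on $\mathbb{E}\|\nabla f(x^k)\|^2$ (or any similar positive constant depending on how I group the $3/4$ factor; it suffices that the resulting constant times $16$ beats the additive $\varepsilon$ term).

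Finally, rearranging and summing over $k = 0, \dots, K-1$, using Assumption \ref{ass:lower_bound} to bound $\mathbb{E}[f(x^0)] - \mathbb{E}[f(x^K)] \leq \Delta$, and dividing by $K$ gives
\begin{equation*}
\tfrac{1}{K}\sum_{k=0}^{K-1}\mathbb{E}\|\nabla f(x^k)\|^2 \leq \tfrac{c_1 L \Delta}{K} + c_2 \varepsilon,
\end{equation*}
for small absolute constants $c_1, c_2$. Choosing $K \geq 16 L \Delta / \varepsilon$ makes the first term at most $\varepsilon/2$ and, with the constants tracked carefully (in particular keeping $c_2 \leq 1/2$), the right-hand side is at most $\varepsilon$, as claimed. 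I do not anticipate any real obstacle: the only place cleverness is needed has already been isolated into Lemmas \ref{lemma:sgd_async:same_weights} and \ref{lemma:bound_variance_term_by_one}, and everything else is the textbook descent-lemma computation with $\gamma = 1/(2L)$.
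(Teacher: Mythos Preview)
Your proposal is correct and essentially identical to the paper's proof: both invoke Lemma~\ref{lemma:sgd_async:same_weights} for the variance bound and Lemma~\ref{lemma:bound_variance_term_by_one} to reduce the multiplicative factor to~$1$, after which the standard descent-lemma computation with $B=1$, $C=\varepsilon$, $\gamma=1/(2L)$ finishes the job. The only cosmetic difference is that the paper packages the final telescoping step as a black-box call to Theorem~\ref{theorem:sgd}, whereas you unroll that computation inline (and your constants $4L\Delta/K + \varepsilon/2 \le 3\varepsilon/4$ check out).
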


\begin{proof}
  Let us fix any iteration $k \in \N.$ Consider that $\mathcal{G}_k$ is a $\sigma$-algebra generated by $g^0, \dots, g^{k-1}.$ Then, given $\mathcal{G}_k,$ $x^k$ is a deterministic vector. Using Lemma~\ref{lemma:sgd_async:same_weights}, we have
  \begin{align*}
      \ExpCond{\norm{g^k - \nabla f(x^k)}^2}{\mathcal{G}_k} \leq \left(\sum_{i\,:\, b_i \wedge m_i > 0} \frac{b_i m_i}{b_i \omega + \omega \frac{\sigma^2}{\varepsilon} + m_i \frac{\sigma^2}{\varepsilon}}\right)^{-1} \left(\norm{\nabla f(x^k)}^2 + \varepsilon\right).
  \end{align*}
  Note that the choice of the parameters in Alg.~\ref{alg:alg_server_better} satisfy the conditions of Lemma~\ref{lemma:bound_variance_term_by_one}. Thus, we have
  \begin{align*}
      \ExpCond{\norm{g^k - \nabla f(x^k)}^2}{\mathcal{G}_k} \leq \norm{\nabla f(x^k)}^2 + \varepsilon
  \end{align*}
  for all $k \geq 0.$ It is left to use the standard \algname{SGD} analysis from Theorem~\ref{theorem:sgd} with $B = 1$ and $C = \varepsilon$ to finish the proof.
\end{proof}

\THEOREMMAINSTATIC*

\begin{proof}
  It immediately follows from Theorem~\ref{thm:sgd_homog_better} for $h_i = h_i^k$ and $\tau_i = \tau_i^k.$
\end{proof}

\THEOREMMAINSTATICTIME*

\begin{proof}
  Let us fix an iteration index $k \in [n].$ In every iteration, every worker calculates $b_i$ stochastic gradients and sends $m_i$ compressed vectors. Thus, the processing time of each iteration is not greater than 
  \begin{equation}
  \begin{aligned}
    &\max_{i \in [n]} \left\{h^k_{i} b_i + \tau^k_{i} m_i\right\} = \max_{i \in [n]} \left\{h^k_{i} \flr{\frac{t^*}{h^k_{i}}} + \tau^k_{i} \flr{\frac{t^*}{\tau^k_{i}}}\right\} \\
    &\leq 2 t^*(\omega, \nicefrac{\sigma^2}{\varepsilon}, h^k_1, \tau^k_1, \dots, h^k_n, \tau^k_n).
    \label{eq:bound_iter_time}
  \end{aligned}
  \end{equation}
  Using the fact that the number of iterations equals to \eqref{eq:static_conv_rate}, we finally get \eqref{eq:static_conv_rate_time}.
\end{proof}

\COROLLLARYMAIN*

\begin{proof}
  It immediately follows from Theorem~\ref{thm:sgd_homom_time} for $h_i = h_i^k$ and $\tau_i = \tau_i^k.$
\end{proof}

\section{The Classical SGD Theorem}
Let us consider a slightly modified classical \algname{SGD} result from \citep{ghadimi2013stochastic,khaled2020better}.
\begin{theorem}
    \label{theorem:sgd}
    Assume that Assumptions~\ref{ass:lipschitz_constant} and \ref{ass:lower_bound} hold. We consider the \algname{SGD} method: $$x^{k+1} = x^k - \gamma g(x^k),$$ where
    \begin{align*}
        \gamma = \min\left\{\frac{1}{L (1 + B)}, \frac{\varepsilon}{2 L C}\right\}
    \end{align*} 
    For all $k \geq 0,$ the vector $g(x)$ is a random vector such that $\ExpCond{g(x^k)}{\mathcal{G}_k} = \nabla f(x^k),$ \begin{align}
        \label{eq:aux_sigma}
        \ExpCond{\norm{g(x^k) - \nabla f(x^k)}^2}{\mathcal{G}_k} \leq B \norm{\nabla f(x^k)}^2 + C,
    \end{align} 
    where $\mathcal{G}_k$ is a $\sigma$-algebra generated by $g(x^0), \dots, g(x^{k-1}).$
    The quantities $B$ and $C$ are arbitrary nonnegative constants. Then
    \begin{align*}
        \frac{1}{K}\sum_{k=0}^{K-1}\Exp{\norm{\nabla f(x^k)}^2} \leq \varepsilon
    \end{align*}
    for
    \begin{align*}
      K \geq \frac{4 L \Delta (1 + B)}{\varepsilon} + \frac{8 L \Delta C}{\varepsilon^2}.
  \end{align*}
\end{theorem}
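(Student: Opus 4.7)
The plan is to follow the standard nonconvex \algname{SGD} analysis, which combines the $L$--smoothness descent inequality with the conditional second-moment bound on $g(x^k)$ implied by \eqref{eq:aux_sigma}. First I would invoke the descent lemma from Assumption~\ref{ass:lipschitz_constant}, namely $f(x^{k+1}) \leq f(x^k) + \inp{\nabla f(x^k)}{x^{k+1}-x^k} + \tfrac{L}{2}\sqnorm{x^{k+1}-x^k}$, plug in $x^{k+1}-x^k = -\gamma g(x^k)$, and take conditional expectation given $\mathcal{G}_k$. The unbiasedness $\ExpCond{g(x^k)}{\mathcal{G}_k} = \nabla f(x^k)$ kills the cross term, and the bias--variance decomposition $\ExpCond{\sqnorm{g(x^k)}}{\mathcal{G}_k} = \sqnorm{\nabla f(x^k)} + \ExpCond{\sqnorm{g(x^k)-\nabla f(x^k)}}{\mathcal{G}_k}$ combined with \eqref{eq:aux_sigma} yields $\ExpCond{\sqnorm{g(x^k)}}{\mathcal{G}_k} \leq (1+B)\sqnorm{\nabla f(x^k)} + C$. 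Substituting gives
$$\ExpCond{f(x^{k+1})}{\mathcal{G}_k} \leq f(x^k) - \left(\gamma - \tfrac{L\gamma^2(1+B)}{2}\right)\sqnorm{\nabla f(x^k)} + \tfrac{L\gamma^2 C}{2}.$$

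Next I would exploit the stepsize choice. Since $\gamma \leq \tfrac{1}{L(1+B)}$ by construction, the coefficient of $\sqnorm{\nabla f(x^k)}$ is at least $\gamma/2$. Taking total expectation, telescoping over $k=0,\dots,K-1$, and using $f \geq f^*$ (Assumption~\ref{ass:lower_bound}) gives
$$\frac{1}{K}\sum_{k=0}^{K-1}\Exp{\sqnorm{\nabla f(x^k)}} \leq \frac{2\Delta}{\gamma K} + L\gamma C.$$

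Finally I would split the right-hand side into two halves of size $\varepsilon/2$. The variance term satisfies $L\gamma C \leq \varepsilon/2$ immediately from $\gamma \leq \tfrac{\varepsilon}{2LC}$. The bias term requires $K \geq \tfrac{4\Delta}{\gamma\varepsilon}$; using $\tfrac{1}{\gamma} = \max\{L(1+B),\tfrac{2LC}{\varepsilon}\} \leq L(1+B) + \tfrac{2LC}{\varepsilon}$ delivers the stated sufficient-$K$ bound $\tfrac{4L\Delta(1+B)}{\varepsilon} + \tfrac{8L\Delta C}{\varepsilon^2}$.

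The argument is entirely routine; there is no conceptual obstacle. The only mild care needed is that $B$ and $C$ are allowed to be arbitrary nonnegative constants (including zero), so the min-of-two definition of $\gamma$ and the $\max \leq$ sum relaxation must degrade gracefully when either $C=0$ or $B=0$; this is automatic because the corresponding term in the sufficient-$K$ bound simply vanishes and the other factor in the $\min$ becomes active.
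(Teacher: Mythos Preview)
Your proposal is correct and follows essentially the same approach as the paper: descent lemma from $L$--smoothness, conditional expectation using unbiasedness and the variance bound \eqref{eq:aux_sigma}, the stepsize condition $\gamma \leq \tfrac{1}{L(1+B)}$ to secure the $\gamma/2$ coefficient, telescoping to $\tfrac{2\Delta}{\gamma K} + L\gamma C$, and then the stated choice of $\gamma$ and $K$. Your final paragraph spelling out the $\max \leq$ sum relaxation is slightly more explicit than the paper, which simply asserts that the choice of $\gamma$ and $K$ suffices.
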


\begin{proof}
    From Assumption~\ref{ass:lipschitz_constant}, we have
    \begin{align*}
        f(x^{k+1}) &\leq f(x^k) + \inp{\nabla f(x^k)}{x^{k+1} - x^k} + \frac{L}{2} \norm{x^{k+1} - x^k}^2 \\
        &= f(x^k) - \gamma \inp{\nabla f(x^k)}{g(x^k)} + \frac{L \gamma^2}{2} \norm{g(x^k)}^2.
    \end{align*}
    We denote $\mathcal{G}^k$ as a sigma-algebra generated by $g(x^0), \dots, g(x^{k-1}).$ Using unbiasedness and \eqref{eq:aux_sigma}, we obtain
    \begin{align*}
        \ExpCond{f(x^{k+1})}{\mathcal{G}^k} &\leq f(x^k) - \gamma \left(1 - \frac{L \gamma}{2}\right) \norm{\nabla f(x^k)}^2 + \frac{L \gamma^2}{2} \ExpCond{\norm{g(x^k) - \nabla f(x^k)}^2}{\mathcal{G}^k} \\
        &\leq f(x^k) - \gamma \left(1 - \frac{L \gamma (1 + B)}{2}\right) \norm{\nabla f(x^k)}^2 + \frac{L \gamma^2 C}{2}.
    \end{align*}
    Since $\gamma \leq \frac{1}{L (1 + B)},$ we get
    \begin{align*}
        \ExpCond{f(x^{k+1})}{\mathcal{G}^k} \leq f(x^k) - \frac{\gamma}{2} \norm{\nabla f(x^k)}^2 + \frac{L \gamma^2 C}{2}.
    \end{align*}
    We subtract $f^*$ and take the full expectation to obtain
    \begin{align*}
        \Exp{f(x^{k+1}) - f^*} \leq \Exp{f(x^k) - f^*} - \frac{\gamma}{2} \Exp{\norm{\nabla f(x^k)}^2} + \frac{L \gamma^2 C}{2}.
    \end{align*}
    Next, we sum the inequality for $k \in \{0, \dots, K - 1\}$:
    \begin{align*}
        \Exp{f(x^{K}) - f^*} &\leq f(x^0) - f^* - \sum_{k=0}^{K-1}\frac{\gamma}{2} \Exp{\norm{\nabla f(x^k)}^2} + \frac{K L \gamma^2 C}{2} \\
        & = \Delta - \sum_{k=0}^{K-1}\frac{\gamma}{2} \Exp{\norm{\nabla f(x^k)}^2} + \frac{K L \gamma^2 C}{2}.
    \end{align*}
    Finally, we rearrange the terms and use that $\Exp{f(x^{K}) - f^*} \geq 0$:
    \begin{align*}
        \frac{1}{K}\sum_{k=0}^{K-1}\Exp{\norm{\nabla f(x^k)}^2} \leq \frac{2 \Delta}{\gamma K} + L \gamma C.
    \end{align*}
    The choice of $\gamma$ and $K$ ensures that
    \begin{align*}
        \frac{1}{K}\sum_{k=0}^{K-1}\Exp{\norm{\nabla f(x^k)}^2} \leq \varepsilon.
    \end{align*}
\end{proof}

\section{Comparison with Baselines}
\label{sec:comparison}

In the following proofs, we use assumptions and definitions from Sec.~\ref{sec:comparison_main}.

\COMPARISONMB*

\begin{proof}
  Without loss of generality, we assume that all workers are sorted by $\max\{h_i, \dot{\tau}_i\}.$ For the Rand$1$ compressor, we have $\omega = d - 1.$ From Corollary~\ref{cor:max_time}, we know that the time complexity of Alg.~\ref{alg:alg_server} is
  \begin{align*}
    T_{*} \eqdef \frac{L \Delta}{\varepsilon} \times t^*(\omega, \nicefrac{\sigma^2}{\varepsilon}, h_1, \dot{\tau}_1, \dots, h_n, \dot{\tau}_n)
  \end{align*}
  up to a constant factor. Using Def.~\ref{def:optimal_time_budget} of $t^*,$ we have
  \begin{align}
    \label{eq:ZkOAaPq}
    T_{*} \leq \max\{\max\{h_{n}, \dot{\tau}_{n}\}, s^*(n)\} \times \frac{L \Delta}{\varepsilon},
  \end{align}
  where $s^*(n)$ is the solution of 
  \begin{align}
    \left(\sum_{i=1}^n \frac{1}{2 \dot{\tau}_{i} \omega + \frac{4 \dot{\tau}_{i} h_{i} \sigma^2 \omega}{s \times \varepsilon} + \frac{2 h_{i} \sigma^2}{\varepsilon}}\right)^{-1} = s.
    \label{eq:eqYyuvxjztMJjmLNJv}
  \end{align}
  Let us take $s' =  12\max_{i \in [n]}\max\left\{\frac{h_i \sigma^2}{n \varepsilon}, \omega \dot{\tau}_i\right\}.$ Using simple bounds, we have
  \begin{align*}
    \left(\sum_{i=1}^n \frac{1}{2 \dot{\tau}_{i} \omega + \frac{4 \dot{\tau}_{i} h_{i} \sigma^2 \omega}{s' \times \varepsilon} + \frac{2 h_{i} \sigma^2}{\varepsilon}}\right)^{-1} 
    &\leq \left(\frac{n}{\max_{i \in [n]} \left(2 \dot{\tau}_{i} \omega + \frac{4 \dot{\tau}_{i} h_{i} \sigma^2 \omega}{s' \times \varepsilon} + \frac{2 h_{i} \sigma^2}{\varepsilon}\right)}\right)^{-1} \\
    &= \frac{\max_{i \in [n]} \left(2 \dot{\tau}_{i} \omega + \frac{4 \dot{\tau}_{i} h_{i} \sigma^2 \omega}{s' \times \varepsilon} + \frac{2 h_{i} \sigma^2}{\varepsilon}\right)}{n}.
  \end{align*}
  Since $s' \geq \omega \max_{i \in [n]} \dot{\tau}_i,$ we get
  \begin{align*}
    \left(\sum_{i=1}^n \frac{1}{2 \dot{\tau}_{i} \omega + \frac{4 \dot{\tau}_{i} h_{i} \sigma^2 \omega}{s' \times \varepsilon} + \frac{2 h_{i} \sigma^2}{\varepsilon}}\right)^{-1} 
    &\leq \frac{\max_{i \in [n]} \left(2 \dot{\tau}_{i} \omega + \frac{4 h_{i} \sigma^2}{\varepsilon} + \frac{2 h_{i} \sigma^2}{\varepsilon}\right)}{n} \\
    &\leq \frac{12 \max_{i \in [n]} \max\left\{\frac{h_{i} \sigma^2}{\varepsilon}, \omega \dot{\tau}_{i}\right\}}{n} \leq s'.
  \end{align*}
  It means that $s^*(n) \leq s'$ since $s^*(n)$ is the solution of \eqref{eq:eqYyuvxjztMJjmLNJv}. Using the properties of $\max$ and \eqref{eq:ZkOAaPq}, we get
  \begin{align*}
    T_{*} 
    &= \operatorname{O}\left(\max\left\{\max\{h_{n}, \dot{\tau}_{n}\}, \max_{i \in [n]}\max\left\{\frac{h_i \sigma^2}{n \varepsilon}, \omega \dot{\tau}_i\right\}\right\} \times \frac{L \Delta}{\varepsilon}\right) \\
    &= \operatorname{O}\left(\max\left\{\max_{i \in [n]}\left(h_{i} + (\omega + 1)\dot{\tau}_{i}\right), \left(\max_{i \in [n]} \frac{h_i \sigma^2}{n \varepsilon} + \max_{i \in [n]} \omega \dot{\tau}_i\right)\right\} \times \frac{L \Delta}{\varepsilon}\right) \\
    &= \operatorname{O}\left(\max\left\{\max_{i \in [n]}\left(h_{i} + (\omega + 1)\dot{\tau}_{i}\right) \times \frac{L \Delta}{\varepsilon}, \max_{i \in [n]} h_i \times \frac{\sigma^2 L \Delta}{n \varepsilon^2}, \max_{i \in [n]} (\omega + 1) \dot{\tau}_i \times \frac{L \Delta}{\varepsilon}\right\}\right) \\
    &= \operatorname{O}\left(\max_{i \in [n]} \left(h_i + (\omega + 1) \dot{\tau}_i\right) \left(\frac{L \Delta}{\varepsilon} + \frac{\sigma^2 L \Delta}{n \varepsilon^2}\right)\right) \\
    &= \operatorname{O}\left(\max_{i \in [n]} \left(h_i + d \dot{\tau}_i\right) \left(\frac{L \Delta}{\varepsilon} + \frac{\sigma^2 L \Delta}{n \varepsilon^2}\right)\right),
  \end{align*}
  where we use $\omega + 1 = d.$
\end{proof}

\COMPARISONRENNALA*

\begin{proof}
  From Sec.~\ref{sec:comparison_main}, we know that
  \begin{align*}
    T_{*} \eqdef \frac{L \Delta}{\varepsilon} \times t^*(\omega, \nicefrac{\sigma^2}{\varepsilon}, h_1, \dot{\tau}_1, \dots, h_n, \dot{\tau}_n)
  \end{align*}
  and
  \begin{align*}
    T_{\textnormal{R}} \eqdef \frac{L \Delta}{\varepsilon} \times t^*(0, \nicefrac{\sigma^2}{\varepsilon}, h_1, d \dot{\tau}_1, \dots, h_n, d \dot{\tau}_n).
  \end{align*}
  Using Property~\ref{property:zero}, we get
  $T_{*} = \operatorname{O}(T_{\textnormal{R}})$ since $\omega = d - 1$ for Rand$1$.
\end{proof}

\clearpage
\newpage 
\section{Description of Alg.~\ref{alg:alg_server_bi} in the Bidirectional Setting}
\label{sec:bidirectional}
In this section, we provide the modification of Alg.~\ref{alg:alg_server_better} with the \algname{EF21-P} mechanism \citep{gruntkowska2023ef21}. Almost all steps are the same as in Alg.~\ref{alg:alg_server_better} except for the \algname{EF21-P} mechanism (we mark the main changes with the {\red color}).

\begin{algorithm}
  \caption{\algname{Bidirectional \newmethod}}
  \label{alg:alg_server_bi}
  \begin{algorithmic}[1]
  \STATE \textbf{Input:} starting point $x^0$, stepsize $\gamma$, the ratio $\nicefrac{\sigma^2}{\varepsilon}$
  \FOR{$k = 0, 1, \dots, K - 1$}
  \STATE Find the current computation speeds $h_i^k > 0$ \\
  and communication speeds $\tau_i^k > 0$ of the workers
  \STATE Find the equilibrium time $t^*$ using Def.~\ref{def:optimal_time_budget}
  \STATE Set $b_i = \flr{\frac{t^*}{h_i^k}}$ and $m_i = \flr{\frac{t^*}{\tau_i^k}}$ for all $i \in [n]$
  \STATE Find active workers $S_{\textnormal{A}} = \{i \in [n]\,:\, b_i \wedge m_i > 0\}$
  \STATE Run Alg.~\ref{alg:alg_worker_bi} in all workers
  \STATE Broadcast $b_i,$ and $m_i$ to all workers
  \STATE Init $g^k = 0$
  \FOR{$i \in S_{\textnormal{A}}$ {\bf in parallel}} 
      \STATE $w_i \overset{(a)}{=} \left(b_i \omega + \omega \frac{\sigma^2}{\varepsilon} + m_i \frac{\sigma^2}{\varepsilon}\right)^{-1}$
      \FOR{$j = 1, \dots, m_i$} 
          \STATE Receive $\cC_{ij}\left(g_i^k\right)$ from the $i$\textsuperscript{th} worker
          \STATE $g^k = g^k + w_i \cC_{ij}\left(g_i^k\right)$
      \ENDFOR
  \ENDFOR
  \STATE $g^k = g^k / \left(\sum_{i=1}^n w_{i} m_i b_i\right)$
  \STATE $x^{k+1} = x^k - \gamma g^k$
  \STATE {\red $p^{k+1} = \cC_{\rm serv}(x^{k+1} - w^k)$}
  \STATE {\red $w^{k+1} = w^k + p^{k+1}$}
  \STATE {\red Broadcast $p^{k+1}$ to all workers}
  \ENDFOR
  \end{algorithmic}
  $(a):$ If $\omega = 0$ and $\frac{\sigma^2}{\varepsilon} = 0,$ then $w_i = 1$
\end{algorithm}
\begin{algorithm}
  \caption{$i$\textsuperscript{th} Worker's Strategy {\red (init all workers with $w^0 = x^0$)}}
  \label{alg:alg_worker_bi}
  \begin{algorithmic}[1]
  \STATE Receive $b_i,$ and $m_i$ from the server
  \IF{$b_i \wedge m_i > 0$}
    \STATE Init $g_i^k = 0$
    \FOR{$l = 1, \dots, b_i$}
        \STATE Calculate $\nabla f({\red w^k};\xi_{il}^k), \quad \xi_{il}^k \sim \mathcal{D}_{\xi}$
        \STATE $g_i^k = g_i^k + \nabla f({\red w^k};\xi_{il}^k)$
    \ENDFOR
    \FOR{$j = 1, \dots, m_i$}
        \STATE Send $\cC_{ij}\left(g_i^k\right) \equiv \cC\left(g_i^k; \nu_{ij}^k\right)$ to the server, \\
        $\nu_{ij}^k \sim \mathcal{D}_{\nu},$ $\cC_{ij} \in \mathbb{U}(\omega)$ \\
    \ENDFOR
  \ENDIF
  \STATE {\red Receive $p^{k+1}$ from the server}
  \STATE {\red $w^{k+1} = w^k + p^{k+1}$}
  \end{algorithmic}
\end{algorithm}

\section{Proofs for Alg.~\ref{alg:alg_server_bi}}

\THEOREMMAINSTATICBI*

\begin{proof}
  In the bidirectional setting, the idea of proof is the same as in Theorem~\ref{thm:sgd_homog_better}. Let us fix any iteration $k \in \N.$ The gradient estimator has the same structure as \eqref{eq:grad_est_static} but with $w^k$ instead of $x^k:$
  \begin{align*}
    g^k = \frac{1}{\sum_{i=1}^n w_{i} m_i b_i} \sum_{i=1}^{n} w_{i} \sum_{j=1}^{m_i}  \cC_{ij}\left(\sum_{l=1}^{b_i} \nabla f(w^k;\xi_{il}^k)\right),
  \end{align*}
  Consider that $\mathcal{G}_k$ is a $\sigma$-algebra generated by all random variables from the iterations $0, \dots, k - 1.$ Then, given $\mathcal{G}_k,$ $w^k$ is a deterministic vector. Using Lemma~\ref{lemma:sgd_async:same_weights} with $x^k \equiv w^k$ and Lemma~\ref{lemma:bound_variance_term_by_one}, we have
  \begin{align*}
      \ExpCond{\norm{g^k - \nabla f(w^k)}^2}{\mathcal{G}_k} \leq \norm{\nabla f(w^k)}^2 + \varepsilon
  \end{align*}
  for all $k \geq 0.$ It is left to use Theorem E.3 from \citep{gruntkowska2023ef21} with $B = 2$ and $C = \varepsilon$ to ensure that $\min_{0 \leq k \leq K - 1} \Exp{\norm{\nabla f(x^k)}^2} \leq \varepsilon$ after $\frac{768 L \Delta}{\alpha \varepsilon}$ iterations.
\end{proof}

\THEOREMMAINSTATICTIMEBI*

\begin{proof}
  Let us fix an iteration index $k \in [n].$ In every iteration, the server broadcasts one compressed vector, every worker calculates $b_i$ stochastic gradients and sends $m_i$ compressed vectors. Thus, the processing time of each iteration is not greater than

  \begin{eqnarray*}
    \tau_{\rm serv} + \max_{i \in [n]} \left\{h^k_{i} b_i + \tau^k_{i} m_i\right\} 
    &=& \tau_{\rm serv} + \max_{i \in [n]} \left\{h^k_{i} \flr{\frac{t^*}{h^k_{i}}} + \tau^k_{i} \flr{\frac{t^*}{\tau^k_{i}}}\right\} \\
    &\leq & \tau_{\rm serv} + 2 t^*(\omega, \nicefrac{\sigma^2}{\varepsilon}, h^k_1, \tau^k_1, \dots, h^k_n, \tau^k_n) \\
    &\overset{\textnormal{P.}\ref{property:monotonic}}{\leq} & \tau_{\rm serv} + 2 t^*(\omega, \nicefrac{\sigma^2}{\varepsilon}, h_1, \tau_1, \dots, h_n, \tau_n).
  \end{eqnarray*}
  Using the converge rate from Theorem~\ref{thm:sgd_homog_bi}, we finally get \eqref{eq:static_conv_rate_time_bi}.
\end{proof}

\COMPARISONBI*

\begin{proof}
  From the assumption, we have $\tau_{\rm serv} = K \dot{\tau}_{\rm serv}$ and $\tau_{\rm serv}^{\rm full} = d \dot{\tau}_{\rm serv}.$ For Top$K$, $\alpha \geq \nicefrac{K}{d}.$ Therefore, up to a constant factor, we obtain
  \begin{align*}
    T_{*,\rm serv} 
    &= \frac{L \Delta}{\alpha \varepsilon} \times \left(\tau_{\rm serv} + t^*(\omega, \nicefrac{\sigma^2}{\varepsilon}, [h_i, \tau_i]_1^n)\right) \\
    &\leq \frac{d L \Delta}{K \varepsilon} \times \left(K \dot{\tau}_{\rm serv} + t^*(\omega, \nicefrac{\sigma^2}{\varepsilon}, [h_i, \tau_i]_1^n)\right) \\
    &= \frac{d L \Delta}{\varepsilon} \dot{\tau}_{\rm serv} + \frac{d L \Delta}{K \varepsilon}t^*(\omega, \nicefrac{\sigma^2}{\varepsilon}, [h_i, \tau_i]_1^n) \\
    &\leq \frac{d L \Delta}{\varepsilon} \dot{\tau}_{\rm serv} + \max\left\{\frac{d L \Delta}{\varepsilon}\dot{\tau}_{\rm serv}, \frac{L \Delta}{\varepsilon}t^*(\omega, \nicefrac{\sigma^2}{\varepsilon}, [h_i, \tau_i]_1^n)\right\} \\
    &\leq 2\left(\frac{d L \Delta}{\varepsilon}\dot{\tau}_{\rm serv} + \frac{L \Delta}{\varepsilon}t^*(\omega, \nicefrac{\sigma^2}{\varepsilon}, [h_i, \tau_i]_1^n)\right).
  \end{align*}
  Also, up to a constant factor, we have
  \begin{align*}
    T_{*} 
    &= \frac{L \Delta}{\varepsilon} \times (\tau_{\rm serv}^{\rm full} + t^*(\omega, \nicefrac{\sigma^2}{\varepsilon}, [h_i, \tau_i]_1^n)) \\
    &= \frac{L \Delta}{\varepsilon} \times (d \dot{\tau}_{\rm serv} + t^*(\omega, \nicefrac{\sigma^2}{\varepsilon}, [h_i, \tau_i]_1^n)) \\
    &= \frac{d L \Delta}{\varepsilon} \dot{\tau}_{\rm serv} + \frac{L \Delta}{\varepsilon} t^*(\omega, \nicefrac{\sigma^2}{\varepsilon}, [h_i, \tau_i]_1^n).
  \end{align*}

  Therefore, $T_{*,\rm serv} = \operatorname{O}\left(T_{*}\right).$
\end{proof}

\section{Development of Adaptive \newmethod}
\label{sec:adaptive_method}
\begin{algorithm}
  \caption{\algname{Adaptive \newmethod}}
  \label{alg:alg_server_online}
  \begin{algorithmic}[1]
  \STATE \textbf{Input:} starting point $x^0$, stepsize $\gamma$, the ratio $\nicefrac{\sigma^2}{\varepsilon}$
  \FOR{$k = 0, 1, \dots, K - 1$}
  \STATE Run Alg.~\ref{alg:alg_worker_online} in all workers
  \STATE Broadcast $x^k$ to the workers
  \STATE Init $l_i = 0$ for all $i \in [n]$ 
  \WHILE{$\left(\sum\limits_{i\,:\,l_i > 0} \left(\sum\limits_{j=1}^{l_i} \left(\frac{\omega}{l_i^2 m_{ij}} + \frac{\omega \sigma^2}{l_i^3 m_{ij} \varepsilon}\right) + \frac{\sigma^2}{l_i \varepsilon}\right)^{-1}\right)^{-1} > \frac{1}{4}$} \algonlinelinelabel{alg:mainwhile}
  \STATE Receive $\cC_{i,l_i,m_{i,l_i}}\left(g_i\right), l_i$ and $m_{i,l_i}$ from some worker (we indicate this worker with $i$)
  \IF{$m_{i,l_i} = 1$ and $l_i > 1$}
  \STATE $\bar{g}_i = \bar{g}_i + \frac{1}{m_{i,(l_i - 1)}} \hat{g}_i \textnormal{ and } \hat{g}_i = 0$
  \ENDIF
  \STATE $\hat{g}_i = \hat{g}_i + \cC_{i,l_i,m_{i,l_i}}\left(g_i\right)$
  \ENDWHILE
  \STATE Init $g^k = 0$
  \FOR{$i \in [n]\,:\, l_i > 0$}
  \STATE $\bar{g}_i = \bar{g}_i + \frac{1}{m_{i,l_i}} \hat{g}_i$
  \STATE $w_i \overset{(a)}{=} \left(\sum_{j=1}^{l_i} \frac{\omega}{m_{ij}} + \sum_{j=1}^{l_i} \frac{\omega \sigma^2}{l_i m_{ij} \varepsilon} + \frac{l_i \sigma^2}{\varepsilon}\right)^{-1}$
  \STATE $g^k = g^k + w_i \bar{g}_i$
  \ENDFOR
  \STATE $g^k = g^k / \left(\sum_{i\,:\,l_i > 0} w_{i} \sum_{j=1}^{l_i} j\right)$
  \STATE $x^{k+1} = x^k - \gamma g^k$
  \ENDFOR
  \end{algorithmic}
  $(a):$ If $\omega = 0$ and $\frac{\sigma^2}{\varepsilon} = 0,$ then $w_i = 1$
\end{algorithm}

\begin{algorithm}[t]
  \caption{$i$\textsuperscript{th} Worker's Strategy}
  \label{alg:alg_worker_online}
  \begin{algorithmic}[1]
  \STATE Receive $x^k$ from the server
  \STATE Init $l_i = 1$ \\
  \STATE Calculate $g_i = \nabla f(x^k;\xi^k_{i1}),$ $\quad \xi^k_{i1} \sim \mathcal{D}_{\xi}$
  \WHILE{True}
    \STATE Start calculating $\nabla f(x^k;\xi^k_{i,l_i+1}), \xi^k_{i,l_i+1} \sim \mathcal{D}_{\xi},$ \\
    and go to the next step
    \STATE Init $m_{i,l_i} = 0$ \\
    \WHILE{$\nabla f(x^k;\xi^k_{i,l_i + 1})$ is not calculated OR $m_{i,l_i} = 0$}
    \STATE $m_{i,l_i} = m_{i,l_i} + 1$
    \STATE Send $\cC_{i,l_i,m_{i,l_i}}\left(g_i\right), l_i$ and $m_{i,l_i}$ to the server, $\cC_{i,l_i,m_{i,l_i}} \in \mathbb{U}(\omega)$ \\
    \ENDWHILE
    \STATE $g_i = g_i + \nabla f(x^k;\xi^k_{i,l_i+1})$
    \STATE $l_i = l_i + 1$
  \ENDWHILE
  \end{algorithmic}
\end{algorithm}

In this section, we design a new method that, unlike Alg.~\ref{alg:alg_server} and \ref{alg:alg_server_better}, does not require the bounds on computations times. It automatically understands when to stop the collection of compressed vectors in $g^k.$

Let us consider Alg.~\ref{alg:alg_server_online}, which we call Adaptive \newmethod. It implements the following gradient estimator:
\begin{align}
  \label{eq:split_logic_gradient_estimator}
  g^k = \frac{1}{\sum_{i=1}^{n} w_{i} \sum_{j=1}^{l_i} j}\sum_{i=1}^{n} w_{i} \sum_{j=1}^{l_i} \frac{1}{m_{ij}} \sum_{p=1}^{m_{ij}} \cC_{ijp}\left(\sum_{r=1}^{j}\nabla f(x^k;\xi_{ir}^k)\right).
\end{align} 
The idea is that each worker calculate and send compressed vectors \emph{in parallel}: while the next stochastic gradients $\nabla f(x^k;\xi^k_{i,j+1})$ are calculating, the workers are sending $\cC_{ij\cdot}\left(\sum_{r=1}^{j}\nabla f(x^k;\xi^k_{ir})\right)$ to server. The main difficulty is to understand when to stop. It turns out that it is sufficient to wait for the moment when the condition in \begin{NoHyper}Line~\ref{alg:mainwhile}\end{NoHyper} of Alg.~\ref{alg:alg_server_online} does not hold. For this method, we can prove the following guarantees.

\begin{restatable}{theorem}{THEOREMMAINSTATICONLINE}
  \label{thm:sgd_homog_online}
Let Assumptions~\ref{ass:lipschitz_constant}, \ref{ass:lower_bound}, \ref{ass:stochastic_variance_bounded}, \ref{ass:independence} hold. Let us take
  $\gamma = \frac{1}{2 L}$
  in Alg.~\ref{alg:alg_server_online}.
  Then for all iterations
  $K \geq \frac{16 L \Delta}{\varepsilon},$
  Alg.~\ref{alg:alg_server_online} guarantees that $\frac{1}{K}\sum_{k=0}^{K-1}\Exp{\norm{\nabla f(x^k)}^2} \leq \varepsilon.$
\end{restatable}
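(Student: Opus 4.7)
}
The plan mirrors the structure of the proof of Theorem~\ref{thm:sgd_homog_better}: derive a bound of the form $\ExpCond{\|g^k-\nabla f(x^k)\|^2}{\mathcal{G}_k}\le \|\nabla f(x^k)\|^2+\varepsilon$ and then invoke Theorem~\ref{theorem:sgd} with $B=1,C=\varepsilon$. The key new ingredient is that the gradient estimator now has the more complicated structure~\eqref{eq:split_logic_gradient_estimator}, and the stopping rule in \begin{NoHyper}Line~\ref{alg:mainwhile}\end{NoHyper} of Alg.~\ref{alg:alg_server_online} plays the role that was played by the explicit choice of $b_i,m_i$ (together with Lemma~\ref{lemma:bound_variance_term_by_one}) in the static setting.

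First, I would show that \eqref{eq:split_logic_gradient_estimator} fits into the template of Lemma~\ref{lemma:sgd_async}: re-index the inner double sum $(j,p)$ with $j\in[l_i]$, $p\in[m_{ij}]$ as a single index $j'$, and set $b_{ij'}=j$, $w_{ij'}=w_i/m_{ij}$, $\omega_{ij'}=\omega$. The batch sizes $1,\dots,1,2,\dots,2,\dots,l_i,\dots,l_i$ are automatically non-decreasing, so the hypothesis of Lemma~\ref{lemma:sgd_async} holds. A direct computation gives the denominator $\sum_{i=1}^n\sum_{j'}w_{ij'}b_{ij'}=\sum_{i=1}^n w_i\sum_{j=1}^{l_i}j$, matching the normalization in \eqref{eq:split_logic_gradient_estimator} and confirming unbiasedness.

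Next, I would plug these choices into~\eqref{lemma:sgd_async:result}. Three sums arise:
\begin{align*}
A_1 &= \omega\sum_i w_i^2\sum_{j=1}^{l_i}\tfrac{j^2}{m_{ij}}, \\
A_2 &= \omega\sigma^2\sum_i w_i^2\sum_{j=1}^{l_i}\tfrac{j}{m_{ij}}, \\
A_3 &= \sigma^2\sum_i w_i^2\sum_{j,j'=1}^{l_i}\min\{j,j'\},
\end{align*}
the last of which captures the sharing of stochastic gradients across different compressed messages; using $\sum_{j,j'\in[l_i]^2}\min\{j,j'\}=\tfrac{l_i(l_i+1)(2l_i+1)}{6}\le l_i^3$ controls it. Using $j\le l_i$ and the explicit form of $w_i$, the three bounds $w_i\omega\sum_j 1/m_{ij}\le1$, $w_i\omega\sigma^2\sum_j 1/(l_im_{ij}\varepsilon)\le1$, and $w_il_i\sigma^2\le\varepsilon$ yield $A_1\le\sum_i l_i^2 w_i$, $A_2\le\varepsilon\sum_i l_i^2 w_i$, and $A_3\le\varepsilon\sum_i l_i^2 w_i$. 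The denominator satisfies $\bigl(\sum_i w_i\sum_{j=1}^{l_i}j\bigr)^2\ge\tfrac14\bigl(\sum_i l_i^2 w_i\bigr)^2$.

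The crucial algebraic observation is that $\tfrac{1}{l_i^2 w_i}$ equals precisely the inner parenthesis in the stopping predicate of \begin{NoHyper}Line~\ref{alg:mainwhile}\end{NoHyper}, so when the while loop terminates we have $\sum_{i:l_i>0}l_i^2 w_i\ge 4$. Combining this with the three bounds gives $A_1/D^2\le 4/\sum_i l_i^2w_i\le 1$ and $(A_2+A_3)/D^2\le 8\varepsilon/\sum_i l_i^2 w_i\le 2\varepsilon$, so conditional on $\mathcal{G}_k$ the variance is at most $\|\nabla f(x^k)\|^2+2\varepsilon$. Rescaling $\varepsilon$ by a constant (or sharpening the bookkeeping) and invoking Theorem~\ref{theorem:sgd} with $B=1,C=\Theta(\varepsilon)$ and $\gamma=1/(2L)$ yields the claimed $K=\mathcal{O}(L\Delta/\varepsilon)$ iteration complexity. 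The main obstacle will be the bookkeeping in step three, particularly the careful handling of the cross-term $A_3$ and verifying that the stopping condition translates into exactly the $\sum_i l_i^2 w_i\ge 4$ inequality needed to close the estimates.
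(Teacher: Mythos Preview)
Your proposal is correct and follows essentially the same route as the paper. The paper packages the inner average $\tfrac{1}{m_{ij}}\sum_{p}\cC_{ijp}$ as a single compressor in $\mathbb{U}(\omega/m_{ij})$ before invoking Lemma~\ref{lemma:sgd_async} (with $b_{ij}=j$, $w_{ij}=w_i$, $\omega_{ij}=\omega/m_{ij}$), while you re-index the pairs $(j,p)$ with $w_{ij'}=w_i/m_{ij}$; both choices yield the identical expressions $A_1,A_2,A_3$ and the same denominator, so this is purely cosmetic.

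The one place where your bookkeeping loses a factor of $2$ is exactly what the paper fixes by the ``add nonnegative terms'' trick you used in Lemma~\ref{lemma:sgd_async:same_weights}: before bounding $A_1,A_2,A_3$ separately, add $A_2/\varepsilon+A_3/\varepsilon$ to the coefficient of $\|\nabla f(x^k)\|^2$ and $\varepsilon A_1$ to the constant term, so that both become the single quantity $\sum_i w_i^2\bigl(\sum_j l_i^2\omega/m_{ij}+\sum_j l_i\omega\sigma^2/(m_{ij}\varepsilon)+l_i^3\sigma^2/\varepsilon\bigr)$ multiplied by $(\|\nabla f(x^k)\|^2+\varepsilon)$. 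After dividing by $D^2\ge\tfrac14(\sum_i l_i^2 w_i)^2$ and using the choice of $w_i$, this is exactly $4(\sum_i l_i^2 w_i)^{-1}(\|\nabla f(x^k)\|^2+\varepsilon)$, and the stopping rule gives the clean bound $\|\nabla f(x^k)\|^2+\varepsilon$ with no spare factor---so $\gamma=1/(2L)$ and $K\ge 16L\Delta/\varepsilon$ hold as stated.
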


\THEOREMSGDHOMOGMINIBATCHESPRECALCULATEDPARAMSTWOSPLIT*

\section{Proofs for Alg.~\ref{alg:alg_server_online}}

\begin{restatable}{lemma}{LEMMASGDASYNCONLINE}
  \label{lemma:sgd_async_online}
  Consider that Assumptions~\ref{ass:stochastic_variance_bounded} and \ref{ass:independence} hold. Then the gradient estimator \eqref{eq:split_logic_gradient_estimator} with the parameters from Alg.~\ref{alg:alg_server_online} is unbiased and
  \begin{equation}
  \begin{aligned}
    \label{lemma:sgd_async_online:result}
      \Exp{\norm{g^k - \nabla f(x^k)}^2} \leq 4 \left(\sum_{i \in [n]\,:\,l_i > 0} \left(\sum_{j=1}^{l_i} \frac{\omega}{l_i^2 m_{ij}} + \sum_{j=1}^{l_i} \frac{\omega \sigma^2}{l_i^3 m_{ij} \varepsilon} + \frac{\sigma^2}{l_i \varepsilon}\right)^{-1}\right)^{-1} \left(\norm{\nabla f(x^k)}^2 + \varepsilon\right).
  \end{aligned}
  \end{equation}
\end{restatable}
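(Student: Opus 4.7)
The plan is to reduce \eqref{eq:split_logic_gradient_estimator} to the generic template handled by Lemma~\ref{lemma:sgd_async} by flattening the double index $(j,p)$ used inside each worker. I would relabel the messages of worker $i$ by a single index $s \in \{1, \dots, \sum_j m_{ij}\}$, scanning groups in order $j = 1, 2, \dots, l_i$, and set $b_{i,s} \eqdef j(s)$ (weakly increasing, as required by that lemma) together with $w_{i,s} \eqdef w_i/m_{ij(s)}$, the latter absorbing the explicit $1/m_{ij}$ factor in the estimator. The nested-batch structure $\sum_{r=1}^{j(s)} \nabla f(x^k;\xi_{ir}^k)$ of the adaptive estimator matches the nested pattern already exploited inside the proof of Lemma~\ref{lemma:sgd_async}. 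Unbiasedness $\Exp{g^k} = \nabla f(x^k)$ is then immediate from Def.~\ref{def:unbiased_compression} and Assumptions~\ref{ass:stochastic_variance_bounded} and \ref{ass:independence}, with the normalizing constant collapsing to $D \eqdef \sum_i w_i \sum_{j=1}^{l_i} j = \sum_i w_i l_i(l_i+1)/2$.

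Plugging the re-indexing into \eqref{lemma:sgd_async:result}, the first two worker-$i$ sums become $w_i^2 \omega \sum_j j^2/m_{ij}$ and $w_i^2 \omega \sigma^2 \sum_j j/m_{ij}$ by direct bookkeeping. The cross-minibatch term needs more care: once the $p,p'$ summations cancel the $1/(m_{ij} m_{ij'})$ factors, what remains is the combinatorial identity $\sum_{j=1}^{l_i}\sum_{j'=1}^{l_i} \min\{j,j'\} = l_i(l_i+1)(2l_i+1)/6$, obtained by splitting the inner sum at $j'=j$ and reusing $\sum_{j=1}^{l} j^2 = l(l+1)(2l+1)/6$. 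I then apply the coarse bounds $j^2 \leq l_i^2$, $j \leq l_i$, and $l_i(l_i+1)(2l_i+1)/6 \leq l_i^3$ (valid for $l_i \geq 1$) to pull a common $w_i^2 l_i^2$ out of each worker's contribution. Writing $A_i \eqdef \omega \sum_j 1/m_{ij}$, $B_i \eqdef (\omega \sigma^2/l_i)\sum_j 1/m_{ij}$, $C_i \eqdef l_i \sigma^2$, the bracketed quantity becomes $A_i \norm{\nabla f(x^k)}^2 + B_i + C_i$, while the weight prescribed in Alg.~\ref{alg:alg_server_online} is precisely $w_i = (A_i + (B_i + C_i)/\varepsilon)^{-1}$. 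The key algebraic step is the elementary inequality $A_i \norm{\nabla f(x^k)}^2 + B_i + C_i \leq w_i^{-1}(\norm{\nabla f(x^k)}^2 + \varepsilon)$, whose slack $A_i \varepsilon + (B_i + C_i)\norm{\nabla f(x^k)}^2/\varepsilon$ is non-negative; this collapses the worker-$i$ variance to at most $w_i l_i^2 (\norm{\nabla f(x^k)}^2 + \varepsilon)$.

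To finish, I would use $D \geq \sum_i w_i l_i^2 / 2$ to replace $1/D^2$ by $4/(\sum_i w_i l_i^2)^2$, yielding the bound $4(\sum_{i:l_i>0} w_i l_i^2)^{-1}(\norm{\nabla f(x^k)}^2 + \varepsilon)$, and then identify $w_i l_i^2 = \bigl(\sum_j \omega/(l_i^2 m_{ij}) + \sum_j \omega\sigma^2/(l_i^3 m_{ij}\varepsilon) + \sigma^2/(l_i\varepsilon)\bigr)^{-1}$ by dividing the definition of $w_i^{-1}$ through by $l_i^2$; this matches the lemma's expression. The main obstacle, to my mind, is not any individual estimate but the anticipatory choice of $w_i$: the weights in Alg.~\ref{alg:alg_server_online} are tuned so that the slack from bounding $j \leq l_i$ inside each worker precisely meets the slack in the $\varepsilon$-weighted inequality, producing the clean $(\norm{\nabla f(x^k)}^2 + \varepsilon)$ prefactor with no remainder; recognizing this design in advance (together with the harmonic-sum rearrangement in the final step) is what makes the argument go through.
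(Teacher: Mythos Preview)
Your proposal is correct and follows essentially the same route as the paper's proof: invoke Lemma~\ref{lemma:sgd_async}, bound the cross-minibatch sum $\sum_{j,j'}\min\{j,j'\}\leq l_i^3$, replace $j,j^2$ by $l_i,l_i^2$, use $\sum_{j=1}^{l_i} j \geq l_i^2/2$, and exploit the specific weight $w_i$ to extract the common factor $(\norm{\nabla f(x^k)}^2+\varepsilon)$. The only cosmetic difference is in the invocation of Lemma~\ref{lemma:sgd_async}: you flatten the $(j,p)$ index with per-message weights $w_i/m_{ij(s)}$ and $\omega_{i,s}=\omega$, whereas the paper first observes that $\tfrac{1}{m_{ij}}\sum_{p=1}^{m_{ij}}\cC_{ijp}\in\mathbb{U}(\omega/m_{ij})$ and then applies the lemma with $l_i$ aggregated messages per worker, $w_{ij}=w_i$, $b_{ij}=j$, $\omega_{ij}=\omega/m_{ij}$; both reductions yield exactly the same three intermediate sums.
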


\begin{proof}
    Alg.~\ref{alg:alg_server_online} implements the gradient estimator \eqref{eq:split_logic_gradient_estimator}. Note that since $\cC_{ijp} \in \mathbb{U}(\omega),$ then $\frac{1}{m_{ij}} \sum_{p=1}^{m_{ij}} \cC_{ijp} \in \mathbb{U}(\nicefrac{\omega}{m_{ij}}).$
    Therefore, we can use Lemma~\ref{lemma:sgd_async} with $\omega_{ij} = \nicefrac{\omega}{m_{ij}}$, $b_{ij} = j,$ $w_{ij} = w_i,$ and $m_i = l_i,$ and get
    \begin{equation*}
    \begin{aligned}
        \Exp{\norm{g^k - \nabla f(x^k)}^2} &\leq \frac{1}{\left(\sum_{i=1}^n w_{i} \sum_{j=1}^{l_i} j\right)^2}\sum_{i=1}^{n} w_{i}^2 \sum_{j=1}^{l_i} \frac{j^2 \omega}{m_{ij}} \norm{\nabla f(x^k)}^2 \\
        &\quad + \frac{1}{\left(\sum_{i=1}^n w_{i} \sum_{j=1}^{l_i} j\right)^2}\sum_{i=1}^{n} w_i^2 \left(\sum_{j=1}^{l_i} \frac{j \omega \sigma^2}{m_{ij}} + \sum_{j=1}^{l_i} \sum_{p=1}^{l_i} \min\{j, p\} \sigma^2\right).
    \end{aligned}
    \end{equation*}
    Since $\sum_{j=1}^{l_i} \sum_{p=1}^{l_i} \min\{j, p\} \leq l_i^3,$ we have
    \begin{equation*}
    \begin{aligned}
        \Exp{\norm{g^k - \nabla f(x^k)}^2} &\leq \frac{1}{\left(\sum_{i=1}^n w_{i} \sum_{j=1}^{l_i} j\right)^2}\sum_{i=1}^{n} w_{i}^2 \sum_{j=1}^{l_i} \frac{j^2 \omega}{m_{ij}} \norm{\nabla f(x^k)}^2 \\
        &\quad + \frac{1}{\left(\sum_{i=1}^n w_{i} \sum_{j=1}^{l_i} j\right)^2}\sum_{i=1}^{n} w_i^2 \left(\sum_{j=1}^{l_i} \frac{j \omega \sigma^2}{m_{ij}} + l_i^3 \sigma^2\right).
    \end{aligned}
    \end{equation*}
    We add nonnegative terms to the last inequality to obtain 
    \begin{equation*}
    \begin{aligned}
        \Exp{\norm{g^k - \nabla f(x^k)}^2} 
        &\leq \frac{1}{\left(\sum_{i=1}^n w_{i} \sum_{j=1}^{l_i} j\right)^2}\sum_{i=1}^{n} w_{i}^2 \left(\sum_{j=1}^{l_i} \frac{j^2 \omega}{m_{ij}} + \sum_{j=1}^{l_i} \frac{j \omega}{m_{ij}} \frac{\sigma^2}{\varepsilon} + l_i^3 \frac{\sigma^2}{\varepsilon}\right)\norm{\nabla f(x^k)}^2 \\
        &\quad + \frac{1}{\left(\sum_{i=1}^n w_{i} \sum_{j=1}^{l_i} j\right)^2}\sum_{i=1}^{n} w_i^2 \left(\sum_{j=1}^{l_i} \frac{j^2 \omega \varepsilon}{m_{ij}} + \sum_{j=1}^{l_i} \frac{j \omega \sigma^2}{m_{ij}} + l_i^3 \sigma^2\right) \\
        &= \frac{1}{\left(\sum_{i=1}^n w_{i} \sum_{j=1}^{l_i} j\right)^2}\sum_{i=1}^{n} w_{i}^2 \left(\sum_{j=1}^{l_i} \frac{j^2 \omega}{m_{ij}} + \sum_{j=1}^{l_i} \frac{j \omega}{m_{ij}} \frac{\sigma^2}{\varepsilon} + l_i^3 \frac{\sigma^2}{\varepsilon}\right)\left(\norm{\nabla f(x^k)}^2 + \varepsilon\right).
    \end{aligned}
    \end{equation*}
    Using $\sum_{j=1}^{l_i} j \geq \frac{l_i^2}{2},$ we obtain
    \begin{equation*}
    \begin{aligned}
        \Exp{\norm{g^k - \nabla f(x^k)}^2} 
        &\leq \frac{4}{\left(\sum_{i=1}^n w_{i} l_i^2\right)^2}\sum_{i=1}^{n} w_{i}^2 \left(\sum_{j=1}^{l_i} \frac{j^2 \omega}{m_{ij}} + \sum_{j=1}^{l_i} \frac{j \omega}{m_{ij}} \frac{\sigma^2}{\varepsilon} + l_i^3 \frac{\sigma^2}{\varepsilon}\right)\left(\norm{\nabla f(x^k)}^2 + \varepsilon\right).
    \end{aligned}
    \end{equation*}
    In the last two sums, we bound the terms $j$ with $l_i$ to get
    \begin{equation*}
    \begin{aligned}
        \Exp{\norm{g^k - \nabla f(x^k)}^2} 
        &\leq \frac{4}{\left(\sum_{i=1}^n w_{i} l_i^2\right)^2}\sum_{i=1}^{n} w_{i}^2 \left(\sum_{j=1}^{l_i} \frac{l_i^2 \omega}{m_{ij}} + \sum_{j=1}^{l_i} \frac{l_i \omega}{m_{ij}} \frac{\sigma^2}{\varepsilon} + l_i^3 \frac{\sigma^2}{\varepsilon}\right)\left(\norm{\nabla f(x^k)}^2 + \varepsilon\right).
    \end{aligned}
    \end{equation*}
    It is left to use the choice of the weights $w_i$ to obtain \eqref{lemma:sgd_async_online:result}.
\end{proof}

\THEOREMMAINSTATICONLINE*

\begin{proof}
  The proof of this theorem is very close to the proof of Theorem~\ref{thm:sgd_homog_better}. Let us fix any iteration $k \in \N.$ Consider that $\mathcal{G}_k$ is a $\sigma$-algebra generated by $g^0, \dots, g^{k-1}.$ Then, given $\mathcal{G}_k,$ $x^k$ is a deterministic vector. Using Lemma~\ref{lemma:sgd_async_online}, we have
  \begin{align*}
      \ExpCond{\norm{g^k - \nabla f(x^k)}^2}{\mathcal{G}_k} \leq 4 \left(\sum_{i \in [n]\,:\,l_i > 0} \left(\sum_{j=1}^{l_i} \frac{\omega}{l_i^2 m_{ij}} + \sum_{j=1}^{l_i} \frac{\omega \sigma^2}{l_i^3 m_{ij} \varepsilon} + \frac{\sigma^2}{l_i \varepsilon}\right)^{-1}\right)^{-1} \left(\norm{\nabla f(x^k)}^2 + \varepsilon\right).
  \end{align*}
  The algorithm is constructed in such a way that the first bracket in the last inequality is less or equal to $1$ (see \begin{NoHyper}Line~\ref{alg:mainwhile}\end{NoHyper} in Alg.~\ref{alg:alg_server_online}). Thus
  \begin{align*}
      \ExpCond{\norm{g^k - \nabla f(x^k)}^2}{\mathcal{G}_k} \leq \norm{\nabla f(x^k)}^2 + \varepsilon
  \end{align*}
  for all $k \geq 0.$ It is left to use the standard \algname{SGD} analysis from Theorem~\ref{theorem:sgd} with $B = 1$ and $C = \varepsilon$ to ensure that the algorithm converges after $\frac{16 L \Delta}{\varepsilon}$ iterations.
\end{proof}

\THEOREMSGDHOMOGMINIBATCHESPRECALCULATEDPARAMSTWOSPLIT*

\begin{proof}
  Let us fix an iteration and take $k \in [K].$ It is sufficient to find a time required to send enough compressed vectors such that the inequality
  \begin{align*}
    4 \left(\sum_{i \in [n]\,:\,l_i > 0} \left(\sum_{j=1}^{l_i} \frac{\omega}{l_i^2 m_{ij}} + \sum_{j=1}^{l_i} \frac{\omega \sigma^2}{l_i^3 m_{ij} \varepsilon} + \frac{\sigma^2}{l_i \varepsilon}\right)^{-1}\right)^{-1} \leq 1
  \end{align*}
  holds. As soon as this inequality holds, the algorithm stops the loop in \begin{NoHyper}Line~\ref{alg:mainwhile}\end{NoHyper} from Alg.~\ref{alg:alg_server_online}. Then the upper bound on the time complexity equals to the number of iterations $\times$ the upper bound on the time of each iteration. The previous inequality is equivalent to
  \begin{align}
    \label{eq:tmp_check_holds}
    \mathcal{H} \eqdef \sum_{i \in [n]\,:\,l_i > 0} \frac{1}{\sum_{j=1}^{l_i} \frac{4 \omega}{l_i^2 m_{ij}} + \sum_{j=1}^{l_i} \frac{4 \omega \sigma^2}{l_i^3 m_{ij} \varepsilon} + \frac{4 \sigma^2}{l_i \varepsilon}} \geq 1.
  \end{align}
  Let us show that 
  \begin{align*}
    t' \eqdef 128 \times t^*(\omega, \nicefrac{\sigma^2}{\varepsilon}, \max\{h_1, \tau_1\}, \min\left\{\tau_1 r_1, \max\{h_1, \tau_1\}\right\}, \dots, \max\{h_n, \tau_n\}, \min\left\{\tau_n r_n, \max\{h_n, \tau_n\}\right\})
  \end{align*}
  is a sufficient time such that \eqref{eq:tmp_check_holds} holds. 
  
  By the definition of the equilibrium time $t^*,$ in order to apply this mapping, we first have to find a permutation $\pi$ that sorts the input pairs $(\max\{h_i, \tau_i\}, \min\left\{\tau_i r_i, \max\{h_i, \tau_i\}\right\})$ by 
  \begin{align*}
    \max\left\{\max\{h_i, \tau_i\}, \min\left\{\tau_i r_i, \max\{h_i, \tau_i\}\right\}\right\}.
  \end{align*} 
  This term equals to $\max\{h_i, \tau_i\}.$ Without loss of generality, we assume that the sequence $\max\{h_i, \tau_i\}$ is sorted, thus $\pi_i = i$ for all $i \in [n].$ Therefore, we have
  \begin{align}
    t' = 128 \min_{j \in [n]} \max\{\max\{h_{j}, \tau_{j}\}, s^*(j)\}, \label{eq:good_time}
  \end{align}
  where $s^*(j)$ is the solution of the equation
  \begin{align}
      \label{eq:main_equation_proof}
      \left(\sum_{i=1}^j \left(2 \min\left\{\tau_i r_i, \max\{h_i, \tau_i\}\right\} \omega + \frac{4 \min\left\{\tau_i r_i, \max\{h_i, \tau_i\}\right\} \max\{h_i, \tau_i\} \sigma^2 \omega}{s \varepsilon} + \frac{2 \max\{h_i, \tau_i\} \sigma^2}{\varepsilon}\right)^{-1}\right)^{-1} = s
  \end{align}
  for all $j \in [n].$ 

  Let us define $j^*$ as the smallest by index minimizer in \eqref{eq:good_time}. Then
  \begin{align*}
    t' = 128 \max\{\max\{h_{j^*}, \tau_{j^*}\}, s^*(j^*)\}.
  \end{align*}
  Assume that $l_i$ is the number of iterations (the number of calculated stochastic gradients) that the $i$\textsuperscript{th} worker does by the time $t'.$ Since $t' \geq 4 \max\{h_{j^*}, \tau_{j^*}\}$ and the workers are sorted by $\max\{h_i, \tau_i\},$ we have $t' \geq 2 \left(h_{i} + \tau_{i}\right)$ for all $i \leq j^*.$ Therefore, for all $i \leq j^*,$ the $i$\textsuperscript{th} worker will have time to calculate and send at least one compressed vector, i.e., $l_i \geq 1$ for $i \leq j^*.$ 

  Next, the $i$\textsuperscript{th} worker requires at most $\tau_i$ seconds to send a compressed vector and it waits for at least one calculated gradient. Consider that the computation time of the $j$\textsuperscript{th} stochastic gradient in the $k$\textsuperscript{th} iteration equals to $h_{ij}^k.$ Thus $\frac{t'}{2} \leq \sum_{j=1}^{l_i} \left(h_{ij}^k + \tau_i\right).$ Indeed, if $\frac{t'}{2} > \sum_{j=1}^{l_i} \left(h_{ij}^k + \tau_i\right),$ then the $i$\textsuperscript{th} worker will have time to calculate and send at least one more compressed vector because $\frac{t'}{2} \geq 2 \max\{h_{i}, \tau_{i}\} \geq h_{i} + \tau_{i}$ for all $i \leq j^*.$ 
  It would contradict the definition of $l_i.$ 
  Therefore, we have
  \begin{align*}
    \frac{t'}{2} \leq \sum_{j=1}^{l_i} \left(h_{ij}^k + \tau_i\right) \leq l_i \max\limits_{j \in [l_i]} h_{ij}^k + l_i \tau_i
  \end{align*}
  and
  \begin{align}
    \label{eq:number_of_comp}
    l_i \geq \frac{t'}{2 \left(\max\limits_{j \in [l_i]} h_{ij}^k + \tau_i\right)}.
  \end{align}
  At the same time, by the definition of $l_i,$ we have
  \begin{align*}
    \frac{t'}{h_{\min}} \geq l_i,
  \end{align*}
  because $h_{\min} > 0$ is the smallest possible calculating time.
  Therefore, we have
  \begin{align*}
    l_i \leq l_{\max} \eqdef \ceil{\frac{t_{\max}}{h_{\min}}},
  \end{align*}
  where $t_{\max}$ is defined in Def.~\ref{def:ratio} ($t_{\max} \geq t'$).
  Since $l_i \geq 1$ for all $i \leq j^*,$ we get
  \begin{equation}
  \begin{aligned}
    \label{eq:tmp_H}
    \mathcal{H} 
    &\eqdef \sum_{i \in [n]\,:\,l_i > 0} \left(\sum_{j=1}^{l_i} \frac{4 \omega}{l_i^2 m_{ij}} + \sum_{j=1}^{l_i} \frac{4 \omega \sigma^2}{l_i^3 m_{ij} \varepsilon} + \frac{4 \sigma^2}{l_i \varepsilon}\right)^{-1} \\
    &\geq \sum_{i=1}^{j^*} \left(\underbrace{\sum_{j=1}^{l_i} \frac{4 \omega}{l_i^2 m_{ij}} + \sum_{j=1}^{l_i} \frac{4 \omega \sigma^2}{l_i^3 m_{ij} \varepsilon}}_{A} + \underbrace{\frac{4 \sigma^2}{l_i \varepsilon}}_{B}\right)^{-1}.
  \end{aligned} 
  \end{equation}

  Using \eqref{eq:number_of_comp}, we obtain
  \begin{align*}
    B \eqdef \frac{4 \sigma^2}{\varepsilon l_i} \leq \frac{8 \sigma^2 \left(\max\limits_{j \in [l_i]} h_{ij}^k + \tau_i\right)}{\varepsilon t'} \leq \frac{8 \sigma^2 \left(h_{i} + \tau_i\right)}{\varepsilon t'} \leq \frac{16 \sigma^2 \max\{h_{i}, \tau_i\}}{\varepsilon t'}.
  \end{align*}

  For every $j$\textsuperscript{th} stochastic gradient, the $i$\textsuperscript{th} worker sends at least one compressed vector or $\flr{\frac{h_{ij}^k}{\tau_i}}$ compressed vectors because it is possible that $\tau_i \leq h_{ij}^k,$ then the worker will have time to send more than one compressed vector. 
  Therefore, we have
  \begin{align*}
    m_{ij} \geq \max\left\{\flr{\frac{h_{ij}^k}{\tau_i}}, 1\right\} \geq \max\left\{\frac{h_{ij}^k}{2 \tau_i}, 1\right\} \geq \max\left\{\frac{\min\limits_{j \in [l_i]} h_{ij}^k}{2 \tau_i}, 1\right\}.
  \end{align*}
  and
  \begin{align*}
    \frac{1}{l_i}\sum_{j=1}^{l_i} \frac{1}{m_{ij}} \leq \frac{2}{l_i}\sum_{j=1}^{l_i} \min\left\{\frac{\tau_{i}}{\min\limits_{j \in [l_i]}h_{ij}^k}, 1\right\} = 2\min\left\{\frac{\tau_{i}}{\min\limits_{j \in [l_i]}h_{ij}^k}, 1\right\}.
  \end{align*}
  Using the last inequality and \eqref{eq:number_of_comp}, we get
  \begin{align*}
    \frac{1}{l_i^2}\sum_{j=1}^{l_i} \frac{1}{m_{ij}} 
    &\leq \frac{4 \left(\max\limits_{j \in [l_i]} h_{ij}^k + \tau_i\right)}{t'}\min\left\{\frac{\tau_{i}}{\min\limits_{j \in [l_i]}h_{ij}^k}, 1\right\} \\
    &\leq \frac{8 \max\{\max\limits_{j \in [l_i]} h_{ij}^k, \tau_i\}}{t'}\min\left\{\frac{\tau_{i}}{\min\limits_{j \in [l_i]}h_{ij}^k}, 1\right\} \\
    &= \underbrace{\min\left\{\frac{8 \tau_i}{t'} \times \frac{\max\{\max\limits_{j \in [l_i]} h_{ij}^k, \tau_i\}}{\min\limits_{j \in [l_i]}h_{ij}^k}, \frac{8 \max\{\max\limits_{j \in [l_i]} h_{ij}^k, \tau_i\}}{t'}\right\}}_{T}.
  \end{align*}
  It is clear that $T \leq \frac{8 \max\{\max\limits_{j \in [l_i]} h_{ij}^k, \tau_i\}}{t'}.$ If $\tau_i < \min\limits_{j \in [l_i]} h_{ij}^k,$ then $T = \frac{8 \tau_i}{t'} \times \frac{\max\limits_{j \in [l_i]} h_{ij}^k}{\min\limits_{j \in [l_i]}h_{ij}^k}.$
  If $\tau_i \geq \min\limits_{j \in [l_i]} h_{ij}^k$ and $\tau_i < \max\limits_{j \in [l_i]} h_{ij}^k,$ then $T = \frac{8 \max\limits_{j \in [l_i]} h_{ij}^k}{t'} \leq \frac{8 \tau_i}{t'} \times \frac{\max\limits_{j \in [l_i]} h_{ij}^k}{\min\limits_{j \in [l_i]}h_{ij}^k}.$ If $\tau_i \geq \max\limits_{j \in [l_i]} h_{ij}^k,$ then $T = \frac{8 \tau_i}{t'} \leq \frac{8 \tau_i}{t'} \times \frac{\max\limits_{j \in [l_i]} h_{ij}^k}{\min\limits_{j \in [l_i]}h_{ij}^k}.$

  Thus, we have
  \begin{align*}
    \frac{1}{l_i^2}\sum_{j=1}^{l_i} \frac{1}{m_{ij}} 
    &\leq \min\left\{\frac{8 \tau_i}{t'} \times \frac{\max\limits_{j \in [l_i]} h_{ij}^k}{\min\limits_{j \in [l_i]}h_{ij}^k}, \frac{8 \max\{\max\limits_{j \in [l_i]} h_{ij}^k, \tau_i\}}{t'}\right\} \\
    &\leq \min\left\{\frac{8 \tau_i}{t'} \times \frac{\sup_{j \in [l_{\max}]} h_{ij}^k}{\inf_{j \in [l_{\max}]}h_{ij}^k}, \frac{8 \max\{\max\limits_{j \in [l_i]} h_{ij}^k, \tau_i\}}{t'}\right\} \\
    &\leq \min\left\{\frac{8 \tau_i r_i}{t'}, \frac{8 \max\{h_{i}, \tau_i\}}{t'}\right\},
  \end{align*}
  where we use the definition of $r_i.$ Using the last inequality and $l_i \geq \frac{t'}{4 \max\{h_{i}, \tau_i\}}$, we have
  \begin{align*}
    A 
    &\eqdef 4 \sum_{j=1}^{l_i} \frac{\omega}{m_{ij} l_i^2} + 4 \sum_{j=1}^{l_i} \frac{\omega \sigma^2}{m_{ij} \varepsilon l_i^3} \\
    &\leq \frac{32 \omega \min\left\{\tau_i r_i, \max\{h_i, \tau_i\}\right\}}{t'} + \frac{32 \omega \sigma^2 \min\left\{\tau_i r_i, \max\{h_i, \tau_i\}\right\}}{\varepsilon t' l_i} \\
    &\leq \frac{32 \omega \min\left\{\tau_i r_i, \max\{h_i, \tau_i\}\right\}}{t'} + \frac{128 \omega \sigma^2 \min\left\{\tau_i r_i, \max\{h_i, \tau_i\}\right\} \max\{h_i, \tau_i\}}{\varepsilon \left(t'\right)^2},
  \end{align*}
  where we use \eqref{eq:number_of_comp} and $\max\limits_{j \in [l_i]} h_{ij}^k \leq h_i.$
  One can substitute the bounds on $A$ and $B$ to \eqref{eq:tmp_H} and obtain
  \begin{align*}
    \mathcal{H} &\geq \frac{1}{128}\sum_{i=1}^{j^*} \left(\frac{\omega \min\left\{\tau_i r_i, \max\{h_i, \tau_i\}\right\}}{t'} + \frac{\omega \sigma^2 \min\left\{\tau_i r_i, \max\{h_i, \tau_i\}\right\} \max\{h_i, \tau_i\}}{\varepsilon \left(t'\right)^2} + \frac{\sigma^2 \max\{h_{i}, \tau_i\}}{\varepsilon t'}\right)^{-1}.
  \end{align*}
  Note that $t' \geq 128 s^*(j^*),$ thus
  \begin{align*}
    \mathcal{H} 
    &\geq \sum_{i=1}^{j^*} \left(\frac{\omega \min\left\{\tau_i r_i, \max\{h_i, \tau_i\}\right\}}{s^*(j^*)} + \frac{\omega \sigma^2 \min\left\{\tau_i r_i, \max\{h_i, \tau_i\}\right\} \max\{h_i, \tau_i\}}{\varepsilon \left(s^*(j^*)\right)^2} + \frac{\sigma^2 \max\{h_{i}, \tau_i\}}{\varepsilon s^*(j^*)}\right)^{-1} \\
    &\geq \sum_{i=1}^{j^*} \left(\frac{2\omega \min\left\{\tau_i r_i, \max\{h_i, \tau_i\}\right\}}{s^*(j^*)} + \frac{4 \omega \sigma^2 \min\left\{\tau_i r_i, \max\{h_i, \tau_i\}\right\} \max\{h_i, \tau_i\}}{\varepsilon \left(s^*(j^*)\right)^2} + \frac{2 \sigma^2 \max\{h_{i}, \tau_i\}}{\varepsilon s^*(j^*)}\right)^{-1} \\
    &= s^*(j^*) \times \sum_{i=1}^{j^*} \left(2\omega \min\left\{\tau_i r_i, \max\{h_i, \tau_i\}\right\} + \frac{4 \omega \sigma^2 \min\left\{\tau_i r_i, \max\{h_i, \tau_i\}\right\} \max\{h_i, \tau_i\}}{\varepsilon s^*(j^*)} + \frac{2 \sigma^2 \max\{h_{i}, \tau_i\}}{\varepsilon}\right)^{-1}.
  \end{align*}
  It is left to use the definition of $s^*(j^*)$ (see \eqref{eq:main_equation_proof}) to obtain that $\mathcal{H} \geq s^*(j^*) \times \frac{1}{s^*(j^*)} = 1.$

  It means that after at most $t'$ seconds, we can ensure that the algorithm will finish the loop in \begin{NoHyper}Line~\ref{alg:mainwhile}\end{NoHyper} from Alg.~\ref{alg:alg_server_online}. In the view of Theorem~\ref{thm:sgd_homog_online}, the time complexity is less or equal to $K \times t'.$
\end{proof}

\section{Construction of the Lower Bound}
\label{sec:lower_bound_formal}

We prove the lower bound by generalizing the \emph{time multiple oracles protocol} from \citep{tyurin2023optimal}. Note that in the classical approaches \citep{nemirovskij1983problem, carmon2020lower, arjevani2022lower,nesterov2018lectures}, the researchers bound the \emph{number of oracle calls} required to find an $\varepsilon$--solution. Our approach is based on the idea from \citep{tyurin2023optimal}, where the authors propose to bound the \emph{time} required to find an $\varepsilon$--solution. We refer to a detailed explanation to \citep{tyurin2023optimal}[Sections 3-6].

First, we define an oracle that emulates the process of computing stochastic gradients or the process of sending a compressed vector \citep{tyurin2023optimal}[Section 4]:
\begin{align*}
  O_{\tau}^{g, \mathcal{D}}\,:\, &\underbrace{\R_{\geq 0}}_{\textnormal{time}} \times \underbrace{\R^d}_{\textnormal{point}} \times \underbrace{\{0, 1\}}_{\textnormal{control}} \times \underbrace{(\R_{\geq 0} \times \R^d \times \{0, 1\})}_{\textnormal{input state}} \rightarrow \underbrace{(\R_{\geq 0} \times \R^d \times \{0, 1\})}_{\textnormal{output state}} \times \R^d
\end{align*}
\begin{align}
  \label{eq:oracle_inside_random}
  \begin{split}
      &\textnormal{such that } \qquad O_{\tau}^{g, \mathcal{D}}(t, x, c, (s_t, s_x, s_q)) = \left\{
  \begin{aligned}
      &((t, x, 1), &0), \quad & c = 1, s_q = 0, \\
      &((s_t, s_x, 1), &0), \quad & c = 1, s_q = 1, t < s_t + \tau,\\
      &((0, 0, 0), & g(s_x; \xi)), \quad & c = 1, s_q = 1, t \geq s_t + \tau,\\
      &((0, 0, 0), & 0), \quad & c = 0,
  \end{aligned}
  \right.
  \end{split}
\end{align}
where $\xi \sim \mathcal{D},$ $g$ is an arbitrary mapping such that $g\,:\, \R^d \times \mathbb{S} \rightarrow \R^d,$ and $\mathbb{S}$ is the sample space of a distribution $\mathcal{D}.$ Next, we define the \emph{time multiple oracles protocol with compression}:

\begin{protocol}[H]
  \caption{Time Multiple Oracles Protocol with Compression}
  \label{alg:time_multiple_oracle_protocol}
  \begin{algorithmic}[1]
  \STATE 
  \textbf{Input: }function(s) $f \in \cF,$ 
  computation oracles $(O_1, ..., O_n) \in \mathcal{O}(f),$ 
  communication oracles $(\hat{\cC}_1, ..., \hat{\cC}_n) \in \mathcal{U},$ 
  algorithm $A = \{(B^k, N^k_1, \dots, N^k_{n})\}_{k=0}^{\infty}~\in~\cA$
  \STATE $s^{\smallnablafunc, 0}_i = s^{\cC, 0}_i = 0$ for all $i \in [n]$
  \FOR{$k = 0, \dots, \infty$}
  \STATE $({t^{k+1}}, {i^{k+1}}, c^{\smallnablafunc, k+1}, c^{\cC, k+1}, x^k) = B^k(g^1, \dots, g^k),$ \hfill $\rhd \,{t^{k+1} \geq t^k}$ \\
  \STATE $(s^{\smallnablafunc, k+1}_{{i^{k+1}}}, g^{k+1}_{i^{k+1}}) = O_{{i^{k+1}}}({t^{k+1}}, x^k, c^{\smallnablafunc, k+1}, s^{\smallnablafunc, k}_{{i^{k+1}}})$ \\
  $\forall j \neq i^{k+1}: \,s^{\smallnablafunc, k+1}_j = s^{\smallnablafunc, k}_j, \quad g^{k+1}_{j} = 0$ \alglinelabellower{line:stoch_oracle}
  \STATE $g^{k+1}_{\textnormal{pre}} = N_{i^{k+1}}^k(g^1, \dots, g^k, g^{1}_{i^{k+1}}, \dots, g^{k+1}_{i^{k+1}}),$
  \STATE $(s^{\cC, k+1}_{{i^{k+1}}}, g^{k+1}) = \hat{\cC}_{{i^{k+1}}}({t^{k+1}}, g^{k+1}_{\textnormal{pre}}, c^{\cC, k+1}, s^{\cC, k}_{{i^{k+1}}})$ \alglinelabellower{line:comp_oracle}\\
  \ENDFOR
  \end{algorithmic}
\end{protocol}

In this protocol, the server via $B^k$ returns a new point $x^k,$ and broadcasts it to the $i^{k+1}$\textsuperscript{th} worker. Then, the worker calls the oracle $O_{{i^{k+1}}}$ that calculates stochastic gradients. Next, the oracle returns the vector $g^{k+1}_{i^{k+1}},$ and the worker processes it with $N_{i^{k+1}}^k.$ Finally, the worker sends $g^{k+1}_{\textnormal{pre}}$ to the oracle $\hat{\cC}_{{i^{k+1}}}$ that sends compressed vectors to the server. Using the parameters $c^{\smallnablafunc, k+1}$ and $c^{\cC, k+1},$ it can decide if it wants to start/stop the process of a gradient calculation and the process of communicating a compressed vector (See Sec.~F in \citep{tyurin2023optimal}). As far as we know, all centralized distributed optimization methods can be described by Protocol~\ref{alg:time_multiple_oracle_protocol}, including \algname{Minibatch SGD}, \algname{QSGD}, \algname{Asynchronous SGD}, \algname{Rennala SGD}, and \algname{\newmethod}.

We consider the standard function class from the optimization literature \citep{nesterov2018lectures, arjevani2022lower, carmon2020lower}:

\begin{definition}[Function Class $\cF_{\Delta, L}$]
  We assume that a function $f \,:\,\R^d \rightarrow \R$ is differentiable,
      $\norm{\nabla f(x) - \nabla f(y)} \leq L \norm{x - y} \quad \forall x, y \in \R^d,$ ($L$-smooth)
  and $f(0) - \inf_{x \in \R^d} f(x) \leq \Delta$ ($\Delta$-bounded).
  The set of all functions with such properties we denote by $\cF_{\Delta, L}.$
  \label{def:func_class}
\end{definition}

Next, we define the class of algorithms that we analyze. 

\begin{definition}[Algorithm Class $\cA_{\textnormal{zr}}$]
  \label{def:alg_class}
  Let us consider Protocol~\ref{alg:time_multiple_oracle_protocol}. We say that the sequence of tuples of mappings $A = \{(B^k, N^k_1, \dots, N^k_{n})\}_{k=0}^{\infty}$ is a zero-respecting algorithm, if
  \begin{enumerate}
    \item $B^k\,:\, \underbrace{\R^d \times \dots \times \R^d}_{k \textnormal{ times}} \rightarrow \R_{\geq 0} \times \N \times \N \times \N \times \R^d$ for all $k \geq 1,$ and $B^0 \in \R_{\geq 0} \times \N \times \N \times \N \times \R^d.$ \label{prop:one}
    \item For all $k \geq 1$ and and $g^1, \dots, g^k \in \R^d,$ $t^{k+1} \geq t^k,$
    where $t^{k+1}$ and $t^k$ are defined as $(t^{k+1}, \dots) = B^k(g^1, \dots, g^k)$ and $(t^k, \dots) = B^{k-1}(g^1, \dots, g^{k-1}).$ \label{prop:two}
    \item $N_{i}^k\,:\, \underbrace{\R^d \times \dots \times \R^d}_{k \textnormal{ times}} \times \underbrace{\R^d \times \dots \times \R^d}_{k + 1 \textnormal{ times}} \rightarrow \R^d$ for all  $k \geq 0$ and for all $i \in [n].$ \label{prop:three}
    \item $\textnormal{supp} \left(x^k\right) \subseteq \bigcup_{j = 1}^k \textnormal{supp} \left(g^j\right),$ and $\textnormal{supp} \left(g^{k+1}_{\textnormal{pre}}\right) \subseteq \bigcup_{j = 1}^k \textnormal{supp} \left(g^j\right) \bigcup_{j = 1}^{k+1} \textnormal{supp} \left(g^j_{i^{k+1}}\right),$
    for all $k \in \N_0,$ where $\textnormal{supp}(x) \eqdef \{i \in [d]\,|\,x_i \neq 0\}.$ \label{prop:four}
  \end{enumerate}
  The set of all algorithms with this properties we define as $\cA_{\textnormal{zr}}.$
\end{definition}

The properties \ref{prop:one} and \ref{prop:three} are only required to define the domains of the mappings. The property \ref{prop:four} ensures that these mappings are \emph{zero-respecting} \citep{arjevani2022lower}. The property \ref{prop:two} is explained in \citep{tyurin2023optimal}[Section 4, Definition 4.1]. It ensures that our algorithm does not ``travel into the past''.

The following oracle class is the same as in \citep{tyurin2023optimal}. For any $f \in \cF_{\Delta, L},$ it returns $n$ oracles that require $h_1, \dots, h_n$ seconds to calculate a stochastic gradient. These oracles emulate the real behavior where the workers have different processing times.
\begin{definition}[Computation Oracle Class $\cO_{h_1, \dots, h_n}^{\sigma^2}$]
  Let us consider an oracle class such that, for any $f \in \cF_{\Delta, L},$ it returns oracles $O_i = O_{h_i}^{\smallnablafunc, \mathcal{D}_i^{\smallnablafunc}}$ for all $i \in [n],$ where $\nabla f(x;\xi)$ is an unbiased $\sigma^2$-variance-bounded mapping (see Assumption~\ref{ass:stochastic_variance_bounded}). The oracles $O_{h_i}^{\smallnablafunc, \mathcal{D}_i^{\smallnablafunc}}$ are defined in \eqref{eq:oracle_inside_random}. We define such oracle class as $\cO_{h_1, \dots, h_n}^{\sigma^2}.$ \label{def:oracle_class}
\end{definition}

The following oracle class emulates the behavior of compressors. It returns $n$ oracles that require $\tau_1, \dots, \tau_n$ seconds to send a compressed vector to the server.

\begin{definition}[Communication Oracle Class $\mathcal{U}_{\tau_1, \dots, \tau_n}^{\omega}$]
  Let us consider an oracle class such that, it returns oracles $\hat{\cC}_i = O_{\tau_i}^{\cC, \mathcal{D}_i^{\cC}}$ for all $i \in [n],$ where $\cC$ is an unbiased compressor with a parameter $\omega,$ i.e., $\cC \in \mathbb{U}(\omega)$  (see Def.~\ref{def:unbiased_compression}). The oracles $O_{\tau_i}^{\cC, \mathcal{D}_i^{\cC}}$ are defined in \eqref{eq:oracle_inside_random}. We define such oracle class as $\mathcal{U}_{\tau_1, \dots, \tau_n}^{\omega}.$ \label{def:communicatio_class}
\end{definition}

Finally, we present our lower bound theorem:

\begin{restatable}{theorem}{THEOREMLOWERBOUND}
  \label{theorem:lower_bound}
  Let us consider Protocol~\ref{alg:time_multiple_oracle_protocol}. We take any $h_i > 0,$ $\tau_i > 0$ for all $i \in [n]$, $\omega \geq 0, L, \Delta, \varepsilon, \sigma^2 > 0$ such that $\varepsilon < c_1 L \Delta$ and $\omega + 1 \leq T,$\footnote{We can avoid this constraint using a slightly different construction of a compressor. However, the number of non-zero returned values by the new construction is random. See Sec.~\ref{sec:another_comp}.} where $T = \left\lfloor\frac{\Delta L}{c_2 \varepsilon}\right\rfloor$ is the dimension of the construction. For any algorithm $A \in \cA_{\textnormal{zr}},$ there exists a function $f \in \cF_{\Delta, L},$ computation oracles $(O_1, \dots, O_n) \in \cO_{h_1, \dots, h_n}^{\sigma^2}(f),$ and communication oracles $(\hat{\cC}_1, \dots, \hat{\cC}_n) \in \mathcal{U}_{\tau_1, \dots, \tau_n}^{\omega},$\footnote{The function $f$ defined on $\R^T$ with $T = \Theta\left(\nicefrac{L \Delta}{\varepsilon}\right),$ and the constructed compressor $\cC$ preserves only $K = \ceil{\nicefrac{T}{\omega + 1}}$ non-zero coordinates.}
  such that $\Exp{\inf_{k \in S_t} \norm{\nabla f(x^k)}^2} > \varepsilon,$ 
  where $S_t \eqdef \left\{k \in \N_0 \,|\,t^k \leq t\right\}$ and 
$$t = c_3 \times \frac{L \Delta}{\varepsilon} \times t^*(\omega, \nicefrac{\sigma^2}{\varepsilon}, h_1, \tau_1, \dots, h_n, \tau_n).$$
The quantities $c_1,$ $c_2,$ and $c_3$ are universal constants. The sequences $x^k$ and $t^k$ are defined in Protocol~\ref{alg:time_multiple_oracle_protocol}.
\end{restatable}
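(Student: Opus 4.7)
The plan is to adapt the ``worst-case chain function'' construction used in existing stochastic lower bounds (\citet{arjevani2022lower,carmon2020lower}), extended with a compression layer and analyzed in the time-based framework of \citet{tyurin2023optimal}. First, I would build a function $f \in \cF_{\Delta,L}$ on $\R^T$ with $T = \Theta(L\Delta/\varepsilon)$ whose gradient satisfies $\norm{\nabla f(x)}^2 > \varepsilon$ as long as the ``progress index'' $\mathrm{prog}(x) := \max\{i : x_i \neq 0\}$ is smaller than $T$. The zero-respecting property in Def.~\ref{def:alg_class} guarantees that the server iterate $x^k$ can activate coordinate $i{+}1$ only after some returned $g^j$ (via line~\ref{line:comp_oracle} of Protocol~\ref{alg:time_multiple_oracle_protocol}) has a non-zero $(i{+}1)$-th entry; by the nested zero-respecting chain, this in turn requires (i) a worker to have computed a stochastic gradient whose $(i{+}1)$-th entry is non-zero, and (ii) a subsequent compression that preserves this coordinate.

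Second, design the stochastic gradient oracle so that, whenever $\mathrm{prog}(x) = i$, the $(i{+}1)$-th coordinate of $\nabla f(x;\xi)$ is non-zero only with probability $p_\sigma \asymp \varepsilon/\sigma^2$ (the standard Arjevani--Carmon Bernoulli mask, calibrated so the $\sigma^2$-bounded variance assumption holds), and take the compressor to be Rand$K$ with $K = \lceil T/(\omega{+}1)\rceil$, so each non-zero coordinate is retained with probability $p_\omega \asymp 1/(\omega{+}1)$. The assumptions $\varepsilon < c_1 L\Delta$ and $\omega{+}1 \leq c_2 L\Delta/\varepsilon$ in the theorem statement ensure $p_\sigma \leq 1$ and $K \leq T$, so these oracles lie in $\cO_{h_1,\dots,h_n}^{\sigma^2}(f)$ and $\mathcal{U}_{\tau_1,\dots,\tau_n}^{\omega}$ respectively. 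The time for worker $m$ to activate the next coordinate is then lower-bounded by $\eta_m h_m + \mu_m \tau_m$, where $\eta_m \sim \mathrm{Geom}(p_\sigma)$ and $\mu_m \sim \mathrm{Geom}(p_\omega)$ are independent (since computation of a successful gradient precedes its compression, even with arbitrary pipelining). Because the $n$ workers operate in parallel, the server cannot see the new coordinate earlier than $\Theta^{(i)} := \min_{m \in [n]} (\eta_m^{(i)} h_m + \mu_m^{(i)} \tau_m)$, and across activation phases the geometric counters can be coupled to be independent, since each activation resets the ``progress barrier'' seen by the oracles.

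Third, and this is the main technical step, I would prove that $\Pr\{\Theta^{(i)} \leq c \cdot t^*(\omega, \sigma^2/\varepsilon, h_1, \tau_1, \dots, h_n, \tau_n)\} \leq 1/2$ for a universal $c > 0$. Fix any $s \leq s^*(j)$ from Def.~\ref{def:optimal_time_budget}. From the geometric tails, $\Pr\{\eta_m h_m \leq s\} \lesssim s p_\sigma / h_m$ and $\Pr\{\mu_m \tau_m \leq s\} \lesssim s p_\omega / \tau_m$; by independence,
\begin{align*}
\Pr\{\eta_m h_m + \mu_m \tau_m \leq s\} \lesssim \min\left\{\tfrac{s p_\sigma}{h_m}, \tfrac{s p_\omega}{\tau_m}, \tfrac{s^2 p_\sigma p_\omega}{h_m \tau_m}\right\},
\end{align*}
which after a union bound over the first $j$ workers (under the permutation $\pi$ sorting $\max\{h_i,\tau_i\}$) produces exactly the expression inside the inverse in \eqref{eq:main_equation} with $p_\sigma^{-1} \asymp \sigma^2/\varepsilon$ and $p_\omega^{-1} \asymp \omega{+}1$. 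Taking the minimum over $j$ and combining with the deterministic lower bound $\Theta^{(i)} \geq \min_{m \in [j]} \max\{h_m, \tau_m\}$ (since $\eta_m, \mu_m \geq 1$) reproduces the equilibrium-time formula \eqref{eq:equil_time}.

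Finally, summing over the $T = \Theta(L\Delta/\varepsilon)$ coordinate activations (using independence of phases and e.g.\ a Chernoff-type bound on the number of activations that finish by time $t$) gives that, with constant probability, $\mathrm{prog}(x^k) < T$ and hence $\norm{\nabla f(x^k)}^2 > \varepsilon$ for every $k$ with $t^k \leq c_3 (L\Delta/\varepsilon) \cdot t^*$. The hardest part is step three: turning the informal harmonic/quadratic-harmonic split of \eqref{eq:quad_eq} into a rigorous tail bound whose parameters exactly match the implicit root of \eqref{eq:main_equation}. Particular care is needed to identify the correct prefix $j^*$ of ``contributing'' workers and to handle the case dichotomy $s \gtrless \min\{h_m/p_\omega, \tau_m/p_\sigma\}$ uniformly in $m$, which is what introduces the two different mean-like terms in \eqref{eq:solve_s}.
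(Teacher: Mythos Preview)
Your proposal is correct and follows essentially the same approach as the paper: the same worst-case chain function scaled to $\cF_{\Delta,L}$, the same Bernoulli-masked stochastic oracle with $p_\sigma \asymp \varepsilon/\sigma^2$, the same Rand$K$ compressor with $K=\lceil T/(\omega{+}1)\rceil$, and the same reduction to bounding $\sum_{i=1}^T \min_{m}(h_m\eta_m^{(i)}+\tau_m\mu_m^{(i)})$ via a Chernoff argument combined with the per-phase tail bound you describe. One point of precision: the paper does \emph{not} couple the phases to be independent; the starting indices $b^{\eta}_{m,j},e^{\mu}_{m,j}$ for phase $j$ depend on the entire history, so instead it conditions on a filtration $\mathcal{G}_j$ (and a finer $\mathcal{H}_j$) under which $\hat t_1,\dots,\hat t_{j-1}$ become measurable while $\eta_{m,j},\mu_{m,j}$ retain fresh geometric laws, and then peels off one factor at a time inside the Chernoff exponential---your ``coupling to independence'' should be read as exactly this conditional fresh-start argument, and your step~3 is handled in the paper simply by setting $t'=\tfrac{1}{24}s^*(j^*)$ and reading off $p'\le \tfrac14$ directly from the defining equation \eqref{eq:main_equation}, which is cleaner than the case dichotomy you anticipated.
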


\section{Proof of Theorem~\ref{theorem:lower_bound}}
\subsection{The ``Worst Case'' Function}
\label{sec:worst_case}
Let us consider the ``worst case'' function, which is a standard function to obtain lower bounds in the nonconvex world. We define
\begin{align*}
    \textnormal{prog}(x) \eqdef \max \{i \geq 0\,|\,x_i \neq 0\} \quad (x_0 \equiv 1).
\end{align*}

In our proofs, we use the construction from \citep{carmon2020lower,arjevani2022lower}. For any $T \in \N,$ the authors define
\begin{align}
    \label{eq:worst_case}
    F_T(x) \eqdef -\Psi(1) \Phi(x_1) + \sum_{i=2}^T \left[\Psi(-x_{i-1})\Phi(-x_i) - \Psi(x_{i-1})\Phi(x_i)\right],
\end{align}
where
\begin{align*}
    \Psi(x) = \begin{cases}
        0, & x \leq 1/2, \\
        \exp\left(1 - \frac{1}{(2x - 1)^2}\right), & x \geq 1/2,
    \end{cases}
    \quad\textnormal{and}\quad
    \Phi(x) = \sqrt{e} \int_{-\infty}^{x}e^{-\frac{1}{2}t^2}dt.
\end{align*}

The main property of the function $F_T(x)$ is that its gradients are large unless $\textnormal{prog}(x) \geq T.$

\begin{lemma}[\cite{carmon2020lower, arjevani2022lower}]
    \label{lemma:worst_function}
    The function $F_T$ satisfies:
    \begin{enumerate}
        \item $F_T(0) - \inf_{x \in \R^T} F_T(x) \leq \Delta^0 T,$ where $\Delta^0 = 12.$
        \item The function $F_T$ is $l_1$--smooth, where $l_1 = 152.$
        \item For all $x \in \R^T,$ $\norm{\nabla F_T(x)}_{\infty} \leq \gamma_{\infty},$ where $\gamma_{\infty} = 23.$
        \item For all $x \in \R^T,$ $\textnormal{prog}(\nabla F_T(x)) \leq \textnormal{prog}(x) + 1.$
        \item For all $x \in \R^T,$ if $\textnormal{prog}(x) < T,$ then $\norm{\nabla F_T(x)} > 1.$
    \end{enumerate}
\end{lemma}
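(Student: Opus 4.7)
The plan is to verify each of the five properties in turn, following the construction of \citet{carmon2020lower} and \citet{arjevani2022lower}. The properties of $F_T$ follow from elementary analytic estimates on the scalar building blocks $\Psi$ and $\Phi$. I would first collect the following facts, all of which can be checked by direct computation from the explicit formulas: (i) $\Psi \geq 0$, $\Psi$ is $C^\infty$, $\Psi(x) = 0$ for $x \leq 1/2$, $\Psi$ is nondecreasing, and $\Psi(x) < e$ for all $x$; (ii) $\Phi$ is strictly increasing, $\Phi'(x) = \sqrt{e}\,e^{-x^2/2} > 0$, and $\Phi$ is bounded. Explicit numerical bounds on $|\Psi|,|\Psi'|,|\Psi''|,|\Phi|,|\Phi'|,|\Phi''|$ are tabulated in \citep{carmon2020lower}; they are the source of the constants $\Delta^0 = 12$, $l_1 = 152$, $\gamma_\infty = 23$.

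The central structural property is the zero-chain property~(4), which I would prove first because it also drives~(5). Observe that $\partial_i F_T(x)$ only involves $x_{i-1}$ and $x_i$ (through the $i$-th and $(i{+}1)$-th terms of the sum). If $i \geq \textnormal{prog}(x) + 2$, then $x_{i-1} = 0 \leq 1/2$, so $\Psi(\pm x_{i-1}) = 0$, which kills the contribution of both terms involving $x_i$, giving $\partial_i F_T(x) = 0$. Hence $\textnormal{prog}(\nabla F_T(x)) \leq \textnormal{prog}(x)+1$. For property~(5), the goal is to show that whenever some coordinate $x_{i-1}$ has $|x_{i-1}| < 1/2$ while $i \leq T$, the component $\partial_i F_T(x)$ is already bounded below by $1$. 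Indeed, if $|x_{i-1}| < 1/2$ the terms involving $\Psi(\pm x_{i-1})$ vanish, so $\partial_i F_T(x) = -\Psi(x_{i-1}) \Phi'(x_i) \;\textnormal{or}\; -\Psi(1)\Phi'(x_1)$, and one invokes the lower bound $\Psi(1)\inf_x \Phi'(x) > 1$ (this is where the specific choice of scaling in $\Phi = \sqrt e \int e^{-t^2/2}$ enters: $\sqrt e \cdot \Psi(1) \cdot e^{-x^2/2}$ is checked to exceed $1$ for all $x$ attained in the relevant region, using the a priori bound on $\|x\|_\infty$ implied by $\gamma_\infty$). Since $\textnormal{prog}(x) < T$ means some $i \leq T$ has $|x_{i-1}| < 1/2$ (in fact $x_{i-1} = 0$), this single coordinate already gives $\|\nabla F_T(x)\| \geq |\partial_i F_T(x)| > 1$.

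For property~(1), I would bound $F_T(0) \leq 0 + \sum_{i=2}^{T} \bigl(\Psi(0)\Phi(0) - 0\bigr)$ trivially, and lower-bound $\inf F_T$ by noting that each summand $\Psi(-x_{i-1})\Phi(-x_i) - \Psi(x_{i-1})\Phi(x_i)$ is bounded below by $-\sup\Psi \cdot \sup\Phi$ and that $-\Psi(1)\Phi(x_1)$ is bounded below by $-\Psi(1)\sup\Phi$. The constants from \citep{carmon2020lower} yield $F_T(0) - \inf F_T \leq 12 T$. For properties~(2) and~(3) I would compute $\nabla F_T$ and the Hessian coordinate-by-coordinate: each coordinate of $\nabla F_T$ is a sum of at most two terms, each of the form $\Psi(\pm x_{i-1})\Phi'(\pm x_i)$ or $\Psi'(\pm x_{i-1})\Phi(\pm x_i)$, bounded by $\sup\Psi \cdot \sup|\Phi'| + \sup|\Psi'|\cdot \sup|\Phi|$, producing the $\gamma_\infty = 23$ bound. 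The Hessian is tridiagonal with entries involving $\Psi'', \Psi',\Phi',\Phi''$; a standard Gershgorin-type bound on operator norm of a tridiagonal matrix yields $l_1 = 152$.

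The only real obstacle is getting the numerical constants $12$, $152$, $23$ to come out exactly as stated, which requires working through the explicit pointwise suprema of $\Psi,\Psi',\Psi'',\Phi,\Phi',\Phi''$. Since the lemma is copied verbatim from \citep{carmon2020lower,arjevani2022lower}, I would not re-derive the constants here but simply cite those works; alternatively, a one-paragraph sketch of the bounds with the citation for the explicit numerical verification suffices.
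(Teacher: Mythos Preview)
The paper does not prove this lemma at all; it is stated with a citation to \cite{carmon2020lower,arjevani2022lower} and used as a black box. So your proposal already goes further than the paper itself, and your plan---verify each property from the scalar bounds on $\Psi,\Phi$ and their derivatives, then cite the cited works for the exact numerical constants---is exactly how those original papers proceed.

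Two small technical slips in your sketch are worth fixing if you actually write it out. First, $\partial_i F_T(x)$ depends on $x_{i-1}$, $x_i$, \emph{and} $x_{i+1}$ (the $(i{+}1)$-th summand contributes $\Psi'(\pm x_i)\Phi(\pm x_{i+1})$ terms), not just on $x_{i-1},x_i$; the zero-chain conclusion still holds because for $i \geq \textnormal{prog}(x)+2$ both $x_{i-1}=0$ and $x_i=0$, killing $\Psi(\pm x_{i-1})$ and $\Psi'(\pm x_i)$ simultaneously. Second, your argument for property~(5) is garbled: you write that if $|x_{i-1}|<1/2$ then $\Psi(\pm x_{i-1})$ vanishes and hence $\partial_i F_T(x) = -\Psi(x_{i-1})\Phi'(x_i)$, which would then also be zero. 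The actual argument in \cite{carmon2020lower} is a case analysis: one finds an index $j\leq T$ with $|x_j|<1$ and shows $|\partial_j F_T(x)|>1$ by exploiting that at least one of $\Psi(-x_{j-1}),\Psi(x_{j-1})$ equals $\Psi(|x_{j-1}|)\geq$ some positive constant (not that both vanish), combined with $\Phi'(x_j)\geq \sqrt{e}\,e^{-1/2}=1$ on $|x_j|\leq 1$. Since you plan to cite the original works for the constants anyway, simply citing them for property~(5) as well is the cleanest route.
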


\THEOREMLOWERBOUND*

\begin{proof}
  \leavevmode

  Without loss of generality, we assume that the workers are sorted by $\max\{h_i, \tau_i\}:$ $\max\{h_1, \tau_1\} \leq \dots \leq \max\{h_n, \tau_n\}.$\\
  (\textbf{Step 1}: $f \in \cF_{\Delta, L}$) \\
  Let us fix $\lambda > 0$ and take the function $f(x) \eqdef \frac{L \lambda^2}{l_1} F_T\left(\frac{x}{\lambda}\right),$  where the function $F_T$ is defined in Sec.~\ref{sec:worst_case}. \cite{tyurin2023optimal}[Sec. D.2, Proof of Thm. 6.4] show that the function $f$ is $L$--smooth and $f(0) - \inf_{x \in \R^T} f(x) \leq \Delta$ if
  \begin{align*}
    T = \left\lfloor\frac{\Delta l_1}{L \lambda^2 \Delta^0}\right\rfloor.
  \end{align*}
  Thus, we have $f \in \cF_{\Delta, L}.$

  \noindent (\textbf{Step 2}: Oracle Class)
  Let us construct a stochastic gradient mapping. For our lower bound, we take
  \begin{align*}[\nabla f(x; \xi)]_j \eqdef \nabla_j f(x) \left(1 + \mathbbm{1}\left[j > \textnormal{prog}(x)\right]\left(\frac{\xi}{p} - 1\right)\right) \quad \forall x \in \R^T,\end{align*} and $\mathcal{D}^{\smallnablafunc}_i = \textnormal{\emph{Bernoulli}}(p)$ for all $i \in [n],$ where $p \in (0, 1].$ We denote $[x]_j$ as the $j$\textsuperscript{th} index of a vector $x \in \R^T.$ Let us take 
  $$p = \min\left\{\frac{L^2 \lambda^2 \gamma_{\infty}^2}{\sigma^2 l_1^2}, 1\right\}.$$ Then \cite{tyurin2023optimal}[Sec. D.2, Proof of Thm. 6.4] show that
  this mapping is unbiased and $\sigma^2$-variance-bounded.

  \noindent (\textbf{Step 3}: Compression Operator)
  In our construction, we take the Rand$K$ compressor (outputs $K$ random values of an input vector \emph{without replacement}, scaled by $\nicefrac{T}{K}$ (Def.~\ref{def:rand_k})). From Theorem~\ref{theorem:rand_k}, we know that $\cC$ is unbiased and $\frac{T}{K} - 1$--variance bounded, i.e.,
  \begin{align*}
    \ExpSub{S}{\cC(x;S)} = x, \qquad \ExpSub{S}{\norm{\cC(x;S) - x}^2} \leq \left(\frac{T}{K} - 1\right) \norm{x}^2, \qquad \forall x \in \R^T,
  \end{align*}
  where
  \begin{align*}
    [\cC(x;S)]_j \eqdef
    \begin{cases}
      \frac{T}{K} x_j,& j \in S, \\
      0, & j \not\in S,
    \end{cases} \quad \forall j \in [T].
  \end{align*}
  and $S$ is an uniformly random subset of $[T]$ without replacement. 
  It is sufficient to take $K = \ceil{\frac{T}{\omega + 1}}$ to ensure that $\cC \in \mathbb{U}(\omega).$ Let us define $p_{\omega} \eqdef \frac{K}{T}.$
  We take mutually independent distributions $\mathcal{D}_{i}^{\cC}$ that generate random subsets $S$ described above.

  \noindent (\textbf{Step 4}: Analysis of Protocol)\\
  Let us take
  $$\lambda = \frac{\sqrt{2 \varepsilon} l_1}{L}$$
  to ensure that $\norm{\nabla f(x)}^2 = \frac{L^2 \lambda^2}{l_1^2}\norm{\nabla F_T (\frac{x}{\lambda})}^2 > 2 \varepsilon \mathbbm{1}\left[\textnormal{prog}(x) < T\right]$ for all $x \in \R^T,$ where we use Lemma~\ref{lemma:worst_function}. Thus
  \begin{align}
    \label{eq:13c6d540-5fae-5705-a665-b89fda334a29}
    T = \left\lfloor\frac{\Delta L}{2 \varepsilon l_1 \Delta^0}\right\rfloor
  \end{align}
  and
  $$p = \min\left\{\frac{2 \varepsilon \gamma_{\infty}^2}{\sigma^2}, 1\right\}.$$

  Protocol~\ref{alg:time_multiple_oracle_protocol} generates the sequence $\{x^k\}_{k=0}^{\infty}.$ We have
  \begin{align}
      \label{eq:aux_part_3_homog}
      \inf_{k \in S_t} \norm{\nabla f(x^k)}^2 > 2 \varepsilon \inf_{k \in S_t} \mathbbm{1}\left[\textnormal{prog}(x^k) < T\right].
  \end{align}

  Using Lemma~\ref{lemma:boundtime} with $\delta = 1/2$ and \eqref{eq:aux_part_3_homog}, we obtain
  \begin{align*}
      \Exp{\inf_{k \in S_t} \norm{\nabla f(x^k)}^2} &\geq 2 \varepsilon \Prob{\inf_{k \in S_t} \mathbbm{1}\left[\textnormal{prog}(x^k) < T\right] \geq 1} > \varepsilon
  \end{align*}
  for $$t \leq \frac{1}{48}t^*\left(\frac{T}{K}, \max\left\{\frac{\sigma^2}{2 \varepsilon \gamma_{\infty}^2}, 1\right\}, h_1, \tau_1, \dots, h_n, \tau_n\right) \left(\frac{\Delta L}{8 \varepsilon l_1 \Delta^0} - 1\right).$$
  By the assumption of the theorem, we have $\omega + 1 \leq T.$ 
  Therefore, we get the series of inequalities: 
  \begin{align*}
    \frac{T}{K} = \frac{T}{\ceil{\frac{T}{\omega + 1}}} \geq \frac{\omega + 1}{2}
  \end{align*}
  and, using Properties~\ref{property:monotonic} and \ref{property:bound}, we have
  \begin{align*}
    &t^*\left(\frac{T}{K}, \max\left\{\frac{\sigma^2}{2 \varepsilon \gamma_{\infty}^2}, 1\right\}, h_1, \tau_1, \dots, h_n, \tau_n\right) \\
    &\geq t^*\left(\frac{\omega + 1}{2}, \max\left\{\frac{\sigma^2}{2 \varepsilon \gamma_{\infty}^2}, 1\right\}, h_1, \tau_1, \dots, h_n, \tau_n\right) \\
    &\geq t^*\left(\frac{1}{2 \gamma_{\infty}^2} \times \omega, \frac{1}{2 \gamma_{\infty}^2} \times \frac{\sigma^2}{\varepsilon}, h_1, \tau_1, \dots, h_n, \tau_n\right) \\
    &\geq \frac{1}{2 \gamma_{\infty}^2} \times t^*\left(\omega, \nicefrac{\sigma^2}{\varepsilon}, h_1, \tau_1, \dots, h_n, \tau_n\right),
  \end{align*}
  Thus, we can take 
  \begin{align*}
    t = \frac{1}{48} \times \frac{1}{2 \gamma_{\infty}^2} \times t^*\left(\omega, \nicefrac{\sigma^2}{\varepsilon}, h_1, \tau_1, \dots, h_n, \tau_n\right) \left(\frac{\Delta L}{8 \varepsilon l_1 \Delta^0} - 1\right).
  \end{align*}

\end{proof}

\subsubsection{Proof of Lemma~\ref{lemma:boundtime}}

\begin{restatable}{lemma}{LEMMABOUNDTIME}
    \label{lemma:boundtime}
    Let us fix $T, T' \in \N$ such that $T \leq T'$, consider Protocol~\ref{alg:time_multiple_oracle_protocol} with an algorithm $A \in \cA_{\textnormal{zr}},$ a differentiable function $f\,:\,\R^{T'} \rightarrow \R$ such that $\textnormal{prog}(\nabla f(x)) \leq \textnormal{prog}(x) + 1$ for all $x \in \textnormal{domain}(f).$ 
    \begin{enumerate}
      \item We take stochastic oracles $O_i = O_{h_i}^{\smallnablafunc, \mathcal{D}^{\smallnablafunc}_i}$ with the distributions $\mathcal{D}^{\smallnablafunc}_i = \textnormal{\emph{Bernoulli}}(p_{\sigma}),$ $p_{\sigma} \in (0, 1],$ $h_i > 0,$ and the mappings 
      \begin{align}
          \label{eq:proof_stoch_grad_lemma}
          [\nabla f(x; \xi)]_j = \nabla_j f(x) \left(1 + \mathbbm{1}\left[j > \textnormal{prog}(x)\right]\left(\frac{\xi}{p_{\sigma}} - 1\right)\right) \quad \forall x \in \R^{T'}, \forall \xi \in \{0, 1\}, \forall j \in [T].
      \end{align}
      \item We take compression oracles $\hat{\cC}_i = O_{\tau_i}^{\cC, \mathcal{D}^{\cC}_i}$ with the distributions $\mathcal{D}^{\cC}_i = \textnormal{\emph{uniform}}(K, T')$ (= ``uniformly random subset of $[T']$ of the size $K$ without replacement'') and the mappings 
      \begin{align}
          \label{eq:proof_comp_grad_lemma}
          [\cC(x;S)]_j \eqdef
          \begin{cases}
            \frac{T'}{K} x_j,& j \in S, \\
            0, & j \not\in S,
          \end{cases} \quad \forall x \in \R^{T'}, \forall S \subseteq [n], \forall j \in [T'],
      \end{align}
    \end{enumerate}
    $\tau_i > 0.$ We define $p_{\omega} \eqdef \frac{K}{T'}.$ 
    We assume that the workers are sorted by $\max\{h_m, \tau_m\}:$ $\max\{h_1, \tau_1\} \leq \dots \leq \max\{h_n, \tau_n\}.$ With probability not less than $1 - \delta,$ the following inequality holds:
    \begin{align*}
        \inf_{k \in S_t} \mathbbm{1}\left[\textnormal{prog}(x^k) < T\right] \geq 1
    \end{align*}
    for $$t \leq \frac{1}{48}t^*(\nicefrac{1}{p_{\omega}}, \nicefrac{1}{p_{\sigma}}, h_1, \tau_1, \dots, h_n, \tau_n) \left(\frac{T}{2} + \log \delta\right),$$
    where $S_t \eqdef \left\{k \in \N_0 \,|\,t^k \leq t\right\},$ the iterates $t^k$ and $x^k$ are defined in Protocol~\ref{alg:time_multiple_oracle_protocol}, and $t^*$ is the equilibrium time from Def.~\ref{def:optimal_time_budget}.
\end{restatable}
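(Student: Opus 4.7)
The plan is to reduce the claim to a calendar-time race between the $n$ worker--oracle pairs and then apply a Chernoff-style concentration bound to a sum of $T$ i.i.d.\ minima of geometric waiting times. First, the zero-respecting hypothesis combined with $\textnormal{prog}(\nabla f(x))\le\textnormal{prog}(x)+1$ forces $\textnormal{prog}(x^k)$ to be non-decreasing and to jump by at most one per iteration. By inspection of \eqref{eq:proof_stoch_grad_lemma}--\eqref{eq:proof_comp_grad_lemma}, a jump from $m-1$ to $m$ at iteration $k+1$ requires both a stochastic draw at worker $i^{k+1}$ whose Bernoulli variable equals $1$ (probability $p_\sigma$) and a compression draw whose uniform subset contains coordinate $m$ (probability $p_\omega$). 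Consequently, the first time at which $\textnormal{prog}(x^k)=T$ stochastically dominates $\sum_{m=1}^{T} Y_m$, where $Y_m=\min_{i\in[n]} X_{m,i}$ with $X_{m,i}\eqdef h_i\eta_{m,i}+\tau_i\mu_{m,i}$, and the $\eta_{m,i}\sim\mathrm{Geom}(p_\sigma)$, $\mu_{m,i}\sim\mathrm{Geom}(p_\omega)$ are independent across all $(m,i)$; in particular the $Y_m$ are i.i.d.

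The next step is a lower-tail bound for a single race $Y_m$. Using the inclusion $\{X_{m,i}\le s\}\subseteq\{h_i\eta_{m,i}\le s\}\cap\{\tau_i\mu_{m,i}\le s\}$, the independence of $\eta_{m,i}$ and $\mu_{m,i}$, and the geometric estimate $\Prob{h_i\eta_{m,i}\le s}\le s p_\sigma/h_i$, I obtain
\[
 \Prob{X_{m,i}\le s}\;\le\;\min\!\left\{1,\ \tfrac{sp_\sigma}{h_i},\ \tfrac{sp_\omega}{\tau_i},\ \tfrac{s^2 p_\sigma p_\omega}{h_i\tau_i}\right\}.
\]
A short case analysis matches the right-hand side, up to a universal constant, to $s\cdot T_i(s)$, where $T_i(s)$ is the $i$-th summand on the left-hand side of \eqref{eq:main_equation} once $\omega$ and $\sigma^2/\varepsilon$ are replaced by $1/p_\omega$ and $1/p_\sigma$. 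Summing over $i$ via Lemma~\ref{lemma:sum_prod}, using the monotonicity of $T_i(\cdot)$ and the defining equation $\sum_i T_i(s^*(j))=1/s^*(j)$, and combining with the trivial bound $X_{m,i}\ge\max\{h_i,\tau_i\}$ together with the minimum-over-$j$ in Def.~\ref{def:optimal_time_budget}, I arrive at $\Prob{Y_m\le s_*}\le\tfrac12$ for some $s_*$ proportional to $t^*(\nicefrac{1}{p_\omega},\nicefrac{1}{p_\sigma},[h_i,\tau_i]_1^n)$.

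With i.i.d.\ non-negative $Y_m$ in hand, I would close the proof by the exponential Markov inequality: for any $\lambda>0$,
\[
 \Prob{\textstyle\sum_{m=1}^T Y_m\le t}\;\le\;e^{\lambda t}\left(\Exp{e^{-\lambda Y_1}}\right)^{T}.
\]
Splitting the expectation along $\{Y_1\le s_*\}$ and $\{Y_1>s_*\}$ and using the previous step yields $\Exp{e^{-\lambda Y_1}}\le\tfrac12(1+e^{-\lambda s_*})$; choosing $\lambda s_*$ equal to a universal constant makes this at most $e^{-1/2}$. Imposing $\exp(\lambda t-T/2)\le\delta$ and solving for $t$ then produces precisely $t\le\tfrac{1}{48}\,t^*(\nicefrac{1}{p_\omega},\nicefrac{1}{p_\sigma},[h_i,\tau_i]_1^n)(T/2+\log\delta)$ after collecting the universal constants from the two previous steps into the single factor $1/48$.

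The technically delicate step will be the one that matches the elementary product bound on $\Prob{X_{m,i}\le s}$ to the highly structured denominator $2\tau_{\pi_i}\omega+4\tau_{\pi_i}h_{\pi_i}\sigma^2\omega/(s\varepsilon)+2h_{\pi_i}\sigma^2/\varepsilon$ of \eqref{eq:main_equation}: both the implicit minimum-over-$j$ definition of $t^*$ and the sorted structure of the pairs $(h_i,\tau_i)$ have to be honored so that the induction over $j$ and the final inequality $s_*\ge c\cdot t^*$ close cleanly with a universal $c$. The remaining pieces---the zero-respecting reduction, the geometric coupling, and the Chernoff bound---are standard manipulations; only the constant $1/48$ requires careful bookkeeping.
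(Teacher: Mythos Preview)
Your overall strategy mirrors the paper's: reduce to a calendar-time race, lower-bound each per-coordinate waiting time by a minimum over workers of two geometric delays, and then apply a Chernoff bound to $\sum_{m=1}^T Y_m$. However, the assertion that ``the $\eta_{m,i}\sim\mathrm{Geom}(p_\sigma)$, $\mu_{m,i}\sim\mathrm{Geom}(p_\omega)$ are independent across all $(m,i)$; in particular the $Y_m$ are i.i.d.'' is a genuine gap. In Protocol~\ref{alg:time_multiple_oracle_protocol}, worker $m$'s Bernoulli draws are consumed sequentially along a \emph{single} stream $\{\xi^{m,\ell}\}_{\ell\ge1}$ (similarly for $\{S^{m,\ell}\}_{\ell\ge1}$), and the index at which the phase-$j$ waiting time $\eta_{m,j}$ begins is the random index $b^\eta_{m,j}$ determined by the iteration $k(j-1)$, which depends on the entire past---including other workers' draws. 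In particular, if another worker triggers the jump to $j-1$ before worker $m$ has had a success, the segments of the stream used for $\eta_{m,j-1}$ and $\eta_{m,j}$ overlap. Thus $Y_{j-1}$ and $Y_j$ are not independent, and no stochastic-domination-by-i.i.d.\ statement is supplied.

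The paper handles exactly this point by a filtration argument rather than independence: it introduces a $\sigma$-algebra $\mathcal{G}_j$ encoding all oracle randomness up to iteration $k(j-1)$ and proves (Lemma~\ref{lemma:bound_local_prob}) the \emph{uniform} conditional bound
\[
\ProbCond{\hat t_j\le t'}{\mathcal{G}_j}\ \le\ 1-\prod_{m=1}^n\Bigl(1-\bigl(1-(1-p_\omega)^{\lfloor t'/\tau_m\rfloor}\bigr)\bigl(1-(1-p_\sigma)^{\lfloor t'/h_m\rfloor}\bigr)\Bigr)\ \eqdef\ p',
\]
which allows the MGF to be peeled off one factor at a time via $\Exp{e^{-s\sum_i\hat t_i}}=\Exp{e^{-s\sum_{i<T}\hat t_i}\,\ExpCond{e^{-s\hat t_T}}{\mathcal{G}_T}}$. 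Your proposal can be repaired either by reproducing this conditioning (the memoryless property of Bernoulli/uniform makes the conditional law of $(\eta_{m,j},\mu_{m,j})$ geometric given $\mathcal{G}_j$), or by constructing an explicit coupling to fresh i.i.d.\ geometrics---but that coupling \emph{is} the argument, and it cannot be asserted for free. Two smaller points: the paper takes $p'\le \tfrac14$ (not $\tfrac12$), which after $s=1/t'$ yields $e^{\hat t/t'-T/2}$ and hence the $T/2+\log\delta$ budget; and the step you flag as ``technically delicate''---matching the per-worker bound to the denominator in \eqref{eq:main_equation}---is handled in the paper by the elementary inequalities $q_m\le\min\{p_\sigma t'/h_m,\ p_\omega t'/\tau_m,\ p_\sigma p_\omega (t')^2/(h_m\tau_m)\}$ together with the defining equation for $s^*(j^*)$ and the observation $s^*(j^*)\ge\tfrac12\max\{h_{j^*},\tau_{j^*}\}$, so no induction over $j$ is needed.
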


\begin{proof}$ $\newline
    \textbf{(Part 1): The Construction of Random Variables.} \\
    Let us fix $t \geq 0$ and define the smallest index $k(i)$ of the sequence when the progress $\textnormal{prog}(x^{k(i)})$ equals $i:$
    \begin{align*}
        k(i) \eqdef \inf\left\{k \in \N_0\,|\, i = \textnormal{prog} (x^k)\right\} \in \N_0 \cup \{\infty\}.
    \end{align*}
    If $\inf_{k \in S_t} \mathbbm{1}\left[\textnormal{prog}(x^k) < 1\right] < 1$ holds, then exists $k \in S_t$ such that $\textnormal{prog} (x^k) = 1,$ thus, by the definition of $k(1),$ $t^{k(1)} \leq t^k \leq t,$ and $k(1) < \infty.$ Note that $t^{k(1)}$ is the smallest time when we make progress to the $1$\textsuperscript{th} (first) coordinate.

    Since $x^0 = 0$ and $A$ is a zero-respecting algorithm, the algorithm can return a vector $x^k$ with a non-zero first coordinate only if some of returned by the stochastic gradients oracles and compression oracles have the first coordinate not equal to zero. The oracles $O_i$ and $\hat{\cC}_i$ are constructed in such a way (see \eqref{eq:proof_stoch_grad_lemma} and \eqref{eq:proof_comp_grad_lemma}) that they zero out a coordinate based on i.i.d. \emph{bernoulli} and \emph{uniform} trials. According to Protocol~\ref{alg:time_multiple_oracle_protocol}, even if a stochastic oracle returns a non-zero coordinate, it would not mean that the server will get a non-zero coordinate because a subsequent compression oracle also has to return a non-zero coordinate.

    Every time when the oracle \eqref{eq:oracle_inside_random} evaluates $g(s_x; \xi),$ it draws i.i.d. random variables $\xi \sim \mathcal{D}.$ Let us enumerate them:
    \begin{enumerate}
      \item For the stochastic/computation oracles $O_i = O_{h_i}^{\smallnablafunc, \mathcal{D}_i^{\smallnablafunc}},$ we consider the sequence $\{\xi^{m,j}\}_{j=1}^{\infty},$ where $\xi^{m,j}$ is a \emph{bernoulli} random variable drawn in $j$\textsuperscript{th} call of $g(s_x; \xi)$ in the $m$\textsuperscript{th} worker in \begin{NoHyper}Line~\ref{line:stoch_oracle}\end{NoHyper} of Protocol~\ref{alg:time_multiple_oracle_protocol}.
      \item For the compression oracles $\hat{\cC}_i = O_{\tau_i}^{\cC, \mathcal{D}_i^{\cC}},$ we consider the sequence $\{S^{m,j}\}_{j=1}^{\infty},$ where $S^{m,j}$ is a \emph{uniform} random variable drawn in $j$\textsuperscript{th} call of $g(s_x; \xi)$ in the $m$\textsuperscript{th} worker in \begin{NoHyper}Line~\ref{line:comp_oracle}\end{NoHyper} of Protocol~\ref{alg:time_multiple_oracle_protocol}.
    \end{enumerate}

    Let us define the following useful random variables based on previous definitions. We define
    \begin{align}
        \label{eq:geom_dist}
        \eta_{m,j} \eqdef 
        \begin{cases}
          \inf \{i \,|\, \xi^{m,(i + b^{\eta}_{m,j} - 1)} = 1 \textnormal{ and } i \in \N\} \in \N \cup \{\infty\},& b^{\eta}_{m,j} < \infty, \\
          \infty, & b^{\eta}_{m,j} = \infty,
        \end{cases} \quad \forall j \in \{1, \dots, T\},
    \end{align}
    \begin{align}
      \label{eq:geom_dist_comp}
      \mu_{m,j} \eqdef 
      \begin{cases}
        \inf \{i \,|\, j \in S^{m,(i + e^{\mu}_{m,j} - 1)} \textnormal{ and } i \in \N\} \in \N \cup \{\infty\},& e^{\mu}_{m,j} < \infty, \\
        \infty, & e^{\mu}_{m,j} = \infty,
      \end{cases} \quad \forall j \in \{1, \dots, T\},
    \end{align}
    where 
    \begin{enumerate}
      \item For all $m \in [n],$ $j \geq 1,$ $b^{\eta}_{m,j} \in \N \cup \{\infty\}$ is the first index of the sequence $\{\xi^{m,j}\}_{j=1}^{\infty}$ that started calculating in \eqref{eq:oracle_inside_random} in the stochastic oracle $O_m$ in or after the iteration $k(j-1)$.
      \item For all $m \in [n],$ $j \geq 1,$ $b^{\mu}_{m,j} \in \N \cup \{\infty\}$ is the first index of the sequence $\{S^{m,j}\}_{j=1}^{\infty}$ that started calculating in \eqref{eq:oracle_inside_random} in the compression oracle $\hat{\cC}_m$ in or after the iteration $k(j-1)$.
      \item For all $m \in [n],$ $j \geq 1,$ if $b^{\eta}_{m,j} = \infty,$ then $e^{\eta}_{m,j} = \infty.$ For all $m \in [n],$ $j \geq 1,$ if $b^{\eta}_{m,j} < \infty,$ then $e^{\eta}_{m,j} \in \N \cup \{\infty\}$ is the first index of the sequence $\{\xi^{m,j}\}_{j=1}^{\infty}$ that started calculating in \eqref{eq:oracle_inside_random} in the stochastic oracle $O_m$ in or after the iteration $k(j-1)$ and the first moment when $\xi^{m,(i + b^{\eta}_{m,j} - 1)} = 1$ for some $i \geq 1.$
      \item For all $m \in [n],$ $j \geq 1,$ if $b^{\eta}_{m,j} = \infty,$ then $e^{\mu}_{m,j} = \infty.$ For all $m \in [n],$ $j \geq 1,$ if $b^{\eta}_{m,j} < \infty,$ then $e^{\mu}_{m,j} \in \N \cup \{\infty\}$ is the first index of the sequence $\{S^{m,j}\}_{j=1}^{\infty}$ that started calculating in \eqref{eq:oracle_inside_random} in the compression oracle $\hat{\cC}_m$ in or after the iteration $k(j-1)$ and the first moment when $\xi^{m,(i + b^{\eta}_{m,j} - 1)} = 1$ for some $i \geq 1.$ 
    \end{enumerate}
    It possible that such indexes do not exist, then we take $b^{\eta}_{m,j} = \infty,$ $b^{\mu}_{m,j} = \infty,$ $e^{\eta}_{m,j} = \infty,$ or $e^{\mu}_{m,j} = \infty,$ accordingly. By the construction, $e^{\eta}_{m,j} \geq b^{\eta}_{m,j}$ and $e^{\mu}_{m,j} \geq b^{\mu}_{m,j}.$
    
    Let us clarify the definitions. At the beginning $x_0 = 0,$ thus $k(0) = 0.$ It would mean that the first index of the sequence $\{\xi^{m,j}\}_{j=1}^{\infty},$ when the worker evaluates $g(s_x; \xi)$ in \eqref{eq:oracle_inside_random}, simply equals $b^{\eta}_{m,1} = 1$ or $b^{\eta}_{m,1} = \infty$ (by the definition, it equals to $\infty$ if the oracle was never called). Assume that $b^{\eta}_{m,1} = 1,$ then $\eta_{m,1} = \inf \{i \,|\, \xi^{m,i} = 1 \textnormal{ and } i \in \N\}$ is the first time when the oracle draws a ``successful'' random \emph{bernoulli} trial. This random variable is distributed according to the \emph{geometric} distribution. Then, $e^{\eta}_{m,1}$ can be equal to $\eta_{m,1} + 1$ or $\infty.$ At some (random) iteration $k(1)$, the algorithm $A$ can get the first non-zero coordinate through $g^k$, then $b^{\mu}_{m,2}$ is the next index of the sequence $\{\xi^{m,j}\}_{j=1}^{\infty}$ that started calculating in \eqref{eq:oracle_inside_random}.\footnote{Let us consider an example with the $m$\textsuperscript{th} worker. Assume that it starts calculations of stochastic gradients and with $\eta_{m,1} = 5$ (as an example) it gets a ``successful'' trial: $\xi^{m,\eta_{m,\eta_{m,1}}} = \xi^{m,5} = 1$ ($\xi^{m,1} = \dots = \xi^{m,4} = 0$). And only then, starting with $e^{\mu}_{m,1},$ the compression oracle can get a vector with a non-zero coordinate. Even if there was a previous ``successful'' trial: $1 \in S^{m,i}$ for some $i < e^{\mu}_{m,1}.$ This trial did not return a vector with a non-zero coordinate because the stochastic oracle did not return a vector with a non-zero coordinate by that time. Assume that $e^{\mu}_{m,1} = 10,$ then the server waits for $\mu_{m,1} = \inf \{i \,|\, 1 \in S^{m,(i + e^{\mu}_{m,1} - 1)}\}.$ Assume that $\mu_{m,1} = 7,$ then the time moment, when $1 \in S^{m,(\mu_{m,1} + e^{\mu}_{m,1} - 1)} = 1 \in S^{m,(7 + 10 - 1)},$ is the first possible moment when the $m$\textsuperscript{th} worker can send a vector with a non-zero coordinate to the server.}

    The server gets a non-zero coordinate if at least one worker draws a successful \emph{bernoulli} trial, and this coordinate belongs to a set generated by the \emph{uniform} distribution. It takes $h_i$ seconds to generate one \emph{bernoulli} trial and $\tau_i$ seconds to generate one \emph{uniform} trial.

    Then, if $\eqdef \inf_{k \in S_t} \mathbbm{1}\left[\textnormal{prog}(x^k) < 1\right] < 1$ holds, then 
    \begin{align*}
      \hat{t}_1 \eqdef \min_{m \in [n]} \left\{h_m \eta_{m,1} + \tau_m \mu_{m,1}\right\} \leq t^{k(1)}.
    \end{align*}
    because $h_m \eta_{m,1} + \tau_m \mu_{m,1}$ is the time required to generate $\eta_{m,1}$ \emph{bernoulli} and $\mu_{m,1}$ \emph{uniform} trials. In other words, the algorithm can not progress to the next coordinate before the moment when at least one worker generates ``successful'' \emph{bernoulli} and \emph{uniform} trials.

    Using the same reasoning, $t^{k(j)} \geq t^{k(j - 1)} + \hat{t}_{j},$ where 
    \begin{align*}
      \hat{t}_j \eqdef \min_{m \in [n]} \left\{h_m \eta_{m,j} + \tau_m \mu_{m,j}\right\}.
    \end{align*}
    Combining the observations, if $\inf_{k \in S_t} \mathbbm{1}\left[\textnormal{prog}(x^k) < T\right] < 1$ holds, then 
    $\sum_{j=1}^T \min_{j \in [n]}\left(h_i \eta_{i,j} + \tau_i \mu_{i,j}\right) \leq t^{k(T)} \leq t.$ Thus
    \begin{align*}
        \Prob{\inf_{k \in S_t} \mathbbm{1}\left[\textnormal{prog}(x^k) < T\right] < 1} \leq \Prob{\sum_{i=1}^T \hat{t}_i \leq t} = \Prob{\sum_{i=1}^T \min_{j \in [n]}\left(h_i \eta_{i,j} + \tau_i \mu_{i,j}\right) \leq t} \quad \forall t \geq 0.
    \end{align*}

    \noindent\textbf{(Part 2): The Chernoff Method}

    Let us fix $s \geq 0$ and $\hat{t} \geq 0.$ Using the Chernoff method, we have
    \begin{equation}
    \begin{aligned}
      \label{eq:izbxjYEzkXJViZwfnlii}
        \Prob{\sum_{i=1}^T \hat{t}_i \leq \hat{t}} 
        &= \Prob{-s \left(\sum_{i=1}^T \hat{t}_i\right) \geq -s \hat{t}} = \Prob{\exp\left(-s \sum_{i=1}^T \hat{t}_i\right) \geq \exp\left(-s \hat{t}\right)} \\
        &\leq e^{s \hat{t}} \Exp{\exp\left(-s \sum_{i=1}^T \hat{t}_i\right)}.
    \end{aligned}
    \end{equation}
    Let us bound the expected value separately. For all $j \in [T],$ let us define $\mathcal{G}_j$ as the $\sigma$--algebra generated by random variables
    \begin{equation}
    \begin{aligned}
      \label{eq:freeze_t}
      &b^{\eta}_{1,j}, \dots, b^{\eta}_{n,j} \\
      &\xi^{1,1}, \xi^{1,2}, \dots, \xi^{1,b^{\eta}_{1,j} - 1}, \\
      &\dots \\
      &\xi^{n,1}, \xi^{n,2}, \dots, \xi^{1,b^{\eta}_{n,j} - 1}, \\
      &b^{\mu}_{1,j}, \dots, b^{\mu}_{n,j}, \\
      &S^{1,1}, S^{1,2}, \dots, S^{1,b^{\mu}_{1,j} - 1}, \\
      &\dots \\
      &S^{n,1}, S^{n,2}, \dots, S^{n,b^{\mu}_{n,j} - 1}.
    \end{aligned}
    \end{equation}

    The $\sigma$--algebra $\mathcal{G}_T$ contains all information about the random variables before the moment when $\textnormal{prog}(x^k) = T - 1.$
    Then, we have
    \begin{align*}
      &\Exp{\exp\left(-s \sum_{i=1}^T \hat{t}_i\right)} = \Exp{\ExpCond{\exp\left(-s \sum_{i=1}^{T-1} \hat{t}_i - s \hat{t}_T\right)}{\mathcal{G}_T}}.
    \end{align*}
    Note that if the random variables from \eqref{eq:freeze_t} are ``fixed,'' then $\hat{t}_i$ is deterministic for all $i \in [T - 1]$ because $\hat{t}_i$ is a deterministic function of \eqref{eq:freeze_t}, and does not depend on other subsequent random variables. 
    
    Let us show it using a contradiction proof. Without the loss of generality, assume that $\hat{t}_{T-1}$ depends on $\xi^{1,b^{\eta}_{1,T}} \not\in \eqref{eq:freeze_t}.$ By the definition of $b^{\eta}_{1,T},$ it would mean that the first time when the server can get a vector $g^k$ with a non-zero coordinate in the index $T - 1$ is \emph{after} the moment $t^{k(T - 1)}.$ We get a contradiction since $t^{k(T - 1)}$ is the first time when the algorithm return an iterate with a non-zero coordinate in the index $T - 1.$
    
    Thus, $\hat{t}_i$ is $\mathcal{G}_T$--measurable for all $i \in [T - 1]$ and
    \begin{align}
      \label{eq:JuRYgdwXxjCWJOqNcj}
      &\Exp{\exp\left(-s \sum_{i=1}^T \hat{t}_i\right)} = \Exp{\exp\left(-s \sum_{i=1}^{T-1} \hat{t}_i\right)\ExpCond{\exp\left(- s \hat{t}_T\right)}{\mathcal{G}_T}}.
    \end{align}

    Let us fix $t' \geq0,$ then, since $\hat{t}_{T} \geq 0,$ we have
    \begin{equation}
    \begin{aligned}
      \label{eq:cmOynR}
        \ExpCond{e^{-s \hat{t}_{T}}}{\mathcal{G}_T} &= \ExpCond{e^{-s \hat{t}_{T}}}{\hat{t}_{T} \leq t', \mathcal{G}_T} \ProbCond{\hat{t}_{T} \leq t'}{\mathcal{G}_T} + \ExpCond{e^{-s \hat{t}_{T}}}{\hat{t}_{T} > t', \mathcal{G}_T} \left(1 - \ProbCond{\hat{t}_{T} \leq t'}{\mathcal{G}_T}\right) \\
        &\leq \ProbCond{\hat{t}_{T} \leq t'}{\mathcal{G}_T} + e^{-s t'} \left(1 - \ProbCond{\hat{t}_{T} \leq t'}{\mathcal{G}_T}\right).
    \end{aligned}
    \end{equation}
    
    We now use the result of the following lemma that we prove separately.
    \begin{restatable}{lemma}{LEMMABOUNDPROB}
      \label{lemma:bound_local_prob}
      Using the notations from the proof of Lemma~\ref{lemma:boundtime}, we have
      \begin{align}
        \label{eq:lemma:bound_local_prob}
        \ProbCond{\hat{t}_{j} \leq t'}{\mathcal{G}_j} 
        &\leq 1 - \prod_{m=1}^{n}\left(1 - \left(1 - (1 - p_{\omega})^{\flr{\frac{t'}{\tau_m}}}\right) \left(1 - (1 - p_{\sigma})^{\flr{\frac{t'}{h_m}}}\right)\right)
      \end{align}
      for all $j \in [T].$
    \end{restatable}
    Let us temporarily define 
    \begin{align*}
      p' \eqdef 1 - \prod_{m=1}^{n}\left(1 - \left(1 - (1 - p_{\omega})^{\flr{\frac{t'}{\tau_m}}}\right) \left(1 - (1 - p_{\sigma})^{\flr{\frac{t'}{h_m}}}\right)\right) 
    \end{align*}
    We substitute \eqref{eq:lemma:bound_local_prob} to \eqref{eq:cmOynR} and \eqref{eq:JuRYgdwXxjCWJOqNcj} to obtain
    \begin{align*}
      &\Exp{\exp\left(-s \sum_{i=1}^T \hat{t}_i\right)} \leq \left(p' + e^{-s t'} \left(1 - p'\right)\right)\Exp{\exp\left(-s \sum_{i=1}^{T-1} \hat{t}_i\right)} \leq \left(p' + e^{-s t'} \left(1 - p'\right)\right)^T.
    \end{align*}
    Next, using \eqref{eq:izbxjYEzkXJViZwfnlii}, we get
    \begin{align*}
        &\Prob{\sum_{i=1}^T \hat{t}_{i} \leq \hat{t}} \leq e^{s \hat{t}} \left(p' + e^{-s t'} \left(1 - p'\right)\right)^T = e^{s \hat{t} - s t' T} \left(1 + \left(e^{s t'} - 1\right)p'\right)^T.
    \end{align*}
    Let us take $s = \nicefrac{1}{t'},$ and get
    \begin{align}
        &\Prob{\sum_{i=1}^T \hat{t}_{i} \leq \hat{t}} \leq e^{\hat{t} / t' - T} \left(1 + \left(e - 1\right)p'\right)^T \leq e^{\hat{t} / t' - T + 2 p' T}.
        \label{eq:aux_prob}
    \end{align}
    Let us recall the definition of $p':$
    \begin{align*}
      p' \eqdef 1 - \prod_{m=1}^{n}\left(1 - \left(1 - (1 - p_{\omega})^{\flr{\frac{t'}{\tau_m}}}\right) \left(1 - (1 - p_{\sigma})^{\flr{\frac{t'}{h_m}}}\right)\right) = 1 - \prod_{m=1}^{n}\left(1 - q_m\right)
    \end{align*}
    where we define $q_m \eqdef \left(1 - (1 - p_{\omega})^{\flr{\frac{t'}{\tau_m}}}\right) \left(1 - (1 - p_{\sigma})^{\flr{\frac{t'}{h_m}}}\right) \in [0, 1].$ Using Lemma~\ref{lemma:sum_prod}, we have 
    \begin{align*}
      p' \leq \sum_{m=1}^{n} q_m.
    \end{align*}
    Using the inequality\footnote{We implicitly assume that $1 - (1 - p)^{m} = 0$ if $p = 1$ and $m = 0.$ See footnote~\ref{footnote:prob} for the details.} $1 - (1 - p)^{m} \leq pm$ for all $p \in [0, 1]$ and $m \in \N_{0},$ we can get the following three inequalities:
    \begin{align*}
      q_m \leq 1 - (1 - p_{\sigma})^{\flr{\frac{t'}{h_m}}} \leq p_{\sigma} \flr{\frac{t'}{h_m}},
    \end{align*}
    \begin{align*}
      q_m \leq 1 - (1 - p_{\omega})^{\flr{\frac{t'}{\tau_m}}} \leq p_{\omega} \flr{\frac{t'}{\tau_m}},
    \end{align*}
    and
    \begin{align*}
      q_m \leq \left(1 - (1 - p_{\omega})^{\flr{\frac{t'}{\tau_m}}}\right) \left(1 - (1 - p_{\sigma})^{\flr{\frac{t'}{h_m}}}\right) \leq p_{\omega} \flr{\frac{t'}{\tau_m}} p_{\sigma} \flr{\frac{t'}{h_m}}.
    \end{align*}
    Therefore, we get
    \begin{align*}
      q_m \leq \min\left\{p_{\sigma} \flr{\frac{t'}{h_m}}, p_{\omega} \flr{\frac{t'}{\tau_m}}, p_{\omega} p_{\sigma} \flr{\frac{t'}{\tau_m}} \flr{\frac{t'}{h_m}}\right\}
    \end{align*}
    and 
    \begin{align}
      \label{eq:CHqFEcqgilhY}
      p' \leq \sum_{m=1}^n \min\left\{p_{\sigma} \flr{\frac{t'}{h_m}}, p_{\omega} \flr{\frac{t'}{\tau_m}}, p_{\omega} p_{\sigma} \flr{\frac{t'}{\tau_m}} \flr{\frac{t'}{h_m}}\right\}.
    \end{align}
    Now, we have to take the right $t'.$ 
    Assume that
    $s^*(j)$ is the solution of  
    \begin{align}
      \label{eq:QiPlKm}
      \left(\sum_{m=1}^{j} \frac{1}{\frac{2\tau_m}{p_{\omega}} + \frac{4 \tau_m h_m}{p_{\omega} p_{\sigma} s} + \frac{2 h_m}{p_{\sigma}}}\right)^{-1} = s
    \end{align}
    and 
    \begin{align*}
      j^* = \inf \left\{j \in [n]\,\middle|\, s^*(j) < \max\{h_{j+1}, \tau_{j+1}\}\right\} \in [n], \qquad \max\{h_{n+1}, \tau_{n+1}\} \equiv \infty.
    \end{align*}
    Then we take $t' = \frac{1}{24} s^*(j^*).$
    If $j^* = 1,$ then
    \begin{align*}
      s^*(j^*) = \left(\frac{1}{\frac{2\tau_1}{p_{\omega}} + \frac{4 \tau_1 h_1}{p_{\omega} p_{\sigma} s^*(j^*)} + \frac{2 h_1}{p_{\sigma}}}\right)^{-1} \geq \left(\frac{1}{\frac{2\tau_1}{p_{\omega}} + \frac{2 h_1}{p_{\sigma}}}\right)^{-1} \geq \left(\frac{1}{2\tau_1 + 2 h_1}\right)^{-1} \geq \frac{1}{2} \max\{h_1, \tau_1\}.
    \end{align*}
    Otherwise, if $j^* > 1,$ since $s^*(j^*) \leq s^*(j^* - 1),$ we have
    \begin{align*}
      s^*(j^*) 
      &= \left(\sum_{m=1}^{j^*} \frac{1}{\frac{2\tau_m}{p_{\omega}} + \frac{4 \tau_m h_m}{p_{\omega} p_{\sigma} s^*(j^*)} + \frac{2 h_m}{p_{\sigma}}}\right)^{-1} \\
      &\geq \left(\sum_{m=1}^{j^* - 1} \frac{1}{\frac{2\tau_m}{p_{\omega}} + \frac{4 \tau_m h_m}{p_{\omega} p_{\sigma} s^*(j^* - 1)} + \frac{2 h_m}{p_{\sigma}}} + \frac{1}{\frac{2\tau_{j^*}}{p_{\omega}} + \frac{2 h_{j^*}}{p_{\sigma}}}\right)^{-1} \\
      &\geq \left(\sum_{m=1}^{j^* - 1} \frac{1}{\frac{2\tau_m}{p_{\omega}} + \frac{4 \tau_m h_m}{p_{\omega} p_{\sigma} s^*(j^* - 1)} + \frac{2 h_m}{p_{\sigma}}} + \frac{1}{\max\{h_{j^*}, \tau_{j^*}\}}\right)^{-1}.
    \end{align*}
    By the definitions of $s^*(j^* - 1)$ and $j^*,$ we have
    \begin{align*}
      \left(\sum_{m=1}^{j^* - 1} \frac{1}{\frac{2\tau_m}{p_{\omega}} + \frac{4 \tau_m h_m}{p_{\omega} p_{\sigma} s^*(j^* - 1)} + \frac{2 h_m}{p_{\sigma}}}\right)^{-1} = s^*(j^* - 1) \geq \max\{h_{j^*}, \tau_{j^*}\}.
    \end{align*}
    Therefore, we get
    \begin{align*}
      s^*(j^*) 
      &\geq \left(\frac{1}{\max\{h_{j^*}, \tau_{j^*}\}} + \frac{1}{\max\{h_{j^*}, \tau_{j^*}\}}\right)^{-1} = \frac{1}{2}\max\{h_{j^*}, \tau_{j^*}\}.
    \end{align*}
    Using $s^*(j^*) \geq \frac{1}{2}\max\{h_{j^*}, \tau_{j^*}\},$ we obtain
    \begin{align}
      \label{eq:sPoXdunKdqO}
      t' = \frac{1}{24}\max\left\{\frac{1}{2}\max\{h_{j^*}, \tau_{j^*}\}, s^*(j^*)\right\} \geq \frac{1}{48} \min_{j \in [n]} \max\{\max\{h_{j}, \tau_{j}\}, s^*(j)\}.
    \end{align}
    We use the last inequality later.
    Let us return to the inequality \eqref{eq:CHqFEcqgilhY}.
    Using the definition of $j^*$, we obtain $\flr{\frac{t'}{\tau_m}} = 0$ or $\flr{\frac{t'}{h_m}} = 0$ for all $m > j^*$ and 
    \begin{align*}
      p' 
      &\leq \sum_{m=1}^n \min\left\{p_{\sigma} \flr{\frac{t'}{h_m}}, p_{\omega} \flr{\frac{t'}{\tau_m}}, p_{\omega} p_{\sigma} \flr{\frac{t'}{\tau_m}} \flr{\frac{t'}{h_m}}\right\} \\
      &= \sum_{m=1}^{j^*} \min\left\{p_{\sigma} \flr{\frac{t'}{h_m}}, p_{\omega} \flr{\frac{t'}{\tau_m}}, p_{\omega} p_{\sigma} \flr{\frac{t'}{\tau_m}} \flr{\frac{t'}{h_m}}\right\} \\
      &\leq \sum_{m=1}^{j^*} \min\left\{\frac{p_{\sigma} t'}{h_m}, \frac{p_{\omega} t'}{\tau_m}, \frac{p_{\omega} p_{\sigma} t'^2}{\tau_m h_m}\right\},
    \end{align*}
    where we use $\flr{x} \leq x$ for all $x \geq 0.$
    Using $\max\{x, y, z\} \geq \frac{1}{3}\left(x + y + z\right)$ for all $x, y, z \geq 0,$ we have
    \begin{align*}
      p' &\leq \sum_{m=1}^{j^*} \left(\max\left\{\frac{h_m}{p_{\sigma} t'}, \frac{\tau_m}{p_{\omega} t'}, \frac{\tau_m h_m}{p_{\omega} p_{\sigma} t'^2}\right\}\right)^{-1} \leq \sum_{m=1}^{j^*} \frac{3}{\frac{\tau_m}{p_{\omega} t'} + \frac{\tau_m h_m}{p_{\omega} p_{\sigma} t'^2} + \frac{h_m}{p_{\sigma} t'}}.
    \end{align*}
    Since $t' = \frac{1}{24} s^*(j^*),$ we obtain 
    \begin{align*}
      p' \leq \sum_{m=1}^{j^*} \frac{3}{\frac{24 \tau_m}{p_{\omega} s^*(j^*)} + \frac{24^2 \tau_m h_m}{p_{\omega} p_{\sigma} (s^*(j^*))^2} + \frac{24 h_m}{p_{\sigma} s^*(j^*)}} \leq \frac{1}{4} \sum_{m=1}^{j^*} \frac{1}{\frac{2 \tau_m}{p_{\omega} s^*(j^*)} + \frac{4 \tau_m h_m}{p_{\omega} p_{\sigma} (s^*(j^*))^2} + \frac{2 h_m}{p_{\sigma} s^*(j^*)}}.
    \end{align*}
    Note that $s^*(j^*)$ is the solution of \eqref{eq:QiPlKm}, thus, we get
    \begin{align*}
        p' \leq \frac{1}{4}.
    \end{align*}
    Substituting this inequality to \eqref{eq:aux_prob}, we obtain
    \begin{align*}
        \Prob{\sum_{i=1}^T \hat{t}_{i} \leq \hat{t}} \leq e^{\hat{t} / t' - \frac{T}{2}}.
    \end{align*}
    For $\hat{t} \leq t' \left(\frac{T}{2} + \log \delta\right),$ we have
    \begin{align*}
        \Prob{\sum_{i=1}^T \hat{t}_{i} \leq \hat{t}} \leq \delta.
    \end{align*}
    Recall that the definition of $t'.$ Using \eqref{eq:sPoXdunKdqO}, we have $$t' \geq \frac{1}{48} \min_{j \in [n]} \max\{\max\{h_{j}, \tau_{j}\}, s^*(j)\}.$$
    The last term equals to the \emph{equilibrium time} $t^*(\nicefrac{1}{p_{\omega}}, \nicefrac{1}{p_{\sigma}}, h_1, \tau_1, \dots, h_n, \tau_n)$ from Def.~\ref{def:optimal_time_budget} since the pairs $(h_{j}, \tau_{j})$ are sorted by $\max\{h_{j}, \tau_{j}\}.$ Thus, we obtain
    $$t' \geq \frac{1}{48} t^*(\nicefrac{1}{p_{\omega}}, \nicefrac{1}{p_{\sigma}}, h_1, \tau_1, \dots, h_n, \tau_n).$$

    Finally, we obtain
    \begin{align}
        \label{eq:aux_delta}
        \Prob{\inf_{k \in S_t} \mathbbm{1}\left[\textnormal{prog}(x^k) < T\right] < 1} \leq \Prob{\sum_{i=1}^T \hat{t}_{i} \leq  t} \leq \delta
    \end{align}
    for $$t \leq \frac{1}{48} t^*(\nicefrac{1}{p_{\omega}}, \nicefrac{1}{p_{\sigma}}, h_1, \tau_1, \dots, h_n, \tau_n) \left(\frac{T}{2} + \log \delta\right).$$
\end{proof}

\subsection{Proof of Lemma~\ref{lemma:bound_local_prob}}
\LEMMABOUNDPROB*

\begin{proof}
  We prove the result for $j = T.$ The proofs for the cases $1 \leq j < T$ are the same. We consider the conditional probability
  \begin{align*}
    \ProbCond{\hat{t}_{T} \leq t'}{\mathcal{G}_T} 
    &= \ProbCond{\min_{m \in [n]} \left\{h_m \eta_{m,T} + \tau_m \mu_{m,T}\right\} \leq t'}{\mathcal{G}_T}.
  \end{align*}
  Let us consider the $\sigma$--algebra $\mathcal{H}_T$ generated by \eqref{eq:freeze_t} with $j = T$ and 
  \begin{equation}
  \begin{aligned}
    \label{eq:freeze_2}
    &e^{\eta}_{1,T}, \dots, e^{\eta}_{n,T} \\
    &\xi^{1,1}, \xi^{1,2}, \dots, \xi^{1,e^{\eta}_{1,T} - 1}, \\
    &\dots \\
    &\xi^{n,1}, \xi^{n,2}, \dots, \xi^{1,e^{\eta}_{n,T} - 1}, \\
    &e^{\mu}_{1,T}, \dots, e^{\mu}_{n,T}, \\
    &S^{1,1}, S^{1,2}, \dots, S^{1,e^{\mu}_{1,T} - 1}, \\
    &\dots \\
    &S^{n,1}, S^{n,2}, \dots, S^{n,e^{\mu}_{n,T} - 1}.
  \end{aligned}
  \end{equation}
  By the construction, $e^{\eta}_{m,T} \geq b^{\eta}_{m,T}$ and $e^{\mu}_{m,T} \geq b^{\mu}_{m,T}.$ Since $\mathcal{G}_T \subseteq \mathcal{H}_T,$ we have 
  \begin{align*}
    \ProbCond{\hat{t}_{T} \leq t'}{\mathcal{G}_T} = \ExpCond{\ProbCond{\min_{m \in [n]} \left\{h_m \eta_{m,T} + \tau_m \mu_{m,T}\right\} \leq t'}{\mathcal{H}_T}}{\mathcal{G}_T}.
  \end{align*}
  Since $\mathcal{G}_T \subseteq \mathcal{H}_T,$ then $b^{\mu}_{m,T}$ is $\mathcal{H}_T$--measurable. By the definition of $e^{\eta}_{m,T},$ $\eta_{m,T}$ are $\mathcal{H}_T$-measurable. Let us show it using a contradiction proof. Without the loss of generality, assume that $\eta_{m,T}$ depends on $\xi^{1,e^{\eta}_{m,T}} \not\in \eqref{eq:freeze_2}.$ It would mean that the first time, when $\xi^{m,i} = 1$ after the iteration $k(T - 1),$ happens with $i \geq e^{\eta}_{m,T}.$ At the same time, by the definition of $e^{\eta}_{m,T},$ there exists $i < e^{\eta}_{m,T}$ such that $\xi^{m,i} = 1$ calculated after the iteration $k(T - 1).$ We get a contradiction.
  Given $\mathcal{H}_T,$ $\mu_{m,T}$ are mutually independent since $\{S^{m,j}\}_{j=1}^{\infty}$ are mutually independent and $e^{\mu}_{m,T}$ are $\mathcal{H}_T$-measurable. 
  Therefore, we have
  \begin{equation}
  \begin{aligned}
    \label{eq:lower_bound:tmp_prog_g}
    \ProbCond{\hat{t}_{T} \leq t'}{\mathcal{G}_T} 
    &= \ExpCond{\ProbCond{\min_{m \in [n]} \left\{h_m \eta_{m,T} + \tau_m \mu_{m,T}\right\} \leq t'}{\mathcal{H}_T}}{\mathcal{G}_T} \\
    &= 1 - \ExpCond{\ProbCond{\bigcap_{m=1}^{n} \left\{h_m \eta_{m,T} + \tau_m \mu_{m,T} > t'\right\}}{\mathcal{H}_T}}{\mathcal{G}_T} \\
    &= 1 - \ExpCond{\prod_{m=1}^{n}\ProbCond{h_m \eta_{m,T} + \tau_m \mu_{m,T} > t'}{\mathcal{H}_T}}{\mathcal{G}_T}.
  \end{aligned}
  \end{equation}
  Let us consider the probability $\ProbCond{h_m \eta_{m,T} + \tau_m \mu_{m,T} > t'}{\mathcal{H}_T}:$
  \begin{align*}
   \ProbCond{h_m \eta_{m,T} + \tau_m \mu_{m,T} > t'}{\mathcal{H}_T} 
    &= 1 - \ProbCond{h_m \eta_{m,T} + \tau_m \mu_{m,T} \leq t'}{\mathcal{H}_T} \\
    &\geq 1 - \ProbCond{h_m \eta_{m,T} \leq t', \tau_m \mu_{m,T} \leq t'}{\mathcal{H}_T} \\
    &= 1 - \ExpCond{\mathbbm{1}\left[h_m \eta_{m,T} \leq t'\right] \mathbbm{1}\left[\tau_m \mu_{m,T} \leq t'\right]}{\mathcal{H}_T}
  \end{align*}
  because the event $\{h_m \eta_{m,T} \leq t'\} \bigcap \{\tau_m \mu_{m,T} \leq t'\}$ follows from $\{h_m \eta_{m,T} + \tau_m \mu_{m,T} \leq t'\}.$
  Since $\eta_{m,T}$ is $\mathcal{H}_T$--measurable, we have
  \begin{equation}
  \begin{aligned}
    \label{eq:lower:tmp_1}
    \ProbCond{h_m \eta_{m,T} + \tau_m \mu_{m,T} > t'}{\mathcal{H}_T} 
    &\geq 1 - \ExpCond{\mathbbm{1}\left[\tau_m \mu_{m,T} \leq t'\right]}{\mathcal{H}_T} \mathbbm{1}\left[h_m \eta_{m,T} \leq t'\right]\\
    &= 1 - \ProbCond{\tau_m \mu_{m,T} \leq t'}{\mathcal{H}_T} \mathbbm{1}\left[h_m \eta_{m,T} \leq t'\right].
  \end{aligned}
  \end{equation}
  Let us consider the probability $\ProbCond{\tau_m \mu_{m,T} \leq t'}{\mathcal{H}_T}.$ Given $\mathcal{H}_T,$ if $e^{\mu}_{m,T} = \infty,$ then $\mu_{m,T} = \infty$ and $\ProbCond{\tau_m \mu_{m,T} \leq t'}{\mathcal{H}_T} = 0.$ Otherwise, if $e^{\mu}_{m,T} = e < \infty,$ then 
  \begin{align*}
    \mu_{m,T} = \inf \{i \,|\, j \in S^{m,(i + e - 1)} \textnormal{ and } i \in \N\}.
  \end{align*}
  and it is distributed with the \emph{geometric} distribution with $p_{\omega}.$ Thus, we have\footnote{We implicitly assume that $1 - (1 - p_{\omega})^{\flr{\frac{t'}{\tau_m}}} = 0$ if $p_{\omega} = 1$ and $\flr{\frac{t'}{\tau_m}} = 0,$ because if $t' < \tau_m,$ then $\Prob{\tau_m \mu_{m,T} \leq t'} = 0$ for the r.v. $\mu_{m,T}$ from the \emph{geometric} distribution for all $p_{\omega} \in (0, 1]$. \label{footnote:prob}}
  \begin{align*}
    \ProbCond{\tau_m \mu_{m,T} \leq t'}{\mathcal{H}_T} = \begin{cases}
      1 - (1 - p_{\omega})^{\flr{\frac{t'}{\tau_m}}},& e^{\mu}_{m,T} < \infty \\
      0,& e^{\mu}_{m,T} = \infty
    \end{cases} \leq 1 - (1 - p_{\omega})^{\flr{\frac{t'}{\tau_m}}},
  \end{align*}
  because the probability that $j$\textsuperscript{th} coordinate belongs to $S^{m,(i + e - 1)}$ equals to $p_{\omega}.$ 
  We substitute this inequality to \eqref{eq:lower:tmp_1} and get
  \begin{equation*}
  \begin{aligned}
    &\ProbCond{h_m \eta_{m,T} + \tau_m \mu_{m,T} > t'}{\mathcal{H}_T} \geq 1 - \left(1 - (1 - p_{\omega})^{\flr{\frac{t'}{\tau_m}}}\right) \mathbbm{1}\left[h_m \eta_{m,T} \leq t'\right].
  \end{aligned}
  \end{equation*}
  Next, we substitute this inequality to \eqref{eq:lower_bound:tmp_prog_g} and obtain
  \begin{align*}
    \ProbCond{\hat{t}_{T} \leq t'}{\mathcal{G}_T} 
    &\leq 1 - \ExpCond{\prod_{m=1}^{n}\left(1 - \left(1 - (1 - p_{\omega})^{\flr{\frac{t'}{\tau_m}}}\right) \mathbbm{1}\left[h_m \eta_{m,T} \leq t'\right]\right)}{\mathcal{G}_T}.
  \end{align*}
  Given $\mathcal{G}_T,$ $\eta_{m,T}$ are independent because $b^{\eta}_{m,T}$ are $\mathcal{G}_T$--measurable. Thus
  \begin{align*}
    \ProbCond{\hat{t}_{T} \leq t'}{\mathcal{G}_T} 
    &\leq 1 - \prod_{m=1}^{n}\left(1 - \left(1 - (1 - p_{\omega})^{\flr{\frac{t'}{\tau_m}}}\right) \ExpCond{\mathbbm{1}\left[h_m \eta_{m,T} \leq t'\right]}{\mathcal{G}_T}\right) \\
    &= 1 - \prod_{m=1}^{n}\left(1 - \left(1 - (1 - p_{\omega})^{\flr{\frac{t'}{\tau_m}}}\right) \ProbCond{h_m \eta_{m,T} \leq t'}{\mathcal{G}_T}\right).
  \end{align*}
  Using the same reasoning as with $\mu_{m,T},$ we get
  \begin{align*}
    \ProbCond{h_m \eta_{m,T} \leq t'}{\mathcal{G}_T} 
    = \begin{cases}
      1 - (1 - p_{\sigma})^{\flr{\frac{t'}{h_m}}},& b^{\eta}_{m,T} < \infty \\
      0,& b^{\eta}_{m,T} = \infty
    \end{cases} \leq 1 - (1 - p_{\sigma})^{\flr{\frac{t'}{h_m}}}.
  \end{align*}
  because, given $\mathcal{G}_T,$ $\eta_{m,T}$ equals $\infty$ or a random variable distributed according to the \emph{geometric} distribution with $p_{\sigma}.$ Therefore, we obtain
  \begin{align*}
    \ProbCond{\hat{t}_{T} \leq t'}{\mathcal{G}_T} 
    &\leq 1 - \prod_{m=1}^{n}\left(1 - \left(1 - (1 - p_{\omega})^{\flr{\frac{t'}{\tau_m}}}\right) \left(1 - (1 - p_{\sigma})^{\flr{\frac{t'}{h_m}}}\right)\right).
  \end{align*}

\end{proof}

\subsection{Another Construction}
\label{sec:another_comp}
In the proof of Theorem~\ref{theorem:lower_bound}, we could use the following construction: 

\bigskip
\noindent (\textbf{Step 3}: Compression Operator)
Let us define $p_{\omega} \eqdef \frac{1}{\omega + 1}.$ In our construction, we take a compressor that outputs random coordinates of an input vector, scaled by $\nicefrac{1}{p_{\omega}}$, where each coordinate is taken with the probability $p_{\omega}.$ Each worker has access to the independent compressed realizations of a such compressor. More formally, we assume that
\begin{align*}
  [\cC(x;S)]_j \eqdef
  \begin{cases}
    \frac{1}{p_{\omega}} x_j,& j \in S, \\
    0, & j \not\in S,
  \end{cases} \quad \forall j \in [T],
\end{align*}
where $S$ is a random subset of $[T],$ where each element from $[T]$ appears with the probability $p_{\omega}$ \emph{independently}.
Then 
\begin{align*}
  \ExpSub{S}{[\cC(x;S)]_j} = x_j
\end{align*}
and 
\begin{align*}
  \ExpSub{S}{\norm{\cC(x;S)}^2} = \ExpSub{S}{\sum_{j=1}^{T} \mathbbm{1}\left[j \in S\right] \frac{1}{p_{\omega}^2} x_j^2} = \sum_{j=1}^{T} \Prob{j \in S} \frac{1}{p_{\omega}^2} x_j^2 = \sum_{j=1}^{T} \frac{1}{p_{\omega}} x_j^2 = \left(\omega + 1\right) \norm{x}^2.
\end{align*}
Thus, we have $\cC \in \mathbb{U}(\omega).$ We take mutually independent distributions $\mathcal{D}_{i}^{\cC}$ that generate random subsets $S$ described above.

This construction is also valid and does not require the assumption $\omega + 1 \lesssim \nicefrac{L \Delta}{\varepsilon}.$ However, unlike the construction from Theorem~\ref{theorem:lower_bound}, this construction can return a random number of non-zero coordinates.

\clearpage
\section{Experiments}
\label{sec:experiments}

The experiments were prepared in Python. The distributed environment was emulated on machines with Intel(R) Xeon(R) Gold 6226R CPU @ 2.90GHz and 64 cores.

\subsection{Experiments with Logistic Regression}
\label{sec:log_regr}
\begin{figure}[h]
\centering
\begin{subfigure}{0.49\textwidth}
  \centering
  \includegraphics[width=\textwidth]{./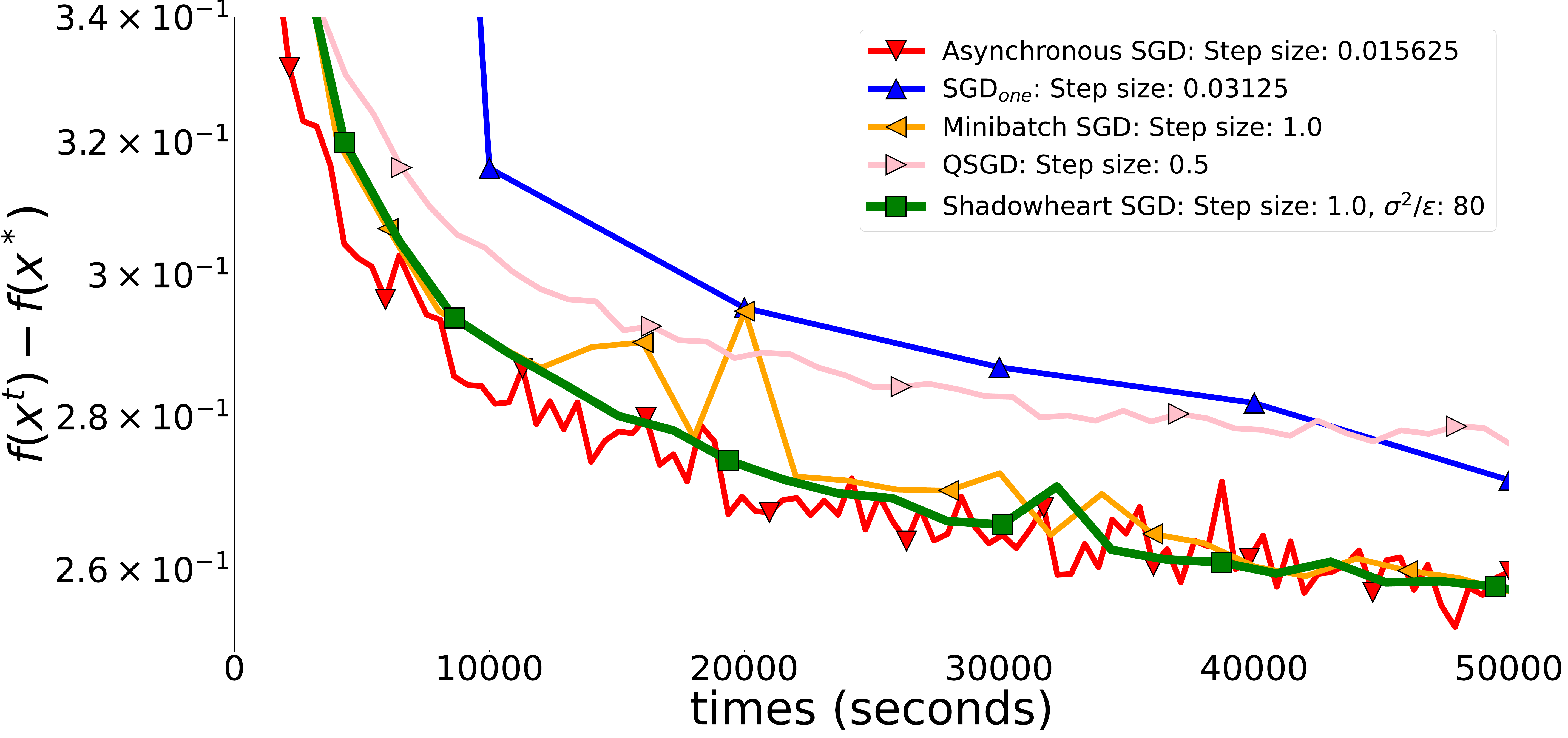}
  \caption{Experiment with computation speeds $h_i = \sqrt{i}$ \\ and \textbf{high} communications speeds $\dot{\tau}_i = \sqrt{i} / d$}
  \label{fig:log_reg_high}
\end{subfigure}
\begin{subfigure}{0.49\textwidth}
  \centering
  \includegraphics[width=\textwidth]{./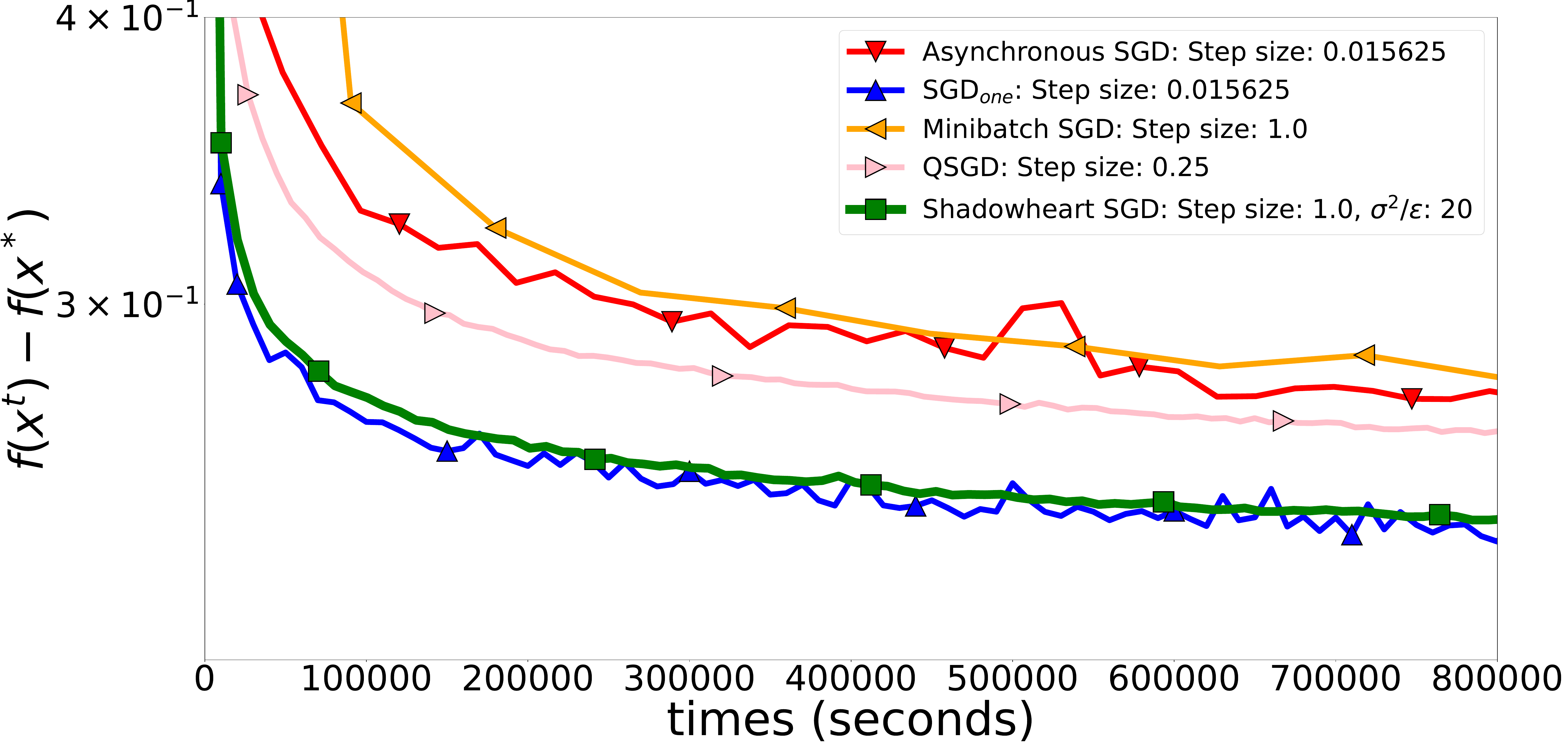}
  \caption{Experiment with computation speeds $h_i = \sqrt{i}$ \\ and \textbf{low} communications speeds $\dot{\tau}_i = \sqrt{i} / d^{1/2}$}
  \label{fig:log_reg_low}
\end{subfigure}
\begin{subfigure}{0.49\textwidth}
  \centering
  \includegraphics[width=\textwidth]{./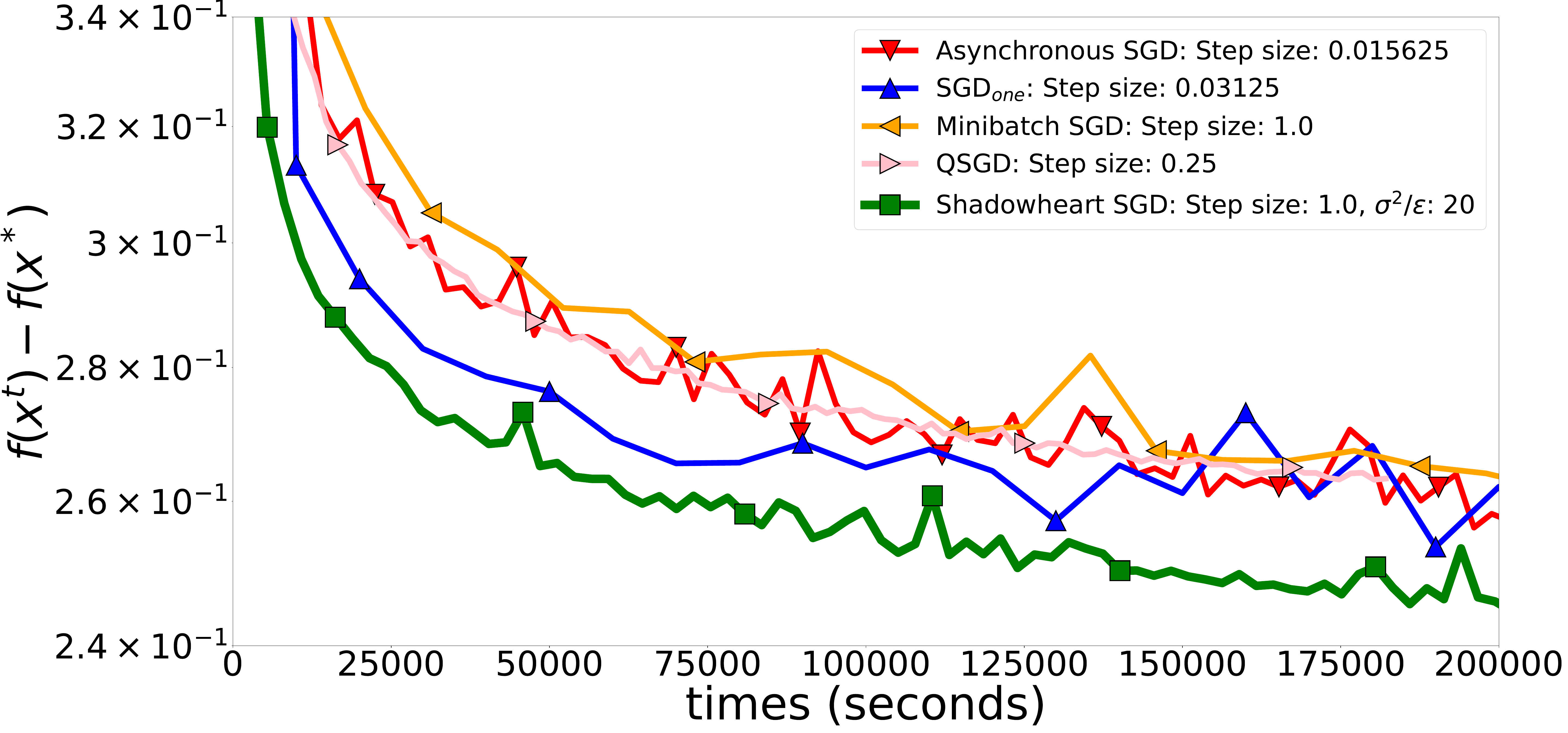}
  \caption{Experiment with computation speeds $h_i = \sqrt{i}$ \\ and \textbf{medium} communications speeds $\dot{\tau}_i = \sqrt{i} / d^{3/4}$}
  \label{fig:log_reg_medium}
\end{subfigure}
\end{figure}
  
We start our experiments with a practical setup: a logistic regression problem with the \textit{MNIST} dataset \citep{lecun2010mnist}. The optimization steps of algorithms are emulated in Python, where we fix the number of workers to $n = 100,$ each worker has access to the \textit{MNIST} dataset and sample $4$ samples when calculating a stochastic gradient. We compare \algname{\newmethod} with \algname{QSGD}, \algname{Asynchronous SGD} (we implement the version from \citep{koloskova2022sharper}), \algname{Minibatch SGD}, and \algname{SGD\textsubscript{one}}. \algname{SGD\textsubscript{one}} is the method described in Sec.~\ref{sec:comparison_main}, where \algname{SGD} is run on the fastest worker locally. In \algname{\newmethod}, we fine-tune the parameter $\nicefrac{\sigma^2}{\varepsilon} \in \{1, 5, 10, 20, 30, 40, 80, 120, 150, 200\}$. In all the methods, we also finetune the step sizes. The dimension of the problem in the logistic regression problem is $d = 7850.$ In \algname{\newmethod} and \algname{QSGD}, we take Rand$K$ with $K = 700.$

We assume that the computations time $h_i$ of a stochastic gradient equals to $\sqrt{i}$ seconds in the $i$\textsuperscript{th} worker. We consider three communication time setups, where it takes $\dot{\tau}_i$ seconds to send \emph{one coordinate} from the $i$\textsuperscript{th} worker to the server and 

\begin{center}
\begin{minipage}{4in}
\begin{enumerate}
\item $\dot{\tau}_i = \sqrt{i} / d$ \,\, (High-speed communications),
\item $\dot{\tau}_i = \sqrt{i} / d^{3/4}$ \,\, (Medium-speed communications),
\item $\dot{\tau}_i = \sqrt{i} / d^{1/2}$ \,\, (Low-speed communications).
\end{enumerate}
\end{minipage}
\end{center}

In the high-speed regime, the communication between the server and the worker is relatively fast. At the same time, in low-speed regimes, communication is expensive. We are ready to present the results of our experiments in Fig.~\ref{fig:log_reg_high}, \ref{fig:log_reg_medium}, and \ref{fig:log_reg_low}.

In Figure~\ref{fig:log_reg_high}, one can see that \algname{\newmethod}, \algname{Asynchronous SGD}, and \algname{Minibatch SGD} are the fastest because it is not expensive to send a non-compressed vector in the ``high-speed communications'' regime. \algname{SGD\textsubscript{one}} is the slowest since it utilizes only one worker.

Next, we analyze Figure~\ref{fig:log_reg_low}, where \algname{\newmethod} and \algname{SGD\textsubscript{one}} have the best performance. \algname{SGD\textsubscript{one}} improves the convergence relative to other methods because the communication speed is much slower than in Figure~\ref{fig:log_reg_high}, and it is expensive to send a non-compressed vector.

One can see that \algname{\newmethod} is very robust to all regimes and has one of the best convergence rates in all experiments. Notably, in the ``medium-speed communications'' regime, where it is still expensive to send a non-compressed vector, our new method converges faster than other baseline methods.

\subsection{Experiments with quadratic optimization tasks and multiplicative noise}
\label{sec:exp:mul_noise}
In real machine learning tasks, it is not easy to control noise. Thus, we generated synthetic quadratic optimization tasks where we can control the noise of stochastic gradients. In particular, we consider
\begin{align*}
    f(x) = \frac{1}{2} x^\top \mA x - b^\top x
\end{align*}
for all $x \in \R^d$
and take $d = 1000,$ 
$$\mA = \frac{1}{4}\left( \begin{array}{cccc}
    2 & -1 & & 0\\
    -1 & \ddots & \ddots & \\
    & \ddots & \ddots & -1 \\
    0 & & -1 & 2 \end{array} \right) \in \R^{d \times d} \quad \textnormal{ and } \quad b = \frac{1}{4}\left[ \begin{array}{c}
        -1\\
        0\\
        \vdots\\
        0\end{array} \right] \in \R^{d}.$$

We consider the following stochastic gradients:
\begin{align}
    [\nabla f(x; \xi)]_j \eqdef \nabla_j f(x) \left(1 + \mathbbm{1}\left[j > \textnormal{prog}(x)\right]\left(\frac{\xi}{p} - 1\right)\right) \quad \forall x \in \R^d,
    \label{eq:stoch_grad}
\end{align}
where $\xi \sim \textnormal{Bernoulli}(p)$ for all $i \in [n],$ and $p \in (0, 1].$ We denote $[x]_j$ as the $j$\textsuperscript{th} index of a vector $x \in \R^d.$ In our experiments, we take the starting point $x^0 = [\sqrt{d}, 0, \dots, 0]^\top$ and $p \in \{10^{-3}, 10^{-4}\};$ the smaller $p$ the larger the noise of stochastic gradients. 

\subsubsection{Discussion of the experiments from Sec.~\ref{sec:exp_bullet}}
Using this setup, in Figures~\ref{fig:fig_1}, \ref{fig:fig_2}, \ref{fig:fig_3}, we fix all parameters except one that we vary to understand the dependencies. In all experiments, we observe that \algname{\newmethod} is the most robust to input changes among other \emph{centralized} methods (\algname{QSGD}, \algname{Asynchronous SGD}, \algname{Minibatch SGD}) and can converge significantly faster. At the same time, we observe that \algname{SGD\textsubscript{one}} can be faster than our method in some setups. It happens in the regimes when communication is expensive (see Figure~\ref{fig:n_10000:3}), which is expected and discussed in Sec.~\ref{sec:comparison_main}. 
Even if communication is expensive, \algname{SGD\textsubscript{one}} starts to slow down relative to other methods when we increase the noise (compare Figures~\ref{fig:n_10000:4} and \ref{fig:n_10000:3}). The following experiments agree with our theoretical discussion in Sec.~\ref{sec:comparison_main}.

\subsubsection{Plots}
\label{sec:exp_bullet}

In these experiments, we take $n = 10000,$ $p = 10^{-3},$ $h_i = \sqrt{i},$ $\dot{\tau}_i = \sqrt{i} / d^{3/4}$ as base parameters; in each plot, we vary one parameter.
\begin{figure}[H]
  \centering
  \begin{subfigure}{0.49\textwidth}
    \includegraphics[width=\textwidth]{./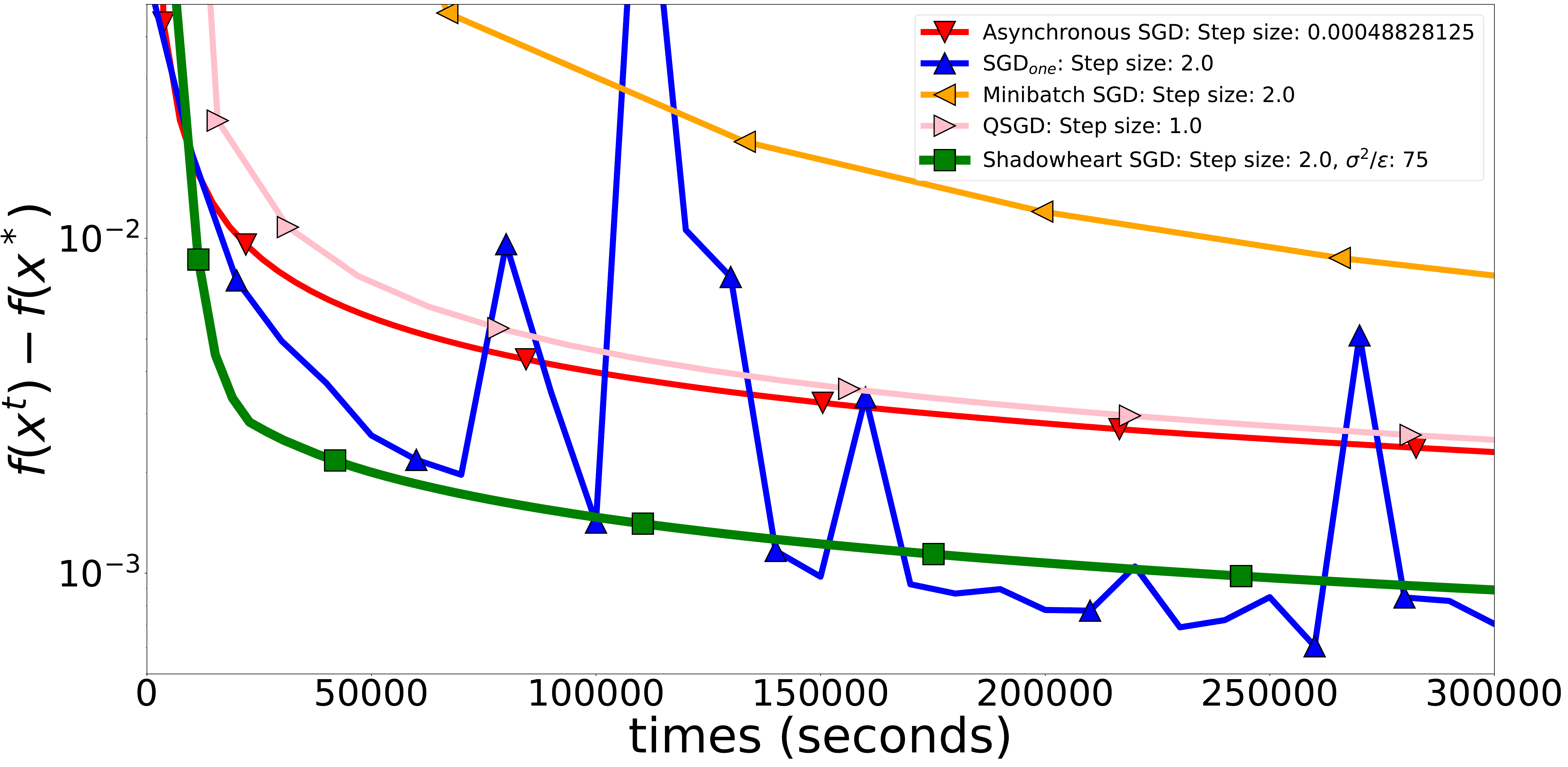}
  \caption{$n = 10000,$ $\underline{p = 10^{-3}},$ $h_i = \sqrt{i},$ $\dot{\tau}_i = \sqrt{i} / d^{3/4}$}
  \label{fig:n_10000:3}
  \end{subfigure}
  \begin{subfigure}{0.49\textwidth}
    \includegraphics[width=\textwidth]{./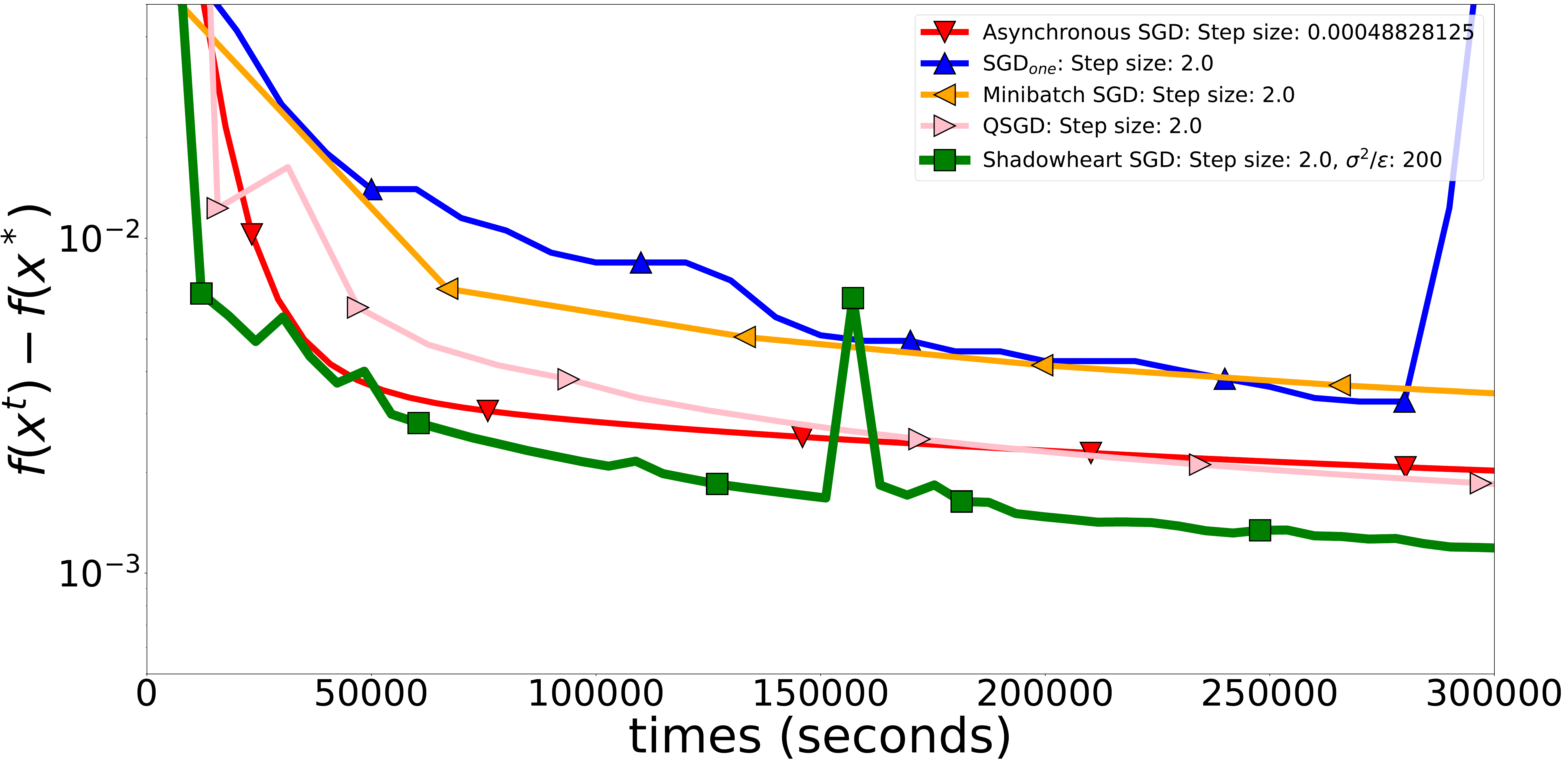}
  \caption{$n = 10000,$ $\underline{p = 10^{-4}},$ $h_i = \sqrt{i},$ $\dot{\tau}_i = \sqrt{i} / d^{3/4}$}
  \label{fig:n_10000:4}
  \end{subfigure}
  \caption{\algname{SGD\textsubscript{one}} starts to slow down relative to \algname{\newmethod} and other methods when we increase the noise.}
  \label{fig:fig_1}
\end{figure}

\begin{figure}[H]
  \centering
  \begin{subfigure}{0.49\textwidth}
    \includegraphics[width=\textwidth]{./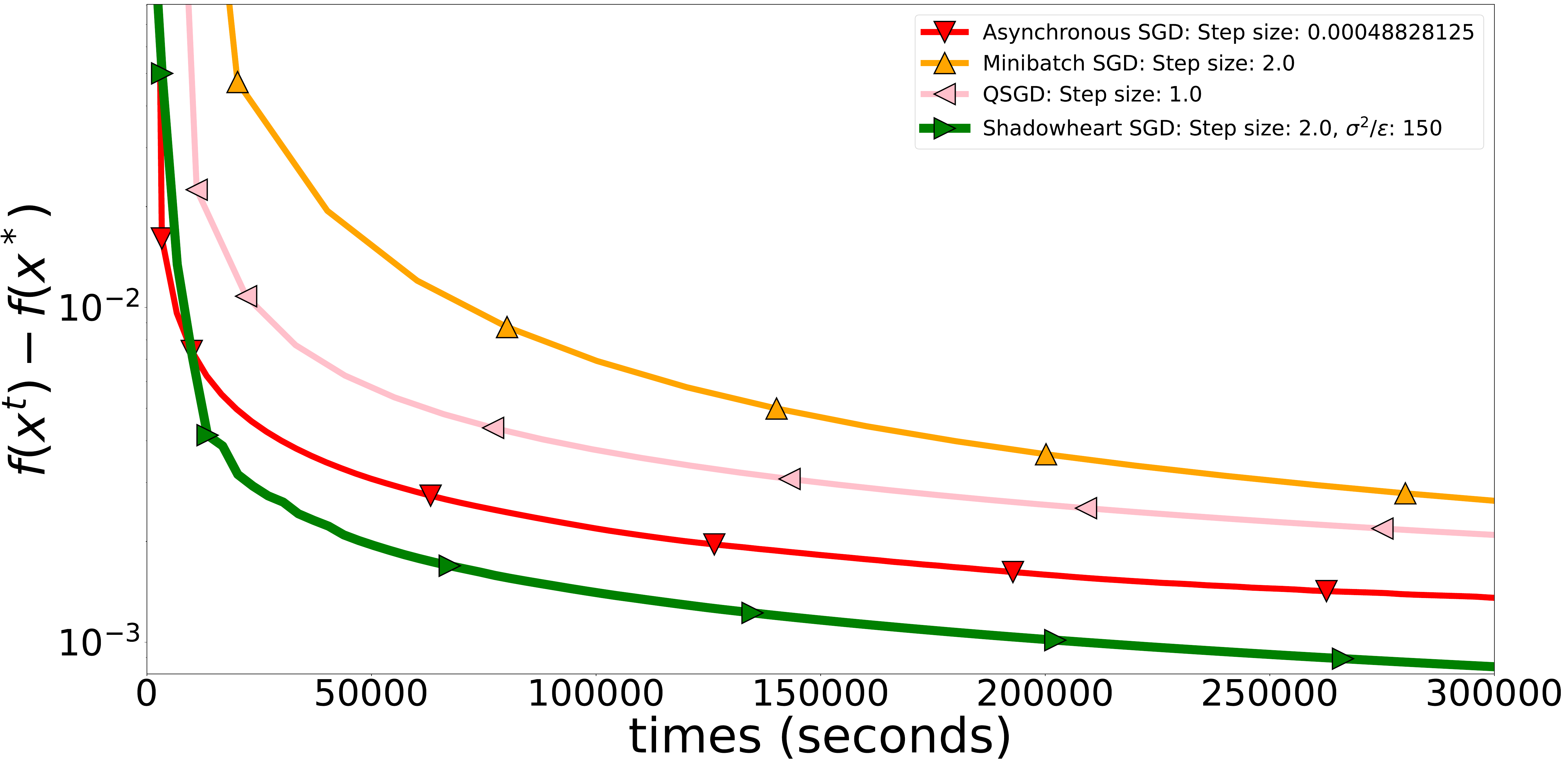}
  \caption{$n = 10000,$ $p = 10^{-3},$ $h_i = \sqrt{i},$ $\underline{\dot{\tau}_i = \sqrt{i} / d}$}
  \end{subfigure}
  \begin{subfigure}{0.49\textwidth}
    \includegraphics[width=\textwidth]{./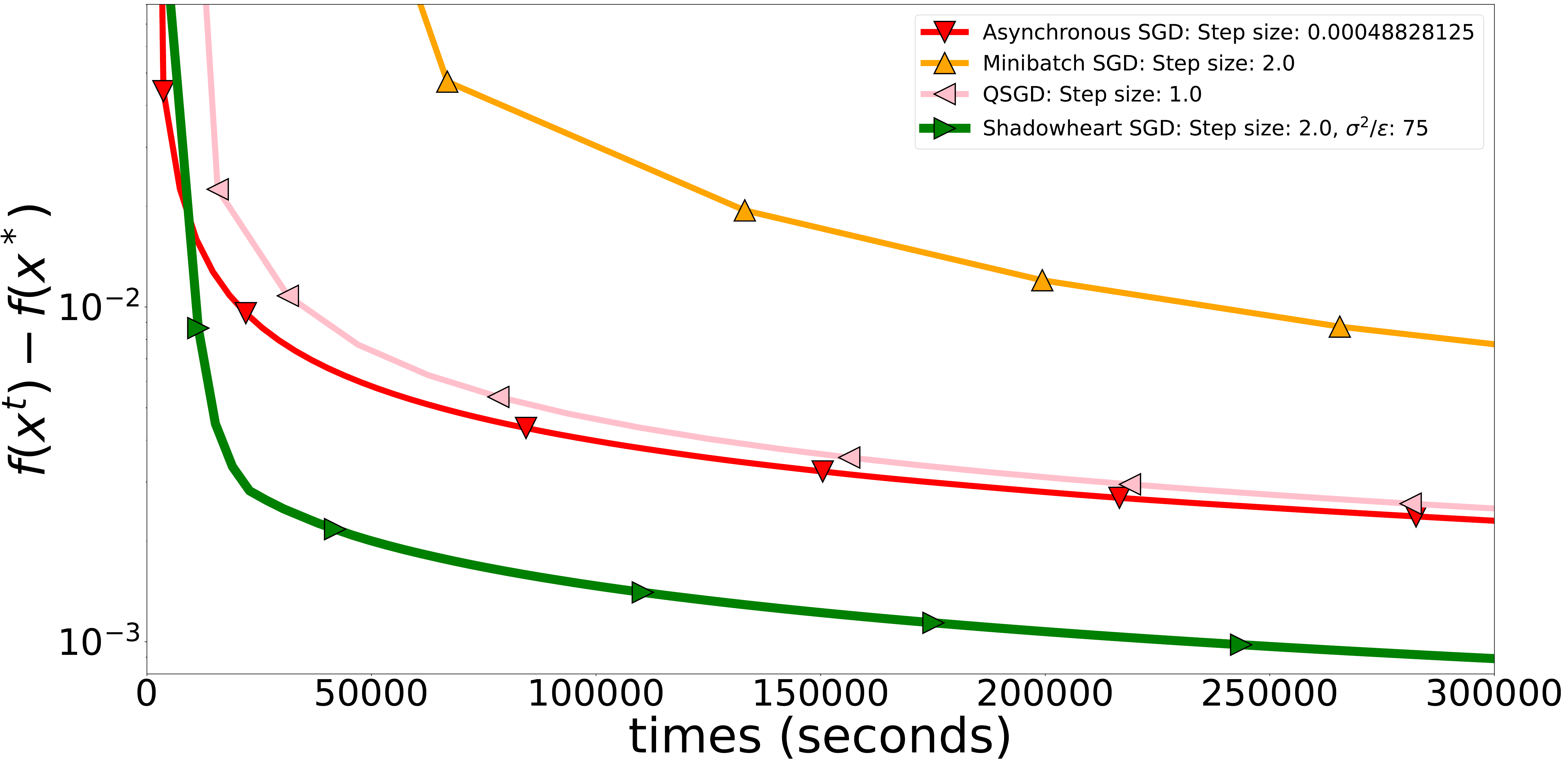}
  \caption{$n = 10000,$ $p = 10^{-3},$ $h_i = \sqrt{i},$ $\underline{\dot{\tau}_i = \sqrt{i} / d^{3/4}}$}
  \end{subfigure}
  \caption{The non-compressed methods \algname{Asynchronous SGD} and \algname{Minibatch SGD} slow down relative to \algname{\newmethod} when we increase the communication times.}
  \label{fig:fig_2}
\end{figure}

\begin{figure}[h]
  \centering
  \begin{subfigure}{0.49\textwidth}
    \includegraphics[width=\textwidth]{./experiments/ibex_bandwidth_async_sqrt_grad_sqrt_send_div_div_3_over_4_dim_num_nodes_10000_dim_1000_num_coord_100_noise_0_001_paper.pdf}
  \caption{$n = 10000,$ $p = 10^{-3},$ $\underline{h_i = \sqrt{i}},$ $\dot{\tau}_i = \sqrt{i} / d^{3/4}$}
  \end{subfigure}
  \begin{subfigure}{0.49\textwidth}
    \includegraphics[width=\textwidth]{./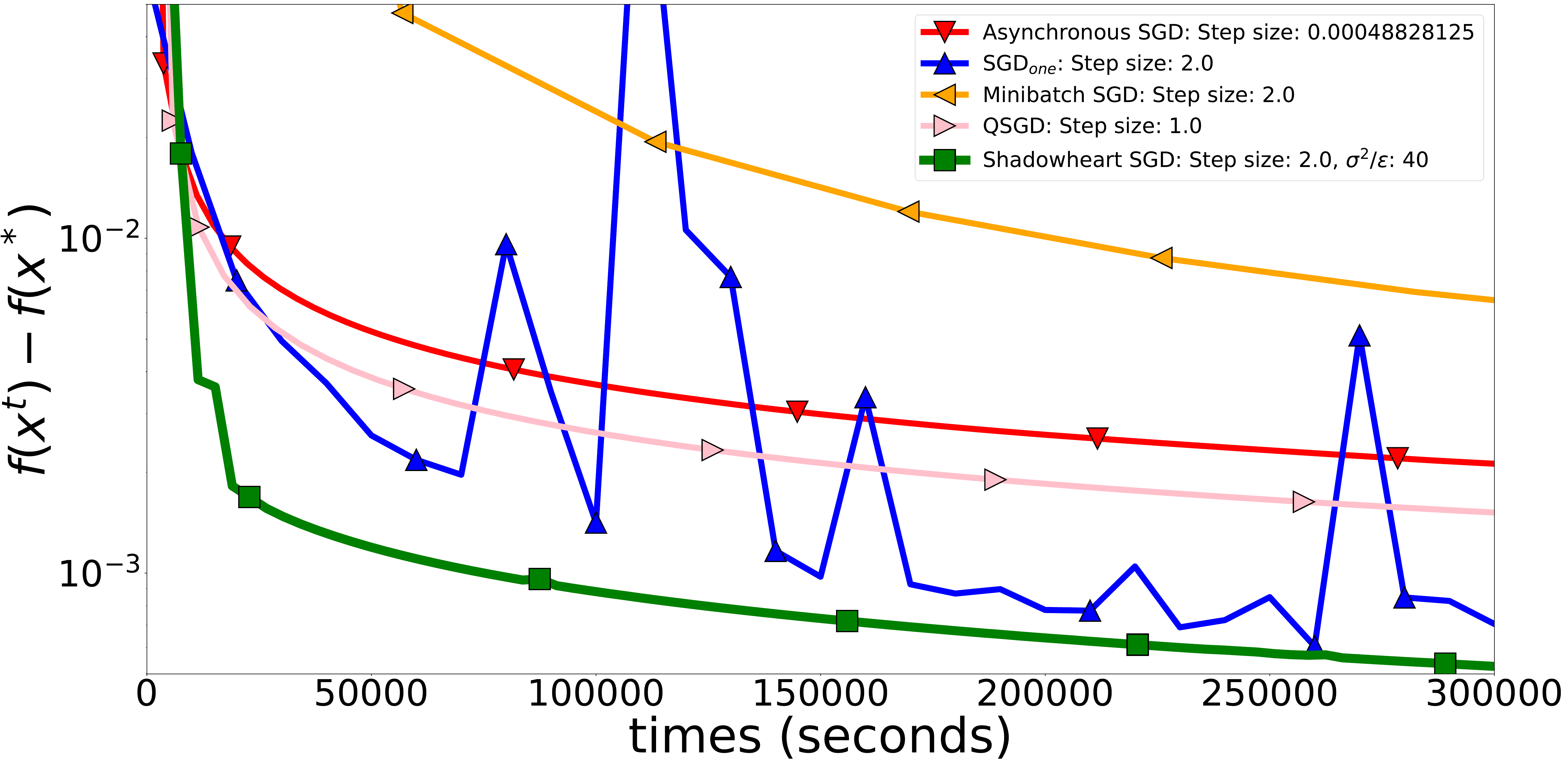}
    \caption{$n = 10000,$ $p = 10^{-3},$ $\underline{h_i = 1},$ $\dot{\tau}_i = \sqrt{i} / d^{3/4}$}
  \end{subfigure}
  \caption{\algname{\newmethod} improves when we decrease the computation times from $\sqrt{i}$ to $1.$}
  \label{fig:fig_3}
\end{figure}

\subsection{Experiments with quadratic optimization tasks and additive noise}

In this section, we consider the same problem as in Sec.~\ref{sec:exp:mul_noise}. However, unlike the \emph{multiplicative} noise, we consider the following \emph{additive} noise:
\begin{align*}
   [\nabla f(x; \xi)]_j \eqdef \nabla_j f(x) + \zeta \quad \forall x \in \R^d,
\end{align*}
where $\zeta \sim \mathcal{N}(0, \sigma^2)$ is a sample from the normal distribution. Be default, we take $n=100$ workers, the dimension $d=100$, and use the Rand$1$ compressor ($\omega=d / 1 - 1 = 99$), $x_0 = [1, \cdots, 1]^\top,$ $\sigma=10^{-1}$, $\varepsilon=10^{-4};$ thus, the ratio $\nicefrac{\sigma^2}{\varepsilon}=10^2.$ For all methods, we choose the step sizes in such a way that they converge to the same neighborhood of the stationary point.

In Figure~\ref{fig:different_n}, we sample $h_i^k$ and $\dot{\tau}_i^k$ from the uniform distribution $U(0.1, 1)$, hence the communication and computation time vary on each iteration for each client. If we increase the number of clients $n$, \algname{Shadowheart SGD} improves (Fig.~\ref{fig:different_n}) compared to other methods, confirming our theory.

In Figure~\ref{fig:different_sigma}, we can see the similar results with different ratios $\nicefrac{\sigma^2}{\varepsilon}$: \algname{Shadowheart SGD} is much better when the ratio is large (Fig.~\ref{fig:different_sigma}). On the other hand, when $\nicefrac{\sigma^2}{\varepsilon}$ is small (Fig.~\ref{fig:different_sigma_1}) SGD\textsubscript{one} can be better because, intuitively, we only need a few workers to find the minimum with a small noise (see also Sec.~\ref{sec:comparison_main}).

Next, we perform a series of experiments with different computation time and communication times ratios. We take $\nicefrac{\dot{\tau}_i^k}{h_i^k} = c$ for all $i \in [n]$, where $c > 0$.

In Figure~\ref{fig:different_c_uniform}, we take $h_i^k \sim U(0.1,1)$ and $\dot{\tau}_i^k \sim c \cdot U(0.1,1)$. \algname{Shadowheart SGD} is better in the high and medium communication speed regimes (Fig.~\ref{fig:different_c_uniform_3}), when the communication times are not too large. On the other hand, with large $c=10^2,$ \algname{Shadowheart SGD} spends much time on sending gradients to the server, whereas \algname{SGD\textsubscript{one}} does not spend time on communication and does not compress (Fig.~\ref{fig:different_c_uniform_5}). Similar to Figure~\ref{fig:different_c_uniform}, we obtain the results with $h_i^k = \sqrt{i}$ and $\dot{\tau}_i^k = c \cdot \sqrt{i}$ in Figure~\ref{fig:different_c_graduate}.

\subsubsection{Plots}
\label{sec:plots_complex_quadratic_additive_noise}

\begin{figure}[H]
    \centering
    \begin{subfigure}{0.32\linewidth}
        \includegraphics[width=0.99\linewidth]{./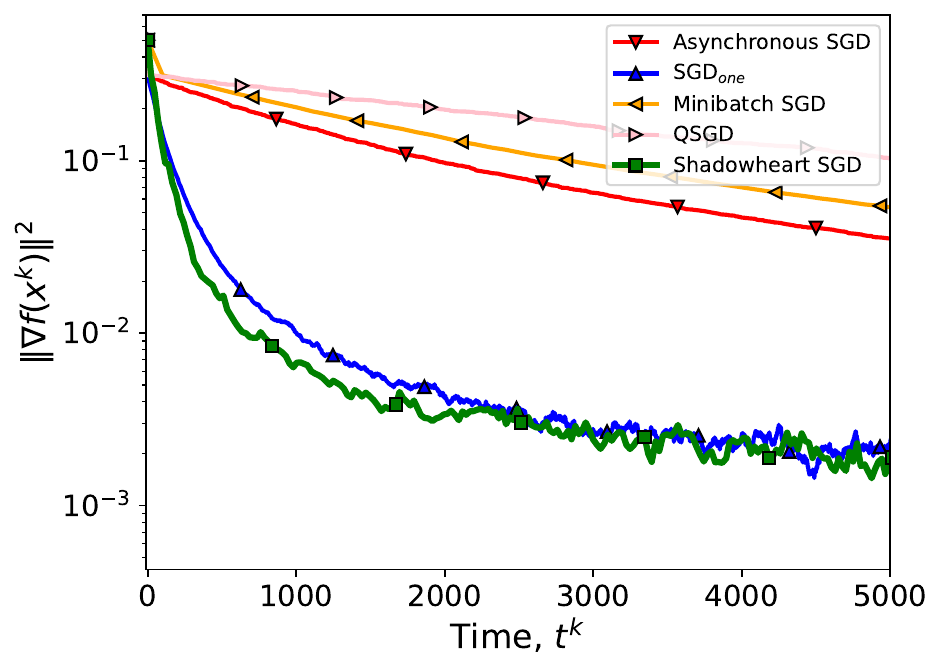}
        \captionsetup{width=.9\linewidth}
        \captionsetup{aboveskip=0pt, belowskip=12pt}
        \caption{$n=10$}
        \label{fig:different_n_10}
    \end{subfigure}
    \begin{subfigure}{0.32\linewidth}
        \includegraphics[width=0.99\linewidth]{./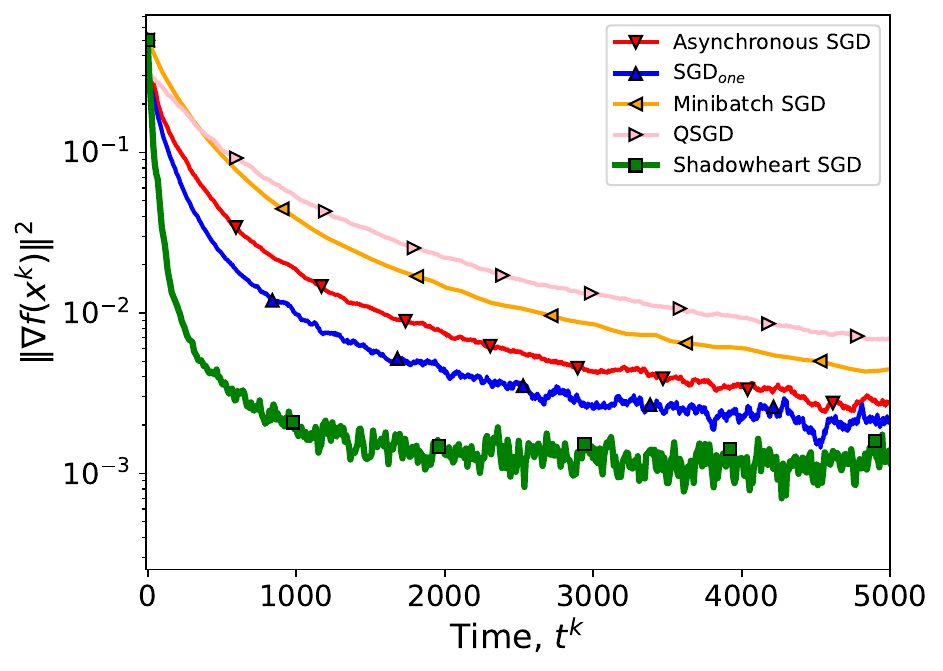}
        \captionsetup{width=.9\linewidth}
        \captionsetup{aboveskip=0pt, belowskip=12pt}
        \caption{$n=10^2$}
        \label{fig:different_n_100}
    \end{subfigure}
    \begin{subfigure}{0.32\linewidth}
        \includegraphics[width=0.99\linewidth]{./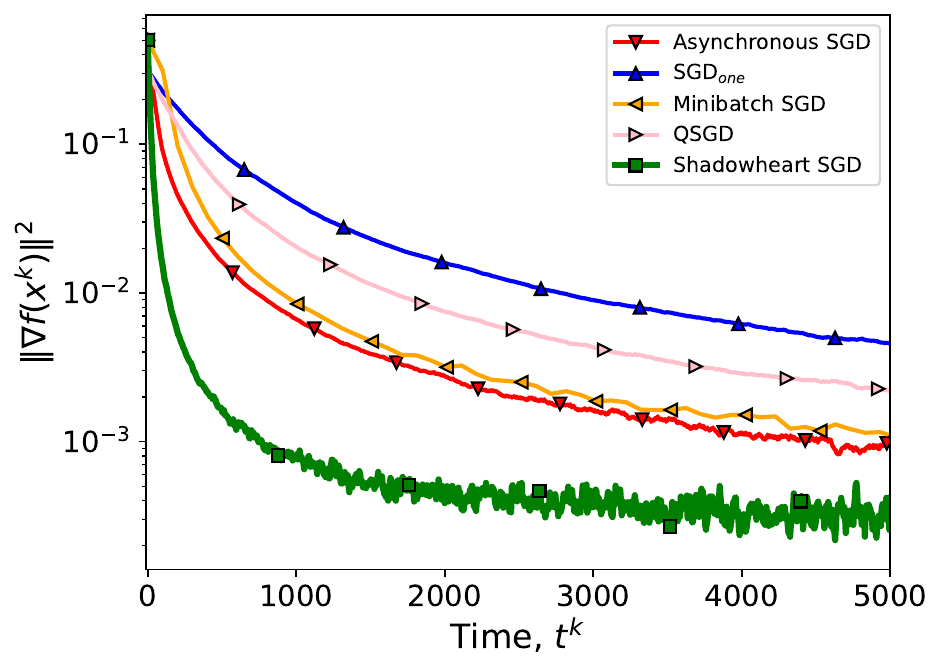}
        \captionsetup{width=.9\linewidth}
        \captionsetup{aboveskip=0pt, belowskip=12pt}
        \caption{$n=10^3$}
        \label{fig:different_n_1000}
    \end{subfigure}
    \caption{$h_i^k, \dot{\tau}_i^k \sim U(0.1,1)$}
    \label{fig:different_n}
\end{figure}

\begin{figure}[H]
    \centering
    \begin{subfigure}{0.32\linewidth}
        \includegraphics[width=0.99\linewidth]{./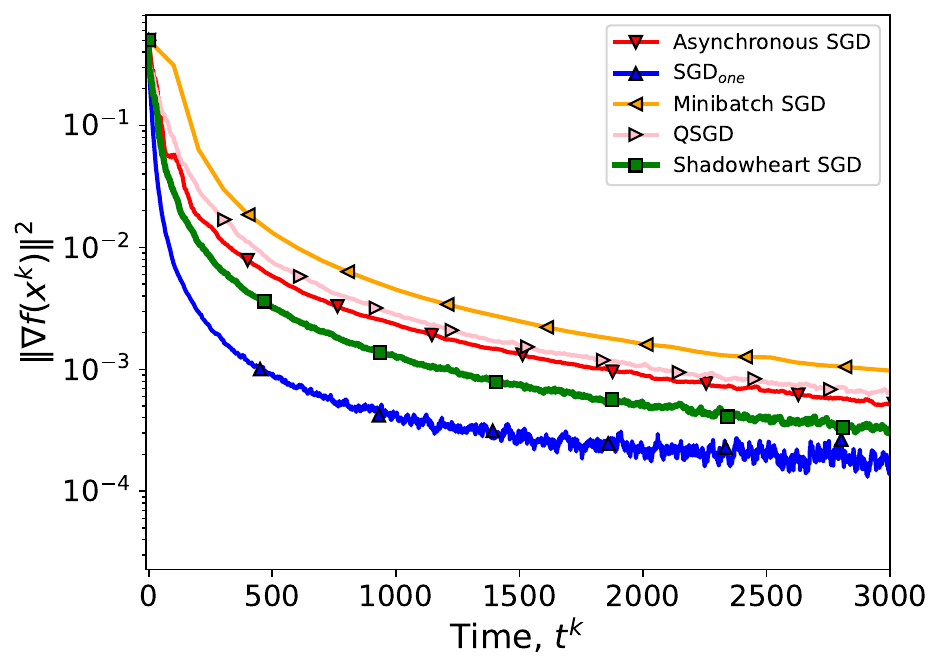}
        \captionsetup{width=.9\linewidth}
        \captionsetup{aboveskip=0pt, belowskip=12pt}
        \caption{$\nicefrac{\sigma^2}{\varepsilon} = 1$}
        \label{fig:different_sigma_1}
    \end{subfigure}
    \begin{subfigure}{0.32\linewidth}
        \includegraphics[width=0.99\linewidth]{./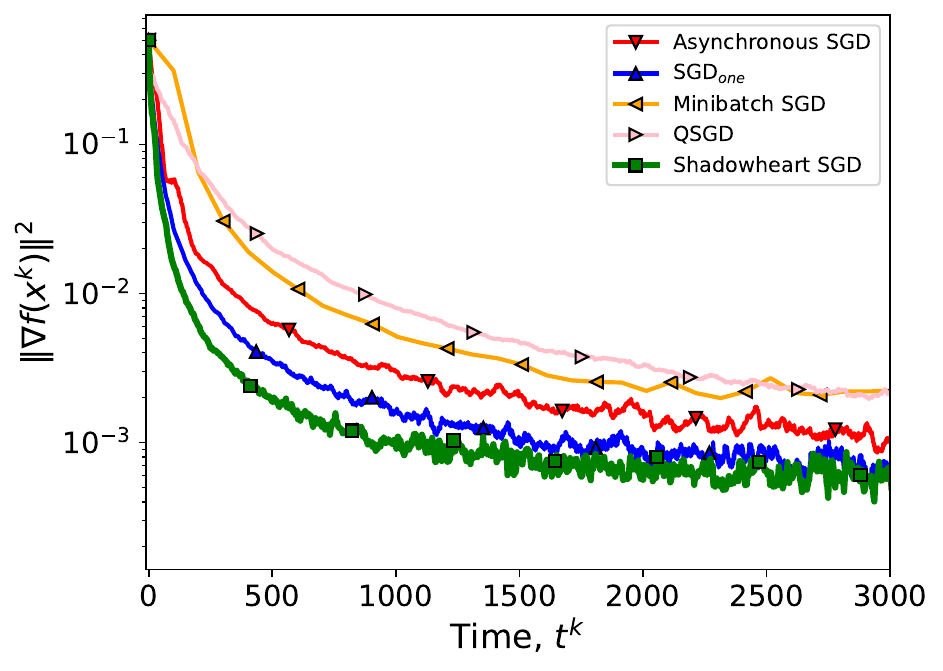}
        \captionsetup{width=.9\linewidth}
        \captionsetup{aboveskip=0pt, belowskip=12pt}
        \caption{$\nicefrac{\sigma^2}{\varepsilon} = 10$}
        \label{fig:different_sigma_100}
    \end{subfigure}
    \begin{subfigure}{0.32\linewidth}
        \includegraphics[width=0.99\linewidth]{./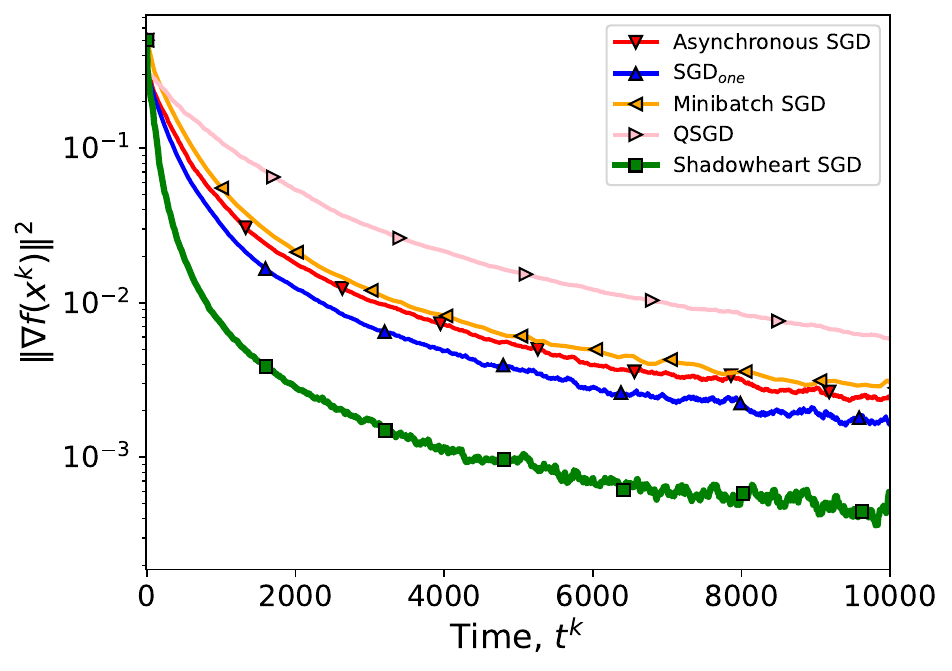}
        \captionsetup{width=.9\linewidth}
        \captionsetup{aboveskip=0pt, belowskip=12pt}
        \caption{$\nicefrac{\sigma^2}{\varepsilon} = 10^2$}
        \label{fig:different_sigma_10000}
    \end{subfigure}
    \caption{$h_i^k, \dot{\tau}_i^k \sim U(0.1,1)$}
    \label{fig:different_sigma}
\end{figure}

\begin{figure}[H]
    \centering
    \begin{subfigure}{0.32\linewidth}
        \includegraphics[width=0.99\linewidth]{./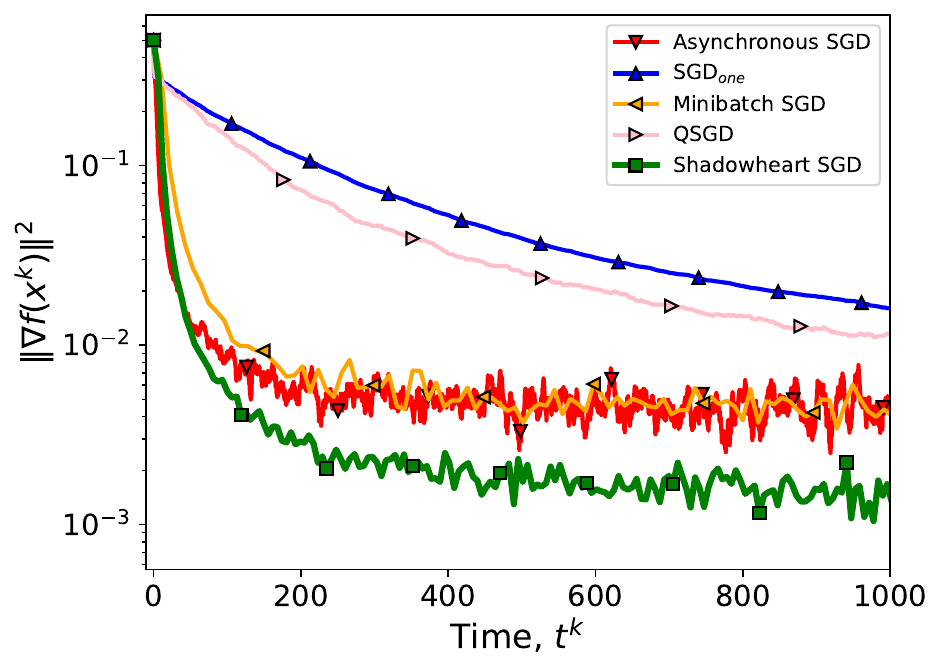}
        \captionsetup{width=.9\linewidth}
        \captionsetup{aboveskip=0pt, belowskip=12pt}
        \caption{$c = 10^{-1}$}
        \label{fig:different_c_uniform_2}
    \end{subfigure}
    \begin{subfigure}{0.32\linewidth}
        \includegraphics[width=0.99\linewidth]{./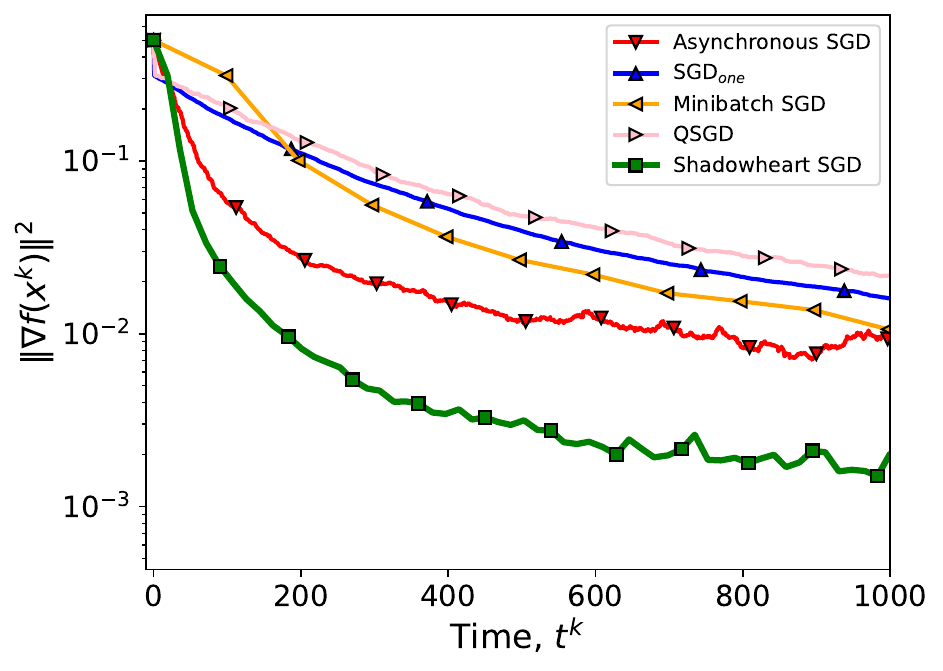}
        \captionsetup{width=.9\linewidth}
        \captionsetup{aboveskip=0pt, belowskip=12pt}
        \caption{$c = 1$}
        \label{fig:different_c_uniform_3}
    \end{subfigure}
    \begin{subfigure}{0.32\linewidth}
        \includegraphics[width=0.99\linewidth]{./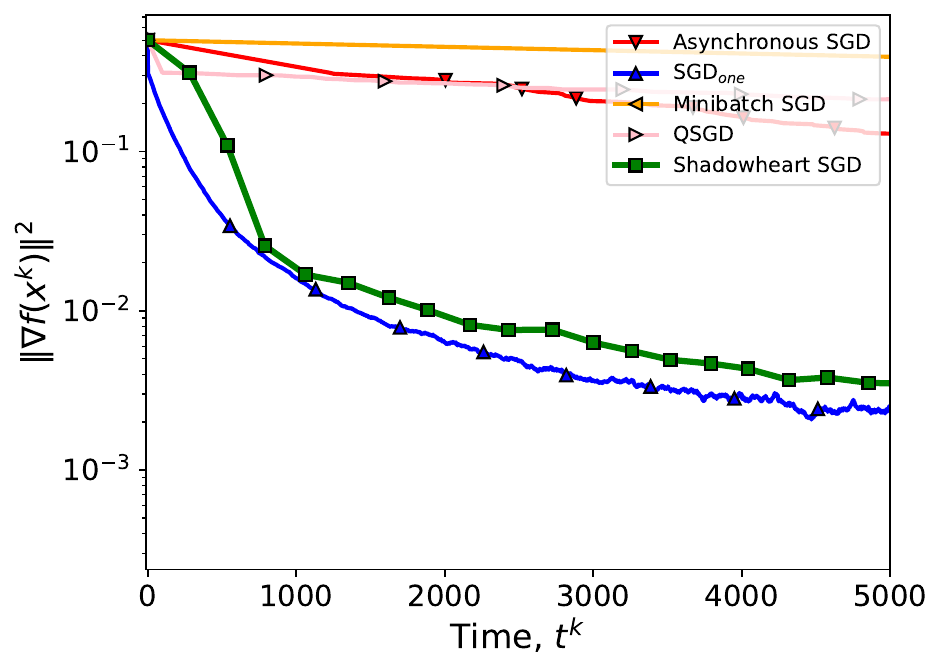}
        \captionsetup{width=.9\linewidth}
        \captionsetup{aboveskip=0pt, belowskip=12pt}
        \caption{$c = 10^{2}$}
        \label{fig:different_c_uniform_5}
    \end{subfigure}
    \caption{$h_i^k \sim U(0.1,1)$, $\dot{\tau}_i^k \sim c \cdot U(0.1,1)$}
    \label{fig:different_c_uniform}
\end{figure}

\begin{figure}[H]
    \centering
    \begin{subfigure}{0.32\linewidth}
        \includegraphics[width=0.99\linewidth]{./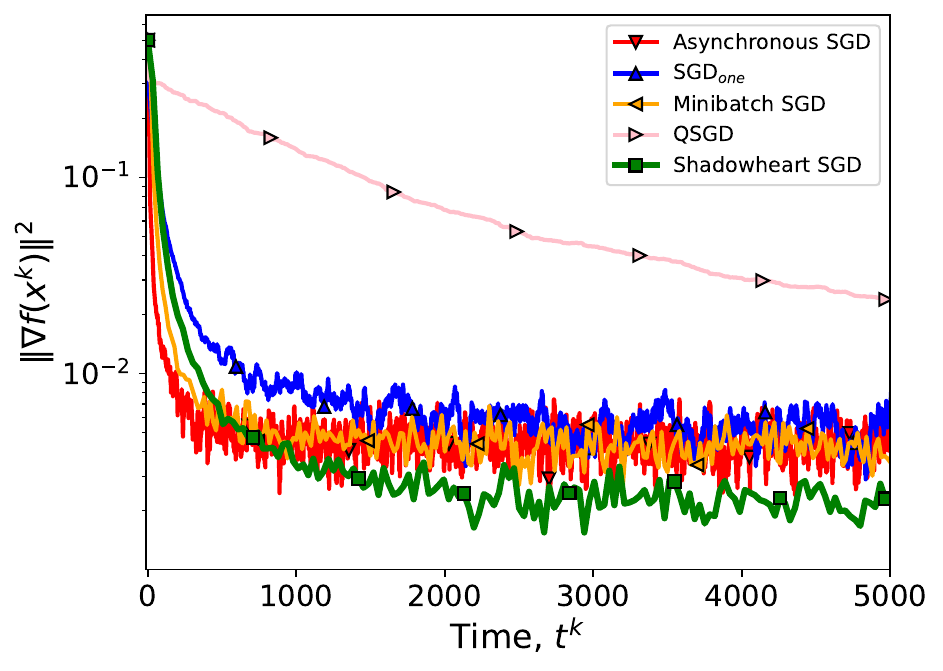}
        \captionsetup{width=.9\linewidth}
        \captionsetup{aboveskip=0pt, belowskip=12pt}
        \caption{$c = 10^{-2}$}
        \label{fig:different_c_graduate_1}
    \end{subfigure}
    \begin{subfigure}{0.32\linewidth}
        \includegraphics[width=0.99\linewidth]{./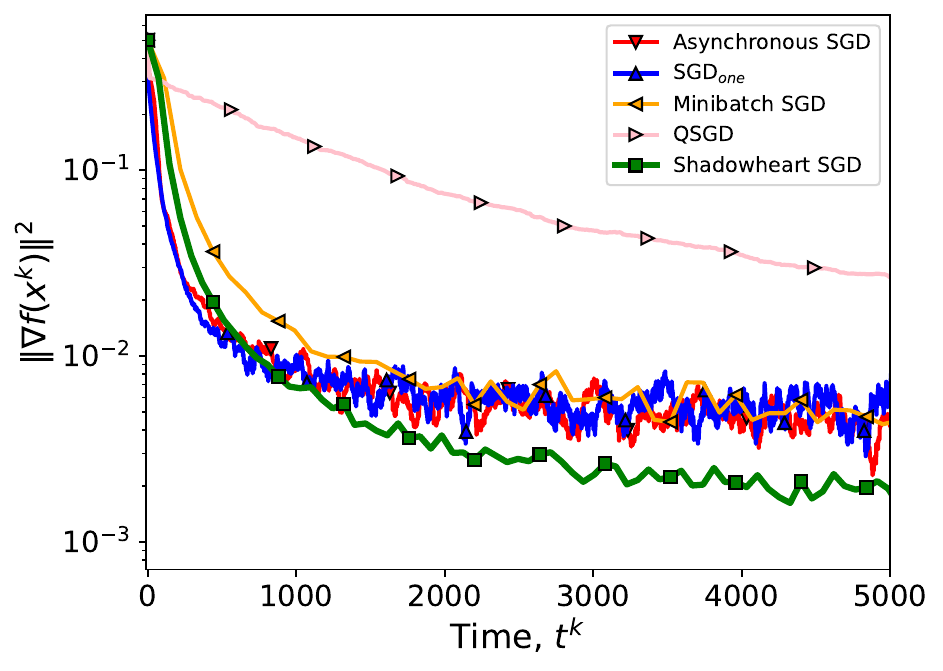}
        \captionsetup{width=.9\linewidth}
        \captionsetup{aboveskip=0pt, belowskip=12pt}
        \caption{$c = 10^{-1}$}
        \label{fig:different_c_graduate_2}
    \end{subfigure}
    \begin{subfigure}{0.32\linewidth}
        \includegraphics[width=0.99\linewidth]{./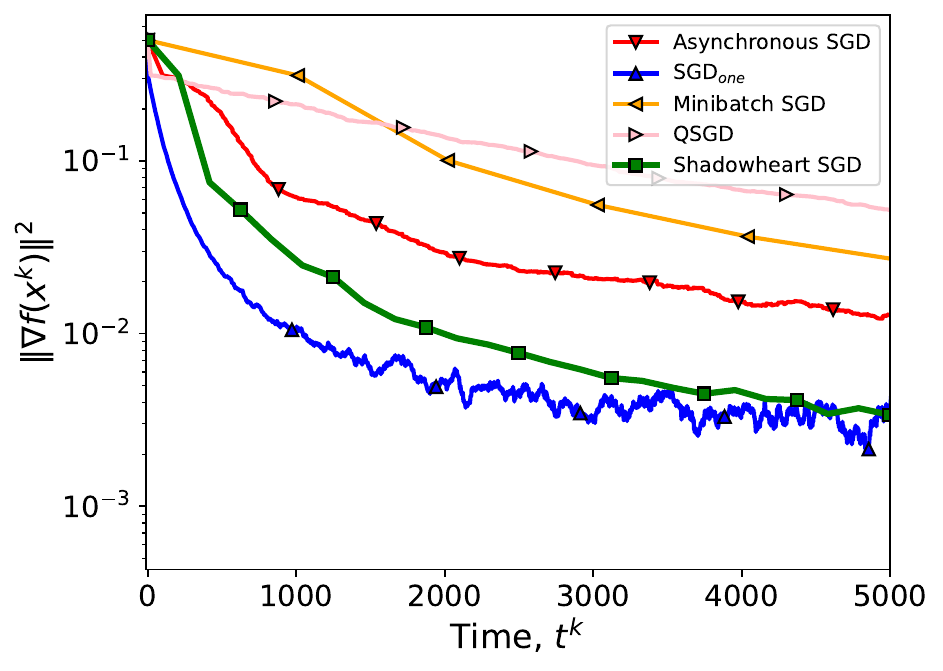}
        \captionsetup{width=.9\linewidth}
        \captionsetup{aboveskip=0pt, belowskip=12pt}
        \caption{$c = 1$}
        \label{fig:different_c_graduate_3}
    \end{subfigure}
    \caption{$h_i^k = \sqrt{i}$, $\dot{\tau}_i^k = c \cdot \sqrt{i}$}
    \label{fig:different_c_graduate}
\end{figure}

\newpage
\section*{NeurIPS Paper Checklist}

\begin{enumerate}

\item {\bf Claims}
    \item[] Question: Do the main claims made in the abstract and introduction accurately reflect the paper's contributions and scope?
    \item[] Answer: \answerYes{} 
    \item[] Justification: Section~\ref{sec:contri} and Table~\ref{table:complexities}
    \item[] Guidelines:
    \begin{itemize}
        \item The answer NA means that the abstract and introduction do not include the claims made in the paper.
        \item The abstract and/or introduction should clearly state the claims made, including the contributions made in the paper and important assumptions and limitations. A No or NA answer to this question will not be perceived well by the reviewers. 
        \item The claims made should match theoretical and experimental results, and reflect how much the results can be expected to generalize to other settings. 
        \item It is fine to include aspirational goals as motivation as long as it is clear that these goals are not attained by the paper. 
    \end{itemize}

\item {\bf Limitations}
    \item[] Question: Does the paper discuss the limitations of the work performed by the authors?
    \item[] Answer: \answerYes{} 
    \item[] Justification: Sections~\ref{sec:tighter}, \ref{sec:problem_estim}, and \ref{sec:fastest_worker}
    \item[] Guidelines:
    \begin{itemize}
        \item The answer NA means that the paper has no limitation while the answer No means that the paper has limitations, but those are not discussed in the paper. 
        \item The authors are encouraged to create a separate "Limitations" section in their paper.
        \item The paper should point out any strong assumptions and how robust the results are to violations of these assumptions (e.g., independence assumptions, noiseless settings, model well-specification, asymptotic approximations only holding locally). The authors should reflect on how these assumptions might be violated in practice and what the implications would be.
        \item The authors should reflect on the scope of the claims made, e.g., if the approach was only tested on a few datasets or with a few runs. In general, empirical results often depend on implicit assumptions, which should be articulated.
        \item The authors should reflect on the factors that influence the performance of the approach. For example, a facial recognition algorithm may perform poorly when image resolution is low or images are taken in low lighting. Or a speech-to-text system might not be used reliably to provide closed captions for online lectures because it fails to handle technical jargon.
        \item The authors should discuss the computational efficiency of the proposed algorithms and how they scale with dataset size.
        \item If applicable, the authors should discuss possible limitations of their approach to address problems of privacy and fairness.
        \item While the authors might fear that complete honesty about limitations might be used by reviewers as grounds for rejection, a worse outcome might be that reviewers discover limitations that aren't acknowledged in the paper. The authors should use their best judgment and recognize that individual actions in favor of transparency play an important role in developing norms that preserve the integrity of the community. Reviewers will be specifically instructed to not penalize honesty concerning limitations.
    \end{itemize}

\item {\bf Theory Assumptions and Proofs}
    \item[] Question: For each theoretical result, does the paper provide the full set of assumptions and a complete (and correct) proof?
    \item[] Answer: \answerYes{} 
    \item[] Justification: Section~\ref{sec:intro} and Appendix
    \item[] Guidelines:
    \begin{itemize}
        \item The answer NA means that the paper does not include theoretical results. 
        \item All the theorems, formulas, and proofs in the paper should be numbered and cross-referenced.
        \item All assumptions should be clearly stated or referenced in the statement of any theorems.
        \item The proofs can either appear in the main paper or the supplemental material, but if they appear in the supplemental material, the authors are encouraged to provide a short proof sketch to provide intuition. 
        \item Inversely, any informal proof provided in the core of the paper should be complemented by formal proofs provided in appendix or supplemental material.
        \item Theorems and Lemmas that the proof relies upon should be properly referenced. 
    \end{itemize}

    \item {\bf Experimental Result Reproducibility}
    \item[] Question: Does the paper fully disclose all the information needed to reproduce the main experimental results of the paper to the extent that it affects the main claims and/or conclusions of the paper (regardless of whether the code and data are provided or not)?
    \item[] Answer: \answerYes{} 
    \item[] Justification: Section~\ref{sec:experiments}
    \item[] Guidelines:
    \begin{itemize}
        \item The answer NA means that the paper does not include experiments.
        \item If the paper includes experiments, a No answer to this question will not be perceived well by the reviewers: Making the paper reproducible is important, regardless of whether the code and data are provided or not.
        \item If the contribution is a dataset and/or model, the authors should describe the steps taken to make their results reproducible or verifiable. 
        \item Depending on the contribution, reproducibility can be accomplished in various ways. For example, if the contribution is a novel architecture, describing the architecture fully might suffice, or if the contribution is a specific model and empirical evaluation, it may be necessary to either make it possible for others to replicate the model with the same dataset, or provide access to the model. In general. releasing code and data is often one good way to accomplish this, but reproducibility can also be provided via detailed instructions for how to replicate the results, access to a hosted model (e.g., in the case of a large language model), releasing of a model checkpoint, or other means that are appropriate to the research performed.
        \item While NeurIPS does not require releasing code, the conference does require all submissions to provide some reasonable avenue for reproducibility, which may depend on the nature of the contribution. For example
        \begin{enumerate}
            \item If the contribution is primarily a new algorithm, the paper should make it clear how to reproduce that algorithm.
            \item If the contribution is primarily a new model architecture, the paper should describe the architecture clearly and fully.
            \item If the contribution is a new model (e.g., a large language model), then there should either be a way to access this model for reproducing the results or a way to reproduce the model (e.g., with an open-source dataset or instructions for how to construct the dataset).
            \item We recognize that reproducibility may be tricky in some cases, in which case authors are welcome to describe the particular way they provide for reproducibility. In the case of closed-source models, it may be that access to the model is limited in some way (e.g., to registered users), but it should be possible for other researchers to have some path to reproducing or verifying the results.
        \end{enumerate}
    \end{itemize}

\item {\bf Open access to data and code}
    \item[] Question: Does the paper provide open access to the data and code, with sufficient instructions to faithfully reproduce the main experimental results, as described in supplemental material?
    \item[] Answer: \answerYes{} 
    \item[] Justification: In the supplementary material
    \item[] Guidelines:
    \begin{itemize}
        \item The answer NA means that paper does not include experiments requiring code.
        \item Please see the NeurIPS code and data submission guidelines (\url{https://nips.cc/public/guides/CodeSubmissionPolicy}) for more details.
        \item While we encourage the release of code and data, we understand that this might not be possible, so “No” is an acceptable answer. Papers cannot be rejected simply for not including code, unless this is central to the contribution (e.g., for a new open-source benchmark).
        \item The instructions should contain the exact command and environment needed to run to reproduce the results. See the NeurIPS code and data submission guidelines (\url{https://nips.cc/public/guides/CodeSubmissionPolicy}) for more details.
        \item The authors should provide instructions on data access and preparation, including how to access the raw data, preprocessed data, intermediate data, and generated data, etc.
        \item The authors should provide scripts to reproduce all experimental results for the new proposed method and baselines. If only a subset of experiments are reproducible, they should state which ones are omitted from the script and why.
        \item At submission time, to preserve anonymity, the authors should release anonymized versions (if applicable).
        \item Providing as much information as possible in supplemental material (appended to the paper) is recommended, but including URLs to data and code is permitted.
    \end{itemize}

\item {\bf Experimental Setting/Details}
    \item[] Question: Does the paper specify all the training and test details (e.g., data splits, hyperparameters, how they were chosen, type of optimizer, etc.) necessary to understand the results?
    \item[] Answer: \answerYes{} 
    \item[] Justification: Section~\ref{sec:experiments}
    \item[] Guidelines:
    \begin{itemize}
        \item The answer NA means that the paper does not include experiments.
        \item The experimental setting should be presented in the core of the paper to a level of detail that is necessary to appreciate the results and make sense of them.
        \item The full details can be provided either with the code, in appendix, or as supplemental material.
    \end{itemize}

\item {\bf Experiment Statistical Significance}
    \item[] Question: Does the paper report error bars suitably and correctly defined or other appropriate information about the statistical significance of the experiments?
    \item[] Answer: \answerNo{} 
    \item[] Justification: While each particular experiment in every plot from Section~\ref{sec:experiments} was run with one seed, the amount of provided experiments and considered settings can give a high confidence in our judgments that the experimental part supports the theoretical part, which is our main contribution.
    \item[] Guidelines:
    \begin{itemize}
        \item The answer NA means that the paper does not include experiments.
        \item The authors should answer "Yes" if the results are accompanied by error bars, confidence intervals, or statistical significance tests, at least for the experiments that support the main claims of the paper.
        \item The factors of variability that the error bars are capturing should be clearly stated (for example, train/test split, initialization, random drawing of some parameter, or overall run with given experimental conditions).
        \item The method for calculating the error bars should be explained (closed form formula, call to a library function, bootstrap, etc.)
        \item The assumptions made should be given (e.g., Normally distributed errors).
        \item It should be clear whether the error bar is the standard deviation or the standard error of the mean.
        \item It is OK to report 1-sigma error bars, but one should state it. The authors should preferably report a 2-sigma error bar than state that they have a 96\% CI, if the hypothesis of Normality of errors is not verified.
        \item For asymmetric distributions, the authors should be careful not to show in tables or figures symmetric error bars that would yield results that are out of range (e.g. negative error rates).
        \item If error bars are reported in tables or plots, The authors should explain in the text how they were calculated and reference the corresponding figures or tables in the text.
    \end{itemize}

\item {\bf Experiments Compute Resources}
    \item[] Question: For each experiment, does the paper provide sufficient information on the computer resources (type of compute workers, memory, time of execution) needed to reproduce the experiments?
    \item[] Answer: \answerYes{} 
    \item[] Justification: Section~\ref{sec:experiments}
    \item[] Guidelines:
    \begin{itemize}
        \item The answer NA means that the paper does not include experiments.
        \item The paper should indicate the type of compute workers CPU or GPU, internal cluster, or cloud provider, including relevant memory and storage.
        \item The paper should provide the amount of compute required for each of the individual experimental runs as well as estimate the total compute. 
        \item The paper should disclose whether the full research project required more compute than the experiments reported in the paper (e.g., preliminary or failed experiments that didn't make it into the paper). 
    \end{itemize}
    
\item {\bf Code Of Ethics}
    \item[] Question: Does the research conducted in the paper conform, in every respect, with the NeurIPS Code of Ethics \url{https://neurips.cc/public/EthicsGuidelines}?
    \item[] Answer: \answerYes{} 
    \item[] Justification: Our work considers a mathematical problem from the machine learning domain.
    \item[] Guidelines:
    \begin{itemize}
        \item The answer NA means that the authors have not reviewed the NeurIPS Code of Ethics.
        \item If the authors answer No, they should explain the special circumstances that require a deviation from the Code of Ethics.
        \item The authors should make sure to preserve anonymity (e.g., if there is a special consideration due to laws or regulations in their jurisdiction).
    \end{itemize}

\item {\bf Broader Impacts}
    \item[] Question: Does the paper discuss both potential positive societal impacts and negative societal impacts of the work performed?
    \item[] Answer: \answerNA{} 
    \item[] Justification: Our work considers a mathematical problem from the machine learning domain.
    \item[] Guidelines:
    \begin{itemize}
        \item The answer NA means that there is no societal impact of the work performed.
        \item If the authors answer NA or No, they should explain why their work has no societal impact or why the paper does not address societal impact.
        \item Examples of negative societal impacts include potential malicious or unintended uses (e.g., disinformation, generating fake profiles, surveillance), fairness considerations (e.g., deployment of technologies that could make decisions that unfairly impact specific groups), privacy considerations, and security considerations.
        \item The conference expects that many papers will be foundational research and not tied to particular applications, let alone deployments. However, if there is a direct path to any negative applications, the authors should point it out. For example, it is legitimate to point out that an improvement in the quality of generative models could be used to generate deepfakes for disinformation. On the other hand, it is not needed to point out that a generic algorithm for optimizing neural networks could enable people to train models that generate Deepfakes faster.
        \item The authors should consider possible harms that could arise when the technology is being used as intended and functioning correctly, harms that could arise when the technology is being used as intended but gives incorrect results, and harms following from (intentional or unintentional) misuse of the technology.
        \item If there are negative societal impacts, the authors could also discuss possible mitigation strategies (e.g., gated release of models, providing defenses in addition to attacks, mechanisms for monitoring misuse, mechanisms to monitor how a system learns from feedback over time, improving the efficiency and accessibility of ML).
    \end{itemize}
    
\item {\bf Safeguards}
    \item[] Question: Does the paper describe safeguards that have been put in place for responsible release of data or models that have a high risk for misuse (e.g., pretrained language models, image generators, or scraped datasets)?
    \item[] Answer: \answerNA{} 
    \item[] Justification: Our work considers a mathematical problem from the machine learning domain.
    \item[] Guidelines:
    \begin{itemize}
        \item The answer NA means that the paper poses no such risks.
        \item Released models that have a high risk for misuse or dual-use should be released with necessary safeguards to allow for controlled use of the model, for example by requiring that users adhere to usage guidelines or restrictions to access the model or implementing safety filters. 
        \item Datasets that have been scraped from the Internet could pose safety risks. The authors should describe how they avoided releasing unsafe images.
        \item We recognize that providing effective safeguards is challenging, and many papers do not require this, but we encourage authors to take this into account and make a best faith effort.
    \end{itemize}

\item {\bf Licenses for existing assets}
    \item[] Question: Are the creators or original owners of assets (e.g., code, data, models), used in the paper, properly credited and are the license and terms of use explicitly mentioned and properly respected?
    \item[] Answer: \answerYes{} 
    \item[] Justification: Section~\ref{sec:experiments}
    \item[] Guidelines:
    \begin{itemize}
        \item The answer NA means that the paper does not use existing assets.
        \item The authors should cite the original paper that produced the code package or dataset.
        \item The authors should state which version of the asset is used and, if possible, include a URL.
        \item The name of the license (e.g., CC-BY 4.0) should be included for each asset.
        \item For scraped data from a particular source (e.g., website), the copyright and terms of service of that source should be provided.
        \item If assets are released, the license, copyright information, and terms of use in the package should be provided. For popular datasets, \url{paperswithcode.com/datasets} has curated licenses for some datasets. Their licensing guide can help determine the license of a dataset.
        \item For existing datasets that are re-packaged, both the original license and the license of the derived asset (if it has changed) should be provided.
        \item If this information is not available online, the authors are encouraged to reach out to the asset's creators.
    \end{itemize}

\item {\bf New Assets}
    \item[] Question: Are new assets introduced in the paper well documented and is the documentation provided alongside the assets?
    \item[] Answer: \answerYes{} 
    \item[] Justification: The code for the experiments is in the supplementary materials.
    \item[] Guidelines:
    \begin{itemize}
        \item The answer NA means that the paper does not release new assets.
        \item Researchers should communicate the details of the dataset/code/model as part of their submissions via structured templates. This includes details about training, license, limitations, etc. 
        \item The paper should discuss whether and how consent was obtained from people whose asset is used.
        \item At submission time, remember to anonymize your assets (if applicable). You can either create an anonymized URL or include an anonymized zip file.
    \end{itemize}

\item {\bf Crowdsourcing and Research with Human Subjects}
    \item[] Question: For crowdsourcing experiments and research with human subjects, does the paper include the full text of instructions given to participants and screenshots, if applicable, as well as details about compensation (if any)? 
    \item[] Answer: \answerNA{} 
    \item[] Justification: 
    \item[] Guidelines:
    \begin{itemize}
        \item The answer NA means that the paper does not involve crowdsourcing nor research with human subjects.
        \item Including this information in the supplemental material is fine, but if the main contribution of the paper involves human subjects, then as much detail as possible should be included in the main paper. 
        \item According to the NeurIPS Code of Ethics, workers involved in data collection, curation, or other labor should be paid at least the minimum wage in the country of the data collector. 
    \end{itemize}

\item {\bf Institutional Review Board (IRB) Approvals or Equivalent for Research with Human Subjects}
    \item[] Question: Does the paper describe potential risks incurred by study participants, whether such risks were disclosed to the subjects, and whether Institutional Review Board (IRB) approvals (or an equivalent approval/review based on the requirements of your country or institution) were obtained?
    \item[] Answer: \answerNA{} 
    \item[] Justification: 
    \item[] Guidelines:
    \begin{itemize}
        \item The answer NA means that the paper does not involve crowdsourcing nor research with human subjects.
        \item Depending on the country in which research is conducted, IRB approval (or equivalent) may be required for any human subjects research. If you obtained IRB approval, you should clearly state this in the paper. 
        \item We recognize that the procedures for this may vary significantly between institutions and locations, and we expect authors to adhere to the NeurIPS Code of Ethics and the guidelines for their institution. 
        \item For initial submissions, do not include any information that would break anonymity (if applicable), such as the institution conducting the review.
    \end{itemize}

\end{enumerate}

\end{document}